\numberwithin{equation}{subsection}
\newtheorem{theorem}[subsection]{Theorem}
\newtheorem{thm}{Theorem}
\newtheorem{conj}[subsection]{Conjecture}
\newtheorem{cor}[subsection]{Corollary}
\newtheorem{lemma}[subsection]{Lemma}
\newtheorem{prop}[subsection]{Proposition}
\theoremstyle{definition}
\newtheorem{example}[subsection]{Example}
\newtheorem{notation}[subsection]{Notation}
\newtheorem{remark}[subsection]{Remark}
\newtheorem{hypo}[subsection]{Hypothesis}
\newcommand{\ra}{\rightarrow}
\newcommand{\xra}{\xrightarrow}
\newcommand{\hra}{\hookrightarrow}
\def\AAA{\mathbb{A}}
\def\CC{\mathbb{C}}
\def\DD{\mathbb{D}}
\def\FF{\mathbb{F}}
\def\GG{\mathbb{G}}
\def\HH{\mathbb{H}}
\def\LL{\mathbb{L}}
\def\NN{\mathbb{N}}
\def\PP{\mathbb{P}}
\def\QQ{\mathbb{Q}}
\def\RR{\mathbb{R}}
\def\SSS{\mathbb{S}}
\def\ZZ{\mathbb{Z}}
\def\bfA{\mathbf{A}}
\def\bfX{\mathbf{X}}
\def\calA{\mathcal{A}}
\def\calC{\mathcal{C}}
\def\calD{\mathcal{D}}
\def\calE{\mathcal{E}}
\def\calF{\mathcal{F}}
\def\calH{\mathcal{H}}
\def\calM{\mathcal{M}}
\def\calO{\mathcal{O}}
\def\calP{\mathcal{P}}
\def\calW{\mathcal{W}}
\def\gothc{\mathfrak{c}}
\def\gothd{\mathfrak{d}}
\def\gothe{\mathfrak{e}}
\def\gothh{\mathfrak{h}}
\def\gothl{\mathfrak{l}}
\def\gothn{\mathfrak{n}}
\def\gothp{\mathfrak{p}}
\def\gothq{\mathfrak{q}}
\def\gothF{\mathfrak{F}}
\def\gothG{\mathfrak{G}}
\def\gothH{\mathfrak{H}}
\def\gothN{\mathfrak{N}}
\def\gothR{\mathfrak{R}}
\def\gothS{\mathfrak{S}}
\def\scrA{\mathscr{A}}
\def\scrB{\mathscr{B}}
\def\scrC{\mathscr{C}}
\def\scrD{\mathscr{D}}
\def\scrE{\mathscr{E}}
\def\scrF{\mathscr{F}}
\def\scrH{\mathscr{H}}
\def\scrL{\mathscr{L}}
\def\rmM{\mathrm{M}}
\def\ttS{\mathtt{S}}
\def\ttT{\mathtt{T}}
\DeclareMathOperator{\Art}{Art}
\DeclareMathOperator{\Aut}{Aut}
\DeclareMathOperator{\Cone}{Cone}
\DeclareMathOperator{\End}{End}
\DeclareMathOperator{\Gal}{Gal}
\DeclareMathOperator{\Ker}{Ker}
\DeclareMathOperator{\Hom}{Hom}
\DeclareMathOperator{\Ind}{Ind}
\DeclareMathOperator{\Lie}{Lie}
\DeclareMathOperator{\Res}{Res}
\DeclareMathOperator{\Spec}{Spec}
\DeclareMathOperator{\Proj}{Proj}
\newcommand{\cl}{\mathrm{cl}}
\newcommand{\Q}{\mathbb{Q}}
\newcommand{\Z}{\mathbb{Z}}
\newcommand{\R}{\mathbb{R}}
\newcommand{\C}{\mathbb{C}}
\newcommand{\G}{\mathbb{G}}
\newcommand{\cO}{\mathcal{O}}
\newcommand{\kb}{\underline{k}}
\newcommand{\Fgal}{F^{\mathrm{Gal}}} 
\newcommand{\Qb}{\overline{\QQ}} 
\newcommand{\Qpb}{\overline{\QQ}_p}
\newcommand{\Fpb}{\overline{\FF}_p}
\newcommand{\M}{\mathcal{M}} 
\newcommand{\sm}{\mathrm{sa}} 
\newcommand{\dR}{\mathrm{dR}}
\newcommand{\univA}{\calA}
\newcommand{\univC}{\calC} 
\newcommand{\fX}{\mathfrak{X}}
\newcommand{\X}{\mathfrak{X}} 
\newcommand{\omegab}{\underline{\omega}} 
\newcommand{\tor}{\mathrm{tor}} 
\newcommand{\Kod}{\mathrm{Kod}} 
\newcommand{\cris}{\mathrm{cris}}
\newcommand{\F}{\mathscr{F}} 
\newcommand{\Sym}{\mathrm{Sym}} 
\newcommand{\D}{\mathsf{D}} 
\newcommand{\pr}{\mathrm{pr}}
\newcommand{\Def}{\mathcal{D}ef}
\newcommand{\Fil}{\mathrm{Fil}}
\newcommand{\DR}{\mathrm{DR}^{\bullet}}
\newcommand{\Gr}{\mathrm{Gr}}
\newcommand{\ord}{\mathrm{ord}}
\newcommand{\tF}{\mathtt{F}}
\newcommand{\BGG}{\mathrm{BGG}^{\bullet}}
\newcommand{\bgg}{\mathrm{BGG}}
\newcommand{\rb}{\underline{r}}
\newcommand{\etab}{\overline{\eta}}
\newcommand{\Spf}{\mathrm{Spf}}
\newcommand{\xb}{\overline{x}}
\newcommand{\ab}{\mathrm{ab}}
\newcommand{\bbalpha}{\boldsymbol{\alpha}}
\newcommand{\bfSh}{\mathbf{Sh}}
\newcommand{\CW}{\mathrm{CW}}
\newcommand{\et}{\mathrm{et}}
\newcommand{\Fr}{\mathrm{Fr}}
\newcommand{\Frob}{\mathrm{Frob}}
\newcommand{\gothRec}{\gothR\mathrm{ec}}
\newcommand{\GL}{\mathrm{GL}}
\newcommand{\Iw}{\mathrm{Iw}}
\newcommand{\per}{\mathrm{per}}
\newcommand{\prim}{\mathrm{prim}}
\newcommand{\Qp}{\QQ_p}
\newcommand{\res}{\mathrm{res}}
\newcommand{\rig}{\mathrm{rig}}
\newcommand{\Sh}{\mathrm{Sh}}
\newcommand{\SL}{\mathrm{SL}}
\newcommand{\Tr}{\mathrm{Tr}}
\newcommand{\ttr}{\mathtt{r}}
\newcommand{\ur}{\mathrm{ur}}
\newcommand{\Zp}{\ZZ_p}
\newcommand{\Qlb}{\overline{\Q}_l}
\newcommand{\Ob}{\mathrm{Ob}}
\newcommand{\tSigma}{\widetilde{\Sigma}}
\newcommand{\tL}{\widetilde{\scrL}}
\newcommand{\sbar}{\overline{s}}
\newcommand{\val}{\mathrm{val}}
\newcommand{\uk}{\underline{k}}
\newcommand{\St}{\mathrm{St}}
\newcommand{\Stb}{\check{\St}}
\newcommand{\fZ}{\mathfrak{Z}}
\newcommand{\spe}{\mathrm{sp}}
\newcommand{\Id}{\mathrm{Id}}
\newcommand{\Gammab}{\underline{\Gamma}}
\begin{document}

\title[$p$-adic cohomology and classicality]{$p$-adic cohomology and classicality of overconvergent Hilbert modular forms}

\author{Yichao Tian} 
\address{Morningside Center of Mathematics,
Chinese Academy of Sciences,
55 Zhong Guan Cun East Road,
Beijing, 100190, China}
\email{yichaot@math.ac.cn}

\author{Liang Xiao}
\address{University of Connecticut, Storrs, Department of
Mathematics, 196 Auditorium Road, Unit 3009, Storrs, CT 06250, U.S.A.}
\email{liang.xiao@uconn.edu}

\begin{abstract}
Let $F$ be a totally real field in which a prime number $p$ is unramified.
We prove that, if a cuspidal overconvergent Hilbert modular form has small slopes under the $U_p$-operators, then  it is classical.
Our method follows the original cohomological approach of R. Coleman.
The key ingredient of the proof is giving an explicit description of the Goren-Oort stratification of the special fiber of the Hilbert modular variety.
As a byproduct of the proof, we show that, at least when $p$ is inert,  the rigid cohomology of the ordinary locus is equal to the space of classical forms in the Grothendieck group of finite-dimensional modules of the Hecke algebras.
\end{abstract}

\subjclass{11F41, 11F33, 14F30}
\keywords{$p$-adic modular  forms, overconvergent Hilbert modular froms, Hilbert modular varieties, Goren-Oort stratification, rigid cohomology.}

\maketitle
\tableofcontents

\section{Introduction}

The classicality results for $p$-adic overconvergent modular forms started with the pioneering work of R. Coleman \cite{coleman}, in which he proved that an overconvergent modular form of weight $k$ and slope $<k-1$ is  classical.  
Coleman proved his theorem using  $p$-adic cohomology and an ingenious  dimension counting argument. Later, P. Kassaei \cite{kassaei} reproved Coleman's theorem based on an analytic continuation result by K. Buzzard \cite{buzzard}.   
In the Hilbert case,   S. Sasaki \cite{sasaki} proved the classicality of small slope overconvergent Hilbert modular forms when the prime $p$ is  totally split in the concerning totally real field.
  With a less optimal slope condition, such a classicality result for  overconvergent Hilbert modular forms was proved by the first named author \cite{tian} in the quadratic inert case, and by V. Pilloni and B. Stroh in the general unramified case \cite{ps}. The methods of \cite{sasaki,tian,ps} followed that of  Kassaei, and used the analytic continuation of overconvergent Hilbert modular forms.

In this paper, we will follow Coleman's original cohomological approach to prove the classicality of cuspidal overconvergent Hilbert modular forms. Let us describe our main results in details.
 We fix a prime number $p$.
Let $F$ be a totally real field of degree $g=[F:\Q]\geq 2$ in which $p$ is unramified,
and   denote by  $\gothp_1, \dots, \gothp_r$ the primes of $F$ above $p$. Let $\Sigma_{\infty}$ be the set of archimedean places of $F$.
We fix an isomorphism $\iota_p: \CC \cong \overline \QQ_p$. 
 For each $\gothp_i$, we  denote by $\Sigma_{\infty/\gothp_i}$ the subset of archimedean places $\tau\in \Sigma_{\infty}$  such that $\iota_p\circ\tau$ induce the prime $\gothp_i$. 
We fix an ideal $\gothN$ of $\calO_F$ coprime to $p$.
We consider the following level structures:
\begin{eqnarray*}
&K_1(\gothN) =
\bigg\{
\begin{pmatrix}a &b\\c&d\end{pmatrix}\in \GL_2(\widehat{\cO}_F)\;\Big|\; a\equiv 1, c\equiv 0 \bmod \gothN
\bigg\}; \\
& K_1(\gothN)^p\Iw_p = \bigg\{
\begin{pmatrix}a &b\\c&d\end{pmatrix}\in K_1(\gothN)\;\Big|\; c\equiv 0 \bmod p
\bigg\}.
\end{eqnarray*}
Consider a  multi-weight $(\underline k, w)\in \NN^{\Sigma_{\infty}}\times \NN$ such that $w\geq k_{\tau}\geq 2$ and $k_{\tau}\equiv w\pmod 2$ for all $\tau$ (such a multi-weight will be called cohomological).
 The convention on weights in this paper is adapted for arithmetic applications: each archimedean component of the automorphic representation associated to a cuspidal Hilbert eigenform of multiweight $(\underline k, w)$ has central character $t\mapsto t^{w-2}$.  This agrees with \cite{saito}. Our first main theorem is the following
\begin{thm}[Theorem~\ref{T:strong classicality}]
\label{T:classicality theorem introduction}

Let $f$ be a cuspidal  overconvergent Hilbert modular form of  multiweight $(\underline{k}, w)$ and level $K_1(\gothN)$, which is an eigenform for all Hecke operators. 
Let $\lambda_{\gothp_i}$ denote the eigenvalue of $f$ for the operator $U_{\gothp_i}$ for $1\leq i\leq r$.
If the $p$-adic valuation of each $\lambda_{\gothp_i}$ satisfies  
\begin{equation}\label{E:slope-bound}
\val_{p}(\lambda_{\gothp_i})<\sum_{\tau\in \Sigma_{\infty/\gothp_i}} \frac{w-k_{\tau}}{2} + \min_{\tau\in \Sigma_{\infty/\gothp_i}} \{k_{\tau}-1\},
\end{equation}
 then $f$ is a classical (cuspidal) Hilbert  eigenform of level $K_1(\gothN)^p\Iw_p$.
\end{thm}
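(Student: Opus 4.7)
The plan is to follow Coleman's original cohomological strategy, adapted to the Hilbert setting. First, realize the space of cuspidal overconvergent Hilbert modular forms of multiweight $(\underline{k},w)$ as a direct summand of the rigid cohomology of the ordinary locus $\M^{\ord}$ of the Hilbert modular variety, with coefficients in a suitable automorphic local system constructed from the Hodge bundle via a BGG-type resolution of the relevant de Rham complex. At the same time, classical cuspidal Hilbert modular eigenforms of the same weight $(\underline{k},w)$ realize inside the algebraic de Rham cohomology of a toroidal compactification of the full Hilbert modular variety $\M$. Restriction from $\M$ to $\M^{\ord}$ then yields a Hecke-equivariant map $\rho\colon H^{*}_{\dR}(\M)\to H^{*}_{\rig}(\M^{\ord})$, and the classicality question becomes the assertion that every cuspidal overconvergent eigenform satisfying the slope bound \eqref{E:slope-bound} lies in the image of $\rho$.

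To control the cokernel of $\rho$, I would use a Gysin/excision long exact sequence whose error terms are the rigid cohomologies of the non-ordinary locus. One stratifies the latter via the Goren--Oort stratification $\{\M_{S}\}$ indexed by subsets $S$ of $\Sigma_{\infty/\gothp_1}\sqcup\cdots\sqcup \Sigma_{\infty/\gothp_r}$. The key geometric input, announced in the abstract, is an explicit description of each $\M_{S}$ as an iterated $\PP^{1}$-bundle over a lower-dimensional (quaternionic) Shimura variety. Combined with the Kodaira--Spencer isomorphism for the automorphic sheaf, this pins down the possible slopes of $U_{\gothp_i}$ on $H^{*}_{\rig}(\M_{S})$: each embedding $\tau\in\Sigma_{\infty/\gothp_i}$ contributes the ``central character'' shift $\tfrac{w-k_\tau}{2}$ coming from the normalization of $U_{\gothp_i}$, and each $\tau\in S\cap\Sigma_{\infty/\gothp_i}$ contributes an additional amount at least $k_\tau-1$ from the extra $\PP^{1}$-bundle direction. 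Consequently, on any stratum $\M_{S}$ with $S\cap\Sigma_{\infty/\gothp_i}\neq\emptyset$, the $\gothp_i$-slopes that appear in $H^{*}_{\rig}(\M_{S})$ are at least
\[
\sum_{\tau\in\Sigma_{\infty/\gothp_i}}\frac{w-k_\tau}{2}+\min_{\tau\in\Sigma_{\infty/\gothp_i}}\{k_\tau-1\},
\]
whereas strata with $S\cap\Sigma_{\infty/\gothp_i}=\emptyset$ impose no new constraint in the $\gothp_i$-direction.

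Intersecting these slope estimates over $i=1,\dots,r$ and feeding the result into the excision sequence, the hypothesis \eqref{E:slope-bound} forces $f$ to lift to an eigenclass in $H^{*}_{\dR}(\M)$. Translating back through the BGG resolution then identifies this lift with a classical cuspidal Hilbert modular eigenform of the given weight; the level at $p$ upgrades from spherical to Iwahori because the datum of the $U_{\gothp_i}$-eigenvalue records the $p$-refinement used to build $f$.

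The main obstacle is the Goren--Oort geometry together with the accompanying slope analysis: one has to establish the explicit $\PP^{1}$-bundle description of each stratum $\M_{S}$ (possibly via a comparison with an auxiliary unitary Shimura variety whose Newton stratification is better understood), lift and identify the correspondence induced by $U_{\gothp_i}$ in these coordinates, and then pin down its eigenvalue contribution on the cohomology of each $\PP^{1}$-factor. A secondary difficulty is to carry out the BGG comparison and the excision sequence Hecke-equivariantly in the cuspidal setting, working with a toroidal compactification and logarithmic coefficients, so that the class produced in $H^{*}_{\dR}(\M)$ is genuinely cuspidal rather than containing an Eisenstein contribution.
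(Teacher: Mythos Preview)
Your broad strategy --- realize overconvergent cusp forms in $H^{g}_{\rig}$ of the ordinary locus via BGG, then control the difference with the de Rham cohomology of the full variety by a Gysin/excision sequence whose error terms are the Goren--Oort strata --- is indeed one of the two approaches in the paper. But there are two substantive gaps.

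First, your slope estimate on the GO-strata is not correctly sourced. The $(\PP^{1})^{I_{\ttT}}$-bundle structure of a stratum $Y_{\ttT}$ contributes only Tate twists $\overline\QQ_{l}(-1)$ to the cohomology; it does not produce the weight-dependent shift ``$k_{\tau}-1$'' you claim. The $k_{\tau}$-dependence lives entirely in the Galois representation $\tilde\rho_{\pi,l}^{\ttS(\ttT)}$ on the \emph{base} quaternionic Shimura variety. To bound the slope of the individual operator $U_{\gothp_{i}}$ on $H^{\star}_{\rig}(Y_{\ttT})$ you therefore need the action of the \emph{partial} Frobenius $\Phi_{\gothp_{i}^{2}}$ on that base cohomology, and this is precisely the content of the paper's Conjecture on partial Frobenius (known only when $p$ is inert). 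With only the total Frobenius available, the direct strata computation yields the much weaker bound of Proposition~\ref{P:weak classicality}, roughly $\sum_{\gothp}\val_{p}(\lambda_{\gothp})<\frac{1}{g}\min_{\tau}(k_{\tau}-1)+\textrm{normalizing terms}$, not the optimal bound \eqref{E:slope-bound}.

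Second, and consequently, the paper's \emph{unconditional} proof of the theorem with the optimal bound does not proceed by pinning down slopes on the strata at all. Instead it argues as follows: the optimal slope condition \eqref{E:slope-bound} is used only to show, via an elementary norm estimate on the $U_{\gothp}$-correspondence acting on the terms of the BGG complex, that $f$ is not in the image of the $\Theta$-maps and hence gives a nonzero class in $H^{g}_{\rig}$. The strata computation is then invoked only qualitatively, to show that every Hecke eigensystem occurring in $H^{g}_{\rig}$ is that of some classical automorphic representation. The passage from ``$f$ has classical tame Hecke spectrum'' to ``$f$ is classical'' is \emph{not} cohomological: it uses the eigenvariety to attach a two-dimensional $p$-adic Galois representation $\rho_{f}$ to $f$, rules out the one-dimensional case by a dimension mismatch, and then invokes strong multiplicity one (via $q$-expansions and the fact that $\lambda_{\gothp}$ must be an eigenvalue of crystalline Frobenius on $\rho_{f}$). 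None of this appears in your outline, and without it your argument is conditional.
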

Here,  we normalize the $p$-adic valuation $\val_p$ so that $\val_p(p)=1$. 
The term $\sum_{\tau} \frac{w-k_{\tau}}{2}$ is a normalizing factor that appears in the definition of cuspidal overconvergent Hilbert modular forms. Any cuspidal overconvergent Hilbert eigenform has $U_{\gothp_i}$-slope greater than or equal to this quantity. 
Up to this normalizing factor, Theorem~\ref{T:classicality theorem introduction} was proved in  \cite{ps} (and also in \cite{tian} for the quadratic case) with slope bound $\val_p(\lambda_{\gothp_i})<\sum_{\tau\in \Sigma_{\infty/\gothp_i}}\frac{w-k_{\tau}}{2}+\min_{\tau\in \Sigma_{\infty/\gothp_i}}(k_\tau-[F_{\gothp_i}:\Q_p])$.
The slope bound \eqref{E:slope-bound}, believed to be optimal, was conjectured by Breuil in an unpublished note \cite{breuil}, which significantly inspires this work. 
In Theorem \ref{T:strong classicality}, we  also give some classicality results using theta operators if the slope bound \eqref{E:slope-bound} is not satisfied, as conjectured by Breuil in \emph{loc. cit.} 
 Finally,  Christian Johansson \cite{johansson} also obtained independently in his thesis similar results for overconvergent automorphic forms for rank two unitary group, but with a  less optimal slope bound.

\medskip
We now explain the proof of our theorem.
As in \cite{coleman}, the first step is to relate the cuspidal overconvergent Hilbert modular forms to a certain $p$-adic cohomology group of the Hilbert modular variety. 

We take the level structure $K=K^pK_p$ to be hyperspecial at places above $p$.
 Let $K^p\Iw_p$ denote the corresponding level structure with the same tame level $K^p$ and with Iwahori group at all places above $p$.
Let $\bfX$ be the integral model of the Hilbert modular variety of level $K$ defined over the ring of integers of a finite  extension $L$ over $\Q_p$. We choose  a toroidal compactification $\bfX^\tor$ of $\bfX$.   Let $X^\tor$ and $X$ denote respectively  the  special fibers of $\bfX^{\tor}$ and $\bfX$ over $\Fpb$, and $\D$ be the boundary $X^\tor-X$.  Let $X^{\tor,\ord}$ be the ordinary locus of $X^\tor$.  
Let $\scrF^{(\underline{k}, w)}$ denote the corresponding overconvergent log-$F$-isocrystal sheaf of multiweight $(\uk, w)$ on $X^\tor$, and let $S^{\dagger}_{(\uk, w)}$ denote the space of cuspidal overconvergent  Hilbert modular forms. We consider  the rigid cohomology of $\F^{(\kb,w)}$ over the ordinary locus of $X^{\tor}$ with compact support at cusps, denoted by $H^\star_{\rig}(X^{\tor,\ord},\D; \F^{(\kb,w)})$ (see Subsection~\ref{subsection:rigid-coh} for its precise definition).  Using the dual BGG-complex and a cohomological computation due to Coleman \cite{coleman}, we show in Theorem \ref{Theorem:overconvergent} that, the cohomology group above is computed by a complex consisting of cuspidal overconvergent Hilbert modular forms. 

Let us explain more explicitly this result in the case when $F$ is a real quadratic field and $p$ is a prime inert in $F/\QQ$. 
Then Theorem~\ref{Theorem:overconvergent} says that the cohomology group $H^\star_{\rig}(X^{\tor,\ord},\D; \F^{(\kb,w)})$ (together with its Hecke action) is computed by the complex 
\[
\scrC^\bullet\colon\ 
S^\dagger_{(2-k_1, 2-k_2,w)} \xrightarrow{(\Theta_1, \Theta_2)}
S^\dagger_{(k_1, 2-k_2,w)} \oplus
S^\dagger_{(2-k_1, k_2,w)}
 \xrightarrow{-\Theta_2 \oplus \Theta_1}
S^\dagger_{(k_1, k_2,w)},
\]
where each $\Theta_{i}$ is essentially the $(k_{i}-1)$-times composition of the Hilbert analogues of the well-known $\theta$-operator for the elliptic modular forms. We refer the reader to Subsection~\ref{S:BGG} and Remark~\ref{R:BGG} for the precise expression of $\Theta_{i}$'s, and to \eqref{Equ:complex} for the definition of the complex $\scrC^\bullet$ in the general case.  
Here, the Hecke action on its terms $S_\star^\dagger$ coincides with the one  given in \cite{KL} (see Remark~\ref{remark:U_p} for details), and the complex $\scrC^{\bullet}$ is  Hecke equivariant for this Hecke action. 

 An important fact for us is that, the slope condition \eqref{E:slope-bound} can be satisfied only for eigenforms in the  last term $S^\dagger_{(k_1,k_2,w)}$.
In other words, if an eigenform $f \in S_{(k_1, k_2, w)}^\dagger$ satisfies the slope condition, then it has nontrivial image in the cohomology group $H^g_\rig(X^{\tor, \ord}, \D; \scrF^{(\kb, w)} )$.
This result on $U_p$-action is explained in Corollary~\ref{Prop:slopes-ocv}.

Moreover, the cohomological approach allows us to prove the following strengthened version of Theorem~\ref{T:classicality theorem introduction}: if a cuspidal overconvergent Hilbert modular form $f$ of multiweight $(\underline{k}, w)$ and level $K$ does not lie in the image of all $\Theta$-maps, then $f$ is a classical (cuspidal) Hilbert modular form.

The second step of the proof of Theorem \ref{T:classicality theorem introduction} is to compute  $H^\star_{\rig}(X^{\tor,\ord},\D; \F^{(\kb,w)})$ using the Goren-Oort stratification of $X$. A key ingredient here is  the explicit description of these Goren-Oort strata of $X$ given in \cite{TX-GO}. 
In the quadratic inert case considered above, the main results of \cite{TX-GO} can be described as follows. 
 Let $X_1$ and $X_2$ be respectively the vanishing loci of the two partial Hasse invariants on  $X^{\tor}$.
Then according to \cite{goren},  $X_1\cup X_2$ a normal crossing divisor of $X^{\tor}$, and it is complement of the ordinary locus $X^{\tor,\ord}\subseteq X^{\tor}$.  Put $X_{12}=X_1\cap X_2$. 
 It was previously known that each of $X_1$ and $X_2$ is a certain collection of $\PP^1$'s. 
 The main result of \cite{TX-GO} says that  each of $X_1$ and $X_2$ is isomorphic a  $\PP^1$-bundle over $\bfSh_{K}(B_{\infty_1, \infty_2}^\times)_{\overline \FF_p}$,
the special fiber of the  discrete Shimura variety of level $K$ associated to the quaternion algebra $B_{\infty_1, \infty_2}$ over $F$ which ramifies exactly at both archimedean places.  Their intersection $X_{12}$ may be identified with the Shimura variety $\bfSh_{K^p\Iw_p}(B_{\infty_1, \infty_2}^\times)_{\overline \FF_p}$ for the same group but with Iwahori level structure at $p$. Moreover, these isomorphisms are compatible with the tame Hecke actions. 

In the general case, for each subset $\ttT\subset \Sigma_{\infty}$, we consider the closed Goren-Oort stratum $X_{\ttT}$ defined as the vanishing locus of the partial Hasse invariants corresponding to $\ttT$. 
This is a proper and smooth closed subvariety  of $X$ of codimension $\#\ttT$ by \cite{goren}. 
The main result of \cite{TX-GO} shows that $X_{\ttT}$ is a certain  $(\PP^1)^N$-bundle over the special fiber of another quaternionic Shimura variety.
In fact, this result is more naturally stated for the Shimura variety associated to the group $\GL_2(F) \times_{F^\times} E^\times$ with $E$ a quadratic CM extension of $F$. We refer the reader to Section~\ref{Section:GO-stratification} for a more detailed discussion. 
Using this result and the Jacquet-Langlands correspondence, one can  compute the cohomology of each closed Goren-Oort stratum. General formalism of rigid cohomology then produces a spectral sequence which relates the desired cohomology group $H^\star_{\rig}(X^{\tor,\ord},\D; \F^{(\kb,w)})$ to the cohomology of the  closed Goren-Oort strata. In the general case, we prove the following
\begin{thm}[Theorems \ref{Theorem:overconvergent} and \ref{T:rigid coh of ordinary}]
\label{T:ord cohomology}
We have the following equalities  in the Grothendieck group of finite-dimensional modules of the \emph{tame} Hecke algebra $\scrH(K^p, L)$:
\[
\sum_{J \subseteq \Sigma_{\infty}}
(-1)^{\#J}\big[(S^\dagger_{(s_J\cdot \underline{k}, w)})^{\mathrm{slope} \leq T} \big]=
\big[H^\star_\rig(X^{\tor,\ord}, \D; \scrF^{(\underline{k}, w)}) \big] =
(-1)^g \big[ S_{(\underline{k},w)}(K^p\Iw_p) \big],
\]
for $T$ sufficiently large, where
\begin{itemize}
\item
$s_J\cdot \kb\in \ZZ^{\Sigma_{\infty}}$ is the multi-weight whose $\tau$-component is $ k_{\tau}$ for $\tau \in J$,  and is $2-k_\tau$ for  $\tau \notin J$; 
\item
the superscript slope $\leq T$ means to take the \emph{finite dimensional} subspace where the slope of the product of the $U_\gothp$-operators is less than or equal to $T \in \RR$; and
\item
$S_{(\uk,w)}(K^p\Iw_p)$ is the space of classical cuspidal Hilbert modular forms of level $K^p\Iw_p$.
\end{itemize}
\end{thm}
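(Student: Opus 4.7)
The plan is to establish the two equalities separately. The first one reduces to Theorem \ref{Theorem:overconvergent}, which provides a Hecke-equivariant dual BGG complex $\scrC^\bullet$ of overconvergent Hilbert modular forms whose hypercohomology computes $H^\star_\rig(X^{\tor,\ord}, \D; \scrF^{(\uk,w)})$. By construction the degree-$i$ term of $\scrC^\bullet$ is $\bigoplus_{J \subseteq \Sigma_\infty,\, \#J = i} S^\dagger_{\epsilon_J(\uk,w)}$, so taking the Euler characteristic in the Grothendieck group yields the first equality directly, provided one first passes to the slope-$\leq T$ subcomplex. This subcomplex is well-defined because the BGG differentials are compatible with the slope decomposition for $\prod_i U_{\gothp_i}$, and for $T$ large enough it captures all the classes appearing in the finite-dimensional rigid cohomology.

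The heart of the argument is the second equality, which I would prove via the Goren-Oort stratification of $X^\tor$. For each $\ttT \subseteq \Sigma_\infty$ write $X_\ttT$ for the closed stratum where the partial Hasse invariants indexed by $\ttT$ all vanish; by \cite{TX-GO} this is smooth and proper of codimension $\#\ttT$, and disjoint from $\D$ whenever $\ttT \neq \emptyset$. Since $X^{\tor,\ord} = X^\tor \setminus \bigcup_{\tau \in \Sigma_\infty} X_{\{\tau\}}$, an inclusion-exclusion argument using Gysin long exact sequences in rigid cohomology with supports yields, in the Grothendieck group of $\scrH(K^p, L)$-modules,
$$\big[H^\star_\rig(X^{\tor,\ord}, \D; \scrF^{(\uk,w)})\big] = \sum_{\ttT \subseteq \Sigma_\infty} (-1)^{\#\ttT}\big[H^\star_\rig(X_\ttT, \D_\ttT; \scrF^{(\uk,w)}|_{X_\ttT})\big],$$
where $\D_\ttT := \D \cap X_\ttT$. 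The key geometric input from \cite{TX-GO} is that each $X_\ttT$ is an iterated $\PP^1$-bundle over the special fiber of an auxiliary quaternionic Shimura variety attached to a quaternion algebra $B_\ttT$ over $F$ (in the $E$-twisted formulation of Section \ref{Section:GO-stratification}), carrying Iwahori-type level at precisely those primes $\gothp_i$ for which $\ttT \supseteq \Sigma_{\infty/\gothp_i}$. The projective bundle formula reduces $H^\star_\rig(X_\ttT, \cdots)$ to the rigid cohomology of this quaternionic Shimura variety with coefficients in a suitably shifted automorphic sheaf.

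The final step combines an Eichler-Shimura-type identification of the cohomology of each quaternionic Shimura variety with the Jacquet-Langlands correspondence, rewriting each stratum's contribution as a sum of Hecke-isotypic components indexed by cuspidal automorphic representations $\pi$ of $\GL_2(\A_F)$ whose archimedean components are discrete series exactly at the places in $\ttT$. The alternating sum over $\ttT$ then produces massive cancellation: for a fixed $\pi$, its multiplicities across the $2^g$ strata sum to zero unless $\pi$ is discrete series at every infinite place, in which case only the middle-degree contribution survives and supplies the sign $(-1)^g$ together with the class $[S_{(\uk,w)}(K^p \Iw_p)]$. The Iwahori level at $p$ on the target side arises precisely because the deepest strata contribute Shimura varieties with Iwahori level at $p$. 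The hard part will be this bookkeeping: tracking the twists introduced by the $\PP^1$-bundle formula, matching the tame Hecke actions across all $2^g$ quaternionic Shimura varieties under Jacquet-Langlands, and verifying that the residual class is exactly $(-1)^g [S_{(\uk,w)}(K^p\Iw_p)]$ rather than some twist or character perturbation thereof.
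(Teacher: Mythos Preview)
Your overall architecture matches the paper's: the first equality is indeed an immediate consequence of Theorem~\ref{Theorem:overconvergent} (the complex $\scrC^\bullet$ computes the rigid cohomology, and passing to finite-slope parts on both sides gives the alternating sum), and for the second equality the paper also proceeds via the spectral sequence coming from the GO-stratification (Proposition~\ref{P:spectral-sequence}), then invokes \cite{TX-GO} and Jacquet--Langlands to identify each stratum's contribution in terms of automorphic representations (Proposition~\ref{C:cohomology of GO-strata}), and finally performs a cancellation over all $\ttT$.

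However, your description of the cancellation mechanism is wrong in a way that matters. You say the alternating sum kills $\pi$ unless ``$\pi$ is discrete series at every infinite place.'' But every $\pi \in \scrA_{(\kb,w)}$ is \emph{by definition} discrete series at every archimedean place, so this criterion would produce no cancellation at all. The actual cancellation is local at the $p$-adic places and is considerably more delicate. The quaternion algebra attached to the stratum $X_\ttT$ is $B_{\ttS(\ttT)}$, not $B_\ttT$, where $\ttS(\ttT)$ is a specific even set (containing $\ttT$ at infinity and possibly some $\gothp \in \Sigma_p$) described in Subsection~\ref{S:description-GO-strata}; moreover the level structure $K_{\ttS(\ttT),\gothp}$ is sometimes hyperspecial, sometimes Iwahori, and sometimes the units of the division algebra, depending on the parity of $d_\gothp$ and whether $\ttT_\gothp = \Sigma_{\infty/\gothp}$. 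The paper's proof of Theorem~\ref{T:rigid coh of ordinary}(2) factors the alternating sum as a product over $\gothp \in \Sigma_p$ and then, for each $\gothp$, computes $\sum_{\ttT_\gothp \subseteq \Sigma_{\infty/\gothp}} (-1)^{\#\ttT_\gothp}(-1)^{d_\gothp - \#\ttS(\ttT)_{\infty/\gothp}} 2^{d_\gothp - \#\ttT_\gothp} \dim (\pi_{\ttS(\ttT),\gothp})^{K_{\ttS(\ttT),\gothp}}$ case by case according to whether $\pi_\gothp$ is unramified, Steinberg, or supercuspidal; the answer comes out to $(-1)^{d_\gothp} \dim \pi_\gothp^{\Iw_\gothp}$ in each case, but this requires tracking the parity of $\#\ttS(\ttT)_{\infty/\gothp}$ and the dimensions carefully. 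There is also a separate verification that the one-dimensional representations $\pi \in \scrB_w$ (present when $\kb$ is parallel) cancel completely, which you do not mention.

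Two smaller points: the paper passes through \'etale cohomology via Proposition~\ref{P:comparison-rig-et} before invoking \cite{TX-GO}, since the description of the strata and their cohomology is established in the \'etale setting; and the Iwahori level on the target does not come solely from ``the deepest strata'' but emerges from the full local computation (when $d_\gothp$ is odd the deepest stratum carries a division-algebra level, not Iwahori).
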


At this point, there are two ways to proceed to get Theorem \ref{T:classicality theorem introduction}.
The first approach is unconditional. We first use Theorem~\ref{T:ord cohomology} to prove the classicality result when the slope is much smaller the weight (Proposition~\ref{P:weak classicality}).
 Then we improve the slope bound by studying  global crystalline periods over the eigenvarieties (Theorem~\ref{T:strong classicality}).
 In fact, we can prove something  stronger: if an eigenform $f$ does not lie in the image of the $\Theta$-maps in the complex $\scrC^\bullet$, then $f$ is a classical Hilbert modular eigenform (Theorem~\ref{T:strong classicality}).
This approach, to some extent, relies on the strong multiplicity one of overconvergent Hilbert modular forms.
This approach is explained in Section~\ref{Section:classicality-I}.

The second approach is more involved, and we need to assume
\begin{itemize}
\item[(1)] either $p$ is inert in $F$, i.e. $p$ stays as a prime in $\calO_F$,
\item[(2)] or the action of the ``partial Frobenius" on the cohomology of quaternionic Shimura variety are as expected by general Langlands conjecture. (See Conjecture~\ref{Conj: partial frobenius})
\end{itemize}
We defer the definition of partial Frobenius to the context of the paper.
Under this assumption, we can strengthen Theorem~\ref{T:ord cohomology} as
\begin{thm}
\label{T:ord cohomology strong}
Assume the assumption above, the equality in Theorem~\ref{T:ord cohomology} is an equality in the Grothendieck group of finite-dimensional $\scrH(K^p, L)[U^2_\gothp; \gothp\in \Sigma_p]$-modules.
\end{thm}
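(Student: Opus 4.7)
The plan is to revisit the proof of Theorem~\ref{T:ord cohomology} and verify that, under either hypothesis, each step respects not only the tame Hecke algebra action but also the action of each $U_\gothp^2$. The argument splits naturally into two parts, corresponding to the two equalities in Theorem~\ref{T:ord cohomology}.

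For the first equality, coming from the dual BGG complex $\scrC^\bullet$, the $\Theta$-maps are differential in nature and commute with the tame Hecke operators by the construction in Theorem~\ref{Theorem:overconvergent}. The explicit formulas recalled in Remark~\ref{R:BGG} show that each $\Theta$ intertwines the $U_\gothp$-actions on the various $S^\dagger_{\epsilon_J(\uk,w)}$ up to multiplication by a fixed power of $p$; squaring absorbs this scalar, so each $\Theta$-map is automatically $U_\gothp^2$-equivariant. This part of the argument requires no hypothesis on $p$ and follows by direct inspection.

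The substantive content lies in upgrading the second equality, which expresses $H^\star_\rig(X^{\tor,\ord}, \D; \scrF^{(\uk,w)})$ via the spectral sequence attached to the GO-stratification. Each $E_1$-term is the rigid cohomology of a closed GO-stratum $X_\ttT$, which by \cite{TX-GO} is a $(\PP^1)^N$-bundle over the special fiber of a quaternionic Shimura variety $\bfSh$; combining the Leray spectral sequence for this bundle with the Jacquet-Langlands correspondence identifies these cohomologies with spaces attached to classical Hilbert cusp forms. The $U_\gothp$-correspondence on $X^\tor$ restricts to a correspondence on each $X_\ttT$ which, after identification with $\bfSh$, factors through the partial Frobenius $\phi_\gothp$ at $\gothp$. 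Thus the $U_\gothp^2$-action on each $E_1$-term is governed by $\phi_\gothp^2$ acting on the cohomology of $\bfSh$, and the problem reduces to identifying these two operators.

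The main obstacle is therefore to identify $\phi_\gothp^2$ with $U_\gothp^2$ on the cohomology of $\bfSh$ viewed via Jacquet-Langlands. When $p$ is inert in $F$, the unique prime $\gothp$ above $p$ makes $\phi_\gothp$ equal to the absolute geometric Frobenius on $\bfSh$, and the Eichler-Shimura congruence relation supplies the required identification; the appearance of $U_\gothp^2$ rather than $U_\gothp$ is dictated by the shape of Eichler-Shimura, which expresses $U_\gothp$ as a combination of $\phi_\gothp$ and its inverse rather than as $\phi_\gothp$ itself, so that only after squaring does the classical Hecke action match $\phi_\gothp^2$ on the nose. When $p$ is not inert, no such direct analogue is available for the partial Frobenius, and one must take as input Conjecture~\ref{Conj: partial frobenius}, which furnishes precisely the desired compatibility between $\phi_\gothp^2$ and $U_\gothp^2$. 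Either hypothesis then feeds into the spectral sequence computation to give the asserted Grothendieck-group equality over $\scrH(K^p, L)[U^2_\gothp;\ \gothp\in \Sigma_p]$.
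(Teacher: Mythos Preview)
Your treatment of the first equality is essentially fine (in fact the $U_\gothp$-operators on $\scrC^\bullet$ are \emph{defined} so as to commute with the differentials, cf.\ the construction after \eqref{Equ:U-p} and Remark~\ref{remark:U_p}; no ``squaring'' trick is needed there).  The serious problem is in your account of the second equality.

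First, the $U_\gothp$-correspondence does not literally restrict to the GO-strata; what does is the partial Frobenius $\varphi_\gothp$ on the special fiber, which permutes the strata via $Y_\ttT\mapsto Y_{\sigma_\gothp\ttT}$ (Corollary~\ref{C:partial-Frob}).  The link with $U_\gothp$ is the relation $U_\gothp\,\Fr_\gothp=N_{F/\Q}(\gothp)S_\gothp$ of Lemma~\ref{Lemma:Frob-U_p}, so that tracking $U_\gothp^2$ amounts to tracking $\Phi_{\gothp^2}=\Fr_\gothp^2 S_\gothp^{-1}$.  Conjecture~\ref{Conj: partial frobenius} (or the inert hypothesis) supplies the $\Phi_{\gothp^2}$-action on the cohomology of each individual stratum, via Proposition~\ref{C:cohomology of GO-strata}; it is \emph{not} an Eichler--Shimura statement identifying $\phi_\gothp^2$ with $U_\gothp^2$, and it does not by itself yield the equality you want.

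Second, and more fundamentally, even once the $\Phi_{\gothp^2}$-action on each $H^\star_{c}(Y_\ttT)$ is known, the alternating sum over all strata---with Gysin twists, the $(\PP^1)^{I_\ttT}$-bundle contributions $(\Qlb\oplus\Qlb(-1))^{\otimes I_\ttT}$, and the permutation of $\ttT$'s under $\sigma_\gothp$---does not visibly collapse to $[S_{(\uk,w)}(K^p\Iw_p)]$ with its $U_\gothp^2$-action.  In the paper this is the entire content of Section~\ref{Section:classicality-II}: after reducing to a local statement \eqref{E:equality in the cuspidal case} at each $\gothp$, one encodes every term by a ``cyclic word'' in letters $\alpha,\beta,\alpha'\beta',\overline{\alpha\beta},\overline{\alpha'\beta'}$ of length $d_\gothp$, and then carries out a delicate cancellation argument (grouping cyclic words by primitive form, analysing orbits under certain conjugation operators, and handling the $2$-power part of periods separately) to show that everything cancels except the two words $(\alpha\cdots\alpha)$ and $(\beta\cdots\beta)$, which match $(\pi_\gothp)^{\Iw_\gothp}$.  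Your proposal skips this step entirely; as written it is a restatement of the goal rather than a proof.
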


This theorem is proved in Section~\ref{Section:classicality-II}, using a  combinatorially complicated argument.

The reason that Theorem~\ref{T:ord cohomology strong} is stated for the action of $U^2_\gothp$ (instead of $U_{\gothp}$) is the following: the description of the Goren-Oort strata is obtained in \cite{TX-GO} using unitary Shimura varieties, where only the twisted partial Frobenius (instead of partial Frobenius itself) has a group theoretic  interpretation, which is, morally, the  Hecke operator  given by $\big(\begin{smallmatrix}
\varpi_\gothp &0\\ 0&\varpi_\gothp^{-1}
\end{smallmatrix}
\big)$,
where $\varpi_{\gothp}$ denotes the id\`ele of $F$ which is a uniformizer at $\gothp$ and $1$ at other places.
One might be able to fix this small defect by modifying the PEL type unitary moduli problem to a moduli problem for $\GL_{2,F} \times_{F^\times} E^\times$.

Now it is a trivial matter to deduce Theorem \ref{T:classicality theorem introduction} from Theorem~\ref{T:ord cohomology strong} (under the assumption above).
In fact, we only need $f$ to be a generalized eigenvector for all the $U_{\gothp_i}$-operators satisfying the slope condition (i.e. $f$ does not have to be an eigenvector for the tame Hecke actions).
The upside of this approach is that one may avoid using the $q$-expansion principle for Hilbert modular forms.
This is crucial when studying other quaternionic Shimura varieties, where the $q$-expansion principle is not available. 
 Moreover, Theorem~\ref{T:ord cohomology strong} is interesting in  its own right, and it gives a concrete description of the rigid cohomology of the ordinary locus.

Another related intriguing question is whether $H^*_\rig(X^{\tor,\ord}, \D; \scrF^{(\underline{k}, w)})$ is concentrated in degree $g$. 
We hope to address this question in the forthcoming paper \cite{TX-Tate}.
It turns out that the result depends on the Satake parameter at $p$ of the corresponding automorphic representation. 

Note also that the same cohomology group (in a more general context) was also used in his recent joint work with M.~Harris, R.~Taylor and J.~Thorne \cite{HLTT}.


\subsection*{Structure of the paper}
Section~\ref{Section:HMV}
reviews basic facts about Hilbert modular varieties, as well as the dual BGG complex.
We define cuspidal overconvergent Hilbert modular forms in Section \ref{Section:ocgt HMF} and show that the cohomology of the complex $\scrC^\bullet$ of  cuspidal overconvergent Hilbert modular forms computes the rigid cohomology of the ordinary locus (Theorem~\ref{Theorem:overconvergent}).  
Moreover, we show that the slopes of the $U_\gothp$-operators are always greater or equal to the normalizing factor in Theorem~\ref{T:classicality theorem introduction} (Corollary~\ref{Prop:slopes-ocv}).
After this, we set up the spectral sequence that computes the rigid cohomology of the ordinary locus in Section~\ref{Section:spectral sequence}.
The entire Section~\ref{Section:GO-stratification} is devoted to give a description of the cohomology of each Goren-Oort stratum, using the earlier work \cite{TX-GO}.
The last two sections each gives an approach to prove the classicality: one unconditional but with some help from eigenvarieties, and one more straightforward but relying on some conjecture on partial Frobenius actions.

\subsection*{Acknowledgements}

We express our sincere gratitude to Kai-Wen Lan for his invaluable help on the compactification of the Hilbert modular varieties, and for explaining to us his vanishing result Lemma~\ref{Lemma:vanishing}.
We thank Ahmed Abbes, Matthew Emerton, Jianshu Li, Yifeng Liu, and Wei Zhang,  for useful discussions.
We thank the referees for careful reading of the paper and for many useful suggestions on improving the representation.

We started working on this project when we were attending a workshop held at the Institute of Advance Study at Hongkong University of Science and Technology in December 2011.
The  hospitality of the institution and the well-organization provided us a great environment for brainstorming ideas.
We especially thank the organizers Jianshu Li and Shou-wu Zhang, as well as the staff at IAS of HKUST.
Y.T. was partially supported by National Natural Science Foundation of China (No. 11321101).
L.X. was partially supported by Simons Collaboration Grant \#278433.

\subsection*{Notation}


For a scheme $X$ over a ring $R$ and a ring homomorphism $R \to R'$, we use $X_{R'}$ to denote the base change $X \times_{\Spec R} \Spec R'$.

For a field $F$, we use $\Gal_F$ to denote its Galois group.


For a number field $F$, we use $\AAA_F$ to denote its ring of adeles, and $\AAA_F^\infty$ (resp. $\AAA_F^{\infty, p}$) to denote its finite adeles (resp. finite adeles away from places above $p$).
When $F = \QQ$, we suppress the subscript $F$ from the notation.
We put $\widehat \ZZ^{(p)} = \prod_{l \neq p} \ZZ_l$ and $\widehat{\calO}_F^{(p)} = \prod_{\gothl \nmid p} \calO_\gothl$.
For each finite place $\gothp$ of $F$, let $F_\gothp$ denote the completion of $F$ at $\gothp$, $\calO_\gothp$ the ring of integers of $F_\gothp$, and $k_\gothp$ the residue field of $\calO_\gothp$. We put $d_\gothp=[k_\gothp: \FF_p]$.
Let $\varpi_\gothp$ denote a uniformizer of $\calO_\gothp$, which we take to be the image of $p$ when $\gothp$ is unramified in $F/\QQ$.
We normalize the Artin map $\Art_F: F^\times \backslash \AAA^\times_F \to \Gal_F^\ab$ so that for each finite prime $\gothp$, the element of $\AAA^\times_F$ whose $\gothp$-component is $\varpi_\gothp$ and other components are $1$, is mapped to a geometric Frobenius at $\gothp$.


We fix a totally real field $F$ of degree $g>1$ over $\QQ$. Let $\gothd_F$ be the different of $F$.
Let $\Sigma$ denote the set of places of $F$, and $\Sigma_\infty$  the subset of all real places.
We fix a prime number $p$ which is unramified in $F$, and let $\Sigma_p$ denote the set of places of $F$ above $p$.
We fix an isomorphism $\iota_p: \CC \simeq \overline \QQ_p$; this gives rise to a natural map $i_p: \Sigma_\infty \to \Sigma_p$ sending $\tau $ to the $p$-adic place corresponding to $ \iota_p \circ \tau$.
For each $\gothp \in \Sigma_p$, we put $\Sigma_{\infty/\gothp} = i_p^{-1}(\gothp)$.

For $\ttS$ an even subset of places of $F$, we use $B_\ttS$ to denote the quaternion algebra over $F$ which is ramified at $\ttS$.

A  \emph{multiweight} is a tuple $(\underline{k}, w) =((k_\tau)_{\tau\in\Sigma_\infty}, w) \in \ZZ^{\Sigma_{\infty}}\times \ZZ$ such that $w \equiv k_\tau \pmod 2$ for each $\tau$.
 We say $(\kb,w)$ is \emph{ cohomological}, if $w\geq k_{\tau}\geq 2$ for $\tau\in \Sigma_{\infty}$.

Let $\scrA_{(\underline k, w)}$ denote the set of irreducible cuspidal automorphic representations $\pi$ of $\GL_2(\AAA_F)$ whose archimedean component $\pi_\tau$ for each $\tau \in \Sigma_\infty$  is a discrete series of weight $k_\tau-2$ with central character $x \mapsto x^{w-2}$.
For such $\pi$, let $\rho_{\pi,l}$ denote the associated $l$-adic Galois representation, normalized so that $\det(\rho_{\pi,l})$ is the $(1-w)$-power of the cyclotomic character times a finite character.

For $A$ an abelian scheme over a scheme $S$, we denote by $A^{\vee}$ the dual abelian scheme, by $\Lie(A/S)$ the Lie algebra of $A$, and by $\omega_{A/S}$ the module of \emph{invariant differential $1$-forms} of $A$ relative to $S$.
We sometimes omit $S$ from the notation when the base is clear.


\section{Preliminaries on Hilbert Modular Varieties and Hilbert Modular Forms}
\label{Section:HMV}
In this section, we review the construction of the integral models of Hilbert modular varieties and their compactifications.
 We also recall the construction of the automorphic vector bundles, using the universal abelian varieties.

\subsection{Shimura varieties for $\GL_{2,F}$}\label{Subsection:uniformization}
  Let $G$ be the algebraic group $\Res_{F/\QQ}(\GL_{2,F})$ over $\QQ$. Consider the homomorphism
\[
\xymatrix@R=0pt{
h: \quad \SSS(\RR)=\Res_{\CC/\RR}\GG_m(\RR) \cong \CC^\times\ar[rr]  &&G(\RR)=\GL_2(F\otimes \RR)\\
a+\sqrt{-1}b   \ar@{|->}[rr] &&\biggl({\begin{pmatrix}a &b\\-b&a\end{pmatrix},\dots,\begin{pmatrix}a &b\\-b&a\end{pmatrix}}\biggr).
}
\]
The space of conjugacy classes of $h$ under $G(\RR)$ has a structure of complex manifold, and is  isomorphic to $(\gothh^{\pm})^{\Sigma_{\infty}}$, where $\gothh^{\pm}=\PP^1(\CC)-\PP^1(\RR)$ is the union of the upper half and lower half planes.
For any open compact subgroup $K\subset G(\AAA^{\infty})=\GL_2(\AAA_{F}^\infty)$, we have the Shimura variety $\Sh_{K}(G)$ with complex points 
\[
\Sh_K(G)(\C)= G(\QQ)\backslash  (\gothh^{\pm})^{\Sigma_{\infty}} \times G(\AAA^{\infty})/K
\]
It is well known that $\Sh_K(G)$ has a canonical structure of quasi-projective variety  defined over the reflex field $\QQ$. 
For $g\in G(\AAA^{\infty})$ and open compact subgroups $K,K'\subset G(\AAA^{\infty})$ with $g^{-1}K'g\subset K$, there is a natural surjective map 
\begin{equation}\label{E:morphism-g}
[g]: \Sh_{K'}(G)\ra \Sh_K(G)
\end{equation}
whose effect on $\C$-points is given by $(z,h)\mapsto (z,hg)$. This gives rise to a Hecke correspondence: 
\begin{equation}\label{E:Hecke-cor}
\xymatrix{
&\Sh_{K\cap gKg^{-1}}(G)\ar[ld]\ar[rd]^{[g]}\\
\Sh_{K}(G) &&\Sh_K(G),
}
\end{equation}
where the left downward arrow is induced by the natural inclusion $K\cap gKg^{-1}\hra K$, and the right downward one is given by $[g]$.
 Taking the project limit  in $K$, we get a natural right action of  $G(\AAA^\infty)$ on the projective limit $\Sh(G):=\varprojlim_K \Sh_K(G)$.

\subsection{Automorphic Bundles}\label{S:automorphic-bundle}
Let $(\uk,w)$ be a cohomological multiweight. We consider the algebraic representation of $G_{\C}$:
\[
\rho^{(\uk,w)}: =\bigotimes_{\tau\in \Sigma_{\infty}}
\biggl( \Sym^{k_{\tau}-2}(\Stb_{\tau})\otimes \mathrm{det}_{\tau}^{-\frac{w-k_{\tau}}{2}}\biggr)
\]
where $\Stb_{\tau}:G_{\C}\cong (\GL_{2,\C})^{\Sigma_{\infty}}\ra \GL_{2,\C}$ is the \emph{contragradient} of the  projection onto the $\tau$-factor, and $\det_{\tau}$ is the projection onto the $\tau$-factor composed with the determinant map. Consider the subgroup $Z_s=\Ker(\Res_{F/\Q}(\GG_m)\xra{N_{F/\Q}} \GG_{m})$ of the center $Z=\Res_{F/\Q}(\GG_m)$ of $G$; let $G^c$ denote the quotient of $G$ by $Z_s$. Then the representation $\rho^{(\uk,w)}$ factors through $G^c_{\C}$. Let $L$ be a subfield of $\CC$ that contains all the embeddings of $F$. The representation $\rho^{(\kb,w)}$ descends to a representation of $G_{L}$ on an $L$-vector space $V^{(\kb,w)}$.

 We say an open subgroup $K\subset G(\AAA^{\infty})$ is \emph{sufficiently small}, if the following two properties are satisfied:
\begin{itemize}
\item[(1)] The quotient $(g^{-1}Kg\cap \GL_2(F))/(gKg^{-1}\cap F^{\times})$ does not have non-trivial elements of finite order for all $g\in G(\AAA^{\infty})$.

\item[(2)] $N_{F/\Q}(K\cap F^{\times})^{w-2}=1$.
\end{itemize} 
If $K$ is sufficiently small,  it follows from \cite[Chap. III 3.3]{milne} that  $\rho^{(\uk,w)}$ gives rise to an algebraic vector bundle $\F^{(\kb,w)}$ on $\Sh_K(G)$ equipped with an integrable connection 
\[
\nabla: \F^{(\uk,w)}\ra \F^{(\uk,w)}\otimes \Omega^{1}_{\Sh_K(G)_L}.
\]
%




The theory of automorphic bundles also allows us  to define an invertible sheaf on $\Sh_K(G)$ for $K$ sufficiently small as follows. Consider the compact dual $(\PP^1_{\C})^{\Sigma_{\infty}}$ of the Hermitian symmetric domain $(\gothh^{\pm})^\Sigma_{\infty}$. It has a natural action by $G_\C=(\GL_{2,\C})^{\Sigma_{\infty}}$. Let $\omega$ be the dual of the tautological quotient bundle on $\PP^{1}_{\C}$. Then the line bundle $\omega$ has a natural $\GL_{2,\C}$-equivariant action. We define 
\begin{equation}\label{E:defn-omega}
\omegab^{(\kb,w)}\colon =\bigotimes_{\tau\in \Sigma_{\infty}}\pr_{\tau}^* (\omega^{\otimes k_{\tau}}\otimes \mathrm{det}^{1-\frac{w-k_{\tau}}{2}})
\end{equation}
and a $G_{\C}$-equivariant action on $\omegab^{(\kb,w)}$ as follows. 
For each $\tau\in \Sigma_{\infty}$, the action of $G_{\C}$ on $\pr_{\tau}^{*}(\omega^{\otimes k_{\tau}}\otimes \mathrm{det}^{1-\frac{w-k_{\tau}}{2}}) $ factors through the $\tau$-copy of $\GL_{2,\C}$, which in turn acts  as the product of  $\det^{1-\frac{w-k_{\tau}}{2}}$ and the $k_{\tau}$-th power of the natural action on $\omega$. One checks easily that the action of $G_{\C}$ on $\underline \omega^{(\kb,w)}$ factors through $G^c_{\C}$, and thus $\omegab^{(\kb,w)}$ descends to an invertible sheaf on $\Sh_{K}(G)$ for $K$ sufficiently small by \cite{milne}. As usual, the invertible sheaf  $\omegab^{(\uk, w)}$ on $\Sh_{K}(G)$ has a canonical model over $L$. 

We define the space of holomorphic Hilbert modular forms of level $K$ with coefficients in $L$ to be 
\begin{equation}\label{E:defn-HMF}
M_{(\uk,w)}(K,L): =H^{0}(\Sh_{K}(G)_L, \omegab^{(\kb,w)}).
\end{equation}
Note here that the canonical bundle $\Omega^g_{\Sh_K(G)}$ accounts for a parallel weight two automorphic line bundle. So our definition is equivalent to the usual notion of holomorphic Hilbert modular forms.  The unusual twist by canonical bundle will make the relation between the de Rham cohomology of $\scrF^{(\underline k, w)}$ and the Zariski cohomology of $\underline \omega^{(\kb, w)}$ more natural, in the view of the dual BGG construction of Faltings (see Subsection~\ref{S:BGG}).

Explicitly, an element of $M_{(\uk,w)}(K,\C)$ is a function $f(z,g)$ on $(\gothh^{\pm})^{\Sigma_{\infty}} \times G(\AAA^{\infty})$ such that 
\begin{itemize}
\item[(1)] $f(z,g)$ is holomorphic in $z$ and locally constant in $g$; and

\item[(2)]  one has  $f(z,gk)=f(z,g)$ for any $k\in K$,  and 
\[
f(\gamma(z),\gamma g)=\biggl(\prod_{\tau\in \Sigma_{\infty}}\frac{(c_{\tau}z_{\tau}+d_{\tau})^{k_{\tau}}}{\det(\gamma_\tau)^{\frac{w+k_{\tau}-2}{2}}}\biggr)f(z,g)
,\]
where $\gamma\in G(\Q)$, and $\gamma_{\tau}=\begin{pmatrix}a_{\tau}&b_{\tau}\\ c_\tau&d_\tau\end{pmatrix}\in \GL_2(\R)$ is the image of $\gamma$ via $G(\QQ)\hra \GL_2(F\otimes\R)\xra{\pr_\tau}\GL_2(\R)$, and $\gamma(z)=\bigl(\frac{a_\tau z_{\tau}+b_\tau}{c_{\tau}z_{\tau}+d_\tau}\bigr)_{\tau\in \Sigma_{\infty}}$.
\end{itemize}

 We denote by  $S_{(\kb,w)}(K,L)\subset M_{(\kb,w)}(K,L)$ the subspace of cusp forms, namely those forms which tend to $0$ near all cusps. For any $g\in G(\AAA^{\infty})$ and open compact subgroups $K,K'\subset G(\AAA^{\infty})$ with $g^{-1}K'g\subset K$, by construction,  there exists a natural isomorphism of coherent sheaves on $\Sh_{K'}(G)$:
 \[
 [g]^*(\omegab^{(\kb,w)})\xra{\sim} \omegab^{(\kb,w)}.
 \]
 Together with the map \eqref{E:morphism-g}, one deduces a map $S_{(\kb,w)}(K,L)\ra S_{(\kb,w)}(K',L)$. Passing to the direct limit in $K$, one obtains a natural left action of $G(\AAA^{\infty})$ on $ S_{(\kb,w)}( L)=\varinjlim_K S_{(\kb,w)}(K,L)$ so that $S_{(\kb,w)}(K,L)$ is identified with the invariants of $S_{(\kb,w)}(L)$ under $K$.
 
  Let $\scrA_{(\kb,w)}$ be the set of cuspidal automorphic representations $\pi=\pi^{\infty}\otimes \pi_\infty$ of $\GL_2(\AAA_F)$, such that each archimedean component $\pi_{\tau}$ of $\pi$ for $\tau\in \Sigma_{\infty}$ is the discrete series of weight $k_{\tau}$ and central character $x \mapsto x^{w-2}$. Then  we have canonical decompositions
 \[
 S_{(\kb,w)}(\C)=\bigoplus_{\pi=\pi^{\infty}\otimes \pi_\infty\in \scrA_{(\kb,w)}} \pi^{\infty}\quad \text{ and }\quad S_{(\kb,w)}(K,\C)=\bigoplus_{\pi=\pi^{\infty}\otimes \pi_{\infty}\in \scrA_{(\kb,w)} }(\pi^{\infty})^{K}.
 \]
  where $\pi^{\infty}$ denotes the finite part of  $\pi$.

  \subsection{Moduli interpretation and integral models}\label{Subsection:HMV}
Recall that $p$ is a rational prime unramified in $F$.
We consider level structures of the type $K=K^pK_p$, where $K^p\subset G(\AAA^{\infty,p})$ is an open compact subgroup, and $K_p$ is hyperspecial, i.e. $K_p\cong \GL_2(\cO_{F}\otimes \Z_p)$. 
 We will use the moduli interpretation  to define integral models of $\Sh_{K}(G)$, for sufficiently small $K^p$.

We start with a more transparent description of $\Sh_{K}(G)(\C)$. The determinant map $\det: G\ra \Res_{F/\Q}(\GG_m)$ induces a bijection between the set of geometric connected components of $\Sh_{K}(G)$ and the double coset space
\[
cl^+_{F}(K): =F^\times_{+}\backslash \AAA_F^{\infty,\times}/\det(K),
\]
where $F^\times_+$ denotes the subgroup of $F^\times$ of totally positive elements. Since $\det(K)\subseteq \prod_{v\nmid \infty}\cO_{F_v}^{\times}$, there is a natural surjective map $cl^+_F(K)\ra cl^+_F$, where $cl^+_F$ is the strict ideal class group of $F$. 
The preimage of each ideal class $[\gothc]$ is a torsor under the group $I :=\widehat{\cO}_F^{\times}/\det(K)\cO_{F,+}^{\times}$, where $\calO^\times_{F, +}$ denotes the group of totally positive units in $\calO_F$.

We fix fractional ideals $\gothc_1,\dots, \gothc_{h^+_F}$ coprime to $p$, which form a set of representatives of $cl^+_F$. For each $\gothc = \gothc_j$, we write $[\gothc]$ for the corresponding class in $cl^+_F$, and we choose a subset $[\gothc]_K=\{g_i\,|\, i\in I\}\subset G(\AAA^{\infty})$ such that the fractional ideal associated to every $\det(g_i)$ is $\gothc$ and $\{\det(g_i)\,|\, i\in I\}$ is a set of representatives of the pre-image of $[\gothc]$ in $cl^+_F(K)$. 
Let $G(\Q)^+$ denote the subgroup of $G(\Q)$ consisting of matrices with totally positive determinant, and let $\gothh$ denote the upper half plane.
By the strong approximation theorem for $\SL_{2,F}$,  we have \[
G(\AAA^{\infty})=\coprod_{[\gothc]\in cl^+_F}\coprod_{g_i\in [\gothc]_K} G(\QQ)^+g_{i}K.\] This gives rise to a decomposition
\begin{align}
\nonumber
\Sh_{K}(G)(\C)&=G(\QQ)^+\backslash \gothh^{\Sigma_{\infty}} \times  G(\AAA^{\infty})/K=\coprod_{[\gothc]\in cl^+_F}\Sh^{\gothc}_{K}(G)(\C),\\
\label{Equ:complex-uniformization}
\textrm{where} \quad \Sh^{\gothc}_{K}(G)(\C)&=\coprod_{g_i\in [\gothc]_K} \Gamma(g_i,K)\backslash \gothh^{\Sigma_{\infty}}\quad \text{with }\Gamma(g_i,K)=g_iKg_i^{-1}\cap G(\Q)^+.
\end{align}
 We note that $\Sh^{\gothc}_{K}(G)$ does not depend on the choice of the subset $[\gothc]_K=\{g_i:i\in I\}$, and  descends to an algebraic variety defined over $\Q$.
 A different choice of the fractional ideal representative $\gothc'$ will result in two canonically isomorphic moduli spaces $\Sh_K^\gothc(G)$ and $\Sh_K^{\gothc'}(G)$ (see Remark~\ref{R:polarization}).
  
 We will interpret  $\Sh_K^{\gothc}(G)$ as a  moduli space as follows.
Recall that $\gothc$ is coprime to $p$. 
 Let $\gothc^{+}$ be the  cone of totally positive elements of $\gothc$. Let $S$ be a locally noetherian $\Z_{(p)}$-scheme. 
 \begin{itemize}
 \item A \emph{Hilbert-Blumenthal abelian variety} (\emph{HBAV} for short) $(A,\iota)$ over $S$ is  an abelian variety $A/S$ of dimension $[F:\Q]$ together with a homomorphism $\iota: \cO_F\ra \End_S(A)$ such that $\Lie(A)$ is a locally free $(\cO_S\otimes_{\Z}\cO_F)$-module of rank 1. 
 
\item If $(A,\iota)$ is an HBAV over $S$, then its usual dual abelian variety $A^\vee$ has a natural action by $\cO_F$. Let $\Hom^{\Sym}_{\cO_F}(A,A^\vee)$ denote the group of symmetric homomorphisms of $A$ to $A^\vee$, and $\Hom^{\Sym}_{\cO_F}(A,A^\vee)^+$ be the cone of polarization. A \emph{$\gothc$-polarization} on $A$ is an $\cO_F$-linear isomorphism  
$$
\lambda: (\gothc,\gothc^+)\xra{\sim} \big(\Hom_{\cO_F}^{\Sym}(A,A^\vee),\Hom^{\Sym}_{\cO_F}(A,A^\vee)^{+} \big)
$$
  preserving the positive cones on both sides; in particular,  $\lambda$ induces an isomorphism of HBAVs:
 $A\otimes_{\cO_F}\gothc\simeq A^\vee$. 
  
\item  We define first the level structure for $K=K(N)$, the principal subgroup of $\GL_2(\widehat{\cO}_F)$ modulo an integer $N$ coprime to $p$. A \emph{principal level-$N$ (or a level-$K(N)^p$) structure} on a $\gothc$-polarized HBAV $(A,\iota,\lambda)$ is an $\cO_F$-linear isomorphism of finite \'etale group schemes over $S$
\[
\alpha_N: (\cO_F/N)^{\oplus 2}\xra{\sim} A[N].
\] 
 Note that there exists a natural $\cO_F$-pairing $A[N]\times A^\vee[N]\ra \mu_N\otimes_{\Z}\gothd_F^{-1}$. Its  composition with $1\otimes\lambda$ gives an $\cO_F$-linear alternating pairing $A[N]\times A[N]\ra \mu_N\otimes_{\Z}\gothc^*.$
  Hence,  $\alpha_N$ determines an isomorphism 
 \[
\nu(\alpha_N) : \cO_F/N\cO_F=\wedge^2_{\cO_F}(\cO_F/N)^{\oplus 2}\xra{\sim} \wedge^2_{\cO_F} A[N]\xra{\sim} \mu_N\otimes_{\Z}\gothc^{*}.
 \]
  For a general open compact subgroup $K\subseteq \GL_2(\widehat{\cO}_F)$ with $K_p=\GL_2(\cO_F\otimes\widehat{\Z}_p)$, we define a level-$K^p$ structure on $(A,\iota,\lambda)$ as follows. Choose an integer $N$ coprime to $p$ such that $K(N)\subseteq K$, and a geometric point $\sbar$ of $S$. The finite group $\GL_2(\cO_F/N)$ acts naturally on the set of principal level-$N$ structures $(\alpha_{N,\sbar},\nu(\alpha_{N,\sbar}))$ of $A_{\sbar}$ by putting 
  $$
  g: (\alpha_{N,\sbar},\nu(\alpha_{N,\sbar}))\mapsto (\alpha_{N,\sbar}\circ g,\det(g)\nu(\alpha_{N,\sbar})).
  $$
 Then  a \emph{level-$K^p$ structure} $\alpha_{K^p}$ on $(A,\iota,\lambda)$ is, for each connected component $S_i$ of $S$ and a geometric point $\sbar_i\in S_i$, a $\pi_1(S_i,\sbar_i)$-invariant $K/K(N)$-orbit of the pairs $(\alpha_{N,\sbar},\nu(\alpha_{N,\sbar}))$. This definition does not depend on the choice of $N$ and $\sbar$.
 \end{itemize}

   We consider the moduli problem which associates to each   locally noetherian $\ZZ_{(p)}$-schemes $S$, the set of isomorphism classes of  quadruples $(A,\iota,\lambda, \alpha_{K^p})$ as above.
If $K^p$ is sufficiently small so that any $(A,\iota,\lambda, \alpha_{K^p})$  does not admit non-trivial automorphisms, then this moduli problem  is  representable   by a smooth and quasi-projective $\Z_{(p)}$-scheme $\M^{\gothc}_K$ \cite{rap,chai}. 
After choosing a  primitive $N$-th root of unity $\zeta_{N}$ for some integer $N$ coprime to $p$ such that $K(N)\subseteq K$,  the set of geometric connected components of $\M^\gothc_K$ is in natural bijection with \cite[2.4]{chai}
 $$
 \mathrm{Isom}(\widehat \calO_F, \widehat \calO_F \otimes \gothc^*) \big/ \det(K).
 $$

Let $\cO^{\times}_{F,+}$ be the group of totally positive units of $\cO_F$. It acts on  $\M^{\gothc}_{K}$ as follows. For  $\epsilon\in \cO_{F,+}^{\times}$ and an $S$-point $(A,\iota,\lambda,\alpha_{K^p})$, we put $\epsilon\cdot (A,\iota,\lambda,\alpha_{K^p})=(A,\iota,\iota(\epsilon)\circ\lambda,\alpha_{K^p})$. 
We point out that this action will take $\nu(\alpha_{N, \bar s})$ to $\epsilon\nu(\alpha_{N, \bar s})$.
We will denote by $(A,\iota,\bar\lambda,\bar \alpha_{K^p})$ the associated $\cO_{F,+}^{\times}$-orbit of $(A,\iota,\lambda,\alpha_{K^p})$.
 The subgroup $(K\cap\cO_{F}^{\times})^2$ acts trivially on $\M^{\gothc}_K$, where $\cO_F^{\times}$ is considered as a subgroup of the center of $\GL_2(\AAA^{\infty})$. Indeed, if $\epsilon=u^2$ with $u\in K\cap \cO_F^{\times}$, the endomorphism $\iota(u):A\ra A$ induces an isomorphism of quadruples $(A,\iota,\lambda, \alpha_{K^p})\cong (A,\iota,\iota(\epsilon)\circ\lambda,\alpha_{K^p})$. Hence, the action of $\cO_{F,+}^{\times}$ on $\M^\gothc_K$ factors through the finite quotient $\cO_{F,+}^{\times}/(K\cap \cO_F^{\times})^2$.
 The  equivalent classes of the set of geometric connected components of $\M^{\gothc}_{K}$ under the  induced action of $\cO_{F,+}^{\times}/(K\cap \cO_F^{\times})^2$  is in bijection with $\widehat{\cO}_F^{\times}/\det(K)\cO_{F,+}^{\times}$, and the stabilizer of each geometric connected component is $\big(\det(K)\cap \cO_{F,+}^{\times}\big)/(K\cap \cO_F^{\times})^2$.

 \begin{prop}\label{P:complex-uniform}
There exists an isomorphism between the quotient of  $\M^{\gothc}_{K}(\C)$ by $\cO_{F,+}^{\times}/(K\cap \cO_F^{\times})^{2}$ and   $\Sh^{\gothc}_K(G)(\C)$. In other words, $\Sh^{\gothc}_{K}(G)(\C)$ is identified with the coarse moduli space over $\C$ of the quadruples $(A,\iota,\bar\lambda, \bar\alpha_{K^p})$.  Moreover, if $\det(K)\cap \cO_{F,+}^{\times}=(K\cap \cO_F^{\times})^2$, then the  quotient map $\M^{\gothc}_{K}(\C)\ra \Sh^{\gothc}_K(G)(\C)$ induces an isomorphism between any geometric connected component of $\M^{\gothc}_{K}(\C)$ with its image. 
  \end{prop}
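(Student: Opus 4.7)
The plan is to construct mutually inverse complex-analytic maps between $\M^\gothc_K(\CC)$ and a disjoint union $\coprod_{g_i\in[\gothc]_K}\gothh^{\Sigma_\infty}$ covering $\Sh^\gothc_K(G)(\CC)$, and then verify that the $\cO_{F,+}^\times$-action on the moduli side is exactly the extra identification one has to impose to descend to $\Sh^\gothc_K(G)(\CC)$.

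For the map $\Phi\colon \M^\gothc_K(\CC)\to\Sh^\gothc_K(G)(\CC)$, starting from a quadruple $(A,\iota,\lambda,\alpha_{K^p})$ over $\CC$, I would use the analytic uniformization $A(\CC)\cong \Lie(A)/H_1(A,\ZZ)$. The Betti lattice $V_\ZZ:=H_1(A,\ZZ)$ is a projective $\cO_F$-module of rank $2$, and the polarization $\lambda$, together with the Weil pairing, forces its determinant class to be $\gothc^*$. The Hodge decomposition $V_\ZZ\otimes\CC=\Lie(A)\oplus\overline{\Lie(A)}$, decomposed along the embeddings $\tau\in\Sigma_\infty$, picks out a line in each factor $V_{\ZZ,\tau}\otimes\CC\cong\CC^2$ and hence a point of $(\gothh^\pm)^{\Sigma_\infty}$; the positivity of $\lambda$ pins the orientation to land in $\gothh^{\Sigma_\infty}$. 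Meanwhile, $\alpha_{K^p}$ together with any choice of $\cO_F\otimes\ZZ_p$-basis of $V_\ZZ\otimes\ZZ_p$ produces an element $g\in G(\AAA^\infty)$ well-defined modulo $K$, whose determinant has fractional ideal $\gothc$. The pair $(z,g)$ then lies in $\Sh^\gothc_K(G)(\CC)$.

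For the inverse, given a representative $g_i\in[\gothc]_K$ and a point $z\in\gothh^{\Sigma_\infty}$, I would form the lattice $\Lambda_{z,g_i}:= g_i\cdot(\widehat{\cO}_F\oplus\widehat{\cO}_F\otimes\gothc^*)\cap F^2$, embed it into $(F\otimes\RR)^2$, and equip the latter with the complex structure prescribed by $z$. The resulting complex torus $(F\otimes\RR)^2/\Lambda_{z,g_i}$ satisfies the Riemann bilinear relations (via the standard alternating pairing on $\cO_F\oplus\gothc^*$ rescaled by $z$) and hence is an HBAV; the $\cO_F$-action is tautological, the pairing gives a $\gothc$-polarization, and $g_i$ modulo $K^p$ yields the level structure. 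A direct check shows that replacing $z$ by a $\Gamma(g_i,K)$-translate produces an isomorphic quadruple, and comparing the two constructions shows they are mutually inverse at the level of isomorphism classes.

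Finally, the action of $\epsilon\in\cO_{F,+}^\times$ sends $(A,\iota,\lambda,\alpha_{K^p})$ to $(A,\iota,\iota(\epsilon)\circ\lambda,\alpha_{K^p})$, which on the uniformization side amounts to rescaling $g$ by the central element $\epsilon\in F^\times\hookrightarrow G(\QQ)^+$; this leaves the class in $G(\QQ)^+\backslash\gothh^{\Sigma_\infty}\times G(\AAA^\infty)/K$ unchanged, so $\Phi$ factors through the $\cO_{F,+}^\times/(K\cap\cO_F^\times)^2$-quotient and induces the claimed bijection. For the component statement, the geometric components of $\M^\gothc_K$ form a torsor under $\widehat{\cO}_F^\times/\det(K)$, on which $\cO_{F,+}^\times/(K\cap\cO_F^\times)^2$ acts with stabilizer $(\det(K)\cap\cO_{F,+}^\times)/(K\cap\cO_F^\times)^2$; under the hypothesis this stabilizer is trivial, so the quotient map permutes components freely and restricts to an isomorphism on each. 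The main obstacle is keeping three sets of sign/positivity conventions consistent throughout—the orientation of $\gothh^{\Sigma_\infty}$, the positivity cone $\Hom_{\cO_F}^{\Sym}(A,A^\vee)^+$, and the totally positive elements of $\gothc$—so that $\Phi$ and its inverse agree on the nose, rather than up to a spurious twist by units or by an outer automorphism of $G$.
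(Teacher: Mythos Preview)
Your approach is different from the paper's and conceptually sound, but has a structural gap and several imprecisions that need attention.

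\textbf{Comparison with the paper.} The paper does not build the complex uniformization of $\M^\gothc_K(\CC)$ from scratch. Instead it quotes the known decomposition
\[
\M^\gothc_K(\CC)=\coprod_{\delta\in\Delta}\Gamma^1(g_\delta,K)\backslash\gothh^{\Sigma_\infty}
\]
(indexed by a set $\Delta$ of representatives of $\widehat{\cO}_F^\times/\det(K)$, with $\Gamma^1(g,K)=gKg^{-1}\cap\SL_2(F)$) from Rapoport and Hida, and then computes purely group-theoretically: the $\cO_{F,+}^\times$-action permutes the components via $\delta\mapsto\delta\epsilon$, collapsing $\Delta$ to the smaller index set $I$ of representatives of $\widehat{\cO}_F^\times/\det(K)\cO_{F,+}^\times$; and on each remaining component the residual identification is by $\Gamma(g_\delta,K)/\Gamma^1(g_\delta,K)(F^\times\cap\Gamma(g_\delta,K))$, which strong approximation for $\SL_{2,F}$ identifies with $(\det(K)\cap\cO_{F,+}^\times)/(K\cap\cO_F^\times)^2$. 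This matches exactly the uniformization $\Sh^\gothc_K(G)(\CC)=\coprod_{\delta\in I}\Gamma(g_\delta,K)\backslash\gothh^{\Sigma_\infty}$.

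\textbf{The gap in your version.} Your opening sentence promises ``mutually inverse maps between $\M^\gothc_K(\CC)$ and $\coprod_{g_i\in[\gothc]_K}\gothh^{\Sigma_\infty}$'', but these cannot be mutually inverse: $\M^\gothc_K(\CC)$ has $|\widehat{\cO}_F^\times/\det(K)|$ geometric components, whereas your target is indexed by the strictly smaller set $[\gothc]_K$ of size $|\widehat{\cO}_F^\times/\det(K)\cO_{F,+}^\times|$. What you actually need is a map $\Phi$ from $\M^\gothc_K(\CC)$ onto $\Sh^\gothc_K(G)(\CC)$ together with a proof that its fibers are \emph{exactly} the $\cO_{F,+}^\times/(K\cap\cO_F^\times)^2$-orbits. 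You establish that $\Phi$ factors through the quotient, but not that the induced map is injective; that step requires precisely the group-theoretic identification the paper isolates, namely that the discrepancy between $\Gamma^1$ and $\Gamma$ is $(\det(K)\cap\cO_{F,+}^\times)/(K\cap\cO_F^\times)^2$. Also, the effect of $\epsilon\in\cO_{F,+}^\times$ on the adelic datum is not multiplication by the central element $\epsilon$: rescaling the polarization by $\epsilon$ changes $\nu(\alpha_N)$ by $\epsilon$ and corresponds to left multiplication by $\bigl(\begin{smallmatrix}\epsilon&0\\0&1\end{smallmatrix}\bigr)\in G(\QQ)^+$ (which is absorbed), not by the scalar matrix. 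Your component argument at the end is correct and matches the paper's.
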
 
\begin{proof}
We fix an idele $a\in \AAA^{\infty,\times}_{F}$ whose associated fractional ideal is $\gothc$. Let $\Delta\subseteq \widehat{\cO}_F^{\times}$ be a complete subset of representatives of $\widehat{\cO}_F^{\times}/\det(K)$, and let $I\subset \Delta$ be a subset of representatives of $\widehat{\cO}_F^{\times}/\det(K)\cO_{F,+}^{\times}$. We put $g_{\delta}=\begin{pmatrix} \delta a &0\\ 0&1\end{pmatrix}$ for $\delta\in \Delta$, and $\Gamma(g,K)=gKg^{-1}\cap G(\Q)^+$ and $\Gamma^1(g,K)=\Gamma(g,K)\cap \SL_2(F)$. Then it is well known that 
\[
\M^{\gothc}_K(\C)=\coprod_{\delta\in \Delta} \Gamma^1(g_{\delta},K)\backslash\gothh^{\Sigma_{\infty}}.
\] 
The case for $K=K(N)$ is proved in \cite{rap} or \cite[4.1.3]{hida}, and the general case is similar. For $\epsilon\in \cO_{F,+}^\times$, it sends a point $\Gamma^1(g_{\delta},K)z$ in $\Gamma^1(g_{\delta},K)\backslash \gothh^{\Sigma_{\infty}}$ to $\Gamma^1(g_{\delta\epsilon},K)\epsilon z$. Hence the quotient of $\M^{\gothc}_{K}(\C)$ is isomorphic to 
$$
\coprod_{\delta\in I}\bigg(\big ( \Gamma^1(g_{\delta},K)\backslash \gothh^{\Sigma_{\infty}}\big)/(\det(K)\cap \cO_{F,+}^{\times})/(K\cap\cO_{F}^\times)^2\bigg).
$$
Now if we take the set $[\gothc]_K = \{g_{\delta}, \delta\in I\}$, then  \eqref{Equ:complex-uniformization} says
$
\Sh_K^{\gothc}(G)(\C)=\coprod_{\delta\in I} \Gamma(g_{\delta},K)\backslash \gothh^{\Sigma_{\infty}}.
$
Note that for each $\delta\in I$, $\Gamma(g_{\delta},K)\backslash \gothh^{\Sigma_{\infty}}$ is identified with the natural quotient of $\Gamma^1(g_{\delta},K)\backslash \gothh^{\Sigma_{\infty}} $ by the group
\[
F^{\times}\Gamma(g_{\delta},K)/\Gamma^1(g_{\delta},K)F^{\times}\cong \Gamma(g_{\delta},K)/(F^{\times}\cap \Gamma(g_{\delta},K)) \Gamma^1(g_{\delta},K).
\]
By the strong approximation for $\SL_{2,F}$, one sees that $\det:\Gamma(g_{\delta},K)\ra \det(K)\cap \cO_{F,+}^{\times}$ is surjective.
 Hence, the group above is isomorphic to $(\det(K)\cap\cO_{F,+}^{\times})/(K\cap \cO_{F,+}^{\times})^2$. The Proposition  follows immediately.
\end{proof}

We define $\bfSh^{\gothc}_K(G)$ to be the quotient of $\M^{\gothc}_K$ by the action of the finite group $\det(K)\cap \cO_{F,+}^{\times}/(K\cap\cO_{F}^{\times})^2$, and we put $\bfSh_K(G)=\coprod_{\gothc\in cl^{+}(F)}\bfSh^{\gothc}_{K}(G)$. In general, this is just a coarse moduli space that parametrizes the quadruples $(A,\iota,\bar\lambda,\bar\alpha_{K^p})$. However, we have the following:

\begin{lemma}\label{L:open-subgroup}
For any open compact subgroup $K^p\subset G(\AAA^{\infty,p})$, there exists an open compact normal subgroup $K'^p\subseteq K^p$ of finite index, such that $\det(K'^pK_p)\cap \cO_{F,+}^{\times}=(K'^pK_p\cap\cO_F^{\times})^2$.
\end{lemma}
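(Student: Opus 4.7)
The inclusion $\supseteq$ is trivial: if $v\in\cO_F^{\times}$ with $v\cdot I_2\in K'^p K_p$, then $\det(v\cdot I_2)=v^2$ is totally positive and lies in $\det(K'^p K_p)$. So the content of the lemma is the reverse inclusion. My plan is to construct $K'^p$ in the form $\Gamma(\gothN)\cdot L\cdot I_2$, where $\Gamma(\gothN)\subseteq\GL_2(\widehat{\calO}_F^{(p)})$ is the principal congruence subgroup of level an ideal $\gothN$ coprime to $p$ (to be chosen) and $L\subseteq\cO_F^{\times}$ is a subgroup of units whose scalar matrices already lie in $K^p$. After conjugating $K^p$ (which preserves both sides since scalars are central in $\GL_2$), I may assume $K^p\subseteq\GL_2(\widehat{\calO}_F^{(p)})$; openness of $K^p$ then supplies an ideal $\gothN_0$ coprime to $p$ with $\Gamma(\gothN_0)\subseteq K^p$. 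In particular, the finite-index subgroup $E_0:=\{v\in\cO_F^{\times}:v\cdot I_2\in K^p\}$ of $\cO_F^{\times}$ contains $U(\gothN_0):=\{u\in\cO_F^{\times}: u\equiv 1\pmod{\gothN_0}\}$.

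The key step is to choose $\gothN$, a multiple of $\gothN_0$ coprime to $p$, such that $U(\gothN)\subseteq E_0^2$ (where $E_0^2$ denotes the subgroup of squares in $E_0$). This is a separation problem for the finite abelian quotient $\cO_F^{\times}/E_0^2$: for each nontrivial coset $\alpha E_0^2$, I need an auxiliary prime $\gothl$ of $\cO_F$ coprime to $2p\gothN_0$ at which no $e\in E_0$ satisfies $e^2\equiv \alpha^{-1}\pmod{\gothl}$. Such primes exist by a Chebotarev density argument applied to the finite abelian (Kummer-type) extensions of $F$ generated by appropriate square roots of unit representatives; taking $\gothN$ to be $\gothN_0$ times the product of a finite set of such primes chosen to handle every nontrivial coset gives the desired containment.

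Now set $L:=\{v\in E_0:v^2\equiv 1\pmod{\gothN}\}$ and $K'^p:=\Gamma(\gothN)\cdot L\cdot I_2$. This is a subgroup of $K^p$ (both $\Gamma(\gothN)\subseteq\Gamma(\gothN_0)\subseteq K^p$ and $L\cdot I_2\subseteq K^p$), and it is normal of finite index in $K^p$ because $\Gamma(\gothN)$ is normal in $\GL_2(\widehat{\calO}_F^{(p)})$ and scalar matrices are central. Since $v^2\equiv 1\pmod{\gothN}$ for $v\in L$, a direct calculation yields $\det(K'^p)=1+\gothN\widehat{\calO}_F^{(p)}$, whence $\det(K'^p K_p)\cap\cO_{F,+}^{\times}=U_+(\gothN):=U(\gothN)\cap\cO_{F,+}^{\times}$; and $K'^p K_p\cap\cO_F^{\times}\cdot I_2=L\cdot I_2$. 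The desired equality thus reduces to $U_+(\gothN)\subseteq L^2$: for $u\in U_+(\gothN)\subseteq U(\gothN)\subseteq E_0^2$, write $u=e^2$ with $e\in E_0$; then $e^2=u\equiv 1\pmod{\gothN}$ forces $e\in L$, and hence $u=e^2\in L^2$.

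The main obstacle the argument must overcome is the square-root ambiguity for totally positive units: writing $u=v^2$ determines $v$ only up to sign, and one must ensure at least one of the signs produces a scalar matrix in $K'^p K_p$. Absorbing this ambiguity into the single condition $U(\gothN)\subseteq E_0^2$ is what streamlines the argument; the presence of the extra central scalars $L\cdot I_2$ in the definition of $K'^p$ (rather than insisting on a plain principal congruence subgroup, which would force the level to be a prime power of odd residue characteristic and need not fit inside a general $K^p$) is what makes the condition work uniformly without disturbing normality.
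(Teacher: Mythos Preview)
Your argument has the right shape, but the crucial step—producing an ideal $\gothN$ coprime to $p$ with $U(\gothN)\subseteq E_0^2$—is exactly Chevalley's theorem on congruence subgroups of $\cO_F^{\times}$, and your Chebotarev sketch does not establish it. The difficulty surfaces when $\alpha\in(\cO_F^{\times})^2\setminus E_0^2$ (which occurs whenever $E_0\subsetneq\cO_F^{\times}$): then $\alpha$ is a square modulo \emph{every} prime $\gothl$, so quadratic Kummer extensions generated by ``square roots of unit representatives'' cannot possibly separate $\alpha$ from the image of $E_0^2$ in $(\cO_F/\gothl)^{\times}$. Handling such $\alpha$ requires higher-degree Kummer extensions and a genuinely nontrivial argument; carried out in full, this is essentially a proof of Chevalley's theorem. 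The paper sidesteps this by citing the theorem directly (in the form: every finite-index subgroup of $\cO_F^{\times}$ contains $U\cap\cO_F^{\times}$ for some open $U\subseteq\widehat{\cO}_F^{\times}$ with $U_v=\cO_{F_v}^{\times}$ for all $v\mid p$).

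Once Chevalley's theorem is granted, the remainder of your construction is correct, but it is more elaborate than necessary. The paper simply sets $K'^p=\det^{-1}(U^p)\cap K^p$. This is automatically normal in $K^p$ (as the preimage of a subgroup of the abelian group $\det(K^p)$), and the verification reduces to the single line $\det(K')\cap\cO_{F,+}^{\times}=U\cap\cO_{F,+}^{\times}=U\cap(K\cap\cO_F^{\times})^2=(K'\cap\cO_F^{\times})^2$. Your device of enlarging $\Gamma(\gothN)$ by the central scalars $L\cdot I_2$ works, but it is unnecessary once $K'^p$ is defined by a determinant condition: any unit $u\in K\cap\cO_F^{\times}$ with $u^2\in U$ already lies in $\det^{-1}(U^p)\cap K^p$, so the scalar witnesses for the inclusion $\supseteq$ are present for free. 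The initial conjugation to force $K^p\subseteq\GL_2(\widehat{\cO}_F^{(p)})$ is likewise correct but dispensable in the paper's approach.
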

\begin{proof}
By a theorem of Chevalley (see for instance \cite[Lemma 2.1]{Ta}), every finite index subgroup of $\cO_{F}^{\times}$ contains a subgroup of the form $U\cap \widehat\cO_{F}^{\times}$, where $U\subseteq \widehat{\cO}^\times_{F}$ is an open compact subgroup with $U_v=\cO_{F_v}^{\times}$ for all $v|p$.  Therefore,  one can  choose such an open compact $U\subseteq \det(K)$  such that  $U\cap (\det(K)\cap \cO_{F}^{\times, +})=U\cap (K\cap \cO_{F}^{\times})^2$. Let  $K'^p\subseteq K^p$ denotes the inverse image of $U^p$ via determinant map. Then it is easy to check that this choice of $K'^p$ answers the question.
\end{proof}

\begin{remark}\label{R:units}
In general, $\det(K)\cap\cO_{F,+}^{\times}/(K\cap \cO_F^{\times})^2$ is non-trivial even for $K^p$ sufficiently small. For instance, if $K=K(N)$ for some integer $N$ coprime to $p$, then $K\cap \cO_{F}^{\times}$ is the subgroup of units congruent to $1$ modulo $N$, and $\det(K)\cap \cO_{F,+}^{\times}$ is subgroup of $K\cap \cO_{F}^{\times}$ of positive elements. By the theorem of Chevalley cited in the proof of the Lemma, we have $\det(K)\cap \cO_{F,+}^{\times}=K\cap \cO_F^{\times}$ for $N$ sufficiently large, and hence $\det(K)\cap\cO_{F,+}^{\times}/(K\cap \cO_F^{\times})^2\simeq (\Z/2\Z)^{[F:\Q]-1}$.
\end{remark}


 From now on, we  always make the following
\begin{hypo}\label{H:fine-moduli}
$K^p$ is sufficiently small and  $\det(K)\cap \cO_{F,+}^\times=(K\cap \cO_F^{\times})^2$.
\end{hypo} 
  By Lemma~\ref{L:open-subgroup}, this hypothesis is always valid up to replacing 
 $K^p$ by an open compact subgroup.  Under this assumption, Proposition~\ref{P:complex-uniform} shows that each geometric connected component is identified with a certain geometric connected component of $\M^{\gothc}_K$. Therefore, $\bfSh_K(G)$ is quasi-projective and smooth over $\Z_{(p)}$. It is the \emph{integral model of the Hilbert modular variety with level $K$}. We can also talk about the universal family of HBAV over $\bfSh_K(G)$.

\begin{remark}
\label{R:polarization}
In the construction of $\bfSh_K(G)$, we fixed the set of representatives  $\{\gothc_1,\cdots, \gothc_{h_F^+}\}$ of $cl^+_F$ and we assumed them to be  coprime to $p$. This assumption was used to establish the smoothness of each $\M^{\gothc_i}_K$, hence that of $\bfSh^{\gothc_i}_K(G)$, using deformation theory. 
However, dropping this assumption or changing to another set of representatives will not cause any problems in practice. Suppose we are given a quadruple $(A,\iota, \lambda, \alpha_{K^p})$ over a connected locally noetherian $\Z_{(p)}$-scheme $S$, where $\lambda:\gothq\xra{\simeq} \Hom_{\cO_F}^{\Sym}(A,A^\vee)$ is an  isomorphism  preserving positivity for a not necessarily prime-to-$p$ fractional ideal $\gothq$.
 Then there exists a unique representative $\gothc_i$ and an element $\xi\in F^{\times}_{+}$ such that multiplication by $\xi$ defines an isomorphism $\xi:\gothc_i\xra{\simeq}\gothq$.  
  We put $\lambda'=\xi\circ\lambda$. Let $(\alpha_{N,\sbar},\nu(\alpha_{N,\sbar}))$ be a representative of isomorphisms in the level-$K^p$ structure $\alpha_{K^p}$ for some integer $N$ coprime to $p$ with $K(N)\subseteq K$.
   We define $\alpha'_{K^p}$ to be the $K/K(N)$-orbit of the pairs $(\alpha_{N,\sbar},\xi\cdot \nu(\alpha_{N,\sbar}))$, where $\xi\cdot \nu(\alpha_{N,\sbar})$ is the composite of isomorphisms
$$
 \cO_F/N\cO_F\xra{\nu(\alpha_{N,\sbar})} \mu_{N,\sbar}\otimes_{\Z} \gothq^*\xra{\xi}\mu_N\otimes_{\Z}\gothc_i^*.
$$
We then get a new quadruple $(A,\iota,\lambda',\alpha'_{K^p})$. Since $\xi$ is well determined up to $\cO_{F,+}^{\times}$, the $\cO_{F,+}^{\times}$-orbit $(A,\iota,\bar\lambda',\bar\alpha'_{K^p})$ is a well-defined $S$-point on $\bfSh_K^{\gothc_i}(G)$. By abuse of notation, we also use  $(A,\iota,\bar\lambda, \bar\alpha_{K^p})$ to denote this point. 
\end{remark}

\subsection{Tame Hecke actions on $\bfSh_K(G)$} Suppose we are given  $g\in G(\AAA^{\infty, p})$, and open compact subgroups $K^p, K'^p\subset G(\AAA^{\infty,p})$   with $g^{-1}K'^pg\subseteq K^p$. 
We let $K=K^pK_p$, $K'=K'^pK'_p$ with $K_p=K_p'=\GL_2(\cO_F\otimes_{\Z} \Z_p)$.  We now define a finite \'etale  map 
\begin{equation}\label{E:morphism-g-Sh}
[g]:\bfSh_{K'}(G)\ra \bfSh_K(G)
\end{equation}
 that extends the Hecke action \eqref{E:morphism-g}.
  If $K^p$ and $g$ are both contained in $\GL_2(\widehat{\cO}^{(p)}_F)$, the morphism $[g]$ is given by $(A,\iota, \bar\lambda,\bar\alpha_{K'^p})\mapsto (A,\iota, \bar\lambda, [\bar\alpha_{K'^p}\circ g]_{K^p})$, where $[\bar\alpha_{K'^g}\circ g]_{K^p}$ denotes the $K^p$-level structure associated to $\bar\alpha_{K'^p}\circ g$. 
  To define $[g]$ in the general case, it is more natural to use the rational version of the moduli interpretation of $\M_K$ as  in \cite[6.4.3]{lan}, i.e. we consider  $\M_K$ as the classifying space of certain isogenies classes of HBAVs instead of the classifying space of isomorphism classes of HBAV.
  Then the exact same formula as above gives rise to the desired Hecke action.
   For more details on these two types of moduli interpretation for $\M_{K}$ and their equivalence, we refer the reader to  \cite[Section 1.4]{lan} and \cite[Section 4.2.1]{hida}. 



\subsection{Compactifications}\label{Subsection:compactification}
Let $K=K^pK_p\subset G(\AAA^\infty)$ be an open compact subgroup with $K_p$ hyperspecial and  satisfying Hypothesis~\ref{H:fine-moduli}.   We recall some results on the arithmetic toroidal compactification of  $\bfSh_K(G)$.  For more details, the reader may refer to \cite{rap,chai} and more recently \cite[Chap. VI]{lan}.

By choosing suitable admissible  rational  polyhedral cone decomposition data for $\bfSh_{K}(G)$, one can construct  arithmetic toroidal compactifications $\bfSh_K^\tor(G)$ satisfying the following conditions.  
 \begin{itemize}
 \item[(1)] The schemes $\bfSh_K^{\tor}(G)$ are projective and smooth over $\Z_{(p)}$. 
 
 \item[(2)] There exists a natural open immersion  $\bfSh_K(G)\hra \bfSh_K^{\tor}(G)$ such that the boundary $\bfSh^\tor_K(G)-\bfSh_K(G)$ is a relative simple normal crossing Cartier divisor of $\bfSh^{\tor}_K(G)$ relative  to the base.
  
 \item[(3)] There exists a polarized semi-abelian scheme $\calA^{\sm}$  over $\bfSh^{\tor}_K(G)$  equipped with an action of $\cO_F$ and a $K^p$-level structure, which extends the universal abelian scheme $\calA$ on $\bfSh_K(G)$ and degenerates to a torus at the cusps. 
 
 \item[(4)] Suppose we are given an element $g\in G(\AAA^{\infty, p})$, and open compact subgroups $K^p, K'^p\subset G(\AAA^{\infty,p})$   with $g^{-1}K'^pg\subseteq K^p$. 
  We put $K=K^pK_p$, $K'=K'^pK'_p$ with $K_p=K_p'=\GL_2(\cO_F\otimes_{\Z} \Z_p)$.
   Then by choosing compatible rational polyhedral cone decomposition data for $\bfSh_K(G)$ and for $\bfSh_{K'}(G)$, we have a proper surjective morphism \cite[6.4.3.4]{lan}:
 \begin{equation}\label{E:morphism-g-tor}
 [g]^{\tor}: \bfSh^{\tor}_{K'}(G)\ra \bfSh^{\tor}_{K}(G),
 \end{equation}
 whose restriction to $\bfSh_{K'}(G)$ is \eqref{E:morphism-g-Sh} defined by the Hecke action of $g$.  
Each double coset $K^pgK^p$ with $g\in \GL_{2}(\AAA^{\infty,p})$  defines an extended Hecke correspondence 
 \begin{equation}\label{E:Hecke-tor}
 \xymatrix{
 &\bfSh^\tor_{K\cap gKg^{-1}}(G)\ar[ld]_{[1]^{\tor}}\ar[rd]^{[g]^\tor}\\
 \bfSh^{\tor}_{K}(G) &&\bfSh^{\tor}_K(G),
 }
 \end{equation}
 which extends \eqref{E:Hecke-cor}.
 
 \end{itemize}
 
 We  put $\omegab=e^*(\Omega_{\calA^{\sm}/\bfSh^{\tor}_K(G)}^1)$, where $e:\bfSh_K^{\tor}(G)\ra \calA^{\sm}$ denotes the unit section. It is an $(\cO_{\bfSh^{\tor}_K}\otimes_{\ZZ}\cO_F)$-module locally free of rank $1$, and it extends  the sheaf of invariant differential $1$-forms of  $\calA$ over $\bfSh_K(G)$.
 We define the \emph{Hodge line bundle} to be $\det(\omegab)=\bigwedge_{\cO_{\bfSh^{\tor}_K(G)}}^{g}\omegab$.
  Following \cite{chai} and \cite[Section 7.2]{lan},  we put
\[
\bfSh^*_K(G)=\Proj\Big(\bigoplus_{n\geq 0}\Gamma\big(\bfSh^{\tor}_K(G),\det(\omegab)^{\otimes n}\big)\Big).
\]
This is a normal and projective scheme over $\ZZ_{(p)}$, and $\det(\omegab)$ descends to an ample line bundle on $\bfSh^*_K(G)$. Moreover, the inclusion $\bfSh_K(G)\hra \bfSh^{\tor}_K(G)$ induces an inclusion $\bfSh_K(G)\hra \bfSh^{*}_K(G)$. Although  $\bfSh^{\tor}_K(G)$ depends on the choice of certain cone  decompositions, $\bfSh^*_K(G)$ is canonically determined by $\bfSh_K(G)$. We call $\bfSh^*_K(G)$ the \emph{minimal compactification } of  $\bfSh_K(G)$.
The boundary $\bfSh^{*}_K(G)- \bfSh_K(G)$ is a finite flat scheme over  $\ZZ_{(p)}$, and its connected components are  indexed by the cusps of  $\bfSh_{K}(G)$.

\subsection{De Rham cohomology}\label{Subsection:dR-cohomology}
  Let $\Fgal$ be the Galois closure of $F/\Q$ contained in $\CC$.
   Let $R$ be an $\cO_{\Fgal, (p)}$-algebra. 
   In practice, we will need the cases where $R$ equals to  $\CC$,  a finite field $k$ of characteristic $p$ sufficiently large, or a finite extension $L/\QQ_p$ that contains all the embeddings  of $F$ into $\Qb_p$, or the ring of integers of such  $L$. 
   The set of the $g$ distinct algebra homomorphisms from $\cO_F$ to $R$ is naturally identified with $\Sigma_{\infty}$. 
  To simplify the notation, we put $\bfSh_{K,R}:=\bfSh_{K}(G)_R$ and $\bfSh_{K,R}^{\tor}:=\bfSh_{K}^\tor(G)_R$, and we write $\bfSh_K(\C)$ and $\bfSh_K^\tor(\C)$ for the associated complex manifolds respectively.  For a coherent $(\cO_{\bfSh_{K,R}^{\tor}}\otimes_{\ZZ}\cO_F)$-module  $M$, we denote by $M=\bigoplus_{\tau\in \Sigma_\infty}M_\tau$ the canonical decomposition, where $M_{\tau}$ is the direct summand on which $\cO_F$ acts via $\tau:\cO_F\ra R\ra \cO_{\bfSh^\tor_{K,R}}$. (This uses the fact that $p$ is unramified in $F$.)

 Let $\D$ denote the boundary $\bfSh_{K,R}^{\tor}-\bfSh_{K,R}$, and $\Omega^1_{\bfSh^{\tor}_{K,R}/R}(\log \D)$ the sheaf of differential $1$-forms on $\bfSh_{K,R}^{\tor}$ over $\Spec(R)$ with logarithmic poles along the relative normal crossing Cartier divisor $\D$. 
Using a toroidal compactification of the semi-abelian scheme $\calA^{\sm}$ on $\bfSh_{K,R}^{\tor}$, there exists a unique $(\cO_{\bfSh_{K,R}^{\tor}}\otimes\cO_F)$-module $\calH^1$ locally free of rank $2$ satisfying the following properties \cite[2.15, 6.9]{lan2}:
\begin{itemize}
\item[(1)] The restriction of $\calH^1$ to $\bfSh_{K,R}$ is  the relative de Rham cohomology $\calH^1_{\dR}(\calA/\bfSh_{K,R})$ of the universal abelian scheme $\calA$. 
In \cite[6.9]{lan2}, $\calH^1$ is called the \emph{canonical extension} of $\calH^1_{\dR}(\calA/\bfSh_{K,R})$.  

\item[(2)] There exists a canonical $\calO_F$-equivariant Hodge filtration
\[0\ra \omegab\ra \calH^1\ra \Lie((\calA^{\sm})^\vee)\ra 0.
\]
Taking the $\tau$-component gives 
\begin{equation}\label{Equ:tau-sequence-sh}
0\ra \omegab_{\tau}\ra \calH^1_{\tau}\ra \wedge^2(\calH^1_{\tau})\otimes\omegab_{\tau}^{-1}\ra 0.
\end{equation}
The line bundle $\wedge^2(\calH^1_{\tau})$ can be trivialized over  $\bfSh^{\tor}_{K,R}$ using the prime-to-$p$ polarization, but we choose
to keep it in order to make the Kodaira-Spencer isomorphism Hecke-equivariant. 


\item[(3)] There exists an $\calO_F$-equivariant  integral  connection with logarithmic poles
\[
\nabla: \calH^1\ra \calH^1\otimes_{\cO_{\bfSh^{\tor}_{K,R}}}\Omega^1_{\bfSh^{\tor}_{K,R}/R}(\log \D),
\] 
which extends the Gauss-Manin connection on $\calH^1_{\dR}(\calA/\bfSh_{K,R})$.

\item[(4)] Let  $\mathrm{KS}$ be the map
\[
\mathrm{KS}:\omegab\hra \calH^1\xra{\nabla}\calH^1\otimes_{\cO_{\bfSh^{\tor}_{K,R}}}\Omega^1_{\bfSh^{\tor}_{K,R}/R}(\log \D)\ra \Lie((\calA^\sm)^\vee)\otimes_{\cO_{\bfSh^{\tor}_{K,R}}}\Omega^1_{\bfSh^{\tor}_{K,R}/R}(\log \D).
\]
It induces an \emph{extended Kodaira-Spencer isomorphism} \cite[6.4.1.1]{lan}
\begin{equation}\label{Equ:Kod-isom}
\Kod: \Omega^1_{\bfSh^{\tor}_{K,R}}(\log \D)\xra{\sim}\omegab\otimes_{(\cO_{\bfSh^{\tor}_{K,R}}\otimes \cO_F)}\Lie(\calA^\vee)^*\cong \bigoplus_{\tau\in\Sigma_{\infty}}\underline \Omega_\tau,
\end{equation}
where $\underline \Omega_\tau  = \omegab_{\tau}^{\otimes 2} \otimes \wedge^2(\calH_\tau^1)^{-1}$.  

Let $\bigwedge^*_{\Z}(\Z[\Sigma_\infty])$ be the exterior algebra of the $\Z$-module $\Z[\Sigma_{R}]$, and $(e_{\tau})_{\tau\in \Sigma_\infty}$ denote the natural basis. We fix an order on $\Sigma_{R}=\{\tau_1,\dots,\tau_g\}$.
We put $e_{\emptyset }=1$ and $e_J=e_{i_1}\wedge \dots \wedge e_{i_j}$, for any subset $J=\{\tau_{i_1},\dots, \tau_{i_j}\}$ with $i_1<\cdots<i_j$.
We call these $e_J$ \emph{\v Cech symbols}, where the relations $e_j \wedge e_{j'} = -e_{j'} \wedge e_j$ are built in the definition to get the correct signs.
Using them, we can write more canonically $\Kod: \Omega^1_{\bfSh^{\tor}_{K,R}}(\log \D)\cong \bigoplus_{\tau\in \Sigma_\infty}
\underline \Omega_\tau e_{\tau}$. It induces  an isomorphism of graded algebras
\begin{equation}\label{Equ:Kod-j-isom}
\Omega^\bullet_{\bfSh^{\tor}_{K,R}/R}(\log\D)=\bigoplus_{0\leq j\leq g}\Omega^j_{\bfSh_{K,R}^{\tor}/R}(\log \D)\cong \bigoplus_{J\subseteq \Sigma_\infty}\underline \Omega^J e_{J},
\end{equation}
where 
$\underline \Omega^J = \bigotimes_{\tau \in J} \underline \Omega_\tau$.



\end{itemize}

\subsection{Integral models of automorphic bundles}\label{S:descent}

For a multi-weight $(\kb,w)\in \Z^{\Sigma_{\infty}}\times \Z$ (not necessarily cohomological),  we put 
 \begin{equation}
\label{E:modular line bundle}
\omegab^{(\kb,w)}\colon =\bigotimes_{\tau\in\Sigma_\infty} \biggl((\wedge^2\calH^1_\tau)^{\frac{w-k_{\tau}}{2}-1}\otimes \omegab_\tau^{k_\tau}\biggr),
\end{equation}
which is a line bundle on $\bfSh^{\tor}_{K,R}$. 
Note that, using the Kodaira-Spencer isomorphism \eqref{Equ:Kod-isom}, we have 
\[
\underline \omega^{(\kb, w)}\cong    \bigotimes_{\tau\in\Sigma_\infty} \biggl((\wedge^2\calH^1_\tau)^{\frac{w-k_{\tau}}{2}}\otimes \omegab_\tau^{k_\tau-2}\biggr)\otimes\Omega^{g}_{\bfSh_{K,R}}(
\log \D).
\]

We define \emph{the space of Hilbert modular forms of weight $(\kb,w)$ and level $K$ with coefficients in $R$} to be
 $$
 M_{(\kb,w)}(K,R)\colon =H^0(\bfSh^{\tor}_{K,R},\omegab^{(\kb,w)}),
 $$
and the subspace of cusp forms to be 
\[
S_{(\kb,w)}(K,R)\colon = H^0(\bfSh^{\tor}_{K,R}, \omegab^{(\kb,w)}(-\D)).
\]
By Koecher's principle, one has  $M_{(\kb,w)}(K, R)=H^0(\bfSh_{K,R},\omegab^{(\kb,w)})$, which coincides with the definition \eqref{E:defn-HMF} when $R$ is a subfield of $\C$.  
In particular, both spaces $M_{(\kb, w)}(K, R)$ and $S_{(\kb, w)}(K,R)$ do not depend on the choice of the toroidal compactification of $\bfSh_{K, R}$.

If   $(\uk,w)$ is cohomological, i.e. $w\geq k_{\tau}\geq 2$ for $\tau\in \Sigma_{\infty}$,  we put 
\[
\F^{(\uk,w)}_{\tau}\colon = (\wedge^2\calH^1_{\tau})^{\frac{w-k_{\tau}}{2}} \otimes \Sym^{k_{\tau}-2}\calH^1_{\tau},\quad \textrm{and} \quad
\F^{(\kb,w)}\colon  = \bigotimes_{\tau\in\Sigma_{\infty}} \F^{(\kb,w)}_{\tau}.
\]
The extended Gauss-Manin connection on $\calH^1$ induces by functoriality an integrable connection 
$$
\nabla: \F^{(\kb,w)}\ra \F^{(\kb,w)}\otimes\Omega^1_{\bfSh^{\tor}_{K,R}}(\log \D).
$$ 
By considering the associated  local system $(\F^{(\kb,w)})^{\nabla=0}$ on $\bfSh_{K}(\C)$, it is easy to see that $(\F^{(\kb,w)},\nabla)$ on $\bfSh^{\tor}_{K,\cO_{F',(p)}}$ gives an integral model of the corresponding automorphic bundle on $\bfSh_{K}(\C)$ considered in Subsection~\ref{S:automorphic-bundle}.

Suppose we are given an element $g\in G(\AAA^{\infty,p})$, and open subgroups $K'^p, K^p\subset G(\AAA^{\infty,p})$ with $g^{-1}K'^pg\subseteq K^p$. Let $K'=K'^pK'_p$ and $K=K^pK_p$ with $K'_p=K_p=\GL_2(\cO_{F}\otimes\Z_p)$. Let $[g]^{\tor}: \bfSh^{\tor}_{K',R}\ra \bfSh_{K,R}^{\tor}$ denote the morphism \eqref{E:morphism-g-tor}. Then according to \cite[Theorem 2.15(4)]{lan}, we have canonical isomorphisms of vector bundles on $\bfSh^{\tor}_{K',R}$ 
\begin{equation}\label{E:isom-g-F}
[g]^{\tor,*}(\F^{(\kb,w)})\xra{\cong} \F^{(\kb,w)},
\end{equation}
compatible with the connection $\nabla$ on $\F^{(\kb,w)}$ and  the Hodge filtration to be defined in Subsection~\ref{S:DR-Hodge}. Similarly, we have an isomorphism on $\bfSh^{\tor}_{K',R}$: $[g]^{\tor,*}(\omegab^{(\kb,w)}) \xra{\cong} \omegab^{(\kb,w)}$ for a general multi-weight $(\kb,w)$. 



\begin{remark}\label{R:automorphic-bundle}
Our definition of Hilbert modular forms (and cusp forms) differs slightly from those in \cite{DT,KL}, where they work over the fine moduli spaces  $\calM^\gothc_K$ and drop the factors $\wedge^2\calH^1_{\tau}$'s. 
As pointed above, the line bundles $\wedge^2\calH^1_{\tau}$  are trivialized  on each $\calM^\gothc_K$ using the polarization there. 
However, in order to descend from $\calM^\gothc_K$ to the quotient $\bfSh^{\gothc}_{K,R}$,  the authors in \emph{loc. cit.} have to modify carefully  the Hecke actions by a factor.
In this paper, we think it is  more canonical to keep the factors $\wedge^2\calH^1_{\tau}$ so that the Hecke action  descends naturally from $\calM_K^\gothc$ to $\bfSh_K^\gothc$. See also Remark~\ref{remark:U_p} below for an explanation for this issue.


Intuitively, the bundle $\calH^1_{\tau}$ on $\bfSh_{K,R}^{\tor}$ ``should be''  the automorphic vector bundle corresponding to the representation $\check{\St}_{\tau}$ of $G_{\C}=(\GL_{2,\C})^{\Sigma_{\infty}}$ in the sense of \cite[Chap. III]{milne}. 
However,   the representation $\check{\St}_{\tau}$ does not give rise to an automorphic vector bundle, because it does not factor through the quotient group $G^c_{\C}$ as explained in \emph{loc. cit.} (and hence the action of the global units $\calO_F^\times$ on the sections of $\calH_\tau^1$ is not trivial). Similarly, a line bundle of the form 
$$
\bigotimes_{\tau\in \Sigma_{\infty}}\bigl((\wedge^2\calH^1_\tau)^{m_{\tau}}\otimes \omegab_{\tau}^{k_{\tau}}\bigr)
$$
 with $m_{\tau},k_{\tau}\in \Z$, is an automorphic  vector bundle in the sense of \emph{loc. cit.} if and only if $2m_{\tau}+k_{\tau}=w$ is an integer independent of $\tau$.
\end{remark}

\subsection{De Rham complex and Hodge filtrations}\label{S:DR-Hodge}
Let $(\kb,w)$ be a cohomological multi-weight. 
We denote by   $\DR(\F^{(\kb,w)})$ the de Rham complex
\[
\F^{(\kb,w)}\xra{\nabla}\F^{(\kb,w)}\otimes\Omega^1_{\bfSh^{\tor}_{K,R}/R}(\log \D)\xra{\nabla}\cdots\xra{\nabla} \F^{(\kb,w)}\otimes\Omega^g_{\bfSh^{\tor}_{K,R}/R}(\log \D).
\]
 For a coherent sheaf $\scrL$ on $\bfSh^{\tor}_{K,R}$, we denote by $\scrL(-\D)$ the tensor product of $\scrL$ with the ideal sheaf of $\D$. For $0\leq i\leq g-1$, $\nabla$ induces a map
\[
\nabla: \F^{(\kb,w)}(-\D)\otimes \Omega_{\bfSh_{K,R}^{\tor}/R}^{i}(\log \D)\ra \F^{(\kb,w)}(-\D)\otimes \Omega^{i+1}_{\bfSh_{K,R}^{\tor}/R}(\log \D).
\]
We denote by $\DR_c(\F^{(\kb,w)})$ the resulting complex by tensoring the complex $\DR(\F^{(\kb,w)})$ with  $\cO_{\bfSh_{K,R}^{\tor}}(-\D)$. 

The complex $\DR(\F^{(\kb,w)})$ (and similarly $\DR_c(\F^{(\kb,w)})$) is equipped with a natural Hodge filtration. 
 Let $(\omega_{\tau},\eta_{\tau})$ be a local basis of $\calH^1_{\tau}$ adapted to the Hodge filtration \eqref{Equ:tau-sequence-sh}.
 We define
 $\tF^n\F_{\tau}^{(\kb,w)}$ to be  the submodule generated by the vectors 
 $$
 \{(\omega_{\tau}\wedge\eta_{\tau})^{\frac{w-k_{\tau}}{2}}\otimes \omega_{\tau}^{i}\otimes \eta_{\tau}^{k_{\tau}-2-i}:  n-\frac{w-k_{\tau}}{2}\leq i\leq k_{\tau}-2\}
 $$
 if $\frac{w-k_{\tau}}{2}\leq n\leq \frac{w-k_{\tau}}{2}+k_{\tau}-2 $,  and
\begin{equation}\label{Equ:Filt-tau}
\tF^n\F_{\tau}^{(\kb,w)}=\begin{cases}\F_{\tau}^{(\kb,w)}&\text{if }n\leq \frac{w-k_{\tau}}{2}\\
0& \text{if } n\geq \frac{w-k_{\tau}}{2}+k_{\tau}-1.\end{cases}
\end{equation}
 The filtration does not depend on the choice of $(\omega_{\tau},\eta_{\tau})$, and the graded pieces of the filtration are
\begin{equation*}\label{Equ:graded-filt-tau}
\Gr^{n}_{\tF}\F_{\tau}^{(\kb,w)}\cong
\begin{cases}
(\wedge^2\calH^1_{\tau})^{w-n-2}\otimes \omegab_{\tau}^{2n+2-w}& \text{if }n\in [\frac{w-k_{\tau}}{2},\frac{w+k_{\tau}}{2}-2]\\
0&\text{otherwise.}
\end{cases}
\end{equation*}

Now consider the sheaf $\F^{(\kb,w)}$. We endow it with the tensor product filtration induced by $(\tF^n\F^{(\kb,w)}_{\tau}, n\in
\Z)$ for $\tau\in \Sigma_{R}$. The $\tF$-filtration on $\F^{(\kb,w)}$ satisfies Griffiths transversality for $\nabla$, i.e. we have
\[
\nabla: \tF^n\F^{(\kb,w)}\ra \tF^{n-1}\F^{(\kb,w)}\otimes \Omega^{1}_{\bfSh^{\tor}_{K,R}/R}(\log \D).
\]
We define $\tF^n\DR(\F^{(\kb,w)})$ as the subcomplex  $\tF^{n-\bullet}\F^{(\kb,w)}\otimes \Omega^{\bullet}_{\bfSh^{\tor}_R/R}(\log \D)$ of $\DR(\F^{(\kb,w)})$, and call it the \emph{$\tF$-filtration}  (or \emph{Hodge filtration}) on $\DR(\F^{(\kb,w)})$.
The $\tF$-filtration on $\DR(\F^{(\kb,w)})$ induces naturally an  $\tF$-filtration on $\DR_{c}(\F^{(\kb,w)})$.

\subsection{The dual BGG-complex}\label{S:BGG}
Assume that $(k_{\tau}-2)!$ is invertible in $R$ for every $\tau\in \Sigma_{\infty}$.
It is well known that $\DR(\F^{(\kb,w)})$ (resp.  $\DR_{c}(\F^{(\kb,w)})$) is quasi-isomorphic to a  much simpler complex  $\BGG(\F^{(\kb,w)})$ (resp. $\BGG_{c}(\F^{(\kb,w)})$), called the dual BGG-complex of $\F^{(\kb,w)}$. Here, we tailor the discussion for later application and refer the reader to \cite[\S 3 and \S 7]{Fa}  and \cite{lan-polo} for details. 

The Weyl group of $G_R=(\Res_{\cO_F/\Z}\GL_2)_R$  is canonically isomorphic to $W_G=\{\pm 1\}^{\Sigma_\infty}$. 
For a subset $J\subseteq \Sigma_{\infty}$,  let $s_{J}\in W_G = \{\pm 1\}^{\Sigma_\infty}$ be the element whose $\tau$-component is $-1$ for $\tau\notin J$ and is $1$ for $\tau\in J$.
 In particular,  $s_{\Sigma_\infty}=1$ is the identity element of $W_G$, and $s_{\emptyset}$ is the longest element.
We have the usual dot action of $W_G$ on $\ZZ^{\Sigma_{\infty}}$: for $J\subseteq \Sigma_\infty$ and $\kb\in\ZZ^{\Sigma_\infty}$, the $\tau$-component of  $s_{J}\cdot \kb$ is equal to $2-k_{\tau}$ for $\tau\notin J$, and to $k_{\tau}$ for $\tau\in J$.

For any $0\leq j\leq g$, we put
\begin{align}\label{Equ:BGG}
\bgg^j(\F^{(\kb,w)})&= \bigoplus_{\substack {J\subseteq \Sigma_{R}\\  \#J=j}}\omegab^{(s_{J}\cdot \kb,w)}  e_{J},
\end{align}
where $e_J$ is the \v{C}ech symbol as in \eqref{Equ:Kod-j-isom}, and 
\begin{equation}\label{E:omega-s-j}
\omegab^{(s_J\cdot \kb,w)}=\bigl(\bigotimes_{\tau\notin J}(\wedge^2\calH^1_{\tau})^{\frac{w+k_{\tau}}{2}-2}\otimes\omegab_{\tau}^{2-k_{\tau}}\bigr) \otimes \bigl(\bigotimes_{\tau\in J}(\wedge^2\calH^1_{\tau})^{ \frac{w-k_{\tau}}{2}-1}\otimes \omegab_{\tau}^{k_\tau}\bigr)
\end{equation}
There exists a differential operator $d^j:\bgg^j(\F^{(\kb,w)})\ra\bgg^{j+1}(\F^{(\kb,w)})$ given by 
\begin{equation}\label{Equ:diff-BGG}
d^j\colon fe_{J}\mapsto \sum_{\tau_{0}\notin J} \Theta_{\tau_0,k_{\tau_0}-1}(f) e_{\tau_0}\wedge e_{J},
\end{equation}
where  $f$ is a local section $f$ of $\omegab^{(s_J\cdot\kb,w)}$ with $\#J=j$,  and $\Theta_{\tau_0,k_{\tau_0}-1}$ is a certain differential operator of order $k_{\tau_0}-1$ (See Remark~\ref{R:BGG}(1)), and it is an analog of the classical theta operator.
We define a decreasing $\tF$-filtration on $\BGG(\F^{(\kb,w)})$ by setting:
\[
\tF^n\BGG(\F^{(\kb,w)})=\bigoplus_{\substack {J\subseteq \Sigma_{R}\\n_J\geq n}}\omegab^{(s_J\cdot \kb,w)}\;e_{J}[-\#J],
\]
where $n_J: =\sum_{\tau\in J}(k_\tau-1)+\sum_{\tau\in\Sigma_\infty}\frac{w-k_\tau}{2}$. It is easy to see that $\tF^{n}\BGG(\F^{(\kb,w)})$ is stable under the differentials $d^j$, and the graded pieces
\begin{equation}\label{Equ:graded-dR-total}
\Gr^n_{\tF}\BGG(\F^{(\kb,w)})=\bigoplus_{\substack {J\subseteq \Sigma_{R}\\n_J= n}}\omegab^{(s_J\cdot\kb,w)}\;e_{J}[-\#J]
\end{equation}
have trivial induced differentials. 
Note that, via the Kodaira-Spencer isomorphism \eqref{Equ:Kod-isom}, one has a Hecke-equivariant isomorphism  
\[
\Gr^n_{\tF} \BGG(\F^{(\kb,w)})\cong \Gr^n_{\tF}\DR(\F^{(\kb,w)}).
\]

Finally, the differential $d^j$ preserves cuspidality, i.e. it induces a map 
$$
d^j:\bgg^j(\F^{(\kb,w)})(-\D)\ra \bgg^{j+1}(\F^{(\kb,w)})(-\D).
$$
we will denote by $\BGG_c(\F^{(\kb,w)})$ the resulting complex. The $\tF$-filtration on $\BGG(\F^{(\kb,w)})$  induces an $\tF$-filtration on $\BGG_c(\F^{(\kb,w)})$, and the graded pieces $\Gr^n_{\tF}(\BGG_c(\F^{(\kb,w)}))$ are given by \eqref{Equ:graded-dR-total} twisted by $\cO_{\bfSh^{\tor}_{K,R}}(-\D)$.

\begin{theorem}[Faltings; cf. {\cite[\S 3 and \S 7]{Fa} , \cite[Chap. \S 5]{FC}, \cite[\S 5]{lan-polo}} ]\label{Theorem:BGG}
Assume that  $(k_{\tau}-2)!$ is invertible in $R$ for each $\tau\in \Sigma_\infty$. Then  there is a canonical quasi-isomorphic embedding  of $\tF$-filtered complexes of abelian sheaves on $\bfSh_{K,R}^{\tor}$
\[
\BGG(\F^{(\kb,w)})\hra \DR(\F^{(\kb,w)}).
\]
Similarly, we have a canonical quasi-isomorphism of $\tF$-filtered  complexes
\[\BGG_c(\F^{(\kb,w)})\hra \DR_c(\F^{(\kb,w)}).\]
\end{theorem}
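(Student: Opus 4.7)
The plan is to prove the result via a local-to-global argument that reduces the quasi-isomorphism to a purely algebraic statement about a BGG resolution of the irreducible representation $V^{(\kb,w)}$ of $G_R$. The strategy is classical: view $\DR(\F^{(\kb,w)})$ locally as a Koszul complex computing Lie algebra cohomology with coefficients in a representation, and then replace that Koszul complex by a much smaller quasi-isomorphic subcomplex coming from the BGG resolution.

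First I would set up the local picture. Working in a formal neighborhood of a point of $\bfSh_{K,R}^{\tor}$, choose local bases $(\omega_\tau,\eta_\tau)$ of $\calH^1_\tau$ adapted to the Hodge filtration \eqref{Equ:tau-sequence-sh}. The Kodaira-Spencer isomorphism \eqref{Equ:Kod-isom} trivializes $\Omega^1_{\bfSh^{\tor}_{K,R}}(\log\D)$ as $\bigoplus_\tau \underline\Omega_\tau$, so that the Gauss-Manin connection $\nabla$ acts on $\F^{(\kb,w)}$ factor by factor through the $\tau$-component $\gothn^-_\tau$ of the unipotent radical of the opposite Borel in $G_R$. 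Locally, $\DR(\F^{(\kb,w)})$ then becomes the Chevalley-Eilenberg (Koszul) complex for $\gothn^-=\bigoplus_\tau \gothn^-_\tau$ acting on $V^{(\kb,w)}$, twisted by the appropriate tensor power of $\underline \Omega^J e_J$.

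Next I would invoke the integral dual BGG resolution. For $(\kb,w)$ a dominant weight, there is a length-$g$ resolution of $V^{(\kb,w)}$ by sums of dual Verma modules indexed by the Weyl group $W_G=\{\pm1\}^{\Sigma_R}$; dually, inside the Chevalley-Eilenberg complex sits a small subcomplex whose degree-$j$ term is the sum over $\#J=j$ of the one-dimensional $B$-module of weight $\epsilon_J\cdot(\kb,w)$ (dot action). Translating into the automorphic bundle setting, this subcomplex is precisely $\BGG(\F^{(\kb,w)})$ from \eqref{Equ:BGG}, with the line bundle $\omegab^{\epsilon_J(\kb,w)}\otimes\underline\Omega^J$ encoding the weight $\epsilon_J\cdot(\kb,w)$ after taking the Kodaira-Spencer contribution into account; the differentials are the higher theta operators $\Theta_{\tau_0,k_{\tau_0}-1}$, which realize the BGG maps between neighboring Verma modules. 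The appearance of the factorials $(k_\tau-2)!$ in the denominators of these BGG maps is exactly what forces the integrality hypothesis on $R$. Compatibility with the Hodge filtration $\tF^\bullet$ is automatic: the filtration is determined by the weight, and the integer $n_J=\sum_{\tau\in J}(k_\tau-1)+\sum_\tau (w-k_\tau)/2$ in the definition of $\tF^n\BGG(\F^{(\kb,w)})$ records the $\tF$-degree of $\omegab^{\epsilon_J(\kb,w)}\otimes\underline\Omega^J$ inside $\F^{(\kb,w)}\otimes\Omega^{\#J}$.

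The final step is to check that this local quasi-isomorphism globalizes canonically and descends to the cuspidal variant. Checking the quasi-isomorphism can be done on completed stalks, where it reduces to the integral BGG resolution statement recalled above. The cuspidal statement follows by tensoring the whole construction with $\cO_{\bfSh^{\tor}_{K,R}}(-\D)$; over the boundary one uses that $\calH^1$ and its Hodge filtration extend through the semi-abelian degeneration of $\calA^\sm$, so that the same Koszul/BGG comparison applies in a formal neighborhood of the toroidal boundary described by the cone decomposition. I expect the main obstacle to be not the local computation but the canonical globalization: a priori the BGG subcomplex depends on a local choice of Borel/flag, and one must verify that the resulting inclusion is independent of choices. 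This is handled by working with the \emph{dual} BGG construction (inducing up from $B$-modules rather than restricting Verma modules), which manifestly has no such dependence, and it is precisely this reformulation, due to Faltings, that makes the embedding $\BGG(\F^{(\kb,w)})\hra \DR(\F^{(\kb,w)})$ canonical on the whole of $\bfSh_{K,R}^{\tor}$.
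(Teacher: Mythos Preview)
The paper does not supply its own proof of this theorem: it is stated with attribution to Faltings and to Lan--Polo, and the reader is referred to \cite{Fa}, \cite{FC}, and \cite{lan-polo} for the argument. So there is no ``paper's proof'' to compare against beyond those references.

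Your sketch is a faithful outline of the approach in those references: the de Rham complex is locally identified with the Chevalley--Eilenberg complex of the opposite unipotent radical acting on $V^{(\kb,w)}$, and the BGG resolution of the finite-dimensional irreducible by (dual) Verma modules provides the small quasi-isomorphic subcomplex. Your identification of the BGG differentials with the $\Theta_{\tau,k_\tau-1}$, of the factorials $(k_\tau-2)!$ as the source of the denominator hypothesis, and of the $\tF$-filtration via the weight $n_J$ all match what one finds in Faltings' treatment. The one place where your sketch is a bit loose is the globalization step: the point is not so much that one must check independence of a local Borel choice, but rather that the BGG maps are realized by $G$-equivariant differential operators between the relevant homogeneous bundles on the flag variety (or, in the integral setting of \cite{lan-polo}, by explicit differential operators between automorphic bundles), and it is this equivariance that makes the inclusion canonical on all of $\bfSh_{K,R}^{\tor}$. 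The cuspidal variant by tensoring with $\cO(-\D)$ is handled exactly as you say.
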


\begin{remark}\label{R:BGG}
	\begin{itemize}
\item[(1)] It is possible to give an explicit formula for the operator $\Theta_{\tau_0,k_{\tau_0}-1}$ appearing in \eqref{Equ:diff-BGG}. Let $f$ be a local section of $\omegab^{(s_{J}\cdot\kb,w)}$ with $q$-expansion 
\[
f=\sum_{\xi} a_{\xi} q^{\xi}
\]
at a cusp of $\bfSh^{\tor}_{K,R}$, where $\xi$ runs through $0$ and the set of totally positive elements in a lattice of $F$. Using the complex uniformization, one can show that
 \begin{align*}
 \Theta_{\tau_0,k_{\tau_0}-1}(f)
 &=\frac{(-1)^{k_{\tau_0}-2}}{(k_{\tau_0}-2)!}\sum_{\xi}\tau_0(\xi)^{k_{\tau_0}-1}a_{\xi}q^{\xi}.
 \end{align*}
The denominator $(k_{\tau_0}-2)!$ explains the assumption that $(k_{\tau}-2)!$ is invertible in $R$ for every $\tau$. The main results of this paper do not use this formula on $q$-expansions.

\item[(2)] The  embedding $\BGG(\F^{(\kb,w)})\hra \DR(\F^{(\kb,w)})$ is constructed using reprensentation theory, and the morphisms in each degree are given by differential operators rather than morphisms of $\cO_{\bfSh_{K,R}^{\tor}}$-modules (cf. \cite[\S 3]{Fa} and \cite[Chap. VI \S5]{FC}). 
When $k_{\tau}=3$ for all $\tau\in \Sigma_{\Sigma}$, the embedding $\bgg^\bullet(\F^{(\kb,w)})\hra \F^{(\kb,w)}$ splits the Hodge filtration on $\calH^1$ globally as abelian sheaves over $\bfSh_{K,R}^{\tor}$, and it is certainly not $\cO_{\bfSh_{K,R}^{\tor}}$-linear.

\item[(3)] Assume $R=\C$. Let
$\LL^{(\kb,w)}$ denote the local system $\F^{(\kb,w)}(\C)^{\nabla=0}$  on the complex manifold $\bfSh_{K}(\C)$, and  $j:\bfSh_{K}(\C)\hra \bfSh^{\tor}_{K}(\C)$ be the open immersion. Then by the Riemann-Hilbert-Deligne correspondence,  $\DR(\F^{(\kb,w)})$ resolves  $Rj_*(\LL^{(\kb,w)})$, and $\DR_c(\F^{(\kb,w)})$ resolves the sheaf $j_!(\LL^{(\kb,w)})$ \cite[Chap. VI. 5.4]{FC}.
\end{itemize}
\end{remark}

\section{Overconvergent Hilbert Modular Forms}
\label{Section:ocgt HMF}

\subsection{Notation}\label{S:HMF-notation}
We fix a  number field $L\subset \C$ containing  $\tau(F)$ for all $\tau\in \Sigma_{\infty}$. The fixed isomorphism $\iota_p:\C\xra{\simeq}{\Qpb}$ determines a $p$-adic place $\wp$ of $L$. We denote by $L_{\wp}$ the completion,  $\calO_{\wp}$   the ring of integers, and $k_0$  the residue field.  The isomorphism $\iota_p$ also identifies $\Sigma_{\infty}$  with  the set of $p$-adic embeddings $\Hom_{\Q}(F,\Qpb)=\Hom_{\Z}(\cO_F,k_0)$. 
The natural action of the Frobenius on $\Hom_\ZZ(\cO_{F},k_0)$ defines, via the identification above, a natural action on $\Sigma_{\infty}$: $\tau\mapsto \sigma\circ\tau$. 
We have a natural partition: $\Sigma_{\infty}=\coprod_{\gothp\in \Sigma_p}\Sigma_{\infty/\gothp}$, where $\Sigma_{\infty/\gothp}$ consists of the $\tau$'s such that $\iota_p\circ\tau$ induces the place $\gothp$. 
For any $\calO_\wp$-scheme $S$ and a coherent $(\cO_S\otimes\cO_F)$-sheaf $M$, we have a canonical decomposition 
$
M=\bigoplus_{\tau\in \Sigma_{\infty}} M_{\tau},
$
where $M_{\tau}$ is the direct summand of $M$ on which $\cO_F$ acts via $\tau: \cO_F\ra \cO_\wp\ra \cO_S$.

Unless stated otherwise, we  take the open compact subgroups $K=K^pK_p\subset G(\AAA^{\infty})$ so that $K_p=\GL_2(\cO_F\otimes \Z_p)$ and that $K^p$ satisfies Hypothesis~\ref{H:fine-moduli}. Then the corresponding Shimura variety $\bfSh_K(G)$ is a fine moduli space of abelian varieties over $\Z_{(p)}$. We choose a toroidal compactification $\bfSh^{\tor}_K(G)$, and let $\bfSh^*_{K}(G)$ be the minimal compactification as in  Subsection \ref{Subsection:compactification}.
 To simplify notation, let $\bfX_K$,  $\bfX^{\tor}_K$,  and $\bfX^*_K$ denote the base change to $W(k_0)$ of $\bfSh_{K}(G)$,  $\bfSh^{\tor}_{K}(G)$, and $\bfSh^*_{K}(G)$, respectively.
Let $X_K$, $X^{\tor}_K$, and $X^*_K$  be respectively their special fibers. Denote by  $\X^{\tor}_K$ the formal completion of $\bfX^{\tor}_K$ along $X_K^\tor$,
 and $\X^\tor_{K,\rig}$ the base change to $L_{\wp}$ of the associated rigid analytic space over $W(k_0)[1/p]$.  For a locally closed subset  $U_0\subset X_K$,  we use $]U_0[$ to denote the inverse image of $U_0$ under the specialization map
$\mathrm{sp}: \X_{K,\rig}^\tor\ra X^{\tor}_K$. 
Similarly, we have the evident variants $\X^*_K$, $\X^*_{K,\rig}$ for the minimal compactification $\bfX^*$.
If there is no risk of confusion, we will use the same notation $\D$ to denote the toroidal boundary in various settings: $\bfX^{\tor}_K-\bfX_K$ and  $X^{\tor}_K-X_K$.

\subsection{Hasse invariant and ordinary locus}\label{Subsection:Hasse}
Let $\calA^\sm_{k_0}$ be the semi-abelian scheme over $X^{\tor}_{K}$ that extends the universal abelian variety $\univA_{k_0}$ over $X_{K}$.
 The Verschiebung homomorphism $\mathrm{Ver}:(\calA^{\sm}_{k_0})^{(p)}\ra \calA^\sm_{k_0}$ induces an $\cO_F$-linear  map on the module of invariant differential $1$-forms:
\[
h:\omegab\ra \omegab^{(p)},
\]
 which  induces, for each $\tau\in \Sigma_{\infty}$ (identified with the set of $p$-adic embeddings of $F$),   a map $h_{\tau}:\omegab_{\tau}\ra \omegab_{\sigma^{-1}\circ\tau}^{p}$. This defines, for each $\tau\in \Sigma_{\infty}$, a  section
 $$
  h_\tau\in H^0(X_{K}^\tor, \omegab_{\sigma^{-1}\circ\tau}^{p}\otimes \omegab_{\tau}^{-1}).
 $$
 We put $h=\otimes_{\tau\in \Sigma_{\infty}}h_{\tau}\in \Gamma(X_K^\tor, \det(\omegab)^{p-1})$.
  We call $h$ and $h_{\tau}$ respectively the \emph{(total) Hasse invariant}, and the \emph{partial Hasse invariant at $\tau$}. 
  
  Let $Y_K$ and $Y_{K,\tau}$ be the closed subschemes of $X^\tor_K$ defined by the vanishing locus of $h$ and $h_{\tau}$. Each $Y_{K,\tau}$ is reduced and smooth, and $Y_K=\bigcup_{\tau}Y_{K,\tau}$ is a normal crossing divisor in $X^{\tor}_K$ \cite{goren-oort, goren}. The stratification given by taking the intersections of $Y_{K, \tau}$ are called the \emph{Goren-Oort stratification} (or \emph{GO-stratification} for short).  We call the complement $X^{\tor, \ord}_{K}=X^{\tor}_K-Y_K$ the \emph{ordinary locus}. This is the open subscheme of the  moduli space $X^{\tor}_K$ where the semi-abelian scheme $\calA^{\sm}_{k_0}$ is ordinary. We point out that $Y_K$ does not intersect the toroidal boundary $\D=X^{\tor}_K-X_K$.
  
   Similarly, for the minimal compactification $X^*_K$, we put $X^{*,\ord}_K=X^*_K-Y_K$. Since $\det(\omegab)$ is an  ample line bundle on $X^{*}_K$ (see Subsection~\ref{Subsection:compactification}), $X^{*,\ord}_K$ is affine.

\subsection{Overconvergent Cusp Forms}\label{Subsection:ovt-cusp}

Let $j:\;]X^{\tor,\ord}_K[\hra \X^{\tor}_{K,\rig}$ be the natural inclusion of rigid analytic spaces. When it is necessary, we write $j_{K}$ instead to emphasize the level $K$. For a coherent sheaf $\calF$ on $\X^\tor_{K,\rig}$, following Berthelot \cite{berthelot2, berthelot}, we define $j^{\dagger}\calF$ to be the sheaf on $\X^{\tor}_{K,\rig}$ such that, for all admissible open subset $U\subset \X^\tor_{K,\rig}$, we have
\[
\Gamma(U,j^{\dagger}\calF)=\varinjlim_{V}\Gamma(V\cap U, \calF),
\]
where $V$ runs through a fundamental system of strict neighborhoods of $]X_K^{\tor,\ord}[$ in $\X^{\tor}_{K,\rig}$.  An explicit fundamental system  of strict neighborhoods of $]X_K^{\tor,\ord}[$ in $\X^{\tor}_{K,\rig}$ can be constructed as follows. Let $\tilde{E}$ be a lift to characteristic $ 0$ of a certain power of the Hasse invariant $h$. For any rational number $r>0$, we denote by $]X^{\tor,\ord}_K[_r$  the admissible open subset of $\X^{\tor}_{K,\rig}$ defined by $|\tilde{E}|\leq p^{-r}$. Then  the admissible open subsets $]X_K^{\tor,\ord}[_r$ with $r\ra 0^+$ form a fundamental system of strict neighborhoods of $]X_K^{\tor,\ord}[$ in $\X_{K,\rig}^{\tor}$. For the minimal compactification $\bfX_K^{*}$, we can define similarly admissible open subsets $]X^{*,\ord}_K[_{r}$, which also form a fundamental system of strict neighborhoods of $]X^{*,\ord}_K[$ in $\X^{*}_{K, \rig}$.
 We again point out that $]X_K^{*,\ord}[_r$ are  affinoid subdomains of $\X^*_{K,\rig}$, while  $]X^{\tor,\ord}_K[_r$  are not.

For a multiweight $(\kb, w)\in \Z^{\Sigma_\infty}\times \Z$, we define the space of \emph{cuspidal overconvergent Hilbert modular forms} (\emph{overconvergent cusp forms} for short) with coefficients in $L_{\wp}$ to be
\[
S^{\dagger}_{(\kb,w)}(K,L_{\wp}):= H^0(\fX^{\tor}_{K,\rig},j^{\dagger}\omegab^{(\kb,w)}(-\D)).
\]
This space does not depend on the choice of the toroidal compactification. 
When there is no risk of confusions, we write $S^{\dagger}_{(\kb,w)}$ for $S^{\dagger}_{(\kb,w)}(K,L_\wp)$.

If the weight $(\kb,w)$ is cohomological (i.e. $w\geq k_{\tau}\geq 2$), the space of overconvergent cusp forms  contains all classical cusp forms with Iwahori level structure at $p$. 
We denote by   $\Iw_p=\prod_{\gothp\mid p}\Iw_{\gothp}\subset \GL_2(\cO_{F}\otimes_{\Z}\Z_p)$ the Iwahori subgroup, where 
\begin{equation}\label{E:Iwahori}
\Iw_{\gothp}=\Big\{g=\begin{pmatrix}a&b\\c&d\end{pmatrix}\in \GL_2(\cO_{F_{\gothp}})\;\Big|\; c\equiv 0\mod \gothp\Big\}.
\end{equation}
Let $S_{(\kb,w)}(K^p\Iw_p,L_{\wp})$ denote the space of classical Hilbert cusp forms  of multiweight $(\kb,w)$ and of prime-to-$p$ level $K^p$ and Iwahori level at all places above $p$. 
By the theory of canonical subgroups, there is a natural injection
\[
\iota:S_{(\kb,w)}(K^p\Iw_p,L_{\wp})\hookrightarrow S^{\dagger}_{(\kb,w)}(K,L_{\wp}).
\]
An overconvergent cusp form $f\in S^{\dagger}_{(\kb,w)}(K,L_{\wp})$ is called \emph{classical}, if it lies in the image of $\iota$.



 Recall that the dual BGG-complex $\BGG_c(\F^{(\kb,w)})$ is  quasi-isomorphic to the de Rham complex $\DR_c(\F^{(\kb,w)})$ (Theorem~\ref{Theorem:BGG}). By applying $j^{\dagger}$ to $\BGG_c(\F^{(\kb,w)})$ and taking global sections, we get a complex $\scrC^\bullet_K$ of  overconvergent cusp forms concentrated in degrees $[0,g]$ with
\begin{equation}\label{Equ:complex}
\scrC^{j}_{K}: =\bigoplus_{\substack{J\subseteq \Sigma_{\infty}\\ \#J=j}} S^{\dagger}_{(s_J\cdot\kb,w)}(K,L_{\wp})e_{J}.
\end{equation}
Here, $e_{J}$ is the symbol introduced in \eqref{Equ:Kod-j-isom} in order to get the correct signs, and the differential map $d^j:\scrC^{j}_K\ra \scrC^{j+1}_K$  is given by the formula \eqref{Equ:diff-BGG}.

\subsection{Rigid cohomology of the ordinary locus.}\label{subsection:rigid-coh}
  We denote by $j^{\dagger}\DR_{c}(\F^{(\kb,w)})$ the complex of sheaves on $\X^{\tor}_{K,\rig}$ obtained by applying $j^{\dagger}$ to each component of $\DR_c(\F^{(\kb,w)})$. We define
\[
R\Gamma_{\rig}(X^{\tor,\ord}_K,\D; \F^{(\kb,w)}): =R\Gamma(\X^{\tor}_{K,\rig}, j^\dagger\DR_c(\F^{(\kb,w)}))\]
as an object in the derived category of $L$-vector spaces, and its cohomology groups will be denoted by
\[
H^\star_{\rig}(X^{\tor,\ord}_K,\D; \F^{(\kb,w)}):=\mathbb{H}^\star(\X^{\tor}_{K,\rig}, j^{\dagger}\DR_c(\F^{(\kb,w)})),
\]
where the left hand side denotes the hypercohomology of the complex $j^{\dagger}\DR_c(\F^{(\kb,w)})$. In Section~4, we will interpret $H^\star_{\rig}(X^{\tor,\ord}_K,\D; \F^{(\kb,w)})$  as the rigid
cohomology of a certain isocrystal over the ordinary locus $X^{\tor,\ord}_{K}$ and with compact support in $\D\subset X^{\tor}_K$.

\begin{theorem}\label{Theorem:overconvergent}
 The object $R\Gamma_{\rig}(X^{\tor,\ord}_K, \D;\F^{(\kb,w)})$ in the derived category of $L$-vector spaces  is represented by the complex $\scrC^{\bullet}_K$ defined in \eqref{Equ:complex}.
 In particular, we have an isomorphism
\[
H^g_{\rig}(X^{\tor,\ord}_K,\D;\F^{(\kb,w)})\cong S^{\dagger}_{(\kb,w)}/\sum_{\tau\in \Sigma_{\infty}}\Theta_{\tau, k_{\tau}-1}(S^{\dagger}_{(s_{\Sigma_{\infty}\backslash\{\tau\}}\cdot\kb,w)}).
\]
\end{theorem}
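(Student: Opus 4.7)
\bigskip

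\noindent\textbf{Proof proposal for Theorem~\ref{Theorem:overconvergent}.}

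The plan is to transport Faltings' dual BGG quasi-isomorphism (Theorem~\ref{Theorem:BGG}) onto the rigid analytic side and then show that the hypercohomology of the resulting BGG complex collapses to the cohomology of its complex of global sections. Since the functor $j^\dagger$ is exact on sheaves of abelian groups (being a filtered colimit of direct images from strict neighborhoods), applying $j^\dagger$ term by term to the quasi-isomorphism $\BGG_c(\F^{(\kb,w)})\hra \DR_c(\F^{(\kb,w)})$ yields a quasi-isomorphism
\[
j^\dagger\BGG_c(\F^{(\kb,w)})\xra{\sim} j^\dagger\DR_c(\F^{(\kb,w)})
\]
of complexes of sheaves on $\X^\tor_{K,\rig}$. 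Taking $R\Gamma$ and using the definition in Subsection~\ref{subsection:rigid-coh}, this identifies $R\Gamma_\rig(X^{\tor,\ord}_K,\D;\F^{(\kb,w)})$ with $R\Gamma(\X^\tor_{K,\rig},j^\dagger\BGG_c(\F^{(\kb,w)}))$.

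Next I would argue that this hypercohomology is computed by the complex $\scrC^\bullet_K$ of global sections. Using the hypercohomology spectral sequence
\[
E_1^{p,q}=H^q\bigl(\X^\tor_{K,\rig},j^\dagger\bigl(\omegab^{\epsilon_J(\kb,w)}\otimes\underline\Omega^J(-\D)\bigr)\bigr)\Longrightarrow H^{p+q}_\rig(X^{\tor,\ord}_K,\D;\F^{(\kb,w)})
\]
(where the outer sum on $E_1^{p,q}$ runs over $J\subseteq\Sigma_\infty$ with $\#J=p$), it suffices to prove that $E_1^{p,q}=0$ for $q>0$. This is the main technical point. Let $\pi:\X^\tor_{K,\rig}\to\X^*_{K,\rig}$ denote the proper, birational morphism induced by passing to the minimal compactification. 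The strict neighborhood $]X^{*,\ord}_K[_r$ is affinoid, because $X^{*,\ord}_K$ is affine (as $\det(\omegab)$ is ample on $X^*_K$), and these affinoids form a fundamental system of strict neighborhoods of $]X^{*,\ord}_K[$. I would therefore reduce to showing that the higher direct images $R^q\pi_{\rig,*}\bigl(\omegab^{\epsilon_J(\kb,w)}\otimes\underline\Omega^J(-\D)\bigr)$ vanish on each such affinoid; combined with Kiehl's theorem on coherent cohomology of affinoids and commutation of $H^q$ with the filtered colimit defining $j^\dagger$, this yields the desired vanishing. This higher direct image vanishing is exactly (the rigid analytic analogue of) the lemma Lan communicated to the authors — essentially a Koecher-type statement for cuspidal automorphic bundles along the toroidal boundary.

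Granting the vanishing, the spectral sequence degenerates and
\[
R\Gamma_\rig(X^{\tor,\ord}_K,\D;\F^{(\kb,w)})\simeq\scrC^\bullet_K,
\]
where the differentials on $\scrC^\bullet_K$ are those inherited from \eqref{Equ:diff-BGG}. Reading off cohomology in the top degree, and using that in degree $g$ there is a unique term indexed by $J=\Sigma_\infty$, the map $d^{g-1}$ is the sum over $\tau\in\Sigma_\infty$ of the maps $\Theta_{\tau,k_\tau-1}:S^\dagger_{\epsilon_{\Sigma_\infty\setminus\{\tau\}}(\kb,w)}\to S^\dagger_{(\kb,w)}$ (up to signs determined by the \v Cech symbols $e_J$). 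Its cokernel is precisely $S^\dagger_{(\kb,w)}/\sum_{\tau}\Theta_{\tau,k_\tau-1}(S^\dagger_{\epsilon_{\Sigma_\infty\setminus\{\tau\}}(\kb,w)})$, giving the stated formula for $H^g_\rig$.

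The main obstacle is the higher cohomology vanishing for the components of $j^\dagger\BGG_c(\F^{(\kb,w)})$. Although it is intuitively clear from affineness of the ordinary locus in the minimal compactification, some care is needed to handle the twist by $(-\D)$ and to formulate the vanishing uniformly over the cofinal family $]X^{\tor,\ord}_K[_r$; this is why one must work with the morphism $\pi_\rig$ to the minimal compactification rather than directly on the toroidal side, where strict neighborhoods of the ordinary locus are not affinoid.
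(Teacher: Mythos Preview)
Your proposal is correct and follows essentially the same route as the paper: pass to $j^\dagger\BGG_c$, push down via $\pi$ to the minimal compactification, invoke Lan's vanishing $R^q\pi_*\big(\omegab^{\epsilon_J(\kb,w)}\otimes\underline\Omega^J(-\D)\big)=0$ for $q>0$, and then use that the strict neighborhoods $]X^{*,\ord}_K[_r$ are affinoid. The only cosmetic difference is in how the interchange of $j^\dagger$ with the pushforward is justified: the paper notes directly that $R\pi_*j^\dagger=j^\dagger R\pi_*$ because the toroidal boundary $\D$ lies inside the ordinary locus (so $\pi$ is an isomorphism away from $]X^{\tor,\ord}_K[$), whereas you phrase it via commutation of cohomology with the filtered colimit defining $j^\dagger$; both are fine, but the paper's observation is slightly cleaner and avoids having to argue about $j_{V*}$.
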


The following Lemma is due to  Kai-Wen Lan.

\begin{lemma}[{\cite[Theorem 8.2.1.3]{lan3}}]\label{Lemma:vanishing}
Let $\pi: \bfX^{\tor}_{K,L}\ra \bfX^{*}_{K,L}$ be the canonical projection. Then for any  multi-weight $(\kb,w )\in \ZZ^{\Sigma_{\infty}}\times \Z$,   we have $R^{q}\pi_{*}\big( \omegab^{(\kb,w)}(-\D)\big)=0$ for $q>0$.
\end{lemma}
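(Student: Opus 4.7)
The plan is to reduce the statement to a local computation above each cusp of $\bfX^*_{K,L}$, and then carry out a toric cohomology calculation.

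First, the morphism $\pi$ is an isomorphism above the open part $\bfX_{K,L}\subset\bfX^*_{K,L}$, so $R^q\pi_*=0$ holds automatically away from the (zero-dimensional) cuspidal boundary. By the theorem on formal functions, it therefore suffices to prove, for each cusp $c\in \bfX^*_{K,L}\setminus\bfX_{K,L}$ and each $q>0$, that
\[
H^q\bigl(\widehat{Z}_c,\;\omegab^{\epsilon_J(\kb,w)}\otimes \underline\Omega^J(-\D)\bigr)=0,
\]
where $\widehat{Z}_c$ denotes the formal completion of $\bfX^\tor_{K,L}$ along the fiber $\pi^{-1}(c)$.

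Next, I would appeal to the local description of the toroidal compactification recalled in Subsection~\ref{Subsection:compactification}. Since $G=\Res_{F/\QQ}\GL_2$ has $\QQ$-parabolic rank one, each cusp is zero-dimensional, and $\widehat{Z}_c$ is étale-locally a finite quotient (by a subgroup $\Gamma_c$ of units, controlled by Hypothesis~\ref{H:fine-moduli}) of the formal completion of a smooth toric variety $\bar\Xi$ along its toric boundary. Here $\bar\Xi$ is attached to a smooth rational polyhedral cone decomposition of $\gothc^+_\RR$ for some fractional ideal $\gothc$ associated with the cusp, and the open torus $\Xi\subset\bar\Xi$ has character lattice $M\subset F$ coming from the $\gothc$-polarization datum. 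Under this identification the divisor $\D$ pulls back to the toric boundary $\bar\Xi\setminus\Xi$; and, crucially, the canonical extensions of $\omegab_\tau$ and $\wedge^2\calH^1_\tau$ trivialize along the toric fibers, since these bundles are pulled back from the (reduced) base parametrizing the cusp. Consequently the sheaf $\omegab^{\epsilon_J(\kb,w)}\otimes\underline\Omega^J$ becomes $\Gamma_c$-equivariantly a free $\cO$-module in this formal neighborhood.

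The problem is thereby reduced to the toric vanishing
\[
H^q\bigl(\widehat{\bar\Xi},\,\cO(-\D)\bigr)^{\Gamma_c}=0\quad\text{for }q>0,
\]
which I would establish by \v Cech cohomology with respect to the affine toric cover indexed by maximal cones of the decomposition. The \v Cech complex decomposes as a direct sum over weights $\xi\in M$; the twist by $\cO(-\D)$ restricts the sum to those $\xi$ lying in the \emph{interior} of the positive cone $\gothc^+$, and for each such $\xi$ the associated subcomplex of the fan is star-shaped (in fact, the full fan), hence contractible, forcing the vanishing in positive degrees. Passing to $\Gamma_c$-invariants is harmless because $\Gamma_c$ acts on the $\xi$-eigenspace by a character, so invariants commute with the vanishing.

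The main obstacle is to make rigorous the assertion that Faltings/Lan's canonical extension of $\omegab^{\epsilon_J(\kb,w)}$ is genuinely trivialized along the formal toric fibers over $c$ in a manner compatible with the $\Gamma_c$-action, so that the purely toric computation applies verbatim in the arithmetic setting. These compatibilities (trivialization of the Hodge filtration along Mumford's degeneration, together with the $\cO_F$-equivariance) are exactly what is worked out in the cited Theorem~8.2.1.3 of \cite{lan3}, and the outline above is the skeleton of that proof.
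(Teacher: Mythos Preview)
The paper does not supply its own proof of this lemma; it attributes the statement to K.-W.~Lan and simply cites \cite[Theorem~8.2.1.3]{lan3}.  Your outline is the standard strategy for such a higher-direct-image vanishing result in the toroidal/minimal compactification setting---reduction to the cusps via the theorem on formal functions, trivialization of the automorphic line bundle along the toric fibers of Mumford's construction (using that $\calA^\sm$ degenerates to a torus there), and then the classical vanishing $H^q(\widehat{\bar\Xi},\cO(-\D))=0$ for $q>0$ on a smooth complete fan via the weight decomposition.  This is indeed the skeleton of Lan's argument, as you note.

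One small correction in your sketch: the unit group $\Gamma_c$ does not act on a fixed $\xi$-eigenspace by a character; rather it permutes the weights, sending the $\xi$-summand to the $\epsilon\xi$-summand for $\epsilon\in\Gamma_c$.  The passage to invariants is still harmless, but for a different reason: since a totally positive $\xi$ has trivial stabilizer in the unit group, the $\Gamma_c$-invariant subcomplex is a sub-direct-sum over $\Gamma_c$-orbits of the acyclic complexes $C^\bullet_\xi$, hence itself acyclic in positive degrees.
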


\begin{proof}[Proof of Theorem \ref{Theorem:overconvergent}] Since the complex $\BGG_c(\F^{(\kb,w)})$ is quasi-isomorphic to the compactly supported de Rham complex $\DR_c(\F^{(\kb,w)})$, we have
\begin{align*}
R\Gamma_{\rig}(X^{\tor,\ord}_K,\D; \F^{(\kb,w)})&\cong R\Gamma(\X_{K,\rig}^{\tor}, j^{\dagger}\BGG_c(\F^{(\kb,w)}))\\
&\cong R\Gamma(\X^{*}_{K,\rig}, R\pi_*j^{\dagger}\BGG_c(\F^{(\kb,w)})).
\end{align*}
Since  the boundary $\D\subset \X^{\tor}_{K,\rig}$ is contained in the ordinary locus  $]X_K^{\tor,\ord}[$, we deduce $R\pi_*j^\dagger=j^{\dagger}R\pi_*$. 
By Lemma~\ref{Lemma:vanishing}, we have $R\pi_{*}\omega^{(s_J\cdot \kb, w)}(-\D)=\pi_{*}\omega^{(s_J\cdot \kb, w)}(-\D)$ for any $J\subseteq \Sigma_{\infty}$, and hence
$R\pi_*\BGG_c(\F^{(\kb,w)})=\pi_*\BGG_c(\F^{(\kb,w)})$. Let $]X_K^{*,\ord}[_r$
for rational $r>0$ be the strict neighborhoods of $]X_K^{*,\ord}[$ considered in 
Subsection~\ref{Subsection:ovt-cusp}. Since the $]X_K^{*,\ord}[_r$'s are affinoid and form a fundamental system of strict neighborhoods of $]X_K^{*,\ord}[$ in $\X^*_{K,\rig}$, we deduce that
\begin{align*}
&{\ }\quad H^{n}\big(\X_{K,\rig}^{*},  j^{\dagger}\pi_*\omegab^{(s_J\cdot\kb,w)}  (-\D)\big)\\
&=\varinjlim_{r\ra 0} H^{n}\big(\,]X_K^{*,\ord}[_r, \pi_*\omegab^{(s_J\cdot\kb,w)} (-\D)\big)=
\begin{cases}
S^{\dagger}_{(s_J\cdot\kb,w)} & \text{for } n=0,\\ 0 & \text{for } n\neq 0.
\end{cases}
\end{align*}
It follows that
$$
R\Gamma(\X_{K,\rig}^*, j^{\dagger}\pi_*\omegab^{(s_J\cdot \kb,w)}(-\D))=S^{\dagger}_{(s_J\cdot \kb,w)},
$$
and hence
$$
R\Gamma_{\rig}(X_K^{\tor,\ord},\D; \F^{(\kb,w)})=R\Gamma(\X^*_{K,\rig}, j^\dagger\pi_*\BGG_c(\F^{(\kb,w)}))=\scrC^{\bullet}_K.
$$
This completes the proof of the theorem.
\end{proof}

\subsection{Prime-to-$p$ Hecke actions} Let $\scrH(K^p,L_\wp)=L_\wp[K^p\backslash G(\AAA^{\infty, p})/K^p]$ be the abstract prime-to-$p$ Hecke algebra of level $K^p$. We will define  actions of $\scrH(K^p,L_\wp)$ on  $H^{\star}_{\rig}(X^{\texttt{}\tor,\ord}_K,\D; \F^{(\kb,w)})$ and on the complex $\scrC^{\bullet}_K$ such that the isomorphisms described in Theorem \ref{Theorem:overconvergent} are equivariant of the actions.

 Consider the double coset $[K^pg K^p]$.  We put $K'^p=K^p\cap g K^pg^{-1}$ and $K'=K'^p K_p$. By choosing suitable rational polyhedral cone decomposition data, we have  the following Hecke correspondence \eqref{E:Hecke-tor}:
\[\xymatrix{
&\bfX^{\tor}_{K'}\ar[rd]^{\pi_2=[g]^\tor}\ar[ld]_{\pi_1=[1]^\tor}\\
\bfX^{\tor}_{K}&&\bfX^{\tor}_{K}.
}
\]
Using the isomorphisms \eqref{Equ:Kod-j-isom} and \eqref{E:isom-g-F}, one has a  map  of complexes of sheaves
\[
\pi_2^*\colon  \pi_2^{-1}\DR_c(\F^{(\kb,w)})\ra \DR_c(\F^{(\kb,w)}),
\]
which is compatible with the $\tF$-filtration. 
For each $J\subseteq \Sigma_{\infty}$, the sheaf $\omegab^{(s_J\cdot \kb,w)}(-\D)$  appears as a direct summand of $\Gr_{\tF}^\bullet\DR_c(\F^{(\kb,w)})$ by Theorem~\ref{Theorem:BGG}. 
The  morphism above induces a map of abelian sheaves
\[
\pi_2^*\colon \pi_2^{-1}(\omegab^{ (s_J\cdot \kb,w)}(-\D))\ra\omegab^{ (s_J\cdot\kb,w)}(-\D_{K'}),
\]
where $\D_{K'}$ is the boundary of $\bfX^{\tor}_{K'}$, 
and hence a morphism of the BGG-complexes 
 $$\pi_2^*: \pi_2^{-1}\BGG_c(\F^{(\kb,w)})\ra \BGG_c(\F^{(\kb,w)}).
 $$ 
 It is  clear that the two morphisms $\pi^*_2$ are compatible with the natural quasi-isomorphic inclusion   $\BGG_c(\F^{(\kb,w)})\hra \DR_c(\F^{(\kb,w)})$ in Theorem~\ref{Theorem:BGG}.
\begin{lemma}\label{L:trace-map}
 Under  the above notation,  we have $R^q\pi_{1,*}\cO_{\bfX^{\tor}_{K'}}=0$ for $q>0$, and $\pi_{1,*}(\cO_{\bfX_{K'}^{\tor}})$ is finite flat over $\cO_{\bfX^{\tor}_{K}}$.
\end{lemma}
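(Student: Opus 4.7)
The strategy is to prove that $\pi_1^{\tor}$ is a finite flat morphism between smooth schemes; both conclusions of the lemma then follow formally, because finite morphisms are affine (giving $R^q\pi_{1,*}=0$ for $q>0$ on any quasi-coherent sheaf) and finite flat morphisms have locally free direct images.

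For finiteness, I would begin from the two facts stated in Subsection~\ref{Subsection:compactification}(4): the morphism $\pi_1^{\tor}$ is proper, thanks to the compatible choice of rational polyhedral cone decompositions for $\bfX^{\tor}_{K'}$ and $\bfX^{\tor}_K$, and its restriction $\pi_1\colon \bfX_{K'}\to \bfX_K$ to the interior is finite étale of degree $[K^p:K'^p]$ because $K'^p\subset K^p$ has finite index. It then remains to check quasi-finiteness along the toroidal boundary $\D_{K'}$. This can be verified locally: with compatible fans, the formal neighborhoods of boundary strata on source and target are identified with affine toric schemes, and the induced map is the toric morphism associated to the natural refinement of fans, which is visibly quasi-finite. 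A more conceptual alternative is to apply Stein factorization $\pi_1^{\tor}\colon \bfX^{\tor}_{K'}\to Y\to \bfX^{\tor}_K$; since the generic fibers are finite, the proper map with connected geometric fibers $\bfX^{\tor}_{K'}\to Y$ is birational, and because $\bfX^{\tor}_{K'}$ is smooth (hence normal) it coincides with its own normalization, forcing this map to be an isomorphism and $\pi_1^{\tor}$ to be finite.

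For flatness, I would appeal to the miracle flatness theorem: both $\bfX^{\tor}_{K'}$ and $\bfX^{\tor}_K$ are smooth over $W(k_0)$ of the same relative dimension $g$ (in particular Cohen--Macaulay and regular), so any finite morphism between them is automatically flat. Equivalently, since $\pi_1^{\tor}$ is log-étale with respect to the boundary log structures (again Subsection~\ref{Subsection:compactification}(4)) and log-étale morphisms are flat, flatness is built into the construction.

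Putting the pieces together, $\pi_{1,*}\cO_{\bfX^{\tor}_{K'}}$ is a finite locally free $\cO_{\bfX^{\tor}_K}$-algebra, and the vanishing $R^q\pi_{1,*}\cO_{\bfX^{\tor}_{K'}}=0$ for $q>0$ is a consequence of the fact that a finite morphism is affine. The main technical obstacle is the quasi-finiteness check along the boundary, which is where the compatibility of the cone decompositions really enters; the Stein factorization shortcut above bypasses a direct toric computation but still relies essentially on properness of $\pi_1^{\tor}$ together with smoothness of the source, both of which are guaranteed by the chosen compactification data.
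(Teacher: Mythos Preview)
There is a genuine gap: your strategy hinges on $\pi_1^{\tor}$ being a finite morphism, but none of the three arguments you offer establishes this, and with the cone decompositions as in Subsection~\ref{Subsection:compactification}(4) the map need not be finite. First, the toric morphism attached to a \emph{refinement} of fans is not quasi-finite in general---it is a proper birational modification (a toric blow-up), the prototypical example of a proper map that fails to be quasi-finite. Second, your Stein factorization step is mis-stated: from ``$g\colon \bfX^{\tor}_{K'}\to Y$ is proper birational with smooth source'' one cannot conclude that $g$ is an isomorphism; the blow-up of a smooth surface at a point is a counterexample. Zariski's Main Theorem requires the \emph{target} to be normal and the map to be quasi-finite, which is exactly what is in question. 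Third, ``log-\'etale implies flat'' is false: log blow-ups are log-\'etale but not flat. Once finiteness is unavailable, miracle flatness cannot be invoked either.

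The paper's proof does not attempt to show finiteness. Over $\bfX_K$ the map is finite \'etale and there is nothing to do; along $\D_K$ one passes to the completion, where $\pi_1^{\tor}$ is \'etale-locally an equivariant morphism of toric varieties. The required vanishing $R^q\pi_{1,*}\cO=0$ for $q>0$ and the finite-flatness of $\pi_{1,*}\cO$ then follow from the classical results on such morphisms in \cite{KKMS}, Chapter~I, \S3: for a proper equivariant map $\phi$ induced by a fan refinement one has $R\phi_*\cO=\cO$, so any non-finite boundary contractions are invisible at the level of $R\pi_{1,*}\cO$. This toric input is precisely what your shortcut was trying to bypass, and it cannot be avoided here.
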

\begin{proof}
The statement is clear over $\bfX_{K}$, since $\pi_1$ is finite \'etale there. Therefore, it is enough to prove the lemma after changing base of $\pi_{1}$ to  the completion of $\bfX_{K}^{\tor}$ along $\D_K=\bfX^{\tor}_{K}-\bfX_K$. The morphism $\pi_1$ over this completion  is \'etale locally given by equivariant morphisms between toric varieties. So the result follows from  similar arguments as in \cite[Ch.~I~\S3]{KKMS}. 
\end{proof}

\begin{cor}\label{C:trace}
There exist natural trace maps 
\begin{align*}
\Tr_{\pi_1}\colon& R\pi_{1,*}\DR_c(\F^{(\kb,w)})\ra \DR_c(\F^{(\kb,w)}), \quad \textrm{and}\\
\Tr_{\pi_1}\colon& R\pi_{1,*}\big(\omegab^{(s_J\cdot\kb,w)} (-\D_{K'})
\big)\ra\omegab^{(s_J\cdot\kb,w)}(-\D)
\end{align*}
for each $J\subseteq \Sigma_{\infty}$, such that the induced map $\Tr_{\pi_1}$ on $\BGG_c(\F^{(\kb,w)})$ is compatible with that on   $\DR_c(\F^{(\kb,w)})$ via the quasi-isomorphism of Theorem~\ref{Theorem:BGG}.
\end{cor}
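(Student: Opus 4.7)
The plan is to construct the trace maps from the standard finite flat trace on structure sheaves and propagate it termwise via the projection formula, exploiting the fact that $\pi_1$ is log-étale so that all the relevant sheaves at level $K'$ are pullbacks of those at level $K$.

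First I would define the basic trace $\Tr_{\pi_1}\colon \pi_{1,*}\cO_{\bfX^{\tor}_{K'}} \to \cO_{\bfX^{\tor}_K}$. By Lemma \ref{L:trace-map}, $\pi_{1,*}\cO_{\bfX^{\tor}_{K'}}$ is a finite flat $\cO_{\bfX^{\tor}_K}$-algebra, so the usual trace makes sense: locally, a section $s$ is sent to the trace of its multiplication endomorphism on this finite flat module. Over $\bfX_K$ itself, where $\pi_1$ is finite étale (being induced by the inclusion $K' \subset K$), this agrees with the usual étale trace.

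Next I would identify every term of the de Rham and dual BGG complexes at level $K'$ with the $\pi_1$-pullback of the corresponding term at level $K$. The isomorphism \eqref{E:isom-g-F} gives $\pi_1^*\F^{(\kb,w)} \simeq \F^{(\kb,w)}$ and $\pi_1^*\omegab^{\epsilon_J(\kb,w)} \simeq \omegab^{\epsilon_J(\kb,w)}$. Because $[1]^\tor$ is log-étale for the canonical log structures given by the toroidal boundaries, we also obtain $\pi_1^*\Omega^\bullet_{\bfX^{\tor}_K}(\log \D_K) \simeq \Omega^\bullet_{\bfX^{\tor}_{K'}}(\log \D_{K'})$ and in particular $\pi_1^*\underline\Omega^J \simeq \underline\Omega^J$; moreover $\pi_1^{-1}(\D_K) = \D_{K'}$ as Cartier divisors, hence $\pi_1^*\cO(-\D_K) = \cO(-\D_{K'})$. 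Combining these gives canonical isomorphisms
$$\omegab^{\epsilon_J(\kb,w)}\otimes\underline\Omega^J(-\D_{K'}) \simeq \pi_1^*\bigl(\omegab^{\epsilon_J(\kb,w)}\otimes\underline\Omega^J(-\D_K)\bigr),$$
and analogously for each term $\F^{(\kb,w)}\otimes\Omega^i_{\bfX^{\tor}_{K'}}(\log\D_{K'})$ of $\DR_c(\F^{(\kb,w)})$. Then, by the projection formula combined with Lemma \ref{L:trace-map}, we have
$$R\pi_{1,*}\bigl(\omegab^{\epsilon_J(\kb,w)}\otimes\underline\Omega^J(-\D_{K'})\bigr) \simeq \pi_{1,*}\cO_{\bfX^{\tor}_{K'}}\otimes_{\cO_{\bfX^{\tor}_K}}\bigl(\omegab^{\epsilon_J(\kb,w)}\otimes\underline\Omega^J(-\D_K)\bigr),$$
and tensoring the basic $\Tr_{\pi_1}$ of Step 1 with $\omegab^{\epsilon_J(\kb,w)}\otimes\underline\Omega^J(-\D_K)$ produces the desired trace. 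The same termwise recipe applied to $\DR_c(\F^{(\kb,w)})$ yields the trace on the de Rham complex. Compatibility with the embedding $\BGG_c \hookrightarrow \DR_c$ of Theorem \ref{Theorem:BGG} is automatic: that embedding is constructed from representation-theoretic data (see Remark \ref{R:BGG}(2)) that is functorial in the universal abelian scheme and hence commutes with $\pi_1^*$, so both traces are built by the same projection-formula procedure on the two sides of a natural square.

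The main obstacle I expect is verifying that the termwise traces really assemble into morphisms of complexes, i.e.\ that they commute with the differentials. For the de Rham differential this is the compatibility of the Gauss-Manin connection with $\pi_1^*$ plus the Leibniz rule relative to the finite flat trace; for the BGG differentials $d^j = \sum_{\tau_0\notin J}\Theta_{\tau_0,k_{\tau_0}-1}$ one must know that each $\Theta_{\tau_0,k_{\tau_0}-1}$, being intrinsically built from $\nabla$ and the Hodge filtration, commutes with $\pi_1^*$. Both reductions are naturality statements, but they have to be checked with care near the toroidal boundary, which is where the log-étaleness of $[1]^\tor$ (rather than mere étaleness) is essential.
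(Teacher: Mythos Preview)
Your proposal is correct and follows essentially the same approach as the paper: identify each term on $\bfX^{\tor}_{K'}$ as $\pi_1^*M$ via \eqref{E:isom-g-F}, apply the projection formula together with Lemma~\ref{L:trace-map} to get $R\pi_{1,*}(\pi_1^*M)\simeq M\otimes \pi_{1,*}\cO_{\bfX^{\tor}_{K'}}$, and then tensor with the finite flat trace. The paper's proof is actually terser than yours---it does not spell out the log-\'etaleness argument for $\Omega^\bullet(\log\D)$ and $\cO(-\D)$, nor does it explicitly address the compatibility with differentials that you flag as the main obstacle---so your additional care there is warranted rather than superfluous.
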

\begin{proof}
By \eqref{E:isom-g-F}, each term  $M'=\mathrm{DR}^j_{c}(\F^{(\kb,w)})$ or $\omegab^{(s_J\cdot \kb,w)}(-\D_{K'})$ on $\bfX^{\tor}_{K'}$ is the pullback via $\pi_1$ of the corresponding object on $\bfX_{K}^{\tor}$, i.e. $M'$ has the form $M'=\pi_1^*(M)$. By  the projection formula and Lemma~\ref{L:trace-map} above, we deduce an isomorphism \[R\pi_{1,*}(M')\cong M\otimes_{\cO_{\bfX_{K}^{\tor}}} R\pi_{1,*}(\cO_{\bfX_{K'}^{\tor}})=M\otimes_{\cO_{\bfX^{\tor}_K}}\pi_{1,*}\cO_{\bfX_{K'}^{\tor}}.\] The existence of  the  trace map $\pi_{1,*}(\cO_{\bfX_{K'}^{\tor}})\ra \cO_{\bfX_{K}^\tor}$ follows from the finite flatness of $\pi_{1,*}(\cO_{\bfX_{K'}^{\tor}})$.  
\end{proof}

We can describe now the action of the double coset $[K^pg K^p]$ on $H^{\star}_{\rig}(X^{\tor,\ord}_K,\D;\F^{(\kb,w)})$ and $\scrC^{\bullet}_K$. Since the partial Hasse invariants depend only on the $p$-divisible group associated with the universal abelian scheme, it is clear that  the inverse image of $X^{\tor,\ord}_K$ via both $\pi_1$ and $\pi_2$ are identified with  $X^{\tor,\ord}_{K'}$.
We define the action of $[K^pg K^p]$ on $R\Gamma(\X^{\tor}_{K,\rig}, j^{\dagger}_K \DR_c(\F^{(\kb,w)}))$ to be the composite map:
\[
\xymatrix{R\Gamma\big(\X^{\tor}_{K,\rig}, j^{\dagger}_K \DR_c(\F^{(\kb,w)})\big)\ar[r]^-{\pi_2^{*}}\ar[rd]_{[K^pg K^p]_*}
&R\Gamma\big(\X^{\tor}_{K',\rig}, j^{\dagger}_{K'}\DR_c(\F^{(\kb,w)})\big)\ar[d]^{\Tr_{\pi_1}}\\
 &R\Gamma\big(\X_{K,\rig}^{\tor}, j^{\dagger}_K\DR_c(\F^{(\kb,w)})\big),
}
\]
where $\Tr_{\pi_1}$ is induced by the trace map $\Tr_{\pi_1}\colon R\pi_{1,*}(\DR_c(\F^{(\kb,w)}))\ra \DR_c(\F^{(\kb,w)})$ in Corollary~\ref{C:trace}. Taking the cohomology, one gets the action of $[K^pg K^p]$ on the cohomology groups $H^{\star}_{\rig}(X^{\tor,\ord}_K, \D; \F^{(\kb,w)})$, hence the action of $\scrH(K^p,L_\wp)$ by linear combinations.

Similarly, for each  $J\subseteq \Sigma_{\infty}$, we define the action of $[K^pg K^p]$ on $S^{\dagger}_{(s_{J}\cdot \kb,w)}(K,L_\wp)=H^0(\X_{K,\rig}^{\tor}, j^{\dagger}_K\omegab^{(s_J\cdot \kb,w)}(-\D))$ as 
\[
[K^pg K^p]_*\colon S^{\dagger}_{(s_J\cdot\kb,w)}(K,L_\wp)\xra{\pi_2^*} S^{\dagger}_{(s_J\cdot\kb,w)}(K',L_\wp)\xra{\Tr_{\pi_1}}S^{\dagger}_{(s_J\cdot\kb,w)}(K,L_\wp).
\]
 Putting together all $S^{\dagger}_{(s_J\cdot\kb,w)}(K,L_\wp)$, one gets the action of $[K^pg K^p]$ on the complex $\scrC^\bullet_{K}$. 
 It is clear that the action on $\scrC^{\bullet}_K$ is compatible with the action on $H^{\star}_{\rig}(X^{\tor,\ord}_{K}, \D; \F^{(\kb,w)})$ via Theorem~\ref{Theorem:overconvergent}.

\subsection{The operator $S_{\gothp}$} 
\label{S:operator S_gothp}
We now define the Hecke actions at $p$. We start with the operator $S_\gothp$ for $\gothp \in \Sigma_p$. In the classical adelic language, the operator $S_{\gothp}$ will be the Hecke action given by the element in $\GL_2(\AAA_F^{\infty,p})$ which is $\big(\begin{smallmatrix}\varpi_{\gothp}^{-1} &0\\0&\varpi^{-1}_{\gothp}\end{smallmatrix}\big)$ at $\gothp$ and is the identity matrix at all other places.

Let $\pi_{S_p}: \bfX^{\tor}_K\ra \bfX^{\tor}_K$  be the endomorphism whose effect at non-cusp points is given by
\[
\pi_{S_{\gothp}}\colon (A,\iota,\bar\lambda, \bar\alpha_{K^p})\mapsto (A\otimes_{\cO_F}\gothp^{-1}, \iota', \bar\lambda',\bar\alpha_{K^p}'),
\]
where the induced structures on  $A\otimes_{\cO_F} \gothp^{-1}\cong A/A[\gothp]$ are given as follows: 
 The action $\iota'$ of $\cO_F$ on $A\otimes_{\cO_F} \gothp^{-1}$ is evident. 
 If $A$ is $\gothc$-polarized, i.e. $\lambda$ induces an isomorphism $\gothc\xra{\sim} \Hom_{\cO_F}^{\Sym}(A,A^{\vee}) $ preserving the positivity, the polarization $\lambda'$ is given by 
\[
\lambda':\gothc \gothp^{2}\xra{\lambda\otimes 1} \Hom^{\Sym}_{\calO_F}(A,A^\vee)\otimes_{\cO_F}\gothp^{2}=\Hom_{\cO_F}^\Sym(A\otimes_{\cO_F}\gothp^{-1},(A\otimes_{\cO_F}\gothp^{-1})^{\vee}),
\]
which sends the positive cone $\gothc^+\gothp^2$ to the positive cone of the polarizations of $A\otimes_{\cO_F} \gothp^{-1}$. 
Finally,   the level-$K^p$ structure $\alpha_{K^p}$ on $A$ induces naturally a level $K^p$-structure $\alpha'_{K^p}$ on $A\otimes_{\calO_F} \gothp^{-1}$, since $A$ and $A\otimes_{\cO_F}\gothp^{-1}$ have naturally isomorphic prime-to-$p$ Tate modules. This canonically defines a point $(A\otimes_{\cO_F}\gothp^{-1},\iota',\bar\lambda',\bar\alpha'_{K^p})$ on $\bfX_K$ with the convention in Remark~\ref{R:polarization}.
 The automorphism $\pi_{S_{\gothp}}$ preserves the ordinary locus $X^{\tor,\ord}_{K}$, since $A$ and $A\otimes \gothp^{-1}$ have isomorphic $p$-divisible groups. 
 
 We have a canonical isogeny
\[
[\varpi_{\gothp}]\colon \univA^{\sm}\ra \pi_{S_{\gothp}}^*(\univA^{\sm})=\univA^{\sm}\otimes_{\cO_F} \gothp^{-1},
\]
with kernel $\univA^\sm[\gothp]$.
It induces a map on the relative de Rham cohomology
$[\varpi_{\gothp}]^*\colon \pi_{S_{\gothp}}^*\calH^1\ra\calH^1$,
hence a morphism of vector bundles
\[
[\varpi_{\gothp}]^*\colon \pi_{S_{\gothp}}^*\F^{(\kb,w)}\ra\F^{(\kb,w)}
\]
compatible with the Hodge filtration, $\cO_F$-action, and the Gauss-Manin connection. 
Hence, it induces a maps of de Rham complexes:
\[
 [\varpi_{\gothp}]^*\colon  \DR_c(\pi_{S_{\gothp}}^*\F^{(\kb,w)}) \ra \DR_c(\F^{(\kb,w)})
\]

We define the action of $S_{\gothp}$ on $H^{\star}_{\rig}(X^{\tor,\ord}_{K}, \D; \F^{(\kb,w)})$ as the composite
\begin{equation}
\label{Equ:S-operator}
\xymatrix{
 H^{\star}\big(\X^{\tor}_{K,\rig}, j^{\dagger}\DR_c(\F^{(\kb,w)})\big)\ar[r]^-{\pi_{S_\gothp}^{*}}\ar[rd]_{S_{\gothp}} & H^{\star}\big(\X^{\tor}_{K,\rig}, j^{\dagger}\DR_c(\pi_{S_{\gothp}}^{*}\F^{(\kb,w)})\big)\ar[d]^{[\varpi_{\gothp}]^*}\\
&H^{\star}\big(\X^{\tor}_{K,\rig}, j^{\dagger}\DR_c(\F^{(\kb,w)})\big).
}
\end{equation}

Similarly, for each subset $J\subseteq \Sigma_{\infty}$, the morphism $[\varpi_{\gothp}]^*$ on $\calH^1$ induces a morphism of modular line bundles 
\[[\varpi_{\gothp}]^*\colon \pi_{S_\gothp}^*\omegab^{(s_J\cdot\kb,w)}(-\D)\ra \omegab^{(s_{J}\cdot\kb,w)}(-\D).
\]
By the functoriality of BGG-complex,  the maps $[\varpi_{\gothp}]^*$ on various $\omegab^{(s_J\cdot\kb,w)}(-\D)$'s commute with the differentials in $\BGG_c(\F^{(\kb,w)})$. 
Thus one has  a map of of the BGG-complexes 
\[
[\varpi_{\gothp}]^*: \BGG_c([\varpi_\gothp]^*\F^{(\kb,w)})\ra \BGG_c(\F^{(\kb,w)}),
\]
which is compatible with the $[\varpi_{\gothp}]^*$ on $\DR_c(\F^{(\kb,w)})$ via the quasi-isomorphic inclusion $\BGG_c(\F^{(\kb,w)})\hra \DR_c(\F^{(\kb,w)})$ in Theorem~\ref{Theorem:BGG}, since the canonical Kodaira-Spencer isomorphism \eqref{Equ:Kod-isom} is Hecke equivariant.    
Taking overconvergent sections, one gets thus an endomorphism on overconvergent cusp forms
\[
\xymatrix{
 H^{0}\big(\X^{\tor}_{K,\rig}, j^{\dagger}\omegab^{(s_J\cdot \kb,w)}(-\D)\big)\ar[r]^-{\pi_{S_\gothp}^{*}}\ar[rd]_{S_{\gothp}} & H^{0}\big(\X^{\tor}_{K,\rig}, j^{\dagger}\pi_{S_{\gothp}}^{*}\omegab^{(s_J\cdot \kb, w)}\big)\ar[d]^{[\varpi_{\gothp}]^*}\\
&H^{\star}\big(\X^{\tor}_{K,\rig}, j^{\dagger}\omegab^{(s_J\cdot \kb,w)}(-\D))\big).
}
\]
Putting all $J$'s together, one obtains an endomorphism $S_\gothp$ on  the complex $\scrC^{\bullet}$ which is compatible with that on $H^{\star}_{\rig}(X^{\tor,\ord}_K,\D;\F^{(\kb,w)})$ when taking cohomology.



\subsection{The $\gothp$-canonical subgroup}\label{S:p-can-subgroup}

For a rigid point $x\in \X^{\tor}_{K,\rig}$ and $\tau\in \Sigma_{\infty}$, 
Goren and Kassaei defined in \cite[4.2]{goren-kassaei} the \emph{$\tau$-valuation} of $x$, denoted by $\nu_{\tau}(x)$, as follows.
In a small enough affine chart $\mathfrak U$ of $\X_{K}^\tor$ containing the rig-point $x$, we can lift the partial Hasse invariant $h_\tau$ to a section $\tilde h_\tau$ of $\omegab^p_{\sigma^{-1}\circ \tau} \otimes \omegab_\tau^{-1}$ which trivializes over $\mathfrak U$. Then we define $\nu_\tau(x)$ to be $\min\{ \val_p(\tilde h_\tau(x)), 1\}$.
This gives a well-defined rational number in $[0,1]$, independent of the affine chart, the lift $\tilde h_\tau$, or the trivialization of the line bundle. 
Moreover, the point $x$ belongs to the ordinary locus $]X_{K}^{\tor,\ord}[$ if and only if $\nu_{\tau}(x)=0$ for all $\tau \in \Sigma_{\infty}$. We write $\rb$ for a tuple $(r_{\gothq})\in [0,p)^{\Sigma_{p}}$ with $r_{\gothq}\in \Q$. Following \cite[5.3]{goren-kassaei}, we
put
\[
]X^{\tor,\ord}_{K}[_{\rb}=\big\{x\in \X^{\tor}_{K,\rig}\; \big| \;\nu_{\tau}(x)+p\nu_{\sigma^{-1}\circ\tau}(x)\leq r_{\gothq}, \; \forall \tau\in \Sigma_{\infty/\gothq}\big\}.
\]
Then we have $]X^{\tor,\ord}_{K}[_{\rb}=]X^{\tor,\ord}_{K}[$ if  $\rb=0$, and $]X^{\tor,\ord}_{K}[_{\rb}$ form a fundamental system of strict neighborhoods of $]X^{\tor,\ord}_{K}[$ in $\X^{\tor}_{K, \rig}$ as   $r_{\gothq}\ra 0^+$ for all $\gothq\in \Sigma_p$. 
We put \[
]X_{K}^{\ord}[_{\rb}\,=\,]X_{K}[\;\cap\; ]X_{K}^{\tor,\ord}[_{\rb}.\]

Now we fix a prime ideal $\gothp\in \Sigma_{p}$, and choose $\rb=(r_{\gothq})_{\gothq\in \Sigma_{p}}$ as above with $0<r_{\gothp}<1$.  Goren-Kassaei proved that there exists a finite flat subgroup scheme $\calC_{\gothp}\subset \univA^{\sm}[\gothp]$ over $]X^{\tor,\ord}_{K}[_{\rb}$, called the \emph{universal $\gothp$-canonical subgroup}, satisfying the following properties \cite[5.3, 5.4]{goren-kassaei}:
\begin{itemize}
\item[(1)] Locally for the \'etale topology on $]X^{\tor,\ord}_{K}[_{\rb}$, we have $\univC_{\gothp}\simeq \cO_F/\gothp$.

\item[(2)] The restriction of $\univC_{\gothp}$ to the ordinary locus $]X^{\tor,\ord}_{K}[$ is the multiplicative part of $\univA^{\sm}[\gothp]$.

\item[(3)] If we equip $\univA^{\sm}/\univC_{\gothp}$ with the induced action of $\cO_F$, polarization, and $K^p$-level structure, then the quotient isogeny
$
\pi_{\gothp}\colon \univA^{\sm}\ra \univA^{\sm}/\univC_{\gothp}
$
 over $]X_{K}^{\tor,\ord}[_{\rb}$ induces a finite flat map
\begin{equation}
\label{Equ:varphi}
\varphi_{\gothp}\colon  ]X^{\tor,\ord}_{K}[_{\rb}\longrightarrow\, ]X_{K}^{\tor,\ord}[_{\rb'}
\end{equation}
such that $\varphi^*_\gothp\univA^{\sm}$ is isomorphic to $ \univA^{\sm}/\univC_{\gothp}$ together with all induced structures, where $\rb'\in [0,p)^{\Sigma_p}$ is given by   $r'_{\gothp}=p r_{\gothp}$ and  $r'_{\gothq}=r_{\gothq}$ for $\gothq\neq \gothp$. The restriction of $\varphi_{\gothp}$ to the non-cuspidal part $]X_{K}^{\ord}[_{\rb}$ is finite \'etale of degree $N_{F/\Q}(\gothp)$.
\end{itemize}

In the sequel, for any point $(A, \iota, \bar\lambda,\bar\alpha_{K^p})$ lying in the locus $]X^{\tor,\ord}_{K}[_{\rb'}$, we denote by $C_{\gothp}
\subset A[\gothp]$ the $\gothp$-canonical subgroup of $A$.

The isogeny  $\pi_{\gothp}$ induces a map on the relative de  Rham cohomology
\begin{equation}\label{Equ:pi-frob}
\pi_{\gothp}^*\colon \varphi_{\gothp}^*\calH^1=\calH^1_{\dR}(\univA^{\sm, (\varphi_{\gothp})}/]X_{K}^{\tor,\ord}[_{\rb})\ra \calH^1_{\dR}(\univA^{\sm}/]X_{K}^{\tor,\ord}[_{\rb})=\calH^1.
\end{equation}
compatible with the Hodge filtration,  the action of $\cO_F$, and the connections $\nabla$ on both sides.

\subsection{Partial Frobenius $\Fr_{\gothp}$}\label{S:operator-Phi}
 Let $(\kb, w)$ be a cohomological multi-weight.  The morphisms $\varphi_{\gothp}$ and $\pi_{\gothp}^*:\varphi_{\gothp}^*(\calH^1)\ra \calH^1$   induce a map of vector bundles on $]X_K^{\tor,\ord}[_{\rb}$:
\[
\pi^*_{\gothp}\colon \varphi^*_{\gothp}\F^{(\kb,w)}\ra \F^{(\kb,w)}
\]
compatible with all structures on both sides, and hence a morphism of the de Rham complexes:
\[
\pi^*_{\gothp}\colon \DR_{c}(\varphi_{\gothp}^*\F^{(\kb,w)})\ra \DR_c(\F^{(\kb,w)}).
\]
We define $\Fr_{\gothp}$ to be  the composite map on the cohomology groups
\begin{equation*}
\xymatrix{
H^{\star}\big(]X_{K}^{\tor,\ord}[_{\rb'}, \DR_{c}(\F^{(\kb,w)})\big)\ar[r]^{\varphi^*_{\gothp}}\ar[rd]_{\Fr_{\gothp}}
&H^{\star}\big(]X_K^{\tor,\ord}[_{\rb},\DR_{c}(\varphi_{\gothp}^*\F^{(\kb,w)})\big)\ar[d]^{\pi_{\gothp}^*}\\
&H^{\star}\big(]X_K^{\tor,\ord}[_{\rb},\DR_c(\F^{(\kb,w)})\big).}
\end{equation*}
 Taking the direct limit as $\rb\ra 0^+$, one gets
\begin{equation}\label{Equ:Fr_q}
\Fr_{\gothp}\colon H^{\star}\big(\X^{\tor}_{K, \rig}, j^{\dagger} \DR_{c}(\F^{(\kb,w)})\big)\ra
 H^{\star}\big(\X^{\tor}_{K, \rig}, j^{\dagger}\DR_c(\F^{(\kb,w)})\big).
 \end{equation}
We call $\Fr_{\gothp}$ the {\it partial Frobenius} at $\gothp$.


 Since \eqref{Equ:pi-frob} is compatible with Hodge filtration and the $\cO_F$-action, it induces by functoriality a map of modular  line bundles 
 \[
\pi_{\gothp}^*\colon \varphi_{\gothp}^*\omegab^{(s_J\cdot\kb,w)}(-\D)\ra \omegab^{(s_J\cdot\kb,w)}(-\D),
\]
for any $J\subseteq \Sigma_{\infty}$. 
Taking direct sums over $J$, one gets  a map $\pi_{\gothp}^*: \pi_{\gothp}^*\BGG_c(\F^{(\kb,w)})\ra \BGG_c(\F^{(\kb,w)})$.
 This  is compatible with the $\pi_{\gothp}^*$ on $\DR_c(\F^{(\kb,w)})$ via the quasi-isomorphic inclusion $\BGG_c(\F^{(\kb,w)})\hra \DR_c(\F^{(\kb,w)})$.
 Taking overconvergent sections, one gets a composite map
\begin{equation}
\label{E:Frp on rigid cohomology}
\xymatrix{
H^0\big(]X_{K}^{\tor,\ord}[_{\rb'}, \omegab^{(s_{J}\cdot\kb,w)}(-\D)\big) \ar[r]^{\varphi_{\gothp}^*}\ar[dr]_{\Fr_\gothp} &H^0\big(]X_{K}^{\tor,\ord}[_{\rb}, \omegab^{(s_J\cdot\kb,w)}(-\D)\big)\ar[d]^{\pi_{\gothp}^*}\\
&H^0\big(]X_{K}^{\tor,\ord}[_{\rb}, \omegab^{(s_J\cdot\kb,w)}(-\D)\big)
}
\end{equation}
Letting $\rb\ra 0^+$, one gets the action of $\Fr_{\gothp}$ on the space of overconvergent cusp  forms:
\[
\Fr_{\gothp}\colon S^{\dagger}_{(s_J\cdot\kb,w)}(K,L_{\wp})\xra{\varphi_{\gothp}^*}S^{\dagger}_{(s_J\cdot\kb,w)}(K,L_{\wp})\xra{\pi_{\gothp}^*}S^{\dagger}_{(s_J\cdot\kb,w)}(K,L_{\wp}).
\]
Taking direct sum on $J$, we get  an endomorphism of complexes $\Fr_{\gothp}: \scrC^\bullet_{K}\ra \scrC^\bullet_{K}$, which is compatible with the $\Fr_{\gothp}$-action on $H^{\star}(\X^{\tor}_{K, \rig}, j^{\dagger}\DR_c(\F^{(\kb,w)}))$ via the isomorphism in Theorem~\ref{Theorem:overconvergent}.

\subsection{Study of $\varphi_{\gothp}$ over the ordinary locus}\label{S:Frobenius-ordinary}
 The ordinary locus $]X_{K}^{\tor, \ord}[$ is stable under $\varphi_{\gothp}$. The restriction of $\varphi_{\gothp}$ to $]X_{K}^{\tor,\ord}[$ can be defined over the formal model $\X_{K}^{\tor,\ord}$. 

 \begin{lemma}\label{Lemma:varphi-omega}
 If we regard $\cO_{\X^{\tor,\ord}_{K}}$ as a finite flat algebra over $\varphi^*_{\gothp}(\cO_{\X^{\tor,\ord}_K})$, then we have
$$
\Tr_{\varphi_{\gothp}}\big(\cO_{\X^{\tor,\ord}_{K}}\big)\subseteq p^{[F_{\gothp}:\Q_p]} \varphi^*_{\gothp}\big(\cO_{\X^{\tor,\ord}_{K}}\big).
$$
 \end{lemma}
 
 To prove this Lemma, we need some preliminary on the Serre-Tate local moduli.

 Let $\xb:\Spec(\overline \FF_p)\ra X_{K}^{\tor,\ord}$ be a geometric point in the ordinary locus, and $A_{\xb}$ the HBAV at $\xb$. We denote by $\widehat{\cO}_{\xb}$ the completion of the local ring $\cO_{\bfX_{K,W(\overline \FF_p)}^{\tor,\ord},\xb}$ at $\xb$.
 Let $\Def_{\cO_{F}}(A_{\xb}[p^{\infty}])$ be the deformation space of $A_{\xb}[p^{\infty}]$, i.e. the formal scheme over $W(\overline \FF_p)$ that classifies the $\cO_F$-deformations of $A_{\xb}[p^\infty]$ to noetherian complete local $W(\overline \FF_p)$-algebras with residue field $\overline \FF_p$. By the Serre-Tate's theory, we have a canonical isomorphism of formal schemes
\begin{equation}\label{Equ:Serre-Tate}
\Spf(\widehat{\cO}_{\xb})\cong \Def_{\cO_F}(A_{\bar x}[p^{\infty}]).
\end{equation}
 The $p$-divisible group $A_{\xb}[p^{\infty}]$ has a canonical decomposition
\[
A_{\xb}[p^{\infty}]=\prod_{\gothq\in \Sigma_p}A_{\xb}[\gothq^{\infty}],
\]
where each $A_{\xb}[\gothq^{\infty}]$ is an ordinary Barsotti-Tate $\cO_{F_{\gothq}}$-group of height $2$  and dimension $d_{\gothq}=[F_{\gothq}:\Q_p]$. This induces a canonical decomposition of the deformation spaces
\begin{equation}\label{Equ:decomp-local-moduli}
\Def_{\cO_F}(A_{\xb}[p^{\infty}])\cong \prod_{\gothq\in \Sigma_{p}}\Def_{\cO_{F_{\gothq}}}(A_{\xb}[\gothq^{\infty}]),
\end{equation}
where $\Def_{\cO_{F_{\gothq}}}(A_{\xb}[\gothq^{\infty}])$ denotes the deformation space of $A_{\xb}[\gothq^\infty]$ as a Barsotti-Tate  $\cO_{F_{\gothq}}$-modules, and the product is in the category of formal $W(\overline \FF_p)$-schemes. Since $A_{\xb}$ is ordinary, for each $\gothq\in \Sigma_p$, we have a canonical exact sequence
\[
0\ra A_{\xb}[\gothq^{\infty}]^{\mu}\ra A_{\xb}[\gothq^{\infty}]\ra A_{\xb}[\gothq^\infty]^{\et}\ra 0,
 \]
where $A_{\xb}[\gothq^{\infty}]^{\mu}$ and $A_{\xb}[\gothq^{\infty}]^{\et}$ denote respectively the multiplicative part and the \'etale part of $A_{\xb}[\gothq^{\infty}]$.
By  Serre-Tate theorem, the deformation space $\Def_{\cO_{F_\gothq}}(A_{\xb}[\gothq^{\infty}])$ has a natural formal group structure, canonically isomorphic to the formal group associated to the $p$-divisible group
\[
\Hom_{\cO_{F_{\gothq}}}(T_p(A_{\xb}[\gothq^{\infty}]^{\et}), A_{\xb}[\gothq^{\infty}]^{\mu})\cong \mu_{p^{\infty}}\otimes_{\Z_p} \cO_{F_{\gothq}}.
\]
Here,  the last step used the fact that both $A_{\xb}[\gothq^{\infty}]^{\mu}$ and $A_{\xb}[\gothq^{\infty}]^{\et}$ have both height $1$ as Barsotti-Tate $\cO_{F_{\gothq}}$-modules. Therefore, we have
$$\Def_{\cO_{F_{\gothq}}}(A_{\xb}[\gothq^{\infty}])\cong \widehat{\G}_m\otimes_{\Z_p} \cO_{F_{\gothq}}\simeq \widehat{\G}_m^{d_{\gothq}}.$$
We choose an isomorphism
\begin{equation}\label{Equ:coordinates}
\Def_{\cO_{F_\gothq}}(A_{\xb}[\gothq^{\infty}])\simeq \Spf(W(\overline \FF_p))[[t_{\gothq, 1},\dots, t_{\gothq, d_{\gothq}}]],
\end{equation}
  so that the multiplication by $p$ on $\Def_{\cO_{F_\gothq}}(A_{\xb}[\gothq^{\infty}])$ is given by
   \[[p](t_{\gothq, i})=(1+t_{\gothq, i})^p-1.\]
Therefore, 
$\frac{d t_{\gothq, i}}{1+t_{\gothq, i}}$ $(1\leq i\leq d_{\gothq})$ are invariant differential $1$-forms, and they form a basis of  $\widehat{\Omega}^1_{\Def_{\cO_{F_{\gothq}}}(A_{\xb}[\gothq^{\infty}])/W(\overline \FF_p)}$. 
   By \eqref{Equ:Serre-Tate}, we have
\[
\widehat{\cO}_{\xb}\simeq W(\overline \FF_p)[[\{t_{\gothq, i}:\gothq\in \Sigma_{p}, 1\leq i\leq d_{\gothq}\}]].
\]
Furthermore, we remark that the direct summand  $\widehat{\Omega}^1_{\widehat{\cO}_{\xb}/W(\overline \FF_p)}[\gothq]$ of the  differential module
$\widehat{\Omega}^1_{\widehat{\cO}_{\xb} / W(\overline \FF_p)}$
 is generated over $\widehat{\cO}_{\xb}$ by
 $\{\frac{d t_{\gothq, i}}{1+t_{\gothq,i}}: 1\leq i\leq d_{\gothq}\}.$

\begin{proof}[Proof of Lemma \ref{Lemma:varphi-omega}]
 The problem is local.
 Let $\xb$ be a geometric point of $X^{\tor,\ord}_K$, and let $\varphi_{\gothp}(\xb)$ be  its image under $\varphi_{\gothp}$. It suffices to show that
 \begin{equation*}\label{Equ:trace-varphi-local}
 \Tr_{\varphi_{\gothp}^*}(\cO_{\xb})\subseteq p^{d_{\gothp}}\varphi_{\gothp}^*(\cO_{\varphi_{\gothp}(\xb)}).
 \end{equation*}
 We always use \eqref{Equ:Serre-Tate} to identify $\Spf(\widehat{\cO}_{\xb})$ with $\Def_{\cO_F}(A_{\xb}[p^{\infty}])$.  

Let $\univA_{\xb}$ be  the base change of the universal HBAV to $\Spf(\widehat{\cO}_{\xb})$. 
Then $\univA_{\xb}[\gothp^{\infty}]$ is the universal deformation of $A_{\xb}[\gothp^\infty]$ over $\Def_{\cO_{F_{\gothp}}}(A_{\xb}[\gothp^{\infty}])$. 
It is an ordinary Barsotti-Tate  $\cO_{F_\gothp}$-modules, i.e. an extension of its \'etale part by its multiplicative part. 
 The isogeny $\pi_{\gothp}:\univA_{\xb}\ra \univA_{\varphi_{\gothp}(\xb)}=\univA_{\xb}/\univC_{\gothp, \xb}$ induces a morphism between two exact sequences of $\gothp$-divisible groups
\[
\xymatrix{0\ar[r]& \univA_{\xb}[\gothp^{\infty}]^{\mu}\ar[r]\ar[d]^{\pi_{\gothp}^{\mu}}&\univA_{\xb}[\gothq^{\infty}]\ar[d]^{\pi_{\gothp}}\ar[r]
&\univA_{\xb}[\gothp^\infty]^\et\ar[r]\ar[d]^{\pi_{\gothp}^{\et}}&0\\
0\ar[r] &\univA_{\varphi_{\gothp}(\xb)}[\gothp^{\infty}]^{\mu}\ar[r]&\univA_{\varphi_{\gothp}(\xb)}[\gothp^{\infty}]\ar[r]&\univA_{\varphi_{\gothp}(\xb)}[\gothp^{\infty}]^{\et}
\ar[r]&0.
}
\]
Since the $\gothp$-canonical subgroup $\univC_{\gothp,\xb}$ coincides with the $p$-torsion of $\univA_{\xb}[\gothp^{\infty}]^{\mu}$,
the isogeny $\pi_{\gothp}^{\mu}$ is  the multiplication by $p$ up to isomorphism, and $\pi_{\gothp}^{\et}$ is an isomorphism. 
This implies that, there exists an isomorphism
$$
\phi\colon
\Def_{\cO_{F_\gothp}}(A_{\xb}[\gothp^{\infty}])\ra\Def_{\cO_{F_\gothp}}(A_{\varphi_\gothp(\xb)}[\gothp^{\infty}])
$$
such that $\varphi_{\gothp}=p\cdot \phi.$

 Let $\varphi'_{\gothp}$ denote the endomorphism on $\Def_{\cO_F}(A_{\xb}[p^{\infty}])$ that gives the multiplication by $p$ on $\Def_{\cO_{F_{\gothp}}}(A_{\xb}[\gothp^{\infty}])$ and the  identity on $\Def_{\cO_{F_{\gothq}}}(A_{\xb}[\gothq^{\infty}])$ with $\gothq\neq \gothp$.
By the discussion above, there exists an isomorphism $\phi: \Def_{\cO_F}(A_{\xb}[p^{\infty}])\ra \Def_{\cO_F}(A_{
\varphi_{\gothp}(\xb)}[p^{\infty}])$ such that $\varphi_{\gothp}=\phi\circ \varphi'_{\gothp}$.
 Thus it suffices to prove that $\Tr_{\varphi'^*_{\gothp}}$ is divisible by $p^{d_{\gothp}}$.
 We may further reduce the problem to showing that the trace map of the multiplication by $p$ on $\Def_{\cO_{F_{\gothp}}}(A_{\xb}[\gothp^{\infty}])$ is divisible by $p^{d_{\gothp}}$. 
  This follows from an easy computation using the Serre-Tate coordinates $\{t_{\gothp, i}: 1\leq i\leq d_{\gothp}\}$ in \eqref{Equ:coordinates}.
 \end{proof}

 \subsection{$U_{\gothp}$-correspondence}
\label{S:Up correspondence}
 Let $\rb=(r_{\gothq})_{\gothq}\in ((0,p)\cap \Q)^{\Sigma_{p}}$ be a tuple with $r_{\gothp}<1$ as in Subsection~\ref{S:p-can-subgroup}, and 
$\rb'=(r'_{\gothq})_{\gothq}\in (0,p)^{\Sigma_p}$ be such that  $r'_{\gothp}=p\,r_{\gothp}$ and $r'_{\gothq}=r_{\gothq}$ with $\gothq\neq \gothp$.
 Let $\calA^{\sm}$ be the family of semi-abelian schemes over $]X^{\tor,\ord}_{K}[_{\rb'}$, and  $]X^{\tor,\ord}_{K}[_{\rb'}^{\gothp}$ be the rigid analytic space that  classifies the $\cO_F$-stable finite flat group schemes $\calD\subseteq \calA^\sm[\gothp]$ which is disjoint from the $\gothp$-canonical subgroup $\calC_{\gothp}$, i.e. outside the toroidal boundary, $]X^{\tor,\ord}_{K}[^{\gothp}_{\rb'}$ parametrizes the tuples $(A,\iota,\lambda,\alpha_{K^p},H)$
\begin{itemize}
\item $(A,\iota,\bar\lambda,\bar\alpha_{K^p})$ is a  point of $]X^{\tor,\ord}_{K}[_{\rb'}$,

\item $D\subset A[\gothp]$ is a subgroup stable under $\cO_F$, \'etale locally isomorphic to $\cO_F/\gothp$ and disjoint from the $\gothp$-canonical subgroup $C_{\gothp}\subset A[\gothp]$.
\end{itemize}
We have two projections
\begin{equation}\label{Equ:tor-p-corresp}
  \xymatrix{
&]X^{\tor,\ord}_{K}[^{\gothp}_{\rb'}\ar[rd]^{\pr_2}\ar[ld]_{\pr_1}\\
]X^{\tor,\ord}_{K}[_{\rb'}&& ]X^{\tor,\ord}_{K}[_{\rb},
}
\end{equation}
whose effect on non-cuspidal points are given by
\begin{align*}
\pr_1(A,\iota,\bar\lambda, \bar\alpha_{K^p}, D)
&\mapsto (A,\iota, \bar\lambda,\bar\alpha_{K^p})\\
\pr_2(A,\iota,\bar\lambda, \bar\alpha_{K^p}, D)&\mapsto (A/D,{\iota}', \bar\lambda',\bar{\alpha}'_{K^p}).
\end{align*}
Here, $(A/D, {\iota}',\bar\lambda', \bar{\alpha}'_{K^p})$ denotes the quotient rigid analytic HBAV $A/D$ with the induced polarization and $K^p$-level structure. 
In the terminology of \cite{goren-kassaei}, the subgroups $H$ are \emph{anti-canonical} at $\gothp$, and  \cite[Theorem~5.4.4(4)]{goren-kassaei} implies that image of $\pr_2$ lies in $]X^{\tor,\ord}_{K}[_{\rb}
$.

\begin{lemma}\label{Lemma:p2-isom}
The morphism $\pr_1$ is finite \'etale of degree  $N_{F/\Q}(\gothp)$. The map $\pr_2$  is an isomorphism of rigid analytic spaces, 
with the inverse  map $\tilde{\pi}_{S_{\gothp^{-1}}}\circ \tilde\varphi_{\gothp}$, where
\begin{align*}
\tilde{\varphi}_{\gothp}&\colon(A,\iota, \bar\lambda,\bar\alpha_{K^p})\mapsto (A/C_{\gothp}, \iota', \bar\lambda', \bar{\alpha}'_{K^p}, A[\gothp]/C_{\gothp}), \textrm{ and}\\
\tilde{\pi}_{S_{\gothp^{-1}}}=\tilde{\pi}_{S_{\gothp}}^{-1}&\colon (A, \iota, \bar\lambda, \bar\alpha_{K^p}, D)\mapsto (A\otimes_{\cO_F}\gothp, \iota'', \bar\lambda'', \bar\alpha''_{K^p}, D\otimes_{\cO_F} \gothp).
\end{align*}
Here, $(\iota', \bar\lambda', \bar{\alpha}'_{K^p})$ and $(\iota'', \bar\lambda'', \bar\alpha''_{K^p})$ denote the natural induced structures on the corresponding objects. In particular, we have
\begin{equation}\label{Equ:p1-p2}
\pr_1={\pi}_{S_{\gothp}}^{-1}\circ\varphi_{\gothp}\circ\pr_2,
\end{equation}
 where $\varphi_{\gothp}$ is defined in \eqref{Equ:varphi}, and  $\pi_{S_{\gothp}}^{-1}$ denotes the inverse of  the automorphism  $\pi_{S_{\gothp}}$ on $]X^{\tor,\ord}_{K}[_{\rb'}$.
  \end{lemma}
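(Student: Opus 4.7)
The plan is to prove the three claims of the lemma in sequence: the finite étaleness of $\pr_1$ of degree $N_{F/\QQ}(\gothp)$, the bijectivity of $\pr_2$ via an explicit two-sided inverse, and the compatibility identity \eqref{Equ:p1-p2}.

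For the first claim, I would analyze the geometric fibers of $\pr_1$. Over a geometric point $(A,\iota,\bar\lambda,\bar\alpha_{K^p})$ of $]X^{\tor,\ord}_{K}[_{\rb'}$, the fiber parametrizes $\cO_F$-stable rank-one subgroup schemes $H\subset A[\gothp]$ disjoint from the canonical subgroup $C_\gothp$. Since $\cO_F$ acts on $A[\gothp]$ through the residue field $k_\gothp = \cO_F/\gothp$ and $A[\gothp]$ is locally free of rank two over $k_\gothp$, such $H$'s correspond to $k_\gothp$-lines in $A[\gothp]$ different from $C_\gothp$, giving exactly $N_{F/\QQ}(\gothp)$ choices. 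Each such $H$ is étale (being disjoint from the multiplicative $C_\gothp$ throughout the strict neighborhood by the Goren-Kassaei construction), so the scheme of such subgroups is finite étale over the base, and this description extends to the toroidal boundary via the multiplicative description of $C_\gothp$ at the cusps.

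For the second claim, I would introduce $\psi := \tilde S_{\gothp^{-1}}\circ\tilde\varphi_\gothp$ as the candidate inverse. Well-definedness of $\psi$ requires checking that $A[\gothp]/C_\gothp$ is anti-canonical in $A/C_\gothp$: this holds because $A[\gothp]/C_\gothp$ identifies with the étale part $(A/C_\gothp)[\gothp]^{\mathrm{et}}$ of the same rank $N_{F/\QQ}(\gothp)$, hence is disjoint from the multiplicative canonical subgroup of $A/C_\gothp$, and the subsequent application of $\tilde S_{\gothp^{-1}}$ preserves this property by functoriality of the $\otimes_{\cO_F}\gothp$ operation. The identity $\pr_2\circ\psi=\mathrm{id}$ then reduces, using flatness of $\gothp$ as an $\cO_F$-module, to the chain
\begin{equation*}
\bigl((A/C_\gothp)\otimes_{\cO_F}\gothp\bigr)\Big/\bigl((A[\gothp]/C_\gothp)\otimes_{\cO_F}\gothp\bigr) = (A/A[\gothp])\otimes_{\cO_F}\gothp = (A\otimes_{\cO_F}\gothp^{-1})\otimes_{\cO_F}\gothp = A.
\end{equation*}
Conversely, for an anti-canonical pair $(A,H)$, the étale isogeny $A\to A/H$ transports $C_\gothp$ isomorphically onto the canonical subgroup $C'_\gothp$ of $A/H$; a direct computation with the preimage $K := \{x\in A: \gothp x\in H\}$ identifies $(A/H)[\gothp]/C'_\gothp$ with $H$ via the multiplication-by-$\gothp$ map $K/A[\gothp]\xrightarrow{\sim} H$, so that tensoring by $\gothp$ in the second coordinate recovers precisely the original subgroup $H\subset A$.

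The third claim is then immediate from the construction of $\psi$: chasing a point $(A,H)$, one has $\varphi_\gothp(\pr_2(A,H)) = (A/H)/C'_\gothp = A/A[\gothp] = A\otimes_{\cO_F}\gothp^{-1}$, and applying $S_{\gothp^{-1}}$ tensors back by $\gothp$ to recover $A = \pr_1(A,H)$. The main obstacle throughout is the careful bookkeeping of induced polarizations, $K^p$-level structures, and the compatibility of the various $\cO_F$-module tensor and quotient operations; once these formal identifications are pinned down using the rational moduli interpretation and the convention in Remark~\ref{R:polarization}, the geometric content reduces to the observation that all these operations respect the ordinary multiplicative-étale decomposition of $A[\gothp]$, which persists in the strict neighborhood thanks to the Goren-Kassaei $\gothp$-canonical subgroup of Subsection~\ref{S:p-can-subgroup}.
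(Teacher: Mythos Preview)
Your proof is correct and follows essentially the same approach as the paper: both construct the explicit inverse $\psi = \tilde S_{\gothp^{-1}}\circ\tilde\varphi_\gothp$ and verify it on points by tracking the isogenies $A \to A/H \to A/A[\gothp] \cong A\otimes_{\cO_F}\gothp^{-1}$. The paper's proof is much terser---it only checks $\psi\circ\pr_2 = \mathrm{id}$ (observing that $A[\gothp]/H$ is the canonical subgroup of $A/H$ and then computing $\tilde\varphi_\gothp(A/H) = (A\otimes\gothp^{-1}, H\otimes\gothp^{-1})$) and declares the rest immediate---whereas you also verify $\pr_2\circ\psi = \mathrm{id}$ and the well-definedness of $\psi$, which makes your argument more complete.
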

 \begin{proof}
 The statement for $\pr_1$ is clear. 
 To see $\pr_2$ is an isomorphism, we take a point $(A, D):=(A, \iota, \bar\lambda, \bar\alpha_{K^p}, D)$ in $]X^{\tor,\ord}_{K}[^{\gothp}_{\rb'}$. 
 We have $\pr_2(A, D)=A/D$, and $A[\gothp]/D$ is the $\gothp$-canonical subgroup of $A'=A/D$.  So  we have
 \[
 \tilde{\varphi}_{\gothp}(A')=(A'/(A[\gothp]/D), A'[\gothp]/(A[\gothp]/D))=(A/A[\gothp], \bar{D})=(A\otimes_{\cO_F}\gothp^{-1}, D\otimes_{\cO_F} \gothp^{-1}).\]
 with all the induced structures. The Lemma now follows immediately.
  \end{proof}


By the  definition of the maps $\pr_1$ and $\pr_2$ in Subsection~\ref{S:Up correspondence},  there is a natural isogeny of semi-abelian schemes over $]X^{\tor,\ord }_{K}[^{\gothp}_{\rb'}$:
\[
\check{\pi}_{\gothp}\colon \pr_1^{*}\univA^{\sm}\ra\pr_1^*\univA^{\sm}/\calD =  \pr_2^*\univA^{\sm},
\]
where  $\calD\subset \pr_1^*\univA^{\sm}[\gothp]$ is the tautological  subgroup scheme over $]X^{\tor,\ord }_{K}[^{\gothp}_{\rb'}$ disjoint from  $\univC_{\gothp}$.
 It induces a morphism  on the relative de Rham cohomology
\[
\check{\pi}^*_{\gothp}\colon \pr_2^*\calH^1\ra \pr_1^*\calH^1
\]
compatible with all the structures on both sides. In particular, for each $\tau\in \Sigma_{\infty}$, it induces a morphism
$\check{\pi}_{\gothp}\colon \pr_2^*\calH^1_{\tau}\ra\pr_1^* \calH^1_{\tau}$
compatible with the Hodge filtration $0\ra \omegab_{\tau}\ra \calH^1_{\tau}\ra \Lie((\univA^{\sm})^\vee)_{\tau}\ra 0$. 

\begin{lemma}\label{Lemma:check-pi}
Let $x=(A,\iota, \lambda, \alpha_{K^p}, D)$ be a rigid point on $]X^{\tor,\ord}_{K}[^{\gothp}_{\rb'}$ defined over the ring of integers $\cO_{\wp'}$ of a finite extension $L'_{\wp'}$ of $L_\wp$, and let $\check{\pi}_{\gothp, x}:A\ra A':=A/D $ be the canonical isogeny. Assume that $A$ has ordinary good reduction.  Let  $\omega_{\tau}$ and $\eta_{\tau}$ (resp. $\omega_{\tau}'$ and $\eta_{\tau}'$) be a basis of $\calH^1_{\tau}(A/\cO_{\wp'})$ (resp.  $\calH^1_{\tau}(A'/\cO_{\wp'})$) over $\cO_{\wp'}$ adapted to the Hodge filtration, and write
$$
\check{\pi}_{\gothp,x}^*(\omega_{\tau}')=a_{\tau}\omega_{\tau},\quad \textrm{and}\quad \check{\pi}_{\gothp,x}^*(\eta_{\tau}')\equiv b_{\tau}\eta_{\tau}\pmod {\omegab_{\tau}}.
$$
 Then we have $\val_p(a_{\tau})=0$ for all $\tau \in \Sigma_{\infty}$,  $\val_p(b_{\tau})=0$ if $\tau\notin \Sigma_{\infty/\gothp}$ and $\val_p(b_{\tau})=1$ if $\tau\in \Sigma_{\infty/\gothp}$. In particular, we have
 $$\check{\pi}_{\gothp,x}^*(\omega_{\tau}'\wedge\eta_{\tau}')=a_{\tau}b_{\tau} \omega_{\tau}\wedge\eta_{\tau},$$
 with $\val_p(a_{\tau}b_{\tau})=0$ if $\tau\notin \Sigma_{\infty/\gothp}$  and $\val_p(a_{\tau}b_{\tau})=1$ if $\tau\in \Sigma_{\infty/\gothp}$.
\end{lemma}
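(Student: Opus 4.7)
The plan is to exploit the ordinary decomposition $A[p^\infty] = \prod_{\gothq \in \Sigma_p} A[\gothq^\infty]$ and analyze $\check\pi_{\gothp,x}\colon A \to A/H$ component by component. Since $\ker(\check\pi_{\gothp,x}) = H \subseteq A[\gothp]$, its restriction to $A[\gothq^\infty]$ is an isomorphism for every $\gothq \neq \gothp$. Hence for any $\tau \notin \Sigma_{\infty/\gothp}$, the induced map $\check\pi_{\gothp,x}^*\colon \calH^1_\tau(A'/\cO_{\wp'}) \to \calH^1_\tau(A/\cO_{\wp'})$ is itself an isomorphism of free rank-two $\cO_{\wp'}$-modules, giving $a_\tau, b_\tau \in \cO_{\wp'}^\times$ directly.

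For $\tau \in \Sigma_{\infty/\gothp}$, the key input is the connected-\'etale sequence of the ordinary Barsotti--Tate $\cO_{F_\gothp}$-module
\[
0 \to A[\gothp^\infty]^\mu \to A[\gothp^\infty] \to A[\gothp^\infty]^\et \to 0,
\]
combined with the hypothesis $H \cap C_\gothp = 0$, where $C_\gothp = A[\gothp]^\mu$ is the $\gothp$-canonical subgroup in the ordinary case. A snake-lemma chase applied to the map of connected-\'etale sequences for $A$ and $A'$ shows that $\check\pi_{\gothp,x}$ induces (i) an isomorphism on multiplicative parts $A[\gothp^\infty]^\mu \xrightarrow{\sim} A'[\gothp^\infty]^\mu$ (kernel is $H \cap A[\gothp^\infty]^\mu = 0$, and the source and target are $p$-divisible of the same height), and (ii) a surjection on \'etale parts $A[\gothp^\infty]^\et \twoheadrightarrow A'[\gothp^\infty]^\et$ with kernel the image of $H$, isomorphic to $\cO_F/\gothp$; as a morphism of Barsotti--Tate $\cO_{F_\gothp}$-modules of height one, this is, up to isomorphism, multiplication by $\varpi_\gothp$.

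Translating via the Hodge filtration $0 \to \omegab_\tau \to \calH^1_\tau \to \Lie(A^\vee)_\tau \to 0$: the cotangent sheaf $\omegab$ is the cotangent space of the formal (hence multiplicative) part of $A[p^\infty]$, so (i) implies that $\check\pi_{\gothp,x}^*\colon \omegab_{A',\tau} \to \omegab_{A,\tau}$ is an isomorphism, yielding $\val_p(a_\tau) = 0$. For the quotient, Cartier duality identifies $\Lie(A^\vee)_\tau$ with the Lie algebra of $A^\vee[\gothp^\infty]^\mu$, itself the Cartier dual of $A[\gothp^\infty]^\et$, and the map induced by $\check\pi_{\gothp,x}^*$ on this quotient is the tangent map of the dual isogeny $\check\pi_{\gothp,x}^\vee$ restricted to multiplicative parts. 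By Cartier duality applied to (ii), the latter is again, up to isomorphism, multiplication by $\varpi_\gothp$. Since $\gothp$ is unramified over $p$, $\tau(\varpi_\gothp)$ equals $p$ up to a unit, hence $\val_p(b_\tau) = 1$.

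The final identity $\check\pi_{\gothp,x}^*(\omega'_\tau \wedge \eta'_\tau) = a_\tau b_\tau\,\omega_\tau \wedge \eta_\tau$ is functoriality of the wedge product on $\wedge^2 \calH^1_\tau$, and the stated valuations for the product follow. The main technical subtlety is the Cartier-duality identification of $\Lie(A^\vee)_\tau$ with the dual of the \'etale part of $A[\gothp^\infty]$, together with the fact that the map induced by $\check\pi_{\gothp,x}^*$ on $\calH^1_\tau/\omegab_\tau$ corresponds to the tangent map of the \emph{dual} isogeny rather than of $\check\pi_{\gothp,x}$ itself; keeping the direction of Cartier duality straight is the main place where an error could creep in.
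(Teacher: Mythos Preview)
Your proof is correct and follows essentially the same route as the paper's. Both reduce to the $p$-divisible group, dispose of $\tau\notin\Sigma_{\infty/\gothp}$ trivially, use that $H$ (being disjoint from $C_\gothp$ over ordinary reduction) is \'etale to see that $\check\pi_{\gothp,x}$ is an isomorphism on multiplicative parts and hence on $\omegab_\tau$, and then compute $b_\tau$ by passing to the dual isogeny and identifying $\Lie(A^\vee)_\tau$ with $\Lie(A^\vee[\gothp^\infty]^\mu)_\tau$; the paper phrases the last step as ``$H^\vee$ is the $p$-torsion of $A'^\vee[\gothp^\infty]^\mu$ so the map on Lie is multiplication by $p$'', which is exactly your Cartier-duality argument applied to the $\varpi_\gothp$-map on \'etale parts.
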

\begin{proof}
The problem 
depends only on the $p$-divisible group $A[p^{\infty}]$. The isogeny $\check{\pi}_{\gothp}$ induces an isomorphism of the $p$-divisible groups $A[\gothq^{\infty}]\xra{\sim} A'[\gothq^{\infty}]$ over $\cO_{\wp'}$ for $\gothq\neq \gothp$. Thus, the statements for  $\tau\notin \Sigma_{\infty/\gothp}$ are evident. 
The subgroup $D\subset A[\gothp]$ with $D\neq C_{\gothp}$ is necessarily \'etale, since $A$ has good ordinary reduction. Therefore, $\check{\pi}_{\gothp,x}$ is \'etale and induces an isomorphism
\[
\omegab_{A'[\gothp^{\infty}]}=\bigoplus_{\tau\in \Sigma_{\infty/\gothp}}\omegab_{A',\tau}\xra{\cong}\bigoplus_{\tau\in \Sigma_{\infty/\gothp}} \omegab_{A,\tau}=\omegab_{A[\gothp^{\infty}]}.
\]
It follows immediately that $a_{\tau}$ are units in $\cO_{\wp'}$ for $\tau\in \Sigma_{\infty/\gothp}$. To show that $\val_p(b_{\tau})=1$, we consider the dual isogeny
$\check{\pi}_{\gothp,x}^\vee\colon A'^\vee \ra A^\vee$. Let $A'^\vee[\gothp^{\infty}]^{\mu}$ and $A^\vee[\gothp^\infty]^{\mu}$ be respectively the multiplicative part of $A'^\vee[\gothp^{\infty}]$ and $A^\vee[\gothp^{\infty}]$.  We have an induced isogeny
\[
(\check{\pi}_{\gothq,x}^\vee)^{\mu}: A'^\vee[\gothp^{\infty}]^{\mu}\ra A^\vee[\gothp^\infty]^{\mu}.
\]
The kernels of $\check{\pi}_{\gothp,x}^\vee$ and $(\check{\pi}_{\gothp,x}^\vee)^{\mu}$ are both $D^\vee$, the Cartier dual of $D$,  which coincides  with the $p$-torsion of $A'^\vee[\gothp^{\infty}]^{\mu}$ (since $\gothp$ is unramified).  Hence, the induced map on $\Lie(A'^\vee[\gothp^\infty]^{\mu})\ra \Lie(A^\vee[\gothp^\infty]^{\mu})$ is given by the multiplication by $p$ up to units, whence $\val_p(b_{\tau})=1$ for all $\tau \in \Sigma_{\infty/\gothp}$. Now the Lemma follows from the fact that $\Lie(A'^\vee)_{\tau}=\Lie(A'^\vee[\gothp^\infty]^{\mu})_{\tau}$ for $\tau\in \Sigma_{\infty/\gothq}$, since $A'$ is ordinary.
 \end{proof}

   \subsection{$U_{\gothp}$-operator} We now define the $U_{\gothp}$-operator on $H^{\star}_{\rig}(X^{\tor,\ord},\D; \F^{(\kb,w)})$ and on the complex $\scrC^\bullet_K$.
    The map $\check{\pi}_{\gothp}^*:\pr_2^{*}\calH^1\ra \pr_1^*\calH^1$  induces a map
$\check{\pi}^*_{\gothp}: \pr_2^*\F^{(\kb,w)}\ra \pr_1^*\F^{(\kb,w)}$
and hence a map of de Rham complexes
\begin{equation}\label{Equ:check-pi-dR}
\check{\pi}^*_{\gothp}\colon \DR_c(\pr_2^*\F^{(\kb,w)})\ra \DR_c(\pr_1^*{\F^{(\kb,w)}})
\end{equation}
compatible with the $\tF$-filtrations on both sides defined in Subsection~\ref{S:DR-Hodge}.

We  define $U_{\gothp}$-operator to be the composite map on the cohomology groups
\begin{equation}
\label{E:Up on rigid cohomology}
\xymatrix{H^{\star}(]X_K^{\tor,\ord}[_{\rb}, \DR_c(\F^{(\kb,w)}))\ar@{-->}^{U_{\gothp}}[rr]\ar[d]^{\pr_2^*}&& H^{\star}(]X_K^{\tor,\ord}[_{\rb'}, \DR_c(\F^{\kb,w}))\\
H^{\star}(]X_K^{\tor,\ord}[^{\gothp}_{\rb'}, \DR_c(\pr_2^*\F^{(\kb,w)}))\ar[rr]^{\check{\pi}_{\gothp}^*}&&H^{\star}(]X_{K}^{\tor,\ord}[^{\gothp}_{\rb'}, \DR_c(\pr_1^*\F^{(\kb,w)}))\ar[u]^{\Tr_{\pr_1}}},
\end{equation}
where the existence of the trace map $\Tr_{\pr_1}$ follows from similar arguments as in Corollary~\ref{C:trace}.

By letting $\rb\ra 0^+$ (so $\rb'\ra 0^+$ as well), we get a map
\begin{equation}\label{Equ:U-p}
U_{\gothp}\colon H^{\star}(\fX^{\tor}_{K,\rig}, j^{\dagger}\DR_{c}(\F^{(\kb,w)}))\ra H^{\star}(\fX^{\tor}_{K,\rig}, j^{\dagger}\DR_c(\F^{(\kb,w)})).
\end{equation}

 Similarly as the discussion for for  $S_{\gothp}$ and $\Fr_{\gothp}$, the map $\check{\pi}_{\gothp}: \pr_2^*\calH^1\ra \pr_1^*\calH^1$ induces by functoriality a morphism 
 \[
 \check{\pi}^*_{\gothp}: \pr_2^*\omegab^{(s_J\cdot \kb, w)}(-\D)\ra \pr_1^* \omegab^{(s_J\cdot\kb,w)}(-\D)
 \]
 for each subset $J\subseteq \Sigma_{\infty}$. Taking direct sum over  all $J$, one gets a  morphism $\check{\pi}_{\gothp}^*$ on the dual BGG-complex such that the following diagram 
 \begin{equation}\label{diag:bgg-dR}
\xymatrix{
{\BGG_c(\pr_2^*\F^{(\kb,w)})}\ar@{^{(}->}[d]\ar[rr]^{\check{\pi}^*_{\gothp}} &&\BGG_c(\pr_1^*\F^{(\kb,w)})
\ar@{^{(}->}[d]\\
\DR_c(\pr^*_2(\F^{(\kb,w)}))\ar[rr]^{\check{\pi}^*_{\gothp}} && \DR_c(\pr_1^*\F^{(\kb,w)}).
}
\end{equation}
is commutative and compatible with the $\tF$-filtrations. 
Here, the vertical arrows are the quasi-isomorphic inclusions as in Theorem~\ref{Theorem:BGG}.

Let  $U_1\subset\, ]X_{K}^{\tor,\ord}[_{\rb'}$ and $U_2\subset\, ]X_K^{\tor,\ord}[_{\rb}$ be quasi-compact admissible open subsets such that $\pr_1^{-1}(U_1)\subset \pr^{-1}_2(U_2)$. We denote by $\res_{12}: \pr_1^{-1}(U_1) \to \pr_2^{-1}(U_2)$ the natural inclusion.
  For every $J\subseteq \Sigma_{\infty}$, we have a composite map $U_{\gothp}$
\[
\xymatrix{
\Gamma(U_2, \omegab^{(s_J\cdot\kb,w)}(-\D))\ar@{-->}[rr]^{U_{\gothp}}\ar[d]^{\pr_2^*}&& \Gamma(U_1, \omegab^{(s_J\cdot\kb,w)}(-\D))\\
\Gamma(\pr_2^{-1}(U_2), \pr_2^*\omegab^{(s_J\cdot\kb,w)}(-\D))\ar[rr]^{\check{\pi}^{*}_{\gothp}\circ \res_{12}^*} &&
\Gamma(\pr_1^{-1}(U_1), \pr_1^*\omegab^{(s_J\cdot\kb,w)}(-\D))\ar[u]^{\Tr_{\pr_1}}.
}
\]
Taking $U_1=]X_{K}^{\tor,\ord}[_{\rb'}$ and $U_2=]X_K^{\tor,\ord}[_{\rb}$ and making $\rb\ra 0^+$, one gets an endomorphism
\[
U_{\gothp}\colon  S^{\dagger}_{(s_J\cdot\kb,w)}(K,L_\wp)\ra S^{\dagger}_{ (s_J\cdot\kb,w)}(K,L_\wp).
\]
Putting  all $J\subseteq \Sigma_{\infty}$ together, one obtains  an endomorphism of  complexes $U_{\gothp}\colon \scrC^{\bullet}_K\ra \scrC^{\bullet}_K$.
   When taking cohomology, it follows from digram \eqref{diag:bgg-dR} that  the ${U}_{\gothp}$ on $H^{\star}(\scrC^{\bullet}_K)$ is canonically identified with the one defined in  \eqref{Equ:U-p} via Theorem~\ref{Theorem:overconvergent}.

\begin{remark}\label{remark:U_p}
 Our definition of the $U_{\gothp}$-operator on $S^{\dagger}_{(s_J\cdot\kb,w)} (K,L_\wp)$ for all $J\subseteq \Sigma_{\infty}$ coincides with the  normalized $U_{\gothp}$-operator defined in \cite[(1.11.7), (4.2.7)]{KL}. 
In \emph{loc. cit.}, the authors worked over the fine moduli spaces  $\M_K=\coprod_{\gothc}\M^\gothc_K$'s and omitted $\wedge^2\calH^1_\tau$'s in their notation.
 However,  in order for the $U_\gothp$-operator descend to forms on $\bfSh_K$, they have to add carefully a normalizing factor as explained in \cite[(1.11.6)]{KL}. 
Here, since  our Kodaira-Spencer isomorphism \eqref{Equ:Kod-isom} is Hecke equivariant, this factor is automatically taken into account by the various powers of $\wedge^2\calH^1_{\tau}$'s.
     \end{remark}
     
  There exists a simple relationship between the partial Frobenius $\Fr_{\gothp}$ and the operator $U_{\gothp}$:
  \begin{lemma}\label{Lemma:Frob-U_p}
  As operators on the cohomology groups $H^{\star}_{\rig}(X^{\tor,\ord}_{K}, \D;\F^{(\kb,w)})$ or on $\scrC^\bullet_{K}$, we have
  \[
  U_{\gothp}\Fr_{\gothp}=N_{F/\Q}(\gothp) S_{\gothp},
  \]
  where the action of $S_{\gothp}$ is defined in \eqref{Equ:S-operator}.
  \end{lemma}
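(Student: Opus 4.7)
The plan is to compute $U_\gothp\Fr_\gothp$ at the level of the underlying correspondence on the moduli space, and then upgrade to cohomology via functoriality. The composite operator is represented on $Z := \, ]X_K^{\tor,\ord}[^{\gothp}_{\rb'}$ by the pair of legs $(\pr_1,\ \varphi_\gothp\circ\pr_2)\colon Z\rightrightarrows\, ]X_K^{\tor,\ord}[_{\rb'}$, together with the composite coefficient morphism $\check\pi_\gothp^*\circ\pr_2^*\circ\pi_\gothp^*\circ\varphi_\gothp^*$. Recall from Lemma~\ref{Lemma:p2-isom} that $\pr_2$ is an isomorphism and that $\pr_1$ is finite \'etale of degree $N_{F/\QQ}(\gothp)$ on the ordinary locus.

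The crux is to establish the geometric identity $\varphi_\gothp\circ\pr_2 = S_\gothp\circ\pr_1$, together with the matching of the isogenies on the universal object. At a rigid point $(A,H)$ of $Z$ with $H$ anti-canonical, I would argue as follows. Since $H$ and $C_{\gothp,A}$ are two distinct rank-one $\cO_F/\gothp$-submodules of the rank-two $\cO_F/\gothp$-module $A[\gothp]$, they intersect trivially and satisfy $H + C_{\gothp,A} = A[\gothp]$. Hence $C_{\gothp,A}$ embeds into $A/H$. I would then identify this image with the $\gothp$-canonical subgroup of $A/H$; this is the main subtle step, and it uses functoriality of the connected-\'etale sequence on the ordinary reduction: since $H$ and the multiplicative part $A[\gothp]^\mu = C_{\gothp,A}$ of the reduction are disjoint as subgroup schemes, the induced map $A[\gothp^\infty]^\mu \to (A/H)[\gothp^\infty]^\mu$ is an isomorphism, whence $C_{\gothp,A/H} = \check\pi_\gothp(C_{\gothp,A})$. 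Consequently
\[
(A/H)/C_{\gothp,A/H} \;=\; A/(H + C_{\gothp,A}) \;=\; A/A[\gothp] \;=\; S_\gothp(A),
\]
and the composite isogeny $\pi_{\gothp,A/H}\circ\check\pi_\gothp\colon A\to S_\gothp(A)$ has kernel $A[\gothp]$ and hence coincides with the natural isogeny $[\varpi_\gothp]\colon A \to A\otimes_{\cO_F}\gothp^{-1}$ defined in Subsection~\ref{S:operator S_gothp}.

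Applying the pullback functor to this isogeny identity and to the map identity $\varphi_\gothp\circ\pr_2 = S_\gothp\circ\pr_1$ would yield the equality of coefficient morphisms
\[
\check\pi_\gothp^*\circ\pr_2^*\circ\pi_\gothp^*\circ\varphi_\gothp^* \;=\; \pr_1^*\circ [\varpi_\gothp]^*\circ S_\gothp^*
\]
as sheaf maps on $Z$. Composing with the trace map $\Tr_{\pr_1}$ and using that $\pr_1$ is \'etale of degree $N_{F/\QQ}(\gothp)$, so that $\Tr_{\pr_1}\circ\pr_1^* = N_{F/\QQ}(\gothp)\cdot\mathrm{id}$, I would conclude
\[
U_\gothp\Fr_\gothp \;=\; \Tr_{\pr_1}\circ\pr_1^*\circ[\varpi_\gothp]^*\circ S_\gothp^* \;=\; N_{F/\QQ}(\gothp)\,S_\gothp
\]
on $H^\star_\rig(X_K^{\tor,\ord},\D;\scrF^{(\underline k, w)})$. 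The same identity on $\scrC^\bullet_K$ follows because the entire chain of maps is compatible with the BGG subcomplex inside the de Rham complex (cf.\ Theorem~\ref{Theorem:overconvergent}). The main difficulty, as indicated, lies in the canonical-subgroup identification of the previous paragraph; it requires care because in the characteristic-zero rigid fiber there are $N_{F/\QQ}(\gothp)$ distinct anti-canonical subgroups that all reduce to the same \'etale subgroup in the ordinary special fiber.
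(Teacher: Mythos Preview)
Your proposal is correct and follows essentially the same approach as the paper. You re-derive the identity $\varphi_\gothp\circ\pr_2 = S_\gothp\circ\pr_1$ and the identification of the composite isogeny $\pi_{\gothp}\circ\check\pi_\gothp$ with $[\varpi_\gothp]$, whereas the paper simply invokes these from Lemma~\ref{Lemma:p2-isom} (in particular \eqref{Equ:p1-p2}); apart from this, the chain of equalities $U_\gothp\Fr_\gothp=\Tr_{\pr_1}\circ[\varpi_\gothp]^*\circ\pr_1^*\circ S_\gothp^* = N_{F/\QQ}(\gothp)\,S_\gothp$ is identical in both arguments.
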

\begin{proof}
By the definitions of $U_\gothp$ and $\Fr_\gothp$ in \eqref{E:Up on rigid cohomology} and \eqref{E:Frp on rigid cohomology}, we have
\[
U_{\gothp}\Fr_{\gothp}=\Tr_{\pr_1} \circ\check{\pi}^{*}_{\gothp}\circ\pr_2^*\circ \pi^*_{\gothp}\circ\varphi_{\gothp}^*=\Tr_{\pr_1} \circ \check{\pi}^{*}_{\gothp}\circ \pi^*_{\gothp}\circ\pr_2^*\circ \varphi_{\gothp}^*.
\]
Here, the second step is because the morphism induced by isogeny commutes with base change. We note that for a point $(A, D)\in ]\M^{ \tor,\ord}_{K,k_0}[^{\gothp}_{\rb'}$, the composite isogeny
\[
A\xra{\check{\pi}_{\gothp}} A/D\xra{\pi_{\gothp}}(A/D)/C_{\gothp}=A/A[\gothp]=A\otimes_{\cO_F} \gothp^{-1}
\]
is by definition the isogeny $[\varpi_{\gothp}]$. Hence, we have $\check{\pi}^{*}_{\gothp}\circ \pi^*_{\gothp}=[\varpi_{\gothp}]^*$, and
\[
U_{\gothp}\Fr_{\gothp}=\Tr_{\pr_1}\circ [\varpi_{\gothp}]^*\circ\pr_{2}^*\circ\varphi_{\gothp}^*.
\]
By \eqref{Equ:p1-p2}, we have $\pr_2^*\circ\varphi_{\gothp}^*=\pr_1^*\circ \pi_{S_{\gothp}}^*$. It follows that
\[
U_{\gothp}\Fr_{\gothp}=\Tr_{\pr_1}\circ[\varpi_{\gothp}]^*\circ\pr_1^*\circ \pi_{S_{\gothp}}^*=N_{F/\Q}(\gothp)[\varpi_{\gothp}]^*\pi_{S_{\gothp}}^*\stackrel{\eqref{Equ:S-operator}}=N_{F/\Q}(\gothp)S_{\gothp}.
\]
\end{proof}

 \subsection{Norms}
\label{S:Norms}
We recall the construction of $p$-adic norms on rigid analytic varieties. Suppose we are given  an admissible formal scheme $\fZ$ over $\cO_{L_\wp}$, and  a vector bundle  $\scrE$ on $\fZ$. Let $\fZ_{\rig}$ denote the rigid analytic space over $L_\wp$ associated to $\fZ$, and $\scrE_{\rig}$ denote the associated vector bundle on $\fZ_{\rig}$. We denote by $|\cdot|$ the non-archimedean norm on $\C_p$ normalized by $|p|=p^{-1}$.  For a quasi-compact open subset $U\subseteq \fZ_{\rig}$,  one can define a norm $\|\cdot\|_U$ on $\Gamma(U, \scrE_{\rig})$ such that $\|\lambda\cdot s\|_U=|\lambda|\cdot \|s\|_U$ for $\lambda\in \C_p$ and $s\in \Gamma(U,\scrE_{\rig})$ as follows.  Recall that a  point $x\in \fZ_{\rig}$ defined over an extension $L'_{\wp'}/L_\wp$ is equivalent to  a morphism of $\cO_{L_\wp}$-formal schemes $x:\Spf(\cO_{L'_{\wp'}})\ra \fZ$. We write $\scrE_x$ for the pullback $x^*\calE$. Given a section $s\in \Gamma(U,\scrE_{\rig})$ and a point $x\in U$ defined over $L'_{\wp'}$,  we denote by  $x^*(s)\in \scrE_{x}\otimes_{\cO_{\wp'}} L'_{\wp'}$ the inverse image of $s$ under $x$. We define  $|s(x)|$ to be the minimal of  $|\lambda|^{-1}$ where $\lambda\in L'_{\wp'}$ such that $\lambda \cdot s\in \scrE_{x}$, and put 
 \[\|s\|_U=\max_{x\in U}|s(x)|.\] 

We apply the construction above to the integral model $\X_K^{\tor}$  and the modular line bundle $\omegab^{(s_J\cdot \kb,w)}$  over it for any subset $J\subseteq \Sigma_{\infty}$. 
 For a quasi-compact admissible open subset $U\subset\,]X^{\tor,\ord}_{K}[_{\rb'}$, we have a well-defined norm $\|\cdot\|_U$ on the space of sections $\Gamma(U, \omegab^{(s_J\cdot\kb,w)})$. 
 For a section $s$ of $\pr_2^*(\omegab^{(s_J\cdot\kb,w)}(-\D))$ over a quasi-compact open subset $V\subseteq\, ]X^{\tor,\ord}_{K}[^{\gothp}_{\rb'}$, we put 
\begin{equation}
\label{E:norm on X tor p}
\|s\|_V=\|(\pr_{2}^{*})^{-1}(s)\|_{\pr_2(V)}
\end{equation}
since $\pr_2\colon ]X^{\tor,\ord}_{K}[^{\gothp}_{\rb'}\ra ]X^{\tor,\ord}_K[_{\rb}$ is an isomorphism of rigid spaces.

\begin{lemma}\label{Lemma:trace}
Let $U\subset\, ]X^{\tor, \ord}_{K}[$ be a quasi-compact admissible open subset, and $g$ be a section of $\cO_{\pr_1^{-1}(U)}$. We have
\[
\|\Tr_{\pr_1}(g)\|_{U}\leq p^{-d_{\gothp}}\|g\|_{\pr_1^{-1}(U)},
\]
where $d_{\gothp}=[F_{\gothp}:\Q_p]$, and $\Tr_{\pr_1}: \Gamma(\pr_1^{-1}(U), \cO_{\pr^{-1}(U)})\ra \Gamma(U, \cO_{U})$ is the trace map.
\end{lemma}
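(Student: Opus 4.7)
The plan is to reduce the estimate to the integral trace estimate for $\varphi_{\gothp}$ provided by Lemma~\ref{Lemma:varphi-omega}(2), using the factorization $\pr_1=S_{\gothp^{-1}}\circ\varphi_{\gothp}\circ\pr_2$ recorded in \eqref{Equ:p1-p2}. Two simple observations make the reduction essentially formal: (i) $\pr_2$ is an isomorphism of rigid spaces (Lemma~\ref{Lemma:p2-isom}) and the norm on sections over subsets of $]X_K^{\tor,\ord}[^{\gothp}_{\rb'}$ was \emph{defined} by transport along $\pr_2$, so $\pr_2$ affects no norm estimate; and (ii) $S_{\gothp^{-1}}=S_{\gothp}^{-1}$ is an automorphism of the integral formal model $\X_K^{\tor}$, hence its pullback preserves norms and preserves the ordinary locus.

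Concretely, I would set $g':=(\pr_2^*)^{-1}(g)$, which is a section on $\pr_2(\pr_1^{-1}(U))=\varphi_{\gothp}^{-1}(S_{\gothp}(U))$. By the very definition of the norm on the correspondence space one has
\[
\|g\|_{\pr_1^{-1}(U)}=\|g'\|_{\varphi_{\gothp}^{-1}(S_{\gothp}(U))}.
\]
Transitivity of the trace along $\pr_1=(S_{\gothp^{-1}}\circ\varphi_{\gothp})\circ\pr_2$, combined with the fact that $\pr_2$ and $S_{\gothp^{-1}}$ are isomorphisms (their contribution to the trace is simply pullback of sections), gives $\Tr_{\pr_1}(g)=S_{\gothp}^{*}\bigl(\Tr_{\varphi_{\gothp}}(g')\bigr)$, so that
\[
\|\Tr_{\pr_1}(g)\|_{U}=\|\Tr_{\varphi_{\gothp}}(g')\|_{S_{\gothp}(U)}.
\]

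Finally I would invoke Lemma~\ref{Lemma:varphi-omega}(2): since $\Tr_{\varphi_{\gothp}}(\cO_{\X_K^{\tor,\ord}})\subseteq p^{d_{\gothp}}\varphi_{\gothp}^{*}(\cO_{\X_K^{\tor,\ord}})$, scaling any rigid section to norm $\leq 1$ and then multiplying back yields
\[
\|\Tr_{\varphi_{\gothp}}(g')\|_{S_{\gothp}(U)}\leq p^{-d_{\gothp}}\|g'\|_{\varphi_{\gothp}^{-1}(S_{\gothp}(U))}.
\]
Chaining the three displayed relations produces the claimed bound. There is no real obstacle in this proof; the only point worth checking is that $\varphi_{\gothp}$ is finite flat in a neighbourhood of $S_{\gothp}(U)$, which is automatic because $U\subset ]X_K^{\tor,\ord}[$ lies in the ordinary locus, $S_{\gothp}$ preserves this locus, and $\varphi_{\gothp}$ is defined and finite flat on all of $]X_K^{\tor,\ord}[_{\rb}$ by the discussion of Subsection~\ref{S:p-can-subgroup}.
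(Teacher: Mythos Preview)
Your proposal is correct and follows essentially the same approach as the paper: factor $\pr_1=S_{\gothp^{-1}}\circ\varphi_{\gothp}\circ\pr_2$ via \eqref{Equ:p1-p2}, use that $\pr_2$ is an isomorphism (so the trace along it is $(\pr_2^*)^{-1}$ and contributes no norm change by definition), use that $S_{\gothp^{-1}}$ is an integral automorphism (so its trace is $S_{\gothp}^*$ and preserves norms), and then apply Lemma~\ref{Lemma:varphi-omega}(2) after normalizing to $\|g'\|\leq 1$. The paper does exactly this, with your $g'$ called $h$.
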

\begin{proof}
Since $\pr_2$ is an isomorphism, one may write $g=\pr_2^*(h)$. Then, by definition \eqref{E:norm on X tor p}, we have $\|g\|_{\pr_1^{-1}(U)}=\|h\|_{\pr_2(\pr_1^{-1}(U))}$.
By \eqref{Equ:p1-p2}, we have $\pr_1=\pi_{S_{\gothp}}^{-1}\circ \varphi_{\gothp}\circ \pr_2$ and hence $\Tr_{\pr_1}=\pi^{*}_{S_{\gothp}}\Tr_{\varphi_{\gothp}}\Tr_{\pr_2}$. Note that $\Tr_{\pr_2}$ is the inverse of $\pr_2^*$, since $\pr_2$ is an isomorphism. 
Thus, we have $\Tr_{\pr_1}(g)=\pi^{*}_{S_{\gothp}}(\Tr_{\varphi_{\gothp}}(h))$. 
Since $\pi_{S_{\gothp}}$ is an automorphism of the integral model $\X_K^{\tor}$,  we have
\[
\|\Tr_{\pi^{-1}_{S_{\gothp}}}\circ\Tr_{\varphi_{\gothp}}(h)\|_{U}=\|\Tr_{\varphi_{\gothp}}(h)\|_{S_{\gothp}(U)}.
\]
It thus  suffices to show that $\|\Tr_{\varphi_{\gothp}}(h)\|_{V}\leq p^{-d_{\gothp}} \| h\|_{\varphi_\gothp^{-1}(V)}$ for $V = \pi_{S_\gothp}(U)$. But this follows from Lemma \ref{Lemma:varphi-omega}.
\end{proof}

\begin{prop}\label{prop:norm-U_p}
Let $U_1, U_2\subset\, ]X_{K}^{\tor,\ord}[$ be quasi-compact admissible open subsets in the ordinary locus such that $\pr_1^{-1}(U_1)\subseteq \pr^{-1}_2(U_2)$, and let $f$ be a section of $\omegab^{(s_J\cdot\kb,w)}(-\D)$ over $U_2$. We have
\[\|U_{\gothp}(f)\|_{U_1}\leq  p^{-\big(\sum_{\tau\in \Sigma_{\infty/\gothp}}\frac{w-k_{\tau}}{2}+\sum_{\tau \in(\Sigma_{\infty/\gothp}-J) }(k_{\tau}-1)\big)}\|f\|_{U_2}.\]
\end{prop}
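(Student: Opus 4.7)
The plan is to unravel $U_\gothp(f) = \Tr_{\pr_1}\bigl((\check\pi_\gothp^* \otimes \phi_{12})(\pr_2^*(f))\bigr)$ and track the $p$-adic norm through each of the four component maps, combining Lemmas~\ref{Lemma:check-pi}, \ref{Lemma:norm-omega}, and~\ref{Lemma:trace}. Since $\pr_2$ is a rigid analytic isomorphism (Lemma~\ref{Lemma:p2-isom}) and the norm on $]X_K^{\tor,\ord}[^\gothp_{\rb'}$ is defined via $\pr_2^*$, the pullback $\pr_2^*$ preserves norms and restriction from $\pr_2^{-1}(U_2)$ to $\pr_1^{-1}(U_1)$ does not increase them.

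For the isogeny action $\check\pi_\gothp^*$ on $\omegab^{\epsilon_J(\kb,w)}$, rewrite the bundle as
\[
\omegab^{\epsilon_J(\kb,w)}\simeq \bigotimes_{\tau}(\wedge^2\calH^1_\tau)^{(w-k_\tau)/2} \otimes \bigotimes_{\tau\in J}\omegab_\tau^{k_\tau-2} \otimes \bigotimes_{\tau\notin J}(\omegab_\tau^\sharp)^{k_\tau-2},
\]
where $\omegab_\tau^\sharp := \wedge^2\calH^1_\tau \otimes \omegab_\tau^{-1}$ is the Hodge quotient line bundle. By Lemma~\ref{Lemma:check-pi}, in integral bases $\check\pi_\gothp^*$ acts by scalars $a_\tau$ on $\omegab_\tau$, $b_\tau$ on $\omegab_\tau^\sharp$, and $a_\tau b_\tau$ on $\wedge^2\calH^1_\tau$, with $\val_p(a_\tau)=0$ always and $\val_p(b_\tau)=1$ exactly when $\tau\in\Sigma_{\infty/\gothp}$. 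A direct accounting then shows $\check\pi_\gothp^*$ multiplies the norm by $p^{-v}$ with
\[
v=\sum_{\tau\in\Sigma_{\infty/\gothp}}\frac{w-k_\tau}{2}+\sum_{\tau\in\Sigma_{\infty/\gothp}\setminus J}(k_\tau-2).
\]
The comparison $\phi_{12}=(\pr_1^*)^{-1}\circ\pr_2^*$ on differentials is the subtle step: by Lemma~\ref{Lemma:varphi-omega} and the resulting Lemma~\ref{Lemma:norm-omega}, the differential-form pullback $\pr_1^*$ sends an integral basis of $\underline\Omega^J$ to an element of $\underline\Omega^J_{]X[^\gothp}$ of norm $p^{-\#(J\cap\Sigma_{\infty/\gothp})}$, so the \emph{inverse} $(\pr_1^*)^{-1}$ is norm-\emph{increasing} by $p^{+\#(J\cap\Sigma_{\infty/\gothp})}$; combined with the norm-preserving $\pr_2^*$, the map $\phi_{12}$ thus contributes a factor $p^{+\#(J\cap\Sigma_{\infty/\gothp})}$.

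Finally, write the resulting section over $\pr_1^{-1}(U_1)$ as $g\cdot\pr_1^*(b_\omega\otimes b_\Omega)$ with $b_\omega\otimes b_\Omega$ an integral basis on $U_1$ (whose functor-pullback by $\pr_1$ is again integral on line-bundle components), and apply $\Tr_{\pr_1}$ to the scalar $g$ invoking Lemma~\ref{Lemma:trace} to gain a further factor $p^{-d_\gothp}$. Collecting all contributions, the total exponent in the norm bound is
\[
v+d_\gothp-\#(J\cap\Sigma_{\infty/\gothp})=\sum_{\tau\in\Sigma_{\infty/\gothp}}\frac{w-k_\tau}{2}+\sum_{\tau\in\Sigma_{\infty/\gothp}\setminus J}(k_\tau-1),
\]
using $d_\gothp-\#(J\cap\Sigma_{\infty/\gothp})=\#(\Sigma_{\infty/\gothp}\setminus J)$, which is the desired bound. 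The main obstacle is the bookkeeping in the $\phi_{12}$ step: recognizing that $\phi_{12}$ \emph{increases} the norm by $p^{+\#(J\cap\Sigma_{\infty/\gothp})}$ (rather than decreasing it, as one might naively guess from the direction of Lemma~\ref{Lemma:norm-omega}) is a subtle consequence of the integral non-triviality of $\pr_1$ in the $\gothp$-direction, and is exactly what makes the contributions from $\check\pi_\gothp^*$ and $\Tr_{\pr_1}$ combine into the correct quantity $\sum_{\tau\in\Sigma_{\infty/\gothp}\setminus J}(k_\tau-1)$.
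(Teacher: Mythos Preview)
Your proof is correct and follows essentially the same approach as the paper: write $U_\gothp(f)$ as the composite of $\pr_2^*$, $\check\pi_\gothp^*\otimes\phi_{12}$, and $\Tr_{\pr_1}$, and track the scalar coefficient through each step using Lemmas~\ref{Lemma:check-pi}, \ref{Lemma:norm-omega}, and~\ref{Lemma:trace}. The paper presents the same computation by passing to explicit local bases $(\omega_{\tau,i},\eta_{\tau,i},dz_{J,i})$ from the outset, but the bookkeeping of the contributions $p^{-v}$ from $\check\pi_\gothp^*$, $p^{+\#(J\cap\Sigma_{\infty/\gothp})}$ from $\phi_{12}$, and $p^{-d_\gothp}$ from the trace, and their recombination into the stated exponent, is identical to yours.
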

\begin{proof}
After shrinking $U_1$ and $U_2$, we may assume that, for each $\tau\in \Sigma_{\infty}$, there exists
   a basis $(\omega_{\tau, i},\eta_{\tau, i})$ of $\calH^1_{\tau}$ over $U_i$ adapted to the Hodge filtration $0\ra \omegab_{\tau}\ra \calH^1_{\tau}\ra \wedge^2(\calH^1_{\tau})\otimes \omegab^{-1}_{\tau}\ra 0$ and satisfying
     $$
     \|\omega_{\tau, i}\|_{U_i}=\|\eta_{\tau,i}\|_{U_i}=1.
     $$
Put $\varepsilon_{\tau,i}=\omega_{\tau,i}\wedge \eta_{\tau,i}$, which is a basis of $\wedge^2\calH_\tau^1$ over $U_i$ with $\|\varepsilon_{\tau,i}\|_{U_i}=1$. 
 By definition of $\omegab^{(s_J\cdot \kb,w)}$ \eqref{E:omega-s-j}, we  may write
    \[
    f=g \bigotimes_{\tau\notin J }\bigl(\varepsilon_{\tau,2}^{\frac{w+k_{\tau}}{2}-2}\otimes \omega_{\tau,2}^{2-k_{\tau}} \bigr)
   \bigotimes_{\tau\in J} \bigl( \varepsilon_{\tau,2}^{\frac{w-k_{\tau}}{2}-1}\otimes\omega_{\tau,2}^{k_{\tau}} \bigr)\quad \text{with }g\in \Gamma(U_2,\cO_{U_2})
    \]
    By definition, we have
\begin{small}
\[
U_{\gothp}(f)=\Tr_{\pr_1}\biggl(\pr_2^*(g)|_{\pr_1^{-1}(U_1)}\cdot \check{\pi}_{\gothp}^*\pr_2^*\biggl(\bigotimes_{\tau\notin J }\bigl(\varepsilon_{\tau,2}^{\frac{w+k_{\tau}}{2}-2}\otimes \omega_{\tau,2}^{2-k_{\tau}} \bigr)
   \bigotimes_{\tau\in J} \bigl( \varepsilon_{\tau,2}^{\frac{w-k_{\tau}}{2}-1}\otimes\omega_{\tau,2}^{k_{\tau}} \bigr)
\biggr)\biggr).
\]
\end{small}
      There exist rigid analytic functions $a_{\tau}, b_{\tau}$ on $\pr_1^{-1}(U_1)$ such that
    \[
    \begin{cases}
    \check{\pi}^*_{\gothp}\pr_2^*(\omega_{\tau,2})=a_{\tau} \pr_1^*(\omega_{\tau,1}),\\
    \check{\pi}^*_{\gothp}\pr_2^*(\etab_{\tau,2})=b_{\tau} \pr_1^*(\etab_{\tau,1}),\\
    \check{\pi}^*_{\gothp}\pr_2^*(\varepsilon_{\tau,2})=a_{\tau}b_{\tau} \pr_1^*(\varepsilon_{\tau,1}).
    \end{cases}
    \]
    By Lemma \ref{Lemma:check-pi}, we have $\|a_{\tau}\|_{\pr_1^{-1}(U_1)}=1$ for all $\tau\in \Sigma_{\infty}$,  $\|b_{\tau}\|_{\pr_1^{-1}(U_1)}=1$ for $\tau\notin \Sigma_{\infty/\gothp}$ and $\|b_{\tau}\|_{\pr_1^{-1}(U_1)}=p^{-1}$ for $\tau\in \Sigma_{\infty/\gothp}$.
     So we obtain
    \[
U_{\gothp}(f) =\Tr_{\pr_1}\big(\pr_2^*(g)h\big)\bigg(\bigotimes_{\tau\notin J }\bigl(\varepsilon_{\tau,1}^{\frac{w+k_{\tau}}{2}-2}\otimes \omega_{\tau,1}^{2-k_{\tau}} \bigr)
   \bigotimes_{\tau\in J} \bigl( \varepsilon_{\tau,1}^{\frac{w-k_{\tau}}{2}-1}\otimes\omega_{\tau,1}^{k_{\tau}} \bigr)\bigg),
    \]
where $h=\big(\prod_{\tau\notin J}(a_{\tau}b_{\tau})^{\frac{w+k_{\tau}}{2}-2}a_{\tau}^{2-k_{\tau}} \big)\cdot\big(\prod_{\tau\in J}(a_{\tau}b_{\tau})^{\frac{w-k_{\tau}}{2}-1} a_{\tau}^{k_{\tau}}\big)$.
    Now it follows from Lemma \ref{Lemma:trace} that
    \begin{align*}
    \|U_{\gothp}(f)\|_{U_1}&=\|\Tr_{\pr_1}\bigl(\pr_2^*(g)h\bigr)\|_{U_1}\\
    &\leq p^{-d_{\gothp}-\sum_{\tau\in ( \Sigma_{\infty/\gothp}-J)}(\frac{w+k_{\tau}}{2}-2)-\sum_{\tau\in \Sigma_{\infty/\gothp}\cap J}\frac{w-k_\tau}{2}-1}
    \|g\|_{\pr_2(\pr_1^{-1}(U_1))}\\
    &\leq p^{-\sum_{\tau\in \Sigma_{\infty/\gothp}}\frac{w-k_{\tau}}{2}-\sum_{\tau\in ( \Sigma_{\infty/\gothp}-J)}(k_{\tau}-1)}\|f\|_{U_2}.
    \end{align*}
\end{proof}

We deduce immediately from Proposition \ref{prop:norm-U_p} the following
  \begin{cor}\label{Prop:slopes-ocv}
Let  $f\in S^{\dagger}_{(s_J\cdot\kb,w)}(K,L_{\wp})$ be a generalized eigenform for ${U}_{\gothp}$ with eigenvalue $\lambda_{\gothp}\neq 0$. Then we have
  \[
  \val_p(\lambda_{\gothp})\geq \sum_{\tau\in \Sigma_{\infty/\gothp}}\frac{w-k_{\tau}}{2}+\sum_{\tau\in (\Sigma_{\infty/\gothp}-J)} (k_{\tau}-1).
  \]
  \end{cor}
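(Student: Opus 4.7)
The plan is to extract the slope bound from Proposition~\ref{prop:norm-U_p} via a standard $p$-adic spectral-radius argument.

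First I reduce to the case of a genuine eigenvector: since $f$ is a generalized eigenvector of $U_\gothp$ with eigenvalue $\lambda_\gothp\neq 0$, it lies in a finite-dimensional $U_\gothp$-stable $L_\wp$-subspace $V\subset S^\dagger_{\epsilon_J(\kb,w)}$ on which $U_\gothp-\lambda_\gothp$ is nilpotent.  Hence $\ker(U_\gothp-\lambda_\gothp)\cap V$ is nonzero, so I may replace $f$ by a nonzero element in this kernel and assume $U_\gothp f = \lambda_\gothp f$ with $f\neq 0$.  Write $c := \sum_{\tau\in\Sigma_{\infty/\gothp}}\tfrac{w-k_\tau}{2} + \sum_{\tau\in\Sigma_{\infty/\gothp}\setminus J}(k_\tau-1)$ for the right-hand side of the desired inequality.

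Next, I would build a nested sequence $U_0\subset U_1\subset U_2\subset\cdots$ of quasi-compact admissible open subsets of $]X^{\tor,\ord}_K[$, all contained in a single quasi-compact open $W\subset\,]X^{\tor,\ord}_K[$ inside the domain of $f$, satisfying the preimage condition $\pr_1^{-1}(U_{n-1})\subset\pr_2^{-1}(U_n)$ of Proposition~\ref{prop:norm-U_p} for every $n\geq 1$, with $\|f\|_{U_0}>0$.  The existence of such a sequence is a consequence of the structure of the $U_\gothp$-correspondence on the ordinary locus: by Lemma~\ref{Lemma:p2-isom}, $\pr_2$ is an isomorphism and the associated correspondence $\Phi := \pr_2\circ\pr_1^{-1} = \varphi_\gothp^{-1}\circ S_\gothp$ is finite \'etale of degree $N_{F/\QQ}(\gothp)$; the preimage condition then reads $\Phi(U_{n-1})\subset U_n$, and the dynamics of $\varphi_\gothp$ on the Goren-Kassaei strata $]X^{\tor,\ord}_K[_\rb$ from Subsection~\ref{S:Frobenius-ordinary} keep the iterated orbits confined to a fixed quasi-compact piece $W$ of the ordinary locus.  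Applying Proposition~\ref{prop:norm-U_p} to the pair $(U_{n-1}, U_n)$ together with $U_\gothp f = \lambda_\gothp f$ then gives $|\lambda_\gothp|\cdot\|f\|_{U_{n-1}} \leq p^{-c}\|f\|_{U_n}$; iterating from $n=1$ yields
$$
|\lambda_\gothp|^n\cdot\|f\|_{U_0} \leq p^{-nc}\|f\|_{U_n} \leq p^{-nc}\|f\|_W.
$$
Since $\|f\|_{U_0}>0$ and $\|f\|_W$ is a fixed finite constant, taking $n$-th roots and letting $n\to\infty$ gives $|\lambda_\gothp|\leq p^{-c}$, i.e.\ $\val_p(\lambda_\gothp)\geq c$, as required.

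The main obstacle is the second step, namely producing the sequence $\{U_n\}$ satisfying the preimage condition but staying inside one fixed bounded piece of the ordinary locus.  This is a manifestation of the complete continuity of $U_\gothp$ on the overconvergent Banach space, and in fact Proposition~\ref{prop:norm-U_p} can be reformulated as an operator-norm bound $\|U_\gothp\|\leq p^{-c}$ on an appropriate Banach space of sections; from that viewpoint, the corollary is immediate from the standard fact that spectral radii are bounded by operator norms on $p$-adic Banach spaces.
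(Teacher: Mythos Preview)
Your argument is correct and follows the same route as the paper --- the corollary is meant to be an immediate consequence of Proposition~\ref{prop:norm-U_p} --- but the presentation introduces unnecessary machinery.  The nested sequence and the limit $n\to\infty$ are superfluous: you may simply take $U_1=U_2=\,]X^{\tor,\ord}_K[$ once.  This tube is quasi-compact (it is the rigid generic fibre of the open formal subscheme $\X^{\tor,\ord}_K$, which is covered by finitely many affines), and it automatically satisfies the preimage condition: if $(A,H)$ lies over the ordinary tube via $\pr_1$ then $A$ has ordinary reduction, and since $H$ is \'etale (being anti-canonical at an ordinary point) the quotient $A/H=\pr_2(A,H)$ also has ordinary reduction.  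Equivalently, both $S_{\gothp}$ and $\varphi_{\gothp}$ send the ordinary tube into itself, so the relation $\pr_1=S_{\gothp^{-1}}\circ\varphi_{\gothp}\circ\pr_2$ from Lemma~\ref{Lemma:p2-isom} gives $\pr_2(\pr_1^{-1}(]X^{\tor,\ord}_K[))\subset\,]X^{\tor,\ord}_K[$.  One application of Proposition~\ref{prop:norm-U_p} then yields
\[
|\lambda_{\gothp}|\cdot\|f\|_{]X^{\tor,\ord}_K[}\;=\;\|U_{\gothp}f\|_{]X^{\tor,\ord}_K[}\;\le\;p^{-c}\,\|f\|_{]X^{\tor,\ord}_K[},
\]
and since a nonzero overconvergent section cannot vanish identically on the dense ordinary tube, $0<\|f\|_{]X^{\tor,\ord}_K[}<\infty$ and we may divide.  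So what you flagged as ``the main obstacle'' is in fact no obstacle at all, and your closing remark about the operator-norm reformulation is already the entire content of the deduction.
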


\section{Formalism of Rigid Cohomology}
\label{Section:spectral sequence}
In this section, we will relate the cohomology group $H^{\star}_{\rig}(X^{\tor,\ord}_K,\D; \F^{(\kb,w)})$ to the rigid cohomology of the Goren-Oort strata of the Hilbert modular variety.

\subsection{A brief recall of rigid cohomology}\label{S:rigid-coh}
We recall what we need  on the rigid cohomology. For more details, we refer the reader to \cite{berthelot2, berthelot} and \cite{tsuzuki}.
Let $L_{\wp}$ be a finite extension of $\Qp$, $\calO_{\wp}$ the  ring of integers and $k_0$ the residue field. Let $\calP$ be a proper smooth formal scheme over  $W(k_0)$, $P$  its special fiber,  and  $\calP_\rig$  the associated rigid analytic space.  We have a natural specialization map $\spe\colon \calP_\rig \to  P$. 
For a locally closed subscheme $Z \subseteq P$, we put $]Z[_{\calP}=\spe^{-1}(Z)$. 
 When it is clear, we omit the subscript $\calP$ from the notation.

For $\overline X$ a locally closed subscheme of $P$, $j_X: X \to \overline X$  an open subset, and $\calE$  a  sheaf of abelian groups defined over some strict neighborhood of   $]X[$ in $]\overline {X}[$, we put
\[
j_X^\dagger \calE = \varinjlim_V j_{V*}j^*_V \calE,
\]
where $V$ runs through a fundamental system of  strict neighborhoods of $]X[$ inside $]\overline X[$ on which $\calE$ is defined,  and $j_V: V \to ]\overline X[$  is the natural inclusion.

An \emph{overconvergent $F$-isocrystal} $\scrE$ on $X/L_{\wp}$ can be viewed as a locally free coherent sheaf defined over some strict neighborhood $V$ of $]X[$ inside $]\overline X[$, equipped with
 an integrable connection $\nabla: \scrE\ra \scrE\otimes_{\cO_V} \Omega^1_{V}$ satisfying certain (over)convergence conditions \cite[Chap. 2]{berthelot}, and, Zariski locally, with an isomorphism $F^*\scrE \to \scrE$ where $F$ is a Zariski local lift of the absolute Frobenius to $\calP$. Let $\DR(\scrE)=\scrE \otimes \Omega^\bullet_{V}$ denote the associated de Rham complex. 
The \emph{rigid cohomology} of $\scrE$ is defined  to be 
\[
R\Gamma_\rig(X/L_{\wp}, \scrE) : = R\Gamma\big(]\overline X[, j_X^\dagger\DR(\scrE)\big).
\]
When $\scrE$ is the constant $F$-isocrystal, we simply put  $R\Gamma_{\rig}(X/L_{\wp})=R\Gamma_{\rig}(X/L_{\wp},\scrE)$.
For a sheaf $\calE$ of abelian groups over a strict neighborhood of $]X[$, we define a sheaf on $]\overline X[$ by 
\[
\Gammab_{]X[}(\calE): =\Ker (j^{\dagger}_{X}\calE\ra i_*i^*\calE),
\] 
where $i:\;]\overline X-X[\ra ]\overline X[$ is the canonical immersion. 
Following Berthelot \cite{berthelot2},  the \emph{rigid cohomology with compact support} of $X$ with values in $\scrE$ is given by 
\[
R\Gamma_{c,\rig}(X/L_{\wp},\scrE):=R\Gamma\big(]\overline X[, \Gammab_{]X[}(\DR(\scrE))\big).
\]
There is a natural map $R\Gamma_{c,\rig}(X/L_{\wp},\scrE)\ra R\Gamma_{\rig}(X/L_{\wp},\scrE)$ in the derived category, which induces maps on cohomology groups $H^{\star}_{c,\rig}(X/L_{\wp},\scrE)\ra H^{\star}_{\rig}(X/L_{\wp},\scrE)$. 
By the main theorem of \cite{kedlaya}, these cohomology group are finite $L_\wp$-vector spaces.

Similarly, if $Z$ is a closed subscheme of $X$, we define the functor $\Gammab^{\dagger}_{]Z[}$ by   
\[\Gammab^\dagger_{]Z[}(\calE):= \Ker (j_{X}^\dagger \calE\ra  j^{\dagger}_{X-Z}\calE)\]
for any sheaf of abelian groups $\calE$ defined over a strict neighborhood of $]X[$ on $]\overline X[$. The functor  $\Gammab^{\dagger}_{]Z[}$ is exact. The \emph{rigid cohomology with support in $Z$} of the $F$-isocrystal $\scrE$ is defined to be 
\[
R\Gamma_{ Z,\rig}(X/L_{\wp}, \scrE): = R\Gamma\big( ]\overline X[,  \Gammab^\dagger_{]Z[}(\DR(\scrE))\big).
\]
There is a canonical distinguished triangle 
\[
R\Gamma_{Z,\rig}(X/L_{\wp},\scrE|_Z)\ra R\Gamma_{\rig}(X/L_{\wp},\scrE)\ra R\Gamma_{\rig}(X-Z/L_{\wp},\scrE|_{X-Z})\xra{+1}
\]
In particular, one has canonical maps of cohomology groups
\begin{equation}\label{E:rig-coh-Z-X}
H^{\star}_{Z,\rig}(X/L_{\wp},\scrE)\ra H^{\star}_{\rig}(X/L_{\wp},\scrE).
\end{equation}
If $Z$ is closed in $\overline X$ (equivalently, $Z$ is proper over $k_0$), then this map factor through $H^{\star}_{Z,\rig}(X/L_{\wp},\scrE)\ra H^{\star}_{c,\rig}(X/L_{\wp},\scrE)$.  
It is standard that $H^{\star}_{\rig}(X/L_{\wp},\scrE)$ and  $H^{\star}_{Z,\rig}(X/L_{\wp},\scrE)$ are independent of the embedding $X\hra P$ and the choice of formal model $\calP$.
We remark that if $U$ is an open subscheme of $X$ containing $Z$, then we have a natural isomorphism $H^*_{Z,\rig}(X/L_{\wp}, \scrE) \cong H^*_{ Z,\rig}(U/L_{\wp}, \scrE|_U)$ \cite[Proposition 2.1.1]{tsuzuki}. 

If $Z$ is a smooth closed subvariety of a smooth variety $X$ of codimension $r$, and if $\scrE$ is an overconvergent $F$-isocrystal $\scrE$ on $X$, then \cite[Th\'eor\`em~3.8]{berthelot3} says that we have a canonical quasi-isomorphism, called the \emph{Gysin isomorphism}
\begin{equation}
\label{E:Gysin-isom}
G_{Z, \scrE}: R\Gamma_\rig(Z/L_\wp, \scrE|_Z) \xrightarrow\cong \R\Gamma_{Z, \rig}(X/L_\wp, \scrE)[2r](r),
\end{equation}
where $\cdot [2r](r)$ means to shift up the cohomological degree by $2r$ and to multiply the action of the arithmetic Frobenius by $p^r$.
In particular, this induces a canonical isomorphism on the corresponding cohomology groups: $G_{Z, \scrE}: H^\star_\rig(Z/L_\wp, \scrE|_Z) \cong H^{\star+2r}_{Z, \rig}(X/L_\wp, \scrE)$.
When $\scrE$ is the constant isocrystal, we write $c(Z) \in H^{2r}_{c, \rig}(X/L_\wp)$ for the image of $1 \in H^0_\rig(Z/L_\wp)$ under the Gysin map in the compactly supported rigid cohomology. It is the \emph{rigid cycle class} of $Z$ in $X$.
 
 \subsection{Formalism of dual \v Cech complex}\label{S:cech-symbol}
Let $\Sigma$ denote a finite set.
Assume that, to each subset $\ttT \subseteq \Sigma$, there is an associated  $\QQ$-vector space $M_\ttT$ such that for each inclusion of subsets $\ttT_1 \subseteq \ttT_2$, we have an (ordering reversing)  $\QQ$-linear map $i_{\ttT_2, \ttT_1}: M_{\ttT_2} \to M_{\ttT_1}$ satisfying the natural cocycle condition.
We consider some formal symbols  $e_\tau$, called the \emph{\v Cech symbols}, indexed by elements $\tau \in \Sigma$, and their formal wedge products in the sense that $e_\tau \wedge e_{\tau'} = -e_{\tau'} \wedge e_\tau$ for $\tau, \tau' \in \Sigma$.
For a subset $\ttT = \{\tau_1, \dots, \tau_i\}$ of $\Sigma$, we fix an order for it and write $e_\ttT$ for $e_{\tau_1}\wedge \cdots \wedge e_{\tau_i}$.
The \emph{dual \v Cech complex} associated to $M_\ttT$ is then given by
\[
M_\Sigma  e_\Sigma \to \cdots \to \bigoplus_{\#\ttT = 2}M_{\ttT}  e_{\ttT} \to  \bigoplus_{\#\ttT = 1}M_\ttT e_\ttT \to M_\emptyset,
\]
where the connecting homomorphisms are given by, for $\ttT = \{\tau_1, \dots, \tau_i\}$,
\[
m_\ttT  e_{\tau_1} \wedge \cdots \wedge e_{\tau_i} \mapsto \sum_{j=1}^i
(-1)^j
 i_{\ttT, \ttT-\{\tau_j\}} (m_\ttT) e_{\tau_1} \wedge \cdots e_{\tau_{j-1}} \wedge e_{\tau_{j+1}}
\wedge \cdots \wedge e_{\tau_i}.
\]
It is clear from the construction that this is a complex. Note that when  $M_{\ttT}=M$ for all $\ttT\subseteq \Sigma$ and $i_{\ttT_2,\ttT_1}=\Id_M$ for all  $\ttT_{1}\subset \ttT_2$, the dual \v Cech complex associated to $M_{\ttT}$ is acyclic (if $\Sigma$ is non-empty).

\begin{lemma}\label{L:resolution}
Let  the notation be  as in Subsection~\ref{S:rigid-coh}. Let $Y=\bigcup_{\tau\in \Sigma}Y_\Sigma$ be a finite union of closed subschemes of $X$. For any subset $\ttT\subseteq \Sigma$, we put $Y_{\ttT}=\bigcap_{\tau\in \ttT}Y_{\tau}$, and let $j_{\ttT}: \overline X-Y_{\ttT}\ra \overline X$ denote the natural immersion. 
For any  sheaf $\calE$ of abelian groups defined on a strict neighborhood of $]\overline X-Y[$, the sequence  
\begin{equation}\label{E:resolution}
0\ra j_{\Sigma}^\dagger \calE \;e_{\Sigma}
\to \bigoplus_{\tau \in \Sigma}
j_{\Sigma\backslash\{\tau\}}^\dagger \calE\; e_{\Sigma\backslash \{\tau\}} \to \cdots \to \bigoplus_{\tau\in \Sigma }j^\dagger_{\{\tau\}}\calE \;e_{\tau}\ra j^{\dagger}_{\overline X-Y} \calE\; e_{\emptyset}\ra 0.
\end{equation}
given with dual \v Cech complex is exact. Here, we place $j^{\dagger}_{\overline X-Y} \calE\; e_{\emptyset}$ at degree $0$, and the $(-i)$-th term is a direct sum, over all subsets $\ttT \subseteq \Sigma$ with $\# \ttT = i$, of $j^\dagger_{\ttT}\calE \, e_{\ttT}$, and all the morphisms are natural restriction maps. 
\end{lemma}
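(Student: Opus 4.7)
The claim is a statement about exactness of a complex of abelian sheaves on $]\overline X[$, so my plan is to reduce it to a pointwise computation. Fix a classical rigid point $x \in ]\overline X[$ with specialization $\overline x \in \overline X$, and recall that for a locally closed $V \subseteq \overline X$ the stalk $(j_V^\dagger \calE)_x$ equals $\calE_x$ if $x$ admits a neighborhood in some strict neighborhood of $]V[$ inside $]\overline X[$, and is zero otherwise. For $V$ locally closed in $\overline X$ with $\calE$ defined on a fixed strict neighborhood, one checks that this condition is governed by the specialization: for $V \subseteq \overline X$ one has $(j_V^\dagger\calE)_x = \calE_x$ iff $\overline x$ lies in $V$ (or in the relevant portion of its closure covered by the fixed strict neighborhood of $]\overline X - Y[$ on which $\calE$ is given).

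Set $S = S(x) = \{\tau \in \Sigma : \overline x \notin Y_\tau\}$. Since $X - Y_\ttT = \bigcup_{\tau \in \ttT}(X - Y_\tau)$ for $\emptyset \neq \ttT \subseteq \Sigma$, the stalk $(j_\ttT^\dagger \calE)_x$ is $\calE_x$ precisely when $\ttT \cap S \neq \emptyset$, and $0$ otherwise; similarly $(j^\dagger_{\overline X - Y}\calE)_x = \calE_x$ iff $S = \Sigma$. Writing $M = \calE_x$, the stalk of \eqref{E:resolution} at $x$ thus reduces to
\[
M e_\Sigma \to \bigoplus_{\tau}M e_{\Sigma\setminus\{\tau\}} \to \cdots \to \bigoplus_\tau M e_\tau \to M_\emptyset,
\]
where each $M e_\ttT$ is $M$ or $0$ according to whether $\ttT \cap S \neq \emptyset$, and $M_\emptyset$ is $M$ or $0$ according to whether $S = \Sigma$ or not.

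This combinatorial dual \v Cech complex is easily seen to be acyclic. If $S = \emptyset$ every term is zero. If $S$ is nonempty, pick any $\tau_\ast \in S$ and define a contracting homotopy $h(m \, e_\ttT) = (-1)^? \, m\, e_{\ttT \cup \{\tau_\ast\}}$ (with the usual Koszul sign) on those $\ttT$ not already containing $\tau_\ast$, extended by zero otherwise; a direct computation shows $hd + dh = \mathrm{id}$. When $S = \Sigma$ this gives the acyclicity of the full augmented \v Cech complex, and when $S \subsetneq \Sigma$ it yields acyclicity of the non-augmented complex (with zero augmentation). In either case the stalk sequence is exact, proving the lemma.

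The main subtlety I would need to treat carefully is the identification of the stalks $(j_\ttT^\dagger\calE)_x$ near the boundary $]\overline X - X[$, where $X - Y_\ttT$ is not open in $\overline X$ and the strict neighborhoods of $]X - Y_\ttT[$ must be matched with the strict neighborhood of $]\overline X - Y[$ on which $\calE$ is defined. This is precisely why the hypothesis on $\calE$ is stated with respect to $]\overline X - Y[$ (rather than $]X - Y[$): the given strict neighborhood simultaneously serves as a strict neighborhood of $]X - Y_\ttT[$ for every $\ttT$. An alternative route that avoids this bookkeeping is induction on $\#\Sigma$, using the Mayer--Vietoris short exact sequence
\[
0 \to j^\dagger_{V_1 \cup V_2}\calE \to j^\dagger_{V_1}\calE \oplus j^\dagger_{V_2}\calE \to j^\dagger_{V_1 \cap V_2}\calE \to 0
\]
for two open pieces of $\overline X$: splitting off one $\tau_0 \in \Sigma$ and grouping the terms of \eqref{E:resolution} according to whether $\tau_0 \in \ttT$ realizes the sequence as the total complex of a short exact sequence of complexes to which the induction hypothesis applies.
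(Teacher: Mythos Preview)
Your main route (stalks plus a Koszul-style contracting homotopy) is genuinely different from the paper's. The paper argues by induction on $\#\Sigma$: since every term vanishes on $]Y_\Sigma[$, it suffices to check exactness after applying $j^\dagger_{\{\tau\}}$ for each $\tau$; using $j^\dagger_{\{\tau\}}j^\dagger_{\ttT}=j^\dagger_{\{\tau\}}$ whenever $\tau\in\ttT$, the resulting complex splits as a sequence of the same shape for $\Sigma\smallsetminus\{\tau\}$ (handled by induction) together with an acyclic constant dual \v Cech complex. This is precisely the inductive alternative you sketch in your last paragraph, so that part of your proposal coincides with the paper's proof.

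Your pointwise argument is conceptually cleaner, and the combinatorial contraction you give (pick $\tau_\ast\in S$ and wedge with $e_{\tau_\ast}$) is correct once the stalk identification is in hand. Two points deserve care. First, in the rigid $G$-topology exactness of abelian sheaves is not in general detected on classical points alone; one should invoke that the associated topos has enough points (e.g.\ via the adic space), where the same dichotomy $(j^\dagger_\ttT\calE)_x=\calE_x$ or $0$ according to $\mathrm{sp}(x)\in \overline X-Y_\ttT$ or not still holds, so the argument carries over unchanged. Second, your worry about the boundary $]\overline X-X[$ stems from an ambiguity in the statement: in the paper's application the lemma is used with $X=\overline X=X^{\tor}$ (the $Y_\ttT$ being closed already in the compactification), so each $X-Y_\ttT$ is genuinely open in $\overline X$ and the issue does not arise. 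What the stalk approach buys is a one-step proof with no induction; what the paper's approach buys is that it stays entirely within the sheaf formalism (admissible coverings and the identity $j^\dagger_{\{\tau\}}j^\dagger_\ttT=j^\dagger_{\{\tau\}}$) and avoids the foundational point about enough points.
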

\begin{proof}
This is a standard property of \v Cech covering. We prove by induction on $\#\Sigma$. When $\#\Sigma=1$, the statement is trivial. Assume now that the Lemma holds for $\#\Sigma=n-1$, and we need to prove it for $\#\Sigma=n$. For each $\tau\in \Sigma$, let $V_\tau$ be a strict neighborhood of $]\overline X-Y_{\tau}[$. Then $V_\tau$ for all $\tau \in \Sigma$ and $]Y_{\Sigma}[$ form an admissible covering of $]\overline X[$. The restriction of \eqref{E:resolution} to $]Y_\Sigma[$ is identically zero, it suffices to prove its exactness when restricted to each $V_\tau$. By standard arguments of direct limits, it is enough to prove the exactness of \eqref{E:resolution} after applying $j^\dagger_{\{\tau\}}$. Note that $j^{\dagger}_{\{\tau\}}j^\dagger_{\ttT}=j^{\dagger}_{\{\tau\}}$ if $\tau\in \ttT$, and $j^{\dagger}_{\{\tau\}}j^{\dagger}_{\overline X-Y}=j^{\dagger}_{\overline X-Y}$. It is easy to see that after applying $j^{\dagger}_{\{\tau\}}$, the resulting complex is the mapping fiber of a morphism from 
\begin{itemize}
\item
the dual \v Cech complex concentrated in degrees $[-n, -1]$ with constant group $j^{\dagger}_{\{\tau\}}\calE$, to
\item
a complex of type \eqref{E:resolution} but with $X$ replaced by $X'=X-Y_{\tau}$ and $Y$ replaced by $Y'=\bigcup_{\tau'\in \Sigma\backslash\{\tau\}} (Y_{\tau'} - Y_\tau)$.
\end{itemize}
By the last remark of the previous Subsection, the latter is acyclic. Hence, the desired exactness follows from the induction hypothesis.
\end{proof}

 \subsection{Setup of Hilbert modular varieties}\label{S:rig-setup}

Let $L$, $L_{\wp}$, $\cO_{\wp}$ and $k_0$ be as in Subsection~\ref{S:HMF-notation}.
We fix an open subgroup $K=K_pK^p$ such that $K_p=\GL_2(\cO_{F}\otimes_{\Z}\Z_p )$, and $K^p$ satisfies Hypothesis~\ref{H:fine-moduli}. 
To simplify notation, let $\bfX$ denote the base change to $W(k_0)$ of the integral model of the Shimura variety  $\bfSh_{K}(G)$ considered  in Subsection~\ref{Subsection:HMV}. Let $\bfX^{\tor}$ be a toroidal compactification of $\bfX$ as in Subsection~\ref{Subsection:compactification}.
We use $X$ and $X^\tor$ to denote their special fibers over $k_0$. Let $\X^{\tor}$ be the formal completion of $\bfX^\tor$ along its special fiber, and let $\X_\rig^\tor$  denote the base change to $L_{\wp}$ of  the rigid analytic spaces associated to $\bfX^\tor$. Let $\fX\subset \fX^{\tor}$ denote the open formal subscheme corresponding to $X$.
 For a subvariety $Z\subseteq X^{\tor}$, we denote by $]Z[=]Z[_{\X^{\tor}}$ the tube  of $Z$ in $\X^{\tor}_{\rig}$.

For $\tau \in \Sigma_\infty$, let $Y_\tau$ denote the vanishing locus of the partial Hasse invariant $h_{\tau}$ at $\tau \in \Sigma_\infty$ defined in Subsection~\ref{Subsection:Hasse}. Recall that these $Y_\tau$ are smooth divisors with simple normal crossings. Note that $Y_{\tau}$ has no intersection with the toroidal boundary $\D$. We put $Y=\bigcup_{\tau\in \Sigma_{\infty}}Y_{\tau}$, and  $X^{\tor,\ord} = X^\tor - Y$ and $X^\ord = X^{\tor, \ord} \cap X$.
For a subset $\ttT \subseteq \Sigma_\infty$, we put $Y_\ttT = \cap _{\tau \in \ttT} Y_\tau$. It is a smooth closed subvariety of $X^{\tor}$ of codimension $\#\ttT$, and we call it a \emph{closed Goren-Oort stratum} (or GO-stratum for short) of codimension $\#\ttT$. As a convention, we put $Y_{\emptyset}=X$.

\subsection{Isocrystals on the Hilbert modular varieties}

Let  $\calA^{\sm}$ denote the family of semi-abelian varieties over $X^{\tor}$ which extends the universal HBAV $\calA$ on $X$. 
Let $(X/W(k_0))_{\cris}$ denote the crystalline site of $X$ relative to the natural divided power structure on $(p)\subset W(k_0)$.
 Then the relative crystalline cohomology $\calH^1_{\cris}(\calA/X)$ is an $F$-crystal over $(X/W(k_0))_{\cris}$.
 The evaluation of $\calH^1_{\cris}(\calA/X)$ at the divided power embedding $X\ra \fX$ is canonically identified with the relative de Rham cohomology $\calH^1_{\dR}(\calA/\fX)$, where $\calA$ also denotes the universal HBAV over $\fX$ by abuse of notation.
  We denote by $\scrD(\calA)$ the (overconvergent) $F$-isocrystal on $X/W(k_0)[1/p]$ (hence also an $F$-isocrystal over $X/L_{\wp}$ by base change) associated to $\calH^1_{\cris}(\calA/X)$.
The action of $\cO_F$ on $\calA$ induces an action of $\cO_F$ on $\scrD(\calA)$, which gives rise to a natural decomposition
\[
\scrD(\calA) =\oplus_{\tau\in \Sigma_{\infty}}\scrD(\calA)_{\tau},
\]
where each $\scrD(\calA)_{\tau}$ is a isocrystal of rank $2$. 

For a multiweight $(\kb, w)$, we put
\[
\scrD^{(\kb,w)}: =\bigotimes_{\tau\in \Sigma_{\infty}}(\wedge^2\scrD(\calA)_{\tau})^{\frac{w-k_\tau}{2}}\otimes \Sym^{k_{\tau}-2}\scrD(\calA)_{\tau}.
\]
This is an $F$-isocrystal over $X/L_{\wp}$, and its evaluation on $\fX$ is  the vector bundle $\F^{(\kb,w)}$ defined in Subsection~\ref{S:descent} on the rigid analytic variety  $\fX_{\rig}$. The isocrystal $\scrD^{(\kb,w)}$ extends to the vector bundle $\F^{(\kb, w)}$ over  $\fX^{\tor}_{\rig}$ equipped with an integrable connection with logarithmic poles along $\D$ (Subsection~\ref{S:descent}).
 For a subvariety $Z\subset X$ disjoint from $\D$, the rigid cohomology  of $Z$ with values in $\scrD^{(\kb,w)}$ can be computed as 
 \[H^{\star}_{\rig}(Z/L_{\wp},\scrD^{(\kb,w)})=H^{\star}\big(]Z[, j_{Z}^{\dagger}\DR(\F^{(\kb,w)})\big),\]
 where $j_Z$ denote the canonical inclusion $ ]Z[\hra \fX^{\tor}_{\rig}$.

\subsection{ Partial Frobenius on $X$}
\label{S:Frobenius-HMV}
 Let $S$ be a locally noetherian $\FF_p$-scheme, and  $x=(A,\iota, \bar\lambda, \bar\alpha_{K^p})$ an $S$-valued point of $X$. For each $\gothp\in \Sigma_p$, we construct a new point $\varphi_{\gothp}(x)=(A',\iota', \bar\lambda',\bar\alpha'_{K^p})$ of $X$  as follows: 
 \begin{itemize}
 \item Let $\Ker_{\gothp}$ denote the $\gothp$-component of the kernel of the relative Frobenius homomorphism $\Fr_{A}: A\ra A^{(p)}$. We put $A'=A/\Ker_{\gothp}$, and equip it with the induced action $\iota'$ of $\cO_F$. Let $\pi_{\gothp}: A\ra A'$ denote the canonical isogeny. 
 
 \item If $\lambda$ is a $\gothc$-polarization on $A$, then it induces a natural $\gothc\gothp$-polarization on $A'$ determined by the commutative diagram:
 \[
\xymatrix{ A'\otimes_{\cO_F}\gothc\gothp\ar[r]^-{\check\pi_{\gothp}}\ar[d]^{\cong}_{\lambda'} & A\otimes_{\cO_F} \gothc\ar[d]_{\lambda}^{\cong} \\
A'^\vee\ar[r]^{\pi_{\gothp}^\vee} & A^\vee.
}
\] 
Here, $\check\pi_{\gothp}$ is the unique map such that the composite 
$A\otimes_{\cO_{F}}\gothc\gothp\xra{\pi_{\gothp}} A'\otimes_{\cO_F}\gothc\gothp\xra{\check\pi_{\gothp}}A\otimes_{\cO_F}\gothc
$ 
is the canonical quotient map by $A[\gothp] \otimes_{\calO_F} \gothc \gothp$.
  
  \item The $K^p$-level structure $\alpha'_{K^p}$ on $A'$ is the unique one induced by the isomorphism $\pi_{\gothp,*}\colon T^{(p)}(A)\xra{\sim} T^{(p)}(A')$ of prime-to-$p$ Tate modules. 
 \end{itemize}
 With the convention in Remark~\ref{R:polarization}, $(A',\iota',\bar\lambda',\bar\alpha_{K^p})$ defines a point on $X$.
 We denote by $\varphi_{\gothp}: X\ra X$ the induced endomorphism of the Hilbert modular variety.
 It is a finite and flat morphism of degree $p^{[F_{\gothp}:\Q_p]}$. By choosing appropriate cone decompositions, one may assume that $\varphi_{\gothp}$ extends to an endomorphism of $X^\tor$. 
 It is clear that the restriction of $\varphi_{\gothp}$ to the ordinary locus $X^{\tor, \ord}$ coincides with the reduction of $\varphi_{\gothp}:\X^{\tor,\ord}\ra \X^{\tor,\ord}$ considered in Subsection~\ref{S:Frobenius-ordinary}, since the $\gothp$-canonical subgroups there lift  $\Ker_{\gothp}$. (But $\varphi_\gothp$ does not lift to $\X^{\tor}$ in general.) 
 Note that $\varphi_{\gothp}$ and $\varphi_{\gothq}$ with $\gothp\neq \gothq$ commute with each other, and the product $F_{X/\FF_p}=\prod_{\gothp\in\Sigma_p}\varphi_{\gothp}:X\ra X$ is the Frobenius endomorphism of $X$ relative to $\FF_p$.
  We call $\varphi_{\gothp}$ the \emph{$\gothp$-partial Frobenius}. 

Let $\sigma_{\gothp}: \Sigma_{\infty}\ra \Sigma_{\infty}$ be the  map defined by  
  \[
  \sigma_{\gothp}(\tau)=\begin{cases}\tau&\text{ if }\tau\notin \Sigma_{\infty/\gothp},\\
   \sigma\tau& \text{ if }\tau\in \Sigma_{\infty/\gothp}.
   \end{cases}
  \] 
For a subset $\ttT\subseteq\Sigma_{\infty}$, we denote by $\sigma_{\gothp}\ttT$ its image under $\sigma_{\gothp}$. 
  
\begin{lemma}\label{L:partial-Frob-Hasse}
Let $x=(A,\iota,\bar\lambda, \bar\alpha_{K^p})$ be a point of $X$ with values in a locally noetherian $k_0$-scheme $S$, and let $\varphi_{\gothp}(x)=(A',\iota',\bar\lambda',\bar\alpha'_{K^p})$ be its image under $\varphi_\gothp$. Then $\tau$-partial Hasse invariant $h_{\tau}(\varphi_{\gothp}(x))$ is canonically identified with  $h_{\tau}(x)$ if $\tau\notin\Sigma_{\infty/\gothp}$, and with $h_{\sigma^{-1}\tau}(x)^{\otimes p}$ if $\tau\in \Sigma_{\infty/\gothp}$; in particular, if $S$ is the spectrum of a perfect field, then $h_{\tau}(\varphi_{\gothp}(x))=0$ if and only if $h_{\sigma_{\gothp}^{-1}\tau}(x)=0$.
   \end{lemma}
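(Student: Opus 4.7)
The plan is to split the argument according to whether $\tau \notin \Sigma_{\infty/\gothp}$ or $\tau \in \Sigma_{\infty/\gothp}$, exploiting the fact that the canonical isogeny $\pi_\gothp \colon A \to A' = A/\Ker_{\gothp}$ modifies only the $\gothp$-divisible group of $A$, and that $h_\tau$ is by definition the $\tau$-component of the pullback $\Ver^*_A \colon \omegab_A \to \omegab_A^{(p)}$ of the Verschiebung.

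For $\tau \notin \Sigma_{\infty/\gothp}$, I set $\gothq = i_p(\tau) \neq \gothp$. Since $\Ker_{\gothp} \subset A[\gothp^\infty]$, the restriction of $\pi_\gothp$ to the $\gothq$-divisible group is an isomorphism $A[\gothq^\infty] \xra{\sim} A'[\gothq^\infty]$, inducing canonical identifications $\omegab_{A',\tau} \simeq \omegab_{A,\tau}$ and $\omegab_{A',\sigma^{-1}\tau} \simeq \omegab_{A,\sigma^{-1}\tau}$ (note that both $\tau$ and $\sigma^{-1}\tau$ lie in $\Sigma_{\infty/\gothq}$, since Frobenius preserves $\Sigma_p$). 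Functoriality of the Verschiebung then identifies $h_\tau(\varphi_\gothp(x))$ with $h_\tau(x)$ under these identifications, giving the first claim.

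For $\tau \in \Sigma_{\infty/\gothp}$, the crucial input is that $\Ker_{\gothp}$ equals the kernel of the relative Frobenius $\Fr_{A[\gothp^\infty]} \colon A[\gothp^\infty] \to A[\gothp^\infty]^{(p)}$; consequently, $\pi_\gothp$ restricted to the $\gothp$-divisible group factors as this Frobenius followed by a canonical isomorphism $A[\gothp^\infty]^{(p)} \xra{\sim} A'[\gothp^\infty]$. Combined with the equality $A^{(p)}[\gothp^\infty] = A[\gothp^\infty]^{(p)}$, this yields a canonical isomorphism $A'[\gothp^\infty] \cong A^{(p)}[\gothp^\infty]$, and passing to invariant differentials produces
\[
\omegab_{A',\tau} \cong \omegab_{A^{(p)},\tau} \cong \omegab_{A,\sigma^{-1}\tau}^{\otimes p},
\]
where the second isomorphism uses the formula $\omegab_{A^{(p)}} = \omegab_A \otimes_{\cO_S, \Fr} \cO_S$ together with the observation that Frobenius pullback shifts the $\cO_F$-character on $\Sigma_\infty$ by $\sigma^{-1}$. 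By naturality of Verschiebung with respect to Frobenius twist (which can be verified, for example, from the defining relation $\Ver \circ \Fr = [p]$), the $\tau$-component of $\Ver_{A'}^*$ is identified with that of $(\Ver_A^*)^{(p)}$, which under the above identifications becomes precisely $h_{\sigma^{-1}\tau}(x)^{\otimes p}$, as required.

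The concluding \emph{in particular} statement is then immediate: over a perfect field, a section of a line bundle vanishes if and only if its $p$-th tensor power does, so both cases give $h_\tau(\varphi_\gothp(x)) = 0$ if and only if $h_{\sigma_\gothp^{-1}\tau}(x) = 0$. The only delicate part of the argument is verifying the direction of the index shift by $\sigma^{-1}$ coming from Frobenius pullback, which requires carefully unwinding the $\Fr_{\cO_S}$-twisted tensor product and how the $\cO_F$-action interacts with it; everything else is standard functoriality.
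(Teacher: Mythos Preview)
Your proof is correct and follows essentially the same approach as the paper. The paper's argument is terser---it dismisses the case $\tau\notin\Sigma_{\infty/\gothp}$ as clear and, for $\tau\in\Sigma_{\infty/\gothp}$, records the isomorphism $A'[\gothp^\infty]\simeq (A[\gothp^\infty])^{(p)}$ and the resulting identification $\omega_{A',\tau}\cong\omega_{A,\sigma^{-1}\tau}^{(p)}$ compatible with Verschiebung---but the logical content is identical to yours; you have simply spelled out the index shift and the naturality step more explicitly.
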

  \begin{proof}
  The statement is clear for $\tau\notin \Sigma_{\infty/\gothp}$. Now suppose that $\tau\in \Sigma_{\infty/\gothp}$. 
  As $\gothp$ is unramified, $A'[\gothp^{\infty}]$ is the quotient of $A[\gothp^{\infty}]$ by its kernel of Frobenius, hence there exists an isomorphism of $p$-divisible groups $A'[\gothp^{\infty}] \cong (A[\gothp^{\infty}])^{(p)}.$
   There exists thus an isomorphism 
  $$
  \omega_{A'/S,\tau}=\omega_{A'[\gothp^{\infty}]/S,\tau}\cong \omega_{A/S,\sigma^{-1}\tau}^{(p)}
  $$ 
  compatible with the morphism induced by the Verschiebung. It follows that $h_{\tau}(A')$ can be identified with the base change of $h_{\sigma^{-1}\tau}(A)$ via the absolute Frobenius on $S$, whence the Lemma.
    \end{proof}
    
    \begin{cor}\label{C:partial-Frob}
    For a subset $\ttT\subseteq \Sigma_{\infty}$, the restriction of the partial Frobenius $\varphi_{\gothp}$ to $Y_{\ttT}$ defines a finite flat map $\varphi_{\gothp}: Y_{\ttT}\ra Y_{\sigma_{\gothp}\ttT}$ of degree $p^{\#(\Sigma_{\infty/\gothp}-\ttT_{\gothp})}$,
     with $\ttT_{\gothp}=\Sigma_{\infty/\gothp}\cap\ttT$.
      If $\varphi_{\gothp}^{-1}(Y_{\sigma_{\gothp}\ttT})$ is the fiber product of $\varphi_{\gothp}: X\ra X$ with the closed immersion $Y_{\sigma_{\gothp}\ttT}\hra X$,
       then we have an equality $[\varphi_{\gothp}^{-1}(Y_{\sigma_{\gothp}\ttT})]=p^{\#\ttT_{\gothp}}[Y_{\ttT}]$ in the group of algebraic cycles on $X$ of codimension $\#\ttT$.
    \end{cor}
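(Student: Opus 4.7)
The plan is to deduce both assertions from a direct computation of the scheme-theoretic preimage $\varphi_{\gothp}^{-1}(Y_{\sigma_{\gothp}\ttT})$ via Lemma~\ref{L:partial-Frob-Hasse}. First, the lemma shows that $\varphi_{\gothp}$ carries $Y_\ttT$ into $Y_{\sigma_{\gothp}\ttT}$ set-theoretically: for $\tau \in \ttT$ the pullback $\varphi_{\gothp}^{*}h_{\sigma_{\gothp}\tau}$ equals $h_{\tau}$ when $\tau \notin \Sigma_{\infty/\gothp}$ and $h_{\tau}^{\otimes p}$ when $\tau \in \Sigma_{\infty/\gothp}$, and both vanish on $Y_\ttT$. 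Since $\varphi_\gothp\colon X\to X$ is finite, its restriction $\varphi_{\gothp}\colon Y_\ttT \to Y_{\sigma_{\gothp}\ttT}$ is also finite.

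For the cycle identity, one uses the fact (recalled in Subsection~\ref{S:rig-setup} from \cite{goren-oort}) that the $Y_\tau$ form a simple normal crossings divisor, so the $Y_{\sigma_\gothp \tau}$ for $\tau \in \ttT$ intersect transversally in $Y_{\sigma_\gothp \ttT}$. Consequently, viewing each $Y_{\sigma_{\gothp}\tau}$ as a Cartier divisor cut out locally by $h_{\sigma_{\gothp}\tau}$, the scheme $\varphi_{\gothp}^{-1}(Y_{\sigma_{\gothp}\ttT})$ is the intersection of the Cartier divisors $\mathrm{div}(\varphi_{\gothp}^{*}h_{\sigma_{\gothp}\tau})$ for $\tau \in \ttT$, and the associated cycle decomposes as
\[
[\varphi_{\gothp}^{-1}(Y_{\sigma_{\gothp}\ttT})]
 \;=\; \prod_{\tau \in \ttT\setminus\Sigma_{\infty/\gothp}} [Y_{\tau}] \cdot \prod_{\tau \in \ttT_{\gothp}} p\,[Y_{\tau}]
 \;=\; p^{\#\ttT_{\gothp}}\,[Y_\ttT],
\]
where the factor of $p$ in the second product comes from the exponent in $\varphi_{\gothp}^{*}h_{\sigma\tau} = h_{\tau}^{\otimes p}$.

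Finally, since $\varphi_{\gothp}\colon X \to X$ is finite flat of degree $p^{[F_{\gothp}:\QQ_p]} = p^{\#\Sigma_{\infty/\gothp}}$, so is its base change $\varphi_{\gothp}^{-1}(Y_{\sigma_{\gothp}\ttT}) \to Y_{\sigma_{\gothp}\ttT}$. Combining this with the cycle identity above gives that the generic degree of $\varphi_{\gothp}\colon Y_\ttT \to Y_{\sigma_{\gothp}\ttT}$ is $p^{\#\Sigma_{\infty/\gothp} - \#\ttT_\gothp} = p^{\#(\Sigma_{\infty/\gothp}-\ttT_{\gothp})}$. Flatness of this restriction follows from miracle flatness, since $Y_\ttT$ and $Y_{\sigma_{\gothp}\ttT}$ are both smooth of the same dimension and the map is finite. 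I do not anticipate any serious obstacle; the main subtlety is simply to justify that the intersection-theoretic cycle identity is legitimate, which is guaranteed by the normal crossings property of the $Y_\tau$.
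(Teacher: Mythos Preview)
Your argument is correct and follows essentially the same route as the paper: use Lemma~\ref{L:partial-Frob-Hasse} to identify $\varphi_\gothp^{-1}(Y_{\sigma_\gothp\ttT})$ as the zero locus of $h_\tau$ for $\tau\in\ttT\setminus\ttT_\gothp$ and $h_\tau^{\otimes p}$ for $\tau\in\ttT_\gothp$, read off the cycle identity, and deduce the degree from $\deg(\varphi_\gothp\colon X\to X)=p^{\#\Sigma_{\infty/\gothp}}$. The only difference is in the flatness step: the paper factors the relative Frobenius of $Y_\ttT$ (automatically finite flat since $Y_\ttT$ is smooth) as the product $\prod_\gothq \varphi_\gothq|_{Y_\ttT}$ and invokes the fibre-by-fibre flatness criterion, whereas your appeal to miracle flatness (finite map between smooth equidimensional schemes of equal dimension) is more direct and equally valid.
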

    \begin{proof}
Lemma~\ref{L:partial-Frob-Hasse} implies that $\varphi_{\gothp}$ sends $Y_{\ttT}$ to $Y_{\sigma_{\gothp}\ttT}$. We note that  $\prod_{\gothp\in \Sigma_{p}} \varphi_{\gothp}: Y_{\ttT}\ra Y_{\ttT}^{(p)}$ is the relative Frobenius of $Y_{\ttT}$, which is finite flat. The flatness criterion by fibers implies the finite flatness of  $\varphi_{\gothp}|_{Y_{\ttT}}$.    
     By the Lemma,  $\varphi_{\gothp}^{-1}(Y_{\sigma_{\gothp}\ttT})$ is the closed subscheme of $X$ defined by vanishing of $h_{\tau}$'s for $\tau\in \ttT-\ttT_{\gothp}$ and $h_{\tau}^{\otimes p}$'s  for $\tau\in \ttT_{\gothp}$. Hence,   $Y_{\ttT}$ is the closed subscheme of $\varphi_{\gothp}^{-1}(Y_{\sigma_{\gothp}\ttT})$ defined by the vanishing of $h_{\tau}$'s for $\tau\in \ttT_{\gothp}$. Since $Y_{\ttT}$ is non-singular,  the equality $[\varphi_{\gothp}^{-1}Y_{\sigma_{\gothp}\ttT}]=p^{\#\ttT_{\gothp}}[Y_{\ttT}]$ follows immediately.   
Note that  $\varphi^{-1}_{\gothp}(Y_{\sigma_{\gothp}\ttT})$ is a finite and flat of degree $p^{[F_{\gothp}:\Q_p]}$ over $Y_{\sigma_{\gothp}\ttT}$. Hence, the flat map $\varphi_{\gothp}|_{Y_{\ttT}}$ must have degree $p^{[F_{\gothp}:\Q_p]}/p^{\#\ttT_{\gothp}}=p^{\#(\Sigma_{\infty/\gothp}-\ttT_{\gothp})}$.
        \end{proof}
  
   We have the isogeny $\pi_{\gothp}: \univA^{\sm}\ra \varphi_{\gothp}^*\univA^{\sm}$ obtained by taking the quotient by the subgroup $\Ker_{\gothp}$ of $\univA^{\sm}$. This induces an isomorphism of $F$-isocrystals:
 \[
 \pi^*_{\gothp}: \varphi_{\gothp}^*\scrD(\calA)=\scrD(\varphi^*_{\gothp}\calA)\xra{\cong} \scrD(\calA),
 \]
 and hence an isomorphism $\pi^*_{\gothp}: \varphi_{\gothp}^*\scrD^{(\kb,w)}\cong \scrD^{(\kb,w)}$.
This gives rise to an  operator $\Fr_{\gothp}$ on the rigid cohomology for each $Y_{\ttT}$ with $\ttT\subseteq \Sigma_{\infty}$:
\begin{align*}
\Fr_{\gothp} \colon H^{\star}_{c,\rig}\big(Y_{\ttT}/L_{\wp},\scrD^{(\kb,w)}|_{Y_{\ttT}}\big)&\xrightarrow{\varphi^*_{ \gothp}} H^{\star}_{c,\rig}\big(Y_{\sigma_{\gothp}^{-1}\ttT}/L_{\wp},\varphi_{\gothp}^*\scrD^{(\kb,w)}|_{Y_{\sigma_{\gothp}^{-1}\ttT}}\big)
\\
&\xrightarrow{\pi_{\gothp}^*}
 H^{\star}_{c,\rig}\big(Y_{\sigma_{\gothp}^{-1}\ttT}/L_{\wp},\scrD^{(\kb,w)}|_{Y_{\sigma^{-1}_{\gothp}\ttT}}\big).
\end{align*}
 Here  $H^{j-2i}_{c,\rig}\big( Y_\ttT/L_{\wp}, \scrD^{(\kb,w)}|_{Y_\ttT} \big)$ is  the same as the usual rigid cohomology without compact support if $\ttT\neq \emptyset$. 
 Similarly, we have an operator $\Fr_{\gothp}$ on $H^{\star+2\#\ttT}_{Y_{\ttT},\rig}(X, \scrD^{(\kb,w)})$
   such that the following diagram is commutative:
 \begin{equation}
 \xymatrix{
 H^{\star}_{c,\rig}(Y_{\ttT}/L_{\wp},\scrD^{(\kb,w)}|_{Y_{\ttT}})\ar[rr]^-{\cong}_-{\mathrm{Gysin}}\ar[d]_{p^{\#\ttT_{\gothp}}\Fr_{\gothp}} && H^{\star+2\#\ttT}_{Y_{\ttT}}(X/L_{\wp},\scrD^{(\kb,w)})\ar[d]^{\Fr_{\gothp}}\\
 H^{\star}_{c,\rig}(Y_{\sigma^{-1}_{\gothp}\ttT}/L_{\wp},\scrD^{(\kb,w)}|_{Y_{\sigma^{-1}_{\gothp}\ttT}}) \ar[rr]^-{\cong}_-{\mathrm{Gysin}} &&H^{\star+2\#\ttT}_{Y_{\sigma^{-1}_{\gothp}\ttT}}(X/L_{\wp},\scrD^{(\kb,w)}).
 }
 \end{equation}
 Here, $p^{\#\ttT_{\gothp}}$ appears on the left vertical arrow, because the inverse image of cycle class  $c(Y_{\ttT})\in H^{2\#\ttT}_{Y_{\ttT},\rig}(X/L_{\wp})$ under $\varphi_{\gothp}^*$ is  the class
 \[
\varphi_{\gothp}^*c(Y_{\ttT})=c(\varphi_{\gothp}^{-1}(Y_{\ttT}))=p^{\#\ttT_{\gothp}}c(Y_{\sigma_{\gothp}^{-1}\ttT}) \in H^{2\#\ttT}_{Y_{\sigma_{\gothp}^{-1}\ttT},\rig}(X/L_{\wp}),
 \]
where we used Corollary~\ref{C:partial-Frob} and basic properties of the rigid class cycle map \cite{petregin}.

Recall  that we have, for each $\gothp\in \Sigma_p$, an automorphism $S_\gothp$ on $X^\tor$   defined in Subsection~\ref{S:operator S_gothp}. We have an natural isogeny 
\[[\varpi_{\gothp}]: \calA\ra S_{\gothp}^*\calA=\calA\otimes_{\cO_F}\gothp^{-1},
\]
which induces an isomorphism of isocrystals $[\varpi_{\gothp}]^*:S_{\gothp}^*\scrD^{(\kb,w)}\cong \scrD^{(\kb,w)}$.
Since $Y_{\ttT}$ is stable under $S_{\gothp}$ for each $\ttT\subseteq \Sigma_{\infty}$, the morphism   $S_{\gothp}$ induces an automorphism 
\[
S_{\gothp}: H^{\star}_{c,\rig}(Y_{\ttT}/L_{\wp},\scrD^{(\kb,w)}|_{Y_{\ttT}})\ra H^{\star}_{c,\rig}(Y_{\ttT}/L_{\wp},\scrD^{(\kb,w)}|_{Y_{\ttT}}).
\]

\subsection{Twisted partial Frobenius}
In order to compare with the unitary setting later, we define the \emph{twisted partial Frobenius} to be
\[
\gothF_{\gothp^2}: =\varphi_\gothp^2 \circ S_\gothp^{-1}: X^{\tor}\ra X^{\tor}.
\]
Note that $\gothF_{\gothp^2}$ sends a point $(A,\iota,\bar\lambda,\bar\alpha_{K^p})$ to $((A/\Ker_{\gothp^2})\otimes_{\cO_F}\gothp,\iota',\bar\lambda',\bar\alpha'_{K^p})$, where $\Ker_{\gothp^2}$ is the $\gothp$-component of the kernel of the relative $p^2$-Frobenius $A\ra A^{(p^2)}$. It is clear that $\gothF_{\gothp^2}$ send a GO-stratum $Y_{\ttT}$ to $Y_{\sigma^2_{\gothp}\ttT}$.
We use 
$
\eta: \calA \rightarrow
\gothF_{\gothp^2}^*\calA
$
to denote the canonical quasi-isogeny 
\[
\calA\ra \calA/\Ker_{\gothp^2}\leftarrow \gothF^*_{\gothp^2}\calA=(\calA/\Ker_{\gothp^2})\otimes_{\cO_F}\gothp.
\]
It induces an isomorphism of $F$-isocrystals $\eta^*_{\gothp}: \gothF^*_{\gothp^2}\scrD(\calA)=\scrD(\gothF^*_{\gothp^2}\calA)\xra{\cong}\scrD(\calA)$, and hence an isomorphism
$
\eta^*_{\gothp}\colon \gothF^*_{\gothp^2}\scrD^{(\kb,w)}\xra{\cong} \scrD^{(\kb,w)}
$.
For $\ttT\subseteq \Sigma_{\infty}$ we define the  operator $\Phi_{\gothp^2}$ on the rigid cohomology to be
\begin{eqnarray}\label{E:partial-Frob-Hilbert}
\Phi_{\gothp^2}: 
H^{\star}_{c,\rig}(Y_{\ttT}/L_{\wp},\scrD^{(\kb,w)}|_{Y_{\ttT}})&\xrightarrow{\gothF^*_{ \gothp^2}} H^{\star}_{c,\rig}(Y_{\sigma_{\gothp}^{-2}\ttT}/L_{\wp},\gothF_{\gothp^2}^*\scrD^{(\kb,w)}|_{Y_{\sigma_{\gothp}^{-2}\ttT}})\\
\nonumber
&\xrightarrow{\eta_{\gothp}^*} 
 H^{\star}_{c,\rig}(Y_{\sigma_{\gothp}^{-2}\ttT}/L_{\wp},\scrD^{(\kb,w)}|_{Y_{\sigma^{-2}_{\gothp}\ttT}}).
\end{eqnarray}
It is clear that $\Phi_{\gothp^2}=\Fr_{\gothp}^2\cdot S_{\gothp}^{-1}$, and $\Phi_{\gothp^2}$ commute with $\Phi_{\gothq^2}$ for $\gothp,\gothq\in \Sigma_p$. Similar to the case for $\Fr_{\gothp}$, we also have an operator 
$\Phi_{\gothp^2}$ on  $H^{\star+2\#\ttT}_{Y_{\ttT},\rig}(X,\scrD^{(\kb,w)})$ such that the following diagram is commutative:
\begin{equation}\label{E:Phi-Gysin}
\xymatrix{
 H^{\star}_{c,\rig}(Y_{\ttT}/L_{\wp},\scrD^{(\kb,w)}|_{Y_{\ttT}})\ar[rr]^-{\cong}_-{\mathrm{Gysin}}\ar[d]_{p^{2\#\ttT_{\gothp}}\Phi_{\gothp^2}} && H^{\star+2\#\ttT}_{Y_{\ttT}}(X/L_{\wp},\scrD^{(\kb,w)})\ar[d]^{\Phi_{\gothp^2}}\\
 H^{\star}_{c,\rig}(Y_{\sigma^{-2}_{\gothp}\ttT}/L_{\wp},\scrD^{(\kb,w)}|_{Y_{\sigma^{-2}_{\gothp}\ttT}}) \ar[rr]^-{\cong}_-{\mathrm{Gysin}} &&H^{\star+2\#\ttT}_{Y_{\sigma^{-2}_{\gothp}\ttT}}(X/L_{\wp},\scrD^{(\kb,w)}).}
\end{equation}


Recall that we have defined the  cohomology group  $H^{\star}_{\rig}(X^{\tor,\ord}, \D; \F^{(\kb, w)})$ in Subsection~\ref{subsection:rigid-coh}. Its relation with the rigid cohomology of the strata $Y_{\ttT}$  is given by the following

 \begin{prop}\label{P:spectral-sequence}
\begin{itemize}
	\item[(1)]
 There exists a  spectral sequence in the second quadrant 
 \begin{equation}
\label{E:spectral sequence for ordinary cohomology}
E_1^{-i,j}  = \bigoplus_{\#\ttT = i}  H^{j-2i}_{c,\rig}\big( Y_\ttT/L_{\wp}, \scrD^{(\kb,w)}|_{Y_\ttT} \big) e_{\ttT}\Rightarrow H^{j-i}_{\rig}(X^{\tor, \ord}, \D; \scrF^{(\kb,w)}). 
\end{equation}
Here, the $e_{\ttT}$'s are the \v Cech symbols from
Subsection~\ref{S:cech-symbol}, and the  transition maps $d_{1}^{-i,j}\colon E^{-i,j}_1\ra E^{1-i,j}_1$ are direct
sums of the Gysin maps $H^{j-2i}_{c,\rig}(Y_{\ttT}/L,\scrD^{(\kb,w)}|_{Y_{\ttT}})\ra H^{j-2i+2}_{c,\rig}(Y_{\ttT'}/L, \scrD^{(\kb,w)}|_{Y_{\ttT'}})$ with $\ttT'\subseteq \ttT$ and $\#\ttT'=\#\ttT-1=i-1$ using the dual \v Cech complex formalism in Subsection~\ref{S:cech-symbol}.

\item[(2)] The spectral sequence is equivariant under the natural action of tame Hecke algebra $\scrH(K^p,L_{\wp})=L_{\wp}[K^p\backslash \GL_2(\AAA^{\infty,p})/K^p]$,  and the actions of $\Fr_{\gothp}$ for each $\gothp\in \Sigma_{p}$, if we let $\Fr_{\gothp}$ act on $H^{\star}_{\rig}(X^{\tor,\ord},\D;\F^{(\kb,w)})$ as in Subsection~\ref{S:operator-Phi}, and  on the spectral sequence $(E^{-i,j}_1, d_1^{-i,j})$  as follows: for $\ttT=\{\tau_1,\cdots, \tau_{\#\ttT}\}$, we define
\[
\xymatrix@R=0pt{
\Fr_\gothp : H^{j-2i}_{c,\rig}(Y_\ttT/L_{\wp}, \scrD^{(\kb,w)}|_{Y_{\ttT}}) e_{\ttT} \ar[r] & H^{j-2i}_{c,\rig}(Y_{\sigma^{-1}_\gothp\ttT}/L_{\wp},\scrD^{(\kb,w)}|_{Y_{\sigma^{-1}_{\gothp}\ttT}}) e_{\sigma^{-1}_\gothp\ttT}\\
m_\ttT \cdot e_{\tau_1} \wedge \cdots \wedge e_{\tau_{\#\ttT}} \ar@{|->}[r] &
\Fr_\gothp(m_\ttT) \cdot p^{\#\ttT_{\gothp}}\cdot  e_{\sigma_\gothp^{-1}\tau_1} \wedge \cdots\wedge e_{\sigma_\gothp^{-1} \tau_{\#\ttT}},
}
\]
where $\ttT_{\gothp}=\ttT\cap\Sigma_{\infty/\gothp}$.
Similarly, the spectral sequence is equivariant for the action of $S_{\gothp}$ and $\Phi_{\gothp^2}=\Fr^2_{\gothp}\cdot S^{-1}_{\gothp}$ on both sides. 

\end{itemize}
 \end{prop}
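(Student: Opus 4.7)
The plan is to obtain the spectral sequence from a resolution of $j^\dagger \DR_c(\scrF^{(\kb,w)})$ (with $j = j_{X^{\tor,\ord}}$) whose terms compute naturally the cohomology of the closed GO-strata $Y_\ttT$, then read off its column-filtration spectral sequence. I would begin with the tautological short exact sequence
$$0 \to \Gammab^\dagger_{]Y[}\DR_c(\scrF^{(\kb,w)}) \to \DR_c(\scrF^{(\kb,w)}) \to j^\dagger \DR_c(\scrF^{(\kb,w)}) \to 0,$$
and then establish the support analog of Lemma~\ref{L:resolution}: for the closed cover $Y=\bigcup_{\tau}Y_\tau$, inclusion-exclusion on the support functors yields an exact sequence (with signs encoded by the \v Cech symbols $e_\ttT$ of Subsection~\ref{S:cech-symbol})
$$0 \to \Gammab^\dagger_{]Y_{\Sigma_\infty}[}\calE\, e_{\Sigma_\infty} \to \cdots \to \bigoplus_{\tau}\Gammab^\dagger_{]Y_\tau[}\calE\, e_{\tau} \to \Gammab^\dagger_{]Y[}\calE\, e_{\emptyset} \to 0.$$
Its exactness would be checked by the same localization argument as in the proof of Lemma~\ref{L:resolution}: working near $x\in X^\tor$, only those $\tau$ with $x\in Y_\tau$ contribute, reducing the claim to the acyclicity of the standard dual \v Cech complex on the finite set $\ttS_x=\{\tau:x\in Y_\tau\}$. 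Splicing this resolution onto the initial short exact sequence produces an exact sequence
$$0 \to \Gammab^\dagger_{]Y_{\Sigma_\infty}[}\DR_c \to \cdots \to \bigoplus_{\tau}\Gammab^\dagger_{]Y_\tau[}\DR_c \to \DR_c \to j^\dagger\DR_c \to 0.$$

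Viewed as a double complex (horizontal = \v Cech-like indices, vertical = de Rham), this computes the hypercohomology $H^\star_\rig(X^{\tor,\ord},\D;\scrF^{(\kb,w)})$, and the column-filtration spectral sequence has $E_1$-page $E_1^{0,j}=\mathbb H^j(\X^\tor_{K,\rig},\DR_c(\scrF^{(\kb,w)}))$, and for $i\geq 1$
$$E_1^{-i,j} = \bigoplus_{\#\ttT=i}\mathbb H^j\bigl(\X^{\tor}_{K,\rig}, \Gammab^\dagger_{]Y_\ttT[}\DR_c(\scrF^{(\kb,w)})\bigr).$$
The $i=0$ row is by definition $H^j_{c,\rig}(X,\scrD^{(\kb,w)}|_X) = H^j_{c,\rig}(Y_\emptyset,\scrD^{(\kb,w)}|_{Y_\emptyset})$ under the convention $Y_\emptyset=X$. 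For $i\geq 1$, the term on the right equals $H^j_{Y_\ttT,\rig}(X^\tor,\scrD^{(\kb,w)})$, and the Gysin isomorphism~\eqref{E:Gysin-isom}, combined with the properness of $Y_\ttT$, identifies this with $H^{j-2i}_{c,\rig}(Y_\ttT,\scrD^{(\kb,w)}|_{Y_\ttT})$. By construction, the differential $d_1^{-i,j}$ is the alternating sum of the natural connecting maps $\Gammab^\dagger_{]Y_\ttT[}\to\Gammab^\dagger_{]Y_{\ttT\setminus\{\tau\}}[}$, which under Gysin become the Gysin maps for the codimension-one closed immersions $Y_\ttT\hookrightarrow Y_{\ttT\setminus\{\tau\}}$. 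This establishes part~(1).

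For part~(2), equivariance of the spectral sequence is a matter of tracing through functorialities. The tame Hecke operators come from étale correspondences away from $p$; they preserve the stratification $\{Y_\ttT\}$ and commute with $\Gammab^\dagger_{]Y_\ttT[}$, hence give compatible actions on the $E_1$-page and on the abutment. For the partial Frobenius $\Fr_\gothp$, Lemma~\ref{L:partial-Frob-Hasse} shows that $\varphi_\gothp(Y_\ttT)\subseteq Y_{\sigma_\gothp\ttT}$, yielding by pullback the expected map $H^*_{c,\rig}(Y_\ttT)\to H^*_{c,\rig}(Y_{\sigma_\gothp^{-1}\ttT})$ and the relabeling $e_\ttT\mapsto e_{\sigma_\gothp^{-1}\ttT}$. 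The scalar factor $p^{\#\ttT_\gothp}$ appearing in the claimed $E_1$-action arises precisely from the cycle class relation $\varphi_\gothp^*[Y_{\sigma_\gothp\ttT}]=p^{\#\ttT_\gothp}[Y_\ttT]$ of Corollary~\ref{C:partial-Frob}: under the Gysin isomorphism, this converts the natural action of $\Fr_\gothp$ on the support cohomology $H^{*+2i}_{Y_\ttT,\rig}(X^\tor,\scrD^{(\kb,w)})$ into $p^{\#\ttT_\gothp}\Fr_\gothp$ on $H^{*}_{c,\rig}(Y_\ttT)$. The case of $S_\gothp$ is immediate since $S_\gothp$ is an automorphism of $X^\tor$ preserving each $Y_\ttT$, and $\Phi_{\gothp^2}=\Fr_\gothp^2\cdot S_\gothp^{-1}$ inherits compatibility by combining. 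I expect the main obstacle to be the careful bookkeeping of the Gysin--cycle-class compatibility that produces the scalar $p^{\#\ttT_\gothp}$, together with tracking the signs introduced by the \v Cech symbols $e_\ttT$ through the splicing and the induced differentials.
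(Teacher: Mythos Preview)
Your approach is essentially the same as the paper's, just packaged through the support functors $\Gammab^\dagger_{]Y_\ttT[}$ rather than through cones $\Cone[\DR_c\to j^\dagger_{X^\tor-Y_\ttT}\DR_c][-1]$; these are quasi-isomorphic, and your closed-support Mayer--Vietoris resolution is the kernel of the open-cover resolution of Lemma~\ref{L:resolution}. The identification of $d_1$ with Gysin maps and the equivariance argument in part~(2), including the origin of the factor $p^{\#\ttT_\gothp}$ from Corollary~\ref{C:partial-Frob}, match the paper's reasoning.

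Two points deserve more care than you give them. First, the identification $E_1^{0,j}=\mathbb H^j(\X^\tor_{K,\rig},\DR_c(\scrF^{(\kb,w)}))\cong H^j_{c,\rig}(X,\scrD^{(\kb,w)})$ is \emph{not} ``by definition'': compactly supported rigid cohomology is defined via $\Gammab_{]X[}$, and the comparison with the log-de Rham complex twisted by $-\D$ is the content of Lemma~\ref{L:rigid cohomology with log pole} (proved via Poincar\'e duality on both sides). Second, for $i\geq1$ you write $H^j_{Y_\ttT,\rig}(X^\tor,\scrD^{(\kb,w)})$, but $\scrD^{(\kb,w)}$ is an isocrystal on $X$, not $X^\tor$. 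The paper inserts an excision step here: since $Y_\ttT$ avoids $\D$, one has $\Gammab^\dagger_{]Y_\ttT[}\DR_c(\scrF^{(\kb,w)})\simeq\Gammab^\dagger_{]Y_\ttT[}(j^\dagger_X\DR(\scrF^{(\kb,w)}))$, which then computes $H^j_{Y_\ttT,\rig}(X,\scrD^{(\kb,w)})$ and feeds into the Gysin isomorphism~\eqref{E:Gysin-isom}. Neither point is a real obstruction, but both should be made explicit.
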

 
\begin{proof}
This is a standard excision of Weil cohomology for simple normal crossing divisors on a variety, with some special attention to the sign convention and the action of Frobenius. 
We put $\F=\F^{(\kb,w)}$ to simplify the notation.
For an open subset $U\subseteq X^{\tor}$, let $j_{U}: U\hra X^\tor$ denote the natural immersion.
By Lemma~\ref{L:resolution}, we have the  following resolution of $j^{\dagger}_{X^{\tor,\ord}}\DR_c(\F)$:
\begin{small}
\begin{equation}
\label{E:cech resolution}
j_{X^{\tor} - Y_{\Sigma_\infty}}^\dagger \DR_c(\F) e_{\Sigma_\infty}
\to \bigoplus_{\tau \in \Sigma_\infty}
j_{X^{\tor}- Y_{\Sigma_\infty\backslash \tau}}^\dagger \DR_c(\F) e_{\Sigma_\infty\backslash \tau} \to \cdots \to \bigoplus_{\tau \in \Sigma_\infty} j^\dagger_{X^{\tor} - Y_{\tau}}\DR_c(\F) e_{\tau}.
\end{equation}
\end{small}
Here,  the $(-i)$-th term is a direct sum of $j^\dagger_{X^{\tor} - Y_\ttT}\DR_c(\F) e_{\ttT}$, over a subset $\ttT \subseteq \Sigma_\infty$ such that $\#\ttT = i$; and 
all connecting maps are natural restrictions. Hence, using the  remark at the end of Subsection~\ref{S:cech-symbol}, we see that  the following  double complex, denoted by $K^{\bullet,\bullet}$,
\begin{small}
\[
\xymatrix@C=15pt{
\DR_c(\F) e_{\Sigma_\infty} \ar[r]\ar[d] &
\displaystyle\bigoplus_{\tau \in \Sigma_\infty}
\DR_c(\F) e_{\Sigma_\infty \backslash \tau} \ar[r] \ar[d]
&
\cdots \ar[r] &
\displaystyle\bigoplus_{\tau \in \Sigma_\infty}
\DR_c(\F)
e_{\tau} \ar[d]
\\
j_{X^{\tor} - Y_{\Sigma_\infty}}^\dagger \DR_c(\F) e_{\Sigma_\infty} \ar[r]
&
\displaystyle \bigoplus_{\tau \in \Sigma_\infty}
j_{X^{\tor} - Y_{\Sigma_\infty \backslash \tau}}^\dagger \DR_c(\F)e_{\Sigma_\infty \backslash\tau} \ar[r]
&
 \cdots
 \ar[r]&
 \displaystyle \bigoplus_{\tau \in \Sigma_\infty} j^\dagger_{X^{\tor}- Y_{\tau}}\DR_c(\F) e_{\tau}
}
\]
\end{small}
is quasi-isomorphic to 
\begin{equation*}
\Cone \big[\DR_c(\F)\to
j^\dagger_{X^{\tor,\ord}} \DR_c(\F)
\big][-1].
\end{equation*}
In other words, if $\underline{s}(K^{\bullet,\bullet})$ denote the simple complex associated to $K^{\bullet,\bullet}$, then we have a quasi-isomophism
\begin{equation}\label{E:resolution-of-j-ordinary}
\biggl( \underline{s}(K^{\bullet,\bullet})\ra \DR_{c}(\F)\biggl)\xra{\sim} j^{\dagger}_{X^{\tor,\ord}}\DR_c(\F),
\end{equation}
where $\underline{s}(K^{\bullet,\bullet})\ra \DR_{c}(\F)$ is induced by the sum of identity maps $\bigoplus_{\tau\in \Sigma_{\infty}}\DR_c(\F)e_{\tau}\ra \DR_c(\F)$. Taking global sections on $\fX^{\tor}_{\rig}$, one obtains a spectral sequence in the second quadrant:
\begin{equation}\label{E:spectral-seq-proof}
E_{1}^{-i,j}\Rightarrow H^{j-i}(\fX^{\tor}_{\rig}, j^{\dagger}_{X^{\tor,\ord}}\DR_c(\F))=H^{j-i}_{\rig}(X^{\tor,\ord},\D;\F),
\end{equation}
where $E_1^{0,j}=H^j(\fX^{\tor}_{\rig}, \DR_c(\F))$ and 
\[
E_1^{-i,j}= \bigoplus_{
\#\ttT=i} H^{j}(\fX^{\tor}_{\rig},  \Cone \big[\DR_c(\F)\to j^{\dagger}_{X^{\tor}-Y_{\ttT}} \DR_c(\F)\big][-1]) e_{\ttT},\quad\text{for $i\geq 1$.} 
\]

We observe that, for each non-empty subset  $\ttT\subseteq \Sigma_{\infty}$, we have a quasi-isomorphism of complexes
$$
\Cone \big[\DR_c(\F)\to j^{\dagger}_{X^{\tor}-Y_{\ttT}} \DR_c(\F)\big][-1]\cong \Cone \big[j_{X}^{\dagger} \DR(\F)\to j^{\dagger}_{X-Y_{\ttT}}\DR(\F)\big][-1]
$$
 by the excision.   
 After taking global sections over $\X^\tor_{\rig}$, one obtains the rigid cohomology with support in $Y_{\ttT}$:
 $$
 R\Gamma_{Y_{\ttT},\rig}(X/L_{\wp},\scrD^{(\kb,w)})= R\Gamma(\X_{\rig}^{\tor}, \Cone \big[\DR_c(\F)\to j^{\dagger}_{X^{\tor}-Y_{\ttT}} \DR_c(\F)\big][-1]).
 $$
 Therefore, by the Gysin isomorphism \eqref{E:Gysin-isom}, the term $E_{1}^{-i,j}$ in \eqref{E:spectral-seq-proof} for $i\geq 1$ is isomorphic to the direct sum of $H^{j-2i}_{Y_{\ttT},\rig}(X,\scrD^{(\kb,w)})$ for all $\ttT\subseteq \Sigma_{\infty}$ with $\#\ttT=i$.

 Now statement (1) of the Proposition follows from \eqref{E:spectral-seq-proof}  and  Lemma~\ref{L:rigid cohomology with log pole} below.  By functoriality of the construction,   the spectral sequence is clearly equivariant under the action of $\scrH(K^p,L_\wp)$. For the equivariance under the actions of $\Fr_{\gothp}$ and $\Phi_{\gothp^2}$, it suffices to note that the action of $\Fr_{\gothp}$ on the spectral sequence \eqref{E:spectral sequence for ordinary cohomology} has already taken account of the Frobenius twist given by the Gysin isomorphism.
\end{proof}

\begin{lemma}
\label{L:rigid cohomology with log pole}
The cohomology $H^\star(\X^{\tor}_{\rig}, \DR_c(\scrF^{(\kb,w)}))$ is canonically isomorphic to the rigid cohomology with compact support $H^\star_{c,\rig}(X/L_{\wp}, \scrD^{(\kb,w)})$.
\end{lemma}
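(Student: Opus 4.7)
The plan is to invoke the comparison theorem between the hypercohomology of the twisted log de Rham complex on the formal toroidal compactification $\X^\tor$ and the rigid cohomology with compact support of its open part, once $(\F^{(\kb,w)}, \nabla)$ is identified with the canonical log-$F$-isocrystal extension of $\scrD^{(\kb,w)}$ across the boundary $\D$. Concretely, I would first verify that the pair $(\calH^1, \nabla)$ on $\X^\tor$ from Subsection~\ref{Subsection:dR-cohomology} arises as the evaluation, at the log embedding $X^\tor \hookrightarrow \X^\tor$, of a locally free log-$F$-isocrystal on $(X^\tor, \D)/W(k_0)$ whose restriction to $X$ is $\scrD(\calA)$. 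This is the output of Kato--Nakayama's log crystalline formalism applied to the universal semi-abelian scheme $\calA^{\sm}$ endowed with the canonical log structure induced by $\D$, combined with Lan's results identifying his canonical extension of $\calH^1_{\dR}(\calA/X)$ with this log crystalline realization. Since the monodromy of $\calH^1$ along $\D$ is unipotent (toric degeneration at the cusps), the tensor constructions defining $\F^{(\kb,w)}$ then produce a log-$F$-isocrystal on $(X^\tor, \D)/W(k_0)$ extending $\scrD^{(\kb,w)}$ whose value on $\X^\tor_{\rig}$ is $\F^{(\kb,w)}$.

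With this identification in hand, the lemma reduces to the comparison theorem of Baldassarri--Chiarellotto, extended to log-$F$-isocrystal coefficients by Shiho (and refined by Kedlaya): for a smooth proper formal scheme $\X^\tor$ over $W(k_0)$ with a horizontal relative normal crossings divisor $\D$, and a locally free log-$F$-isocrystal $(\bar\scrE, \nabla)$ extending an overconvergent $F$-isocrystal $\scrE$ on $X = X^\tor - \D$, one has a canonical isomorphism
\[
H^\star_{c,\rig}(X/L_\wp, \scrE) \;\cong\; \mathbb{H}^\star\bigl(\X^\tor_{\rig},\; \DR(\bar\scrE) \otimes_{\calO_{\X^\tor_{\rig}}} \calO_{\X^\tor_{\rig}}(-\D)\bigr).
\]
Applied to $(\bar\scrE, \scrE) = (\F^{(\kb,w)}, \scrD^{(\kb,w)})$, the right-hand side is precisely $\mathbb{H}^\star(\X^\tor_{\rig}, \DR_c(\F^{(\kb,w)}))$ by the definition in Subsection~\ref{S:DR-Hodge}, which completes the proof.

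The main obstacle lies in the first step: checking that Lan's canonical extension of the Gauss--Manin connection along the toroidal boundary really satisfies the overconvergence and log crystalline conditions required to invoke Shiho's comparison. If one wishes to bypass this obstacle, a more hands-on route is to tensor the tautological sequence $0 \to \calO_{\X^\tor_{\rig}}(-\D) \to \calO_{\X^\tor_{\rig}} \to \calO_{\D} \to 0$ with the locally free log de Rham complex $\DR(\F^{(\kb,w)})$, obtaining
\[
0 \to \DR_c(\F^{(\kb,w)}) \to \DR(\F^{(\kb,w)}) \to \DR(\F^{(\kb,w)})|_{\D} \to 0,
\]
and then to match the resulting long exact sequence of hypercohomology, term by term, against the Gysin excision long exact sequence for $H^\star_{c,\rig}(X/L_\wp, \scrD^{(\kb,w)})$. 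The easier boundary-free log comparison applied to $\X^\tor$ itself handles the middle term, and the same comparison applied to each closed stratum of $\D$ handles the right-hand term; one concludes by the five lemma and induction on the number of irreducible components of $\D$.
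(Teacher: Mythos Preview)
Your first route is the natural one, and the paper agrees: it opens the proof by saying the lemma ``is well-known to the experts, but unfortunately it is not well written down in the literature.'' The obstacle you flag---that a compact-support log comparison with general $F$-isocrystal coefficients, in the precise form $H^\star_{c,\rig}(X,\scrE) \cong H^\star\big(\X^\tor_\rig, \DR(\bar\scrE)(-\D)\big)$, is not cleanly available---is exactly why the paper does not take this route.

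Your fallback route has a gap. In your tautological triangle the middle term $H^\star(\X^\tor_\rig, \DR(\F))$ is, by Baldassarri--Chiarellotto, the rigid cohomology $H^\star_\rig(X,\scrD^{(\kb,w)})$ of the \emph{open} variety \emph{without} compact support. The standard excision triangle for compact support has $H^\star_{c,\rig}(X^\tor,-)=H^\star_\rig(X^\tor,-)$ in the middle slot instead, and these genuinely differ (already for a modular curve with constant coefficients). One can try to compare with the cone of the forget-support map $H^\star_{c,\rig}(X)\to H^\star_\rig(X)$, but that cone lives on the tube $]\D[$ in the overconvergent picture, whereas your third term $H^\star(\X^\tor_\rig,\DR(\F)|_\D)$ is the hypercohomology of a complex that still carries the transverse $\dd\log$ directions and is not the rigid cohomology of $\D$ on the nose. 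Matching these boundary terms is essentially the same log comparison you were trying to avoid, not a five-lemma formality.

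The paper instead argues by duality. Baldassarri--Chiarellotto applied to $\F^\vee$ (no $(-\D)$ twist) gives $H^\star(\X^\tor_\rig, \DR(\F^\vee)) \cong H^\star_\rig(X,\scrD^{(\kb,w),\vee})$, and Kedlaya's Poincar\'e duality for rigid cohomology identifies $H^\star_{c,\rig}(X,\scrD^{(\kb,w)})$ with the dual of $H^{2g-\star}_\rig(X,\scrD^{(\kb,w),\vee})$. So it remains to see that $H^\star(\X^\tor_\rig,\DR_c(\F))$ and $H^{2g-\star}(\X^\tor_\rig,\DR(\F^\vee))$ are themselves in Poincar\'e duality. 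Via rigid GAGA this becomes an algebraic de Rham statement; the paper then transports it to $\CC$ using the fixed isomorphism $\Qpb \cong \CC$ and reduces to classical Poincar\'e duality for the local system $\LL=(\F|_{\bfX(\CC)})^{\nabla=0}$, together with the identifications $H^\star(\bfX^\tor(\CC),\DR_c(\F))\cong H^\star_c(\bfX(\CC),\LL)$ and $H^\star(\bfX^\tor(\CC),\DR(\F^\vee))\cong H^\star(\bfX(\CC),\LL^\vee)$.
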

\begin{proof}
This is well known to the experts, but unfortunately not well recorded in the literature.
To sketch a  proof, we pass to the dual. As above, we set $\scrF = \scrF^{(\kb,w)}$.
By \cite[2.6]{baldassarri-chiarellotto}, there is an isomorphism $H^\star(\X^{\tor}_{\rig}, \DR(\scrF^\vee)) \cong H^\star_\rig(X/L_{\wp}, \scrD^{(\kb,w),\vee})$.
By \cite[Theorem~1.2.3]{kedlaya},
$ H^\star_{c, \rig}(X/L_{\wp}, \scrD^{(\kb,w)})$ is in natural Poincar\'e duality  with $H^{2g-\star}_\rig(X/L_{\wp}, \scrD^{(\kb,w),\vee})$. 

It then suffices to show that
$H^{\star}(\X^{\tor}_{\rig},\DR_c(\F))$ 
and $H^{2g-\star}(\X_{\rig}^{\tor}, \DR(\F^\vee))$ are in natural Poincar\'e duality.  By the rigid GAGA theorem, this is equivalent to proving that the algebraic de Rham cohomology
$H^{\star}(\bfX^{\tor}_{L_{\wp}}, \DR_c(\F))$ and $H^{2g-\star}(\bfX^{\tor}_{L_{\wp}}, \DR(\F^\vee))$ are in Poincar\'e duality.
Unfortunately, this is only available in the literature \cite{baldassarri-log-duality} when $\F$ equals to the  constant sheaf.
One can either modify  the proof of \emph{loc. cit.} for the general case; or alternatively, using the  embedding $L_{\wp}\hra \overline{\Q}_p\xleftarrow{\sim} \C$, one reduces to show that $H^{\star}(\bfX^{\tor}(\C), \DR_c(\F))$ and $H^{2g-\star}(\bfX^{\tor}(\C), \DR(\F^\vee))$ are in Poincar\'e duality.
 Let $\LL=(\F|_{\bfX(\C)})^{\nabla=0}$ denote the local system of horizontal sections of $\F$ on $\bfX(\C)$.  By the Riemann-Hilbert-Deligne correspondence and classical GAGA, $H^{2g-\star}(\bfX^{\tor}(\C), \DR(\F^\vee))$ is canonically isomorphic to the singular cohomology $H^{2g-\star}(\bfX(\C), \LL^{\vee})$.
  By the same arguments as in \cite[Chap. VI 5.4]{FC}, one proves that $H^{\star}(\bfX^{\tor}(\C), \DR_c(\F))$ is the same as $H^{\star}(\bfX^{\tor}(\C), j_{!}\LL)=H^{\star}_{c}(\bfX(\C), \LL)$, where  $j: \bfX(\C)\ra \bfX^\tor(\C)$ denotes the natural immersion. The desired duality now follows from the classical Poincar\'e theory for  manifolds.
\end{proof}

\subsection{\'Etale Cohomology}\label{S:etale-coh}
To compute the cohomology groups $H^{\star}_{c,\rig}(Y_{\ttT}/L_{\wp}, \scrD^{(\kb,w)})$, we compare them with their \'etale analogues. Let $l\neq p$ be a fixed prime, and fix an isomorphism $\iota_{l}: \C\xra{\simeq} \overline\Q_{l}$. This defines an $l$-adic place $\gothl$ of the number field $L$; denote by $L_{\gothl}$  its completion. Post-composition with $\iota_l$ identifies $\Sigma_{\infty}$ with the set of $l$-adic embeddings of $F$.  Let $a:\calA\ra \bfSh_{K}(G)$ be the structural morphism of the universal abelian scheme. The relative \'etale  cohomology $R^1a_{*}(L_{\gothl})$  has a canonical decomposition:
\[
R^1a_{*}(L_{\gothl})=\bigoplus_{\tau\in \Sigma_{\infty}} R^1a_{*}(L_{\gothl})_{\tau},
\]
where $R^1a_{*}(L_{\gothl})_{\tau}$ is the direct summand on which $F$ acts via $\iota_{l}\circ \tau$.
   For a multiweight $(\kb,w)$, we put 
\[
\scrL^{(\kb,w)}_{\gothl}: =\bigotimes_{\tau\in \Sigma_{\infty}}\Big( (\wedge^2R^1a_{*}(L_{\gothl})_{\tau})^{\frac{w-k_{\tau}}{2}}\otimes \Sym^{k_{\tau}-2} R^1a_{*}(L_{\gothl})_{\tau}\Big).
\]
Note that $\scrL_{\gothl}^{(\kb,w)}|_{X}$ is a lisse $L_{\gothl}$-sheaf pure of weight $g(w-2)$. We have a natural action of the prime-to-$p$ Hecke operators $\scrH(K^p,L_{\gothl})$ on $H^{\star}_{\et}(Y_{\ttT}, \scrL^{(\kb,w)}_{\gothl})$ for each $\ttT\subseteq \Sigma_\infty$.
For each $\gothp\in \Sigma_p$, consider $\varphi_{\gothp}: X\ra X$. The isogeny $\pi_{\gothp}: \calA\ra \varphi^*_{\gothp}\calA$ induces an isomorphism 
$
\pi_{\gothp}^*\scrL^{(\kb,w)}_{\gothl}\xra{\cong}\scrL^{(\kb,w)}_{\gothl}.
$
This gives rise to an action of the partial Frobenius on cohomology groups:
\begin{equation}\label{E:partial-Frob-l}
\Fr_{\gothp}: H^{\star}_{c, \et}(Y_{\ttT,\Fpb}, \scrL^{(\kb,w)}_{\gothl})\xra{\varphi^*_{\gothp}} H^{\star}_{c,\et}(Y_{\sigma^{-1}_{\gothp}\ttT,\Fpb}, \varphi_{\gothp}^*\scrL_{\gothl}^{(\kb,w)})\xra{\pi_{\gothp}^*}H^{\star}_{c,\et}(Y_{\sigma^{-1}_{\gothp}\ttT,\Fpb}, \scrL_{\gothl}^{(\kb,w)}).
\end{equation}
As usual, we put $Y_{\emptyset}=X$.
Similarly, we have morphisms  $\Phi_{\gothp^2}$ and $S_{\gothp}$ on $H^{\star}_{c,\et}(Y_{\ttT,\Fpb},\scrL^{(\kb,w)}_{\gothl})$, as in the case of rigid cohomology. 
For simplicity, we use $Y^{(i)}$ to denote the \emph{disjoint union} of the GO-strata $Y_\ttT$ for $\#\ttT = i$.  Then, for a fixed integer $i\geq 0$, $H^{\star}_{c,\et}(Y^{(i)}_{\Fpb}, \scrL^{(\kb,w)}_{\gothl}) = \bigoplus_{\#\ttT =i}H^{\star}_{c,\et}(Y_{\ttT,\Fpb}, \scrL^{(\kb,w)}_{\gothl})$ is stable under the action of $\Fr_\gothp$ for each $\gothp$.

\begin{prop}\label{P:comparison-rig-et}
We identify both $\overline\Q_{l}$ and $\Qpb$ with $\C$ using the isomorphisms $\iota_{l}$ and $\iota_p$. Then for an integer $i\geq 0$, we have an equality  in the Grothendieck group of finite-dimensional $\scrH(K^p,\C)[\Fr_{\gothp},S_{\gothp}, S_{\gothp}^{-1}; \gothp\in \Sigma_{p}]$-modules:
\[
\sum_{n=0}^{2g-2i}(-1)^{n}\big[\bigoplus_{\#\ttT=i}H^{n}_{c,\et}(Y_{\ttT,\Fpb}, \scrL^{(\kb,w)}_{\gothl})\otimes_{L_{\gothl}}\overline\Q_l\big]=\sum_{n=0}^{2g-2i} (-1)^n\big[\bigoplus_{\#\ttT=i}H^{n}_{c,\rig}(Y_{\ttT}/L_{\wp}, \scrD^{(\kb,w)})\otimes_{L_{\wp}}\Qpb\big].
\]
Moreover, if $i\neq 0$, we have an equality for each $n$:
\[
\big[\bigoplus_{\#\ttT=i}H^{n}_{c,\et}(Y_{\ttT,\Fpb}, \scrL^{(\kb,w)}_{\gothl})\otimes_{L_\gothl}\overline \Q_{l}\big]=\big[\bigoplus_{\#\ttT=i}H^{n}_{c,\rig}(Y_{\ttT}/L_{\wp}, \scrD^{(\kb,w)})\otimes_{L_{\wp}}\overline\Q_p\big].\]
\end{prop}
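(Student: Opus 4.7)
The proof splits naturally based on whether $i=0$ or $i\geq 1$, since the stronger degree-by-degree statement requires smooth properness of the strata $Y_\ttT$, while the Grothendieck-group statement is sufficient for the Euler-characteristic style argument needed in the open case.

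For $i \geq 1$, each stratum $Y_\ttT$ with $\#\ttT = i$ is smooth and proper over $\overline{\FF}_p$ (as recalled from \cite{TX-GO} in the introduction). The plan is to observe that both $\scrL_\gothl^{(\kb,w)}$ and $\scrD^{(\kb,w)}$ arise in parallel from the universal abelian scheme $a: \calA \to X$, as the same direct summand (cut out by the action of $\cO_F$ and by central characters) of tensor and symmetric powers of $R^1 a_* \overline{\QQ}_\ell$ respectively of the crystalline $H^1$. Consequently, after restriction to $Y_\ttT$, one can realize each of $H^n_{\et}(Y_{\ttT, \overline{\FF}_p}, \scrL_\gothl^{(\kb,w)})$ and $H^n_{c,\rig}(Y_\ttT/L_\wp, \scrD^{(\kb,w)})$ as a Hecke-isotypic summand of the cohomology of a suitable auxiliary smooth proper variety---essentially a Kuga--Sato variety built from fiber powers of $\calA|_{Y_\ttT}$ over $Y_\ttT$. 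For such smooth proper varieties the Katz--Messing theorem (together with the Berthelot--Ogus comparison between crystalline and rigid cohomology) identifies the $\ell$-adic and rigid cohomology as isomorphic modules over the algebra of algebraic correspondences. Since the tame Hecke operators, the partial Frobenius $\Fr_\gothp$ (induced by $\pi_\gothp$ composed with $\varphi_\gothp$), and $S_\gothp$ (induced by the automorphism of Subsection~\ref{S:operator S_gothp}) all arise from correspondences on $\calA$, this comparison is equivariant for the whole algebra, yielding the desired equality degree by degree.

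For $i = 0$, the variety $X = Y_\emptyset$ is smooth but not proper, and only the alternating-sum statement is claimed. Here the plan is to invoke the Lefschetz--Verdier trace formula for correspondences acting on compactly supported cohomology in both theories (using the rigid analogue of Deligne's formalism). Any element $\phi \in \scrH(K^p, \CC)[\Fr_\gothp, S_\gothp, S_\gothp^{-1}]$ is realized by a geometric correspondence on $X$; its Lefschetz number on $\bigoplus_n (-1)^n H^n_{c,*}$ is given by a sum of local contributions at fixed points of the correspondence, which depends only on the geometry of the correspondence and not on the cohomology theory. Equality of trace characters on every element of the algebra then determines equality of classes in the Grothendieck group (using that the commuting operators $\Fr_\gothp$, $S_\gothp$, together with the bi-$K^p$-invariant Hecke elements, separate the irreducible modules appearing on each side via their characters).

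The main obstacle will be matching the $\Fr_\gothp$-actions carefully: $\Fr_\gothp$ is a correspondence between distinct strata $Y_\ttT \to Y_{\sigma_\gothp^{-1}\ttT}$ rather than an endomorphism, so the Frobenius twists introduced by the Gysin isomorphism---including the factor $p^{\#\ttT_\gothp}$ that already appeared in Proposition~\ref{P:spectral-sequence}---must be tracked consistently on both the $\ell$-adic and rigid sides. A secondary subtlety is establishing the rigid Lefschetz--Verdier formula for the open smooth $X$, where one must verify that the contributions at the toroidal boundary $\D$ cancel, or match, between the two cohomology theories; this should follow from the fact that the correspondences extend to $\bfX^\tor$ and the boundary geometry of the extended correspondence is independent of the choice of $\ell$ versus $p$.
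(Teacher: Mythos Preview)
Your reduction via the Kuga--Sato variety $\calA^{(w-2)g}\times_X Y_\ttT$---realizing both coefficient sheaves as correspondence-cut direct summands of constant-coefficient cohomology, and observing that the tame Hecke operators, $\Fr_\gothp$, and $S_\gothp$ all arise from algebraic correspondences on this variety---is exactly the paper's approach (following \cite[\S6]{saito}).

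Two points need correction. First, your invocation of Katz--Messing for $i\geq 1$ overstates what that theorem gives: it identifies Frobenius characteristic polynomials on each $H^n$, not the full correspondence-module structure degree by degree. The paper's order of logic is the reverse of yours: it proves the \emph{alternating-sum} statement uniformly for all $i$ via the Lefschetz trace formula (both sides equal the intersection number $(\Gamma,\Delta)$ when $i\geq 1$; for $i=0$ the open smooth case is handled by Mieda \cite[Cor.~3.3]{mieda}, which packages Fujiwara's trace formula \cite{fujiwara} and its rigid analogue). The degree-by-degree refinement for $i\neq 0$ is then deduced from \emph{purity}: both $H^n_\et$ and $H^n_\rig$ of the smooth proper $Y_\ttT$ with pure coefficients are pure of weight $g(w-2)+n$ (Deligne's Weil~II on one side, its rigid analogue \cite{abe-caro} on the other), and since $\prod_\gothp\Fr_\gothp$ is the relative Frobenius of $X$ and lies in the algebra, the weight---hence the degree $n$---is already detected by the module structure. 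This purity step is the ingredient missing from your argument.

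Second, your ``main obstacle'' paragraph is misplaced. The factor $p^{\#\ttT_\gothp}$ and the Gysin twist belong to the spectral-sequence bookkeeping of Proposition~\ref{P:spectral-sequence}, not here. In the present statement one works directly on the disjoint union $Y^{(i)}=\coprod_{\#\ttT=i}Y_\ttT$, on which $\Fr_\gothp$ is a genuine finite flat endomorphism (it permutes components since $\#\sigma_\gothp^{-1}\ttT=\#\ttT$), and no Gysin normalization enters. Likewise, for $i=0$ the issue is not that boundary contributions must be shown to cancel; the input from \cite{mieda} is a direct equality of Euler characteristics of correspondences on the open variety.
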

\begin{proof}
Note that $\scrL^{(\kb,w)}_{\gothl}$ and $\scrD^{(\kb,w)}$  on $X$ are pure of weight $g(w-2)$ in the sense of Deligne and \cite{abe-caro} respectively. As $Y_{\ttT}$ is proper and smooth for $\ttT\neq\emptyset$,  $H^n_{\et}(Y_{\ttT,\Fpb},\scrL^{(\kb,w)}_{\gothl})$ and $H^{n}_{\rig}(Y_{\ttT}/L_{\wp}, \scrD^{(\kb,w)})$ are both pure of weight $g(w-2)+n$ by Deligne's Weil II and its rigid analogue  (\emph{loc. cit.}). Since $\prod_{\gothp\in\Sigma_{p}}\Fr_{\gothp}$ is the  Frobenius endomorphism of $X$, 
the weight can be detected by the action of $\prod_{\gothp\in \Sigma_p}\Fr_{\gothp}$, the second part of the Proposition follows immediately from the first part.

To prove the first part, we follow the strategy of \cite[\S6]{saito}.
We consider  $ F\otimes_{\Q}L\cong\prod_{\tau\in \Sigma_{\infty}}L_{\tau}$, where $L_{\tau}$ is the copy of $L$ with embedding $\tau:F\hra L$. Let $e_{\tau}\in F\otimes_{\Q} L$ denote the projection onto $L_{\tau}$. Since $F\otimes_{\Q}L$ is generated over $L$ by $1+p\cO_F$, we may write $e_{\tau}$ as a linear combination of elements in $1+p\cO_F$. Hence, $e_{\tau}$ is a linear combination of endomorphisms of $\calA$ over $\bfX$ of degrees prime to $p$.

Using the Vandermonde determinant, one can find easily a $\Q$-linear combination $e^1$ of multiplications by prime-to-$p$ integers   on $\calA$ such that the induced action of $e^1$ on $R^1a_{*}(\Q_l)$ is the identity map, and is $0$ on $R^qa_{*}\Q_{l}$ for $q\neq 1$. 
 Consider the fiber product $a^{w-2}: \calA^{w-2}\ra \bfX$. Then $e_{\tau}^{\otimes w-2}\cdot (e^1)^{\otimes w-2}$ acts as an idempotent on $R^{q}a^{w-2}_{*}(L_{\gothl})$, we get $(R^1a_{*}(L_{\gothl})_{\tau})^{\otimes (w-2)}$ if $q=w-2$, and $0$ if $q\neq w-2$. One finds also easily an idempotent  $e^{(k_{\tau},w)}\in \Q[\gothS_{w-2}]$ in the group algebra of the symmetric group with $w-2$ letters  such that 
$$
e^{(k_{\tau},w)}\cdot (R^1a_{*}(L_{\gothl})_{\tau})^{\otimes (w-2)}=(\wedge^2R^1a_*(L_{\gothl})_\tau)^{\frac{w-k_{\tau}}{2}}\otimes \Sym^{k_{\tau}-2}R^1a_{*}(L_{\gothl})_{\tau}.
$$  
Note the action of $\gothS_{w-2}$ on $R^1a_{*}(L_{\gothl})_{\tau}^{\otimes (w-2)}$ is induced by its action on $\calA^{w-2}$.

Consider the fiber product $a^{(w-2)g}: \calA^{ (w-2)g}\ra \bfX$. Taking the product of the previous constructions, we get a $L$-linear combination $e^{(\kb,w)}$ of algebraic correspondences on $\calA^{(w-2)g}$ satisfying the following properties:

\begin{itemize}
\item[(1)] It is an $L$-linear combination of permutations in $\gothS_{(w-2)g}$ and endomorphisms of $\calA^{(w-2)g}$ as an abelian scheme over $\bfX$ whose degrees are prime to $p$,

\item[(2)] The action of $e^{(\kb,w)}$ on the cohomology $R^qa^{(w-2)g}_{*}(L_{\gothl})$ is the projection onto the direct summand $\scrL^{(\kb,w)}_{\gothl}$ if $q=(w-2)g$ and is equal to $0$ if $q\neq (w-2)g$.
\end{itemize}

The algebraic correspondence $e^{(\kb,w)}$ with coefficients in $L$ acts also on  $H^{q}_{c,\et}(\calA_{\Fpb}^{g(w-2)}, L_{\gothl})$. 
Using the Leray spectral sequence for $a^{(w-2)g}$, one sees easily that, for any locally closed subscheme  $Z\subseteq X$, we have
\begin{equation}\label{E:projection}
e^{(\kb,w)}\cdot H^{n+(w-2)g}_{c,\et}(\calA^{g(w-2)}\times_{X}Z_{\Fpb}, L_{\gothl})=H^{n}_{c,\et}(Z_{\Fpb}, \scrL^{(\kb,w)}_{\gothl}).
\end{equation}
Similarly, let $(\calA^{(w-2)g}/W(k_0))_{\cris}$ over $(X/W(k_0))_{\cris}$ denote respectively the small crystalline sites of $\calA^{(w-2)g}$ and $X$ with respect to $W(k_0)$, and  $R^{q}_{\cris}a^{(w-2)g}_*(\cO_{\calA^{(w-2)g}/W(k_0)})$ be the relative crystalline cohomology. This is an $F$-crystal over $(X/W(k_0))_{\cris}$, and we denote by  $\calH^{q}_{\rig}(\calA^{(w-2)g}/X)$ the associated overconvergent isocrystal on $X/L_{\wp}$. The algebraic correspondence  $e^{(\kb,w)}$ acts on $\calH^{q}_{\rig}(\calA^{(w-2)g}/X)$ as an idempotent, and we have
 \[
 e^{(\kb,w)}\cdot\calH^{q}_{\rig}(\calA^{(w-2)g}/X)=\begin{cases}\scrD^{(\kb,w)}&\text{if }q=(w-2)g,\\
0&\text{otherwise}.
\end{cases}
\]
Consequently, we have
\[
e^{(\kb,w)}\cdot H^{n+(w-2)g}_{c,\rig}(\calA^{g(w-2)}\times_{X}Z/L_{\wp})=H^{n}_{c,\rig}(Z/L_{\wp}, \scrD^{(\kb,w)})
\]
 for any subscheme $Z\subseteq X$.
 
As $K^p$ varies, the Hecke action of  $\GL_2(\AAA^{\infty,p})$ on $\bfSh_{K}(G)$ \eqref{E:Hecke-tor} lifts to an equivariant action on  $\calA$. 
Then, for each double coset $[K^pgK^p]$ with $g\in \GL_2(\AAA^{\infty,p})$, there exists a finite flat algebraic correspondence on $\calA^{(w-2)g}$ such that, 
after composition with $e^{(\kb,w)}$, its induced actions   on $H^{(w-2)g+n}_{c,\et}(\calA^{(w-2)g}\times_{X}Y_{\ttT,\Fpb},L_{\gothl})$ and  $H^{n+(w-2)g}_{c,\rig}(\calA^{g(w-2)}\times_{X}Y_{\ttT}/L_{\wp})$
 give the Hecke action of $[K^pgK^p]$ on $H^{n}_{c,\et}(Y_{\ttT,\Fpb}, \scrL^{(\kb,w)}_{\gothl})$ and $H^{n}_{c,\rig}(Y_{\ttT}/L_{\wp}, \scrD^{(\kb,w)})$, respectively. 

Consider the partial Frobenius $\varphi_{\gothp}: X\ra X$ for each $\gothp\in \Sigma_p$. The quasi-isogeny $\pi_{\gothp}^{(w-2)g}: \calA^{(w-2)g}\ra \varphi_{\gothp}^*(\calA^{(w-2)g})=\varphi_{\gothp}^*(\calA)^{(w-2)g}$ defines an algebraic correspondence on $\calA^{(w-2)g}$ whose composition with $e^{(\kb,w)}$ induces the action of $\Fr_{\gothp}$ on $H^{n}_{c,\et}(Y^{(i)}_{\Fpb}, \scrL^{(\kb,w)}_{\gothl})$ and on $H^{n}_{c,\rig}(Y^{(i)}/L_{\wp}, \scrD^{(\kb,w)})$.
 Similarly, the actions of $S_{\gothp}$ and $S_{\gothp}^{-1}$ on the \'etale and rigid cohomology groups of $Y^{(i)}$ are also induced by algebraic correspondences on $\calA^{(w-2)g}$. 

In summary, the action of $\scrH(K^p,\CC)[\Fr_{\gothp}, S_{\gothp},S_{\gothp}^{-1};\gothp\in \Sigma_p]$  on the \'etale  and rigid cohomology groups are linear combinations of actions induced by algebraic correspondences on $\calA^{(w-2)g}$. Therefore, in order to prove the first part of the Proposition, it suffices to show, for any algebraic correspondence $\Gamma$ of $\calA^{(w-2)g}$ and an integer $i\geq 0$, we have the following equality:
\[
\sum_{n}(-1)^n\Tr(\Gamma^*, H^{n}_{c,\et}(\calA^{(w-2)g}\times_{X}Y^{(i)}_{\Fpb},L_\gothl))
=\sum_{n}(-1)^n\Tr(\Gamma^*,H^n_{c,\rig}(\calA^{(w-2)g}\times_X Y^{(i)}/L_{\wp})).
\]
If $i\geq 1$, $\calA^{(w-2)g}\times_{X}Y^{(i)}$ is proper and smooth over $k_0$. Since the cycle class map is well defined for \'etale and rigid cohomology \cite{petregin} and  the Lefschetz formula is valid,  the two  sides  above are both equal to the intersection number $(\Gamma, \Delta)$, where $\Delta$ is the diagonal of  $(\calA^{(w-2)g}\times_{X}Y^{(i)})\times (\calA^{(w-2)g}\times_{X}Y^{(i)})$. If $i=0$, the desired equality still holds thanks to \cite[Corollary 3.3]{mieda}, whose proof uses Fujiwara's trace formula \cite{fujiwara} and its rigid analogue due to Mieda.
\end{proof}
\begin{remark}
Even though it will not be used in this paper, it is interesting to consider 
the   \'etale counterpart of  $H^{\star}_{\rig}(X^{\tor,\ord}, \D; \F^{(\kb,w)})$. Let $t: X\hra X^{\tor}$ and $j: X^{\tor,\ord}\ra X^{\tor}$ be the natural open immersions. 
 Then $X^{\tor,\ord}$  can be viewed as a partial compactification of the ordinary locus $X^{\ord}$.
 We consider the cohomology group $H^{\star}_{\et}\big(X^{\tor,\ord}_{\Fpb}, t_{!}(\scrL^{(\kb,w)}_{\gothl}|_{X^{\ord}})\big)=H^{\star}_{\et}\big(X^{\tor}_{\Fpb}, Rj_*t_{!}(\scrL^{(\kb,w)}_{\gothl}|_{X^{\ord}})\big)$. 
 Similar to the rigid case, it is  equipped with a natural action of the algebra $\scrH(K^p,L_{\gothl})[\Fr_{\gothp}, S_{\gothp},S_{\gothp}^{-1};\gothp\in \Sigma_{p}]$. 
  Using the cohomological purity for smooth pairs [SGA 4, XVI Th\'eor\`eme 3.3], one sees easily  that 
  \[
  R^bj_{*}t_{!}(\scrL^{(\kb,w)}|_{X^{\ord}})=
  \begin{cases}
 t_!(\scrL^{(\kb,w)}_{\gothl}) &\text{if } b=0,\\
 \bigoplus_{\#\ttT=b}\scrL^{(\kb,w)}_{\gothl}|_{Y_{\ttT}}(-b), &\text{if }b\geq 1.
  \end{cases}
  \]
  One deduces immediately a  spectral sequence 
  \begin{align*}
  E_2^{a,b}=H^{a}_{\et}(X^{\tor}_{\Fpb}, R^bj_*t_!(\scrL^{(\kb,w)}_{\gothl}))=\bigoplus_{\#\ttT=b} H^a_{c,\et}(Y_{\ttT,\Fpb}&,\scrL^{(\kb,w)}_{\gothl }|_{Y_{\ttT}})(-b)
  \\
  &\Longrightarrow H^{a+b}_{\et}(X^{\tor,\ord}_{\Fpb}, t_!(\scrL^{\kb,w}_{\gothl})),
  \end{align*}
which is $\scrH(K^p,L_{\gothl})[\Fr_{\gothp}, S_{\gothp},S_{\gothp}^{-1};\gothp\in \Sigma_{p}]$-equivariant if we define the actions of $\scrH(K^p,L_{\gothl})$,  $\Fr_{\gothp}$, and $S_{\gothp}$ on the left hand side in a similar way as its rigid analogue \eqref{E:spectral sequence for ordinary cohomology}. Then by Proposition~\ref{P:comparison-rig-et}, we have an equality in the Grothendieck group of finite-dimensional $\scrH(K^p,\C)[\Fr_{\gothp}, S_{\gothp},S_{\gothp}^{-1};\gothp\in \Sigma_{p}]$-modules:
\[
\sum_{n}(-1)^n\big[H^{n}_{\et}(X^{\tor,\ord}_{\Fpb}, t_!(\scrL^{(\kb,w)}_{\gothl}))\otimes_{L_{\gothl}}\overline\Q_l\big]=\sum_{n}(-1)^n\big[H^{n}_{\rig}(X^{\tor,\ord},\D; \F^{(\kb,w)})\otimes_{L_{\wp}}\overline\Q_{p}\big].
\]
As usual, we identify both $\overline\Q_l$ and $\Qpb$ with $\C$ via $\iota_{l}$ and $\iota_p$, respectively.
 \end{remark}

\section{Quaternionic Shimura Varieties and Goren-Oort Stratification}
\label{Section:GO-stratification}

As shown in Proposition~\ref{P:spectral-sequence}, the cohomology group $H^\star_\rig(X^{\tor, \ord}, \D; \scrF^{(\underline k, w)})$ can be computed by a spectral sequence consisting of rigid cohomology on the GO-strata of the (special fiber of) Hilbert modular variety; computing this is  further equivalent to computing the \'etale counterparts, as shown in Proposition~\ref{P:comparison-rig-et}.
The aim of this section is to compute the corresponding \'etale cohomology groups together with the actions of various operators.
The first step is to relate the \'etale cohomology of the GO-strata to the \'etale cohomology of analogous GO-strata of the Shimura variety for the group $G''_\emptyset =\GL_{2,F} \times_{F^\times} E^\times$ for certain CM extension $E$ of $F$ (Proposition~\ref{P:cohomology of Sh(G*E) vs Sh(G)}).
The next step is to apply the main theorem in the previous paper \cite{TX-GO} of this sequel which states that  each such a GO-stratum  is isomorphic to a $\PP^1$-power bundle over some other Shimura varieties (Theorem~\ref{T:GO-Hilbert}).
Combining these two, we arrive at an explicit description of those \'etale cohomology groups appearing in Proposition~\ref{P:comparison-rig-et} (which contains similar information as each term of the spectral sequence \eqref{E:spectral sequence for ordinary cohomology} does);
this is done in Propositions~\ref{T:total Frobenius action} and \ref{C:cohomology of GO-strata}.
One subtlety is that, in general, we do not have full information on the action of twisted partial Frobenius on these spaces (Conjecture~\ref{Conj: partial frobenius}).
This is why a complete description is only available when $p$ is inert.  Nonetheless, we can still prove our main theorem on classicality, as shown in the next section.

This section will start with a long digression on introducing quaternionic Shimura varieties and certain unitary-like Shimura varieties; the reason for this detour is that the description of the GO-strata does naturally live over the special fiber of Hilbert modular varieties but rather the unitary-like ones.

\subsection{Quaternionic Shimura variety}\label{S:QSV} We shall only recall the quaternionic Shimura varieties that we will need. For more details, see \cite[\S 3]{TX-GO}. Let $\ttS$ be an even subset of places of $F$. Put $\ttS_{\infty}=\ttS\cap \Sigma_{\infty}$.  We denote by $B_{\ttS}$ be the quaternionic algebra over $F$ ramified exactly at $\ttS$, and $G_{\ttS}=\Res_{F/\Q}(B_{\ttS}^{\times})$ be the associated $\Q$-algebraic group. We consider the Deligne homomorphism 
$$
h_{\ttS}: \C^{\times}\ra G_{\ttS}(\R)\simeq (\HH^{\times})^{\ttS_{\infty}}\times \GL_2(\R)^{\Sigma_{\infty}-\ttS_{\infty}}
$$ 
given by $h_{\ttS}(x+yi)=(z^{\tau}_{G_{\ttS}})_{\tau\in \Sigma_{\infty}}$ such that $z^{\tau}_{G_{\ttS}}=1$ for $\tau\in \ttS_{\infty}$ and $z_{G_{\ttS}}^\tau=\begin{pmatrix}x &y\\ -y &x\end{pmatrix}$ for $\tau\in \Sigma_{\infty}-\ttS_{\infty}$. Then the $G_{\ttS}(\R)$-conjugacy class of $h_{\ttS}$ is isomorphic to $\gothH_{\ttS}=(\gothh^{\pm})^{\Sigma_{\infty}-\ttS_{\infty}}$, where  $\gothh^\pm=\PP^1(\C)-\PP^1(\RR)$. For an open compact subgroup $K_{\ttS}\subset G_{\ttS}(\AAA^{\infty})$, we put  
\[
\Sh_{K_{\ttS}}(G_{\ttS})(\C)=G_{\ttS}(\Q)\backslash \gothH_{\ttS}\times G_{\ttS}(\AAA^{\infty})/K_\ttS.
\]
The Shimura variety $\Sh_{K_{\ttS}}(G_{\ttS})$ is defined over its reflex field $F_{\ttS}$, which is a subfield of the Galois closure of $F$ in $\C$. We have a natural action of the group $G_{\ttS}(\AAA^{\infty})$ on  $\Sh(G_{\ttS})=\varprojlim_{K_{\ttS}} \Sh_{K_{\ttS}}(G_{\ttS})$.  When $\ttS=\emptyset$,  this is the  Hilbert modular varieties $\Sh(G)$ considered in Section \ref{Section:HMV}. 

For each $\gothp$, we put $\ttS_{\infty/\gothp}=\Sigma_{\infty/\gothp}\cap \ttS$. In this paper, we will consider only $\ttS$ satisfying 
\begin{hypo}\label{H:subset-S}
We have $\ttS\subseteq \Sigma_{\infty}\cup \Sigma_p$. (Put $\ttS_p = \ttS \cap \Sigma_p$.)

Moreover, for a  $p$-adic place  $\gothp\in \Sigma_p$,
\begin{enumerate}
\item[(1)]  if $\gothp\in \ttS$, then  the degree $[F_{\gothp}:\Q_p]$ is odd and $\Sigma_{\infty/\gothp}\subseteq \ttS$;

\item[(2)] if  $\gothp\notin \ttS$, then $\ttS_{\infty/\gothp}$ has even cardinality.
\end{enumerate}
\end{hypo}
We fix an isomorphism $G_{\ttS}(\AAA^{\infty,p})\simeq \GL_2(\AAA^{\infty,p}_{F})$, so that the prime-to-$p$ component $K^p_{\ttS}$ will be viewed as an open subgroup of $\GL_2(\AAA^{\infty,p}_{F})$. In this paper, we will only encounter primes $\gothp\in\Sigma_{p} $ and open compact subgroups $K_{\ttS,\gothp}\subset B_{\ttS}^{\times}(F_{\gothp})$ of the following types:
\begin{itemize}
\item \textbf{Type $\alpha$ and $\alpha^\#$} The degree $[F_{\gothp}:\Q_p]$ is even, so $B_{\ttS}$ splits at $\gothp$ by Hypothesis~\ref{H:subset-S}. Fix an isomorphism $B^\times_{\ttS}(F_{\gothp})\simeq \GL_2(F_{\gothp})$. We will only consider  $K_{\ttS,\gothp}$ to be either $\GL_2(\cO_{F_{\gothp}})$ or the Iwahori subgroup $\Iw_{\gothp}\subset \GL_2(\cO_{F_\gothp})$ \eqref{E:Iwahori}, and the latter case may only happen when  $\Sigma_{\infty/\gothp}=\ttS_{\infty/\gothp}$.  We will say $\gothp$ is of \emph{type $\alpha$} if $K_{\ttS,\gothp} = \GL_2(\calO_{F_\gothp})$ is hyperspecial, and is of \emph{type $\alpha^\#$ if$K_{\ttS,\gothp}$} is Iwahori. Note that when  $\Sigma_{\infty/\gothp}\neq\ttS_{\infty/\gothp}$, $\gothp$ is necessarily of type $\alpha$, but when $\Sigma_{\infty/\gothp}=\ttS_{\infty/\gothp}$, both types are possible.

\item \textbf{Type $\beta$ and $\beta^\#$} The degree $[F_{\gothp}:\Q_p]$ is odd.  There are two cases:
\begin{itemize}
\item When $\ttS_{\infty/\gothp}\neq \Sigma_{\infty/\gothp}$,  $B_{\ttS}$ splits at $\gothp$ by Hypothesis~\ref{H:subset-S}. We fix an isomorphism $B^\times_{\ttS}(F_{\gothp})\simeq \GL_2(F_{\gothp})$. We consider only the case $K_{\ttS,\gothp}=\GL_2(\cO_{F_{\gothp}})$. We say $\gothp$ is of \emph{type $\beta$}.

\item When $\ttS_{\infty/\gothp}=\Sigma_{\infty/\gothp}$, $B_{\ttS}$ is ramified at $\gothp$. Then $B_{\ttS,\gothp}:=B_{\ttS}\otimes_F F_{\gothp}$ is the  quaternion division algebra over $F_{\gothp}$. Let $\cO_{B_{\ttS,\gothp}}$ be the maximal order of $B_{\ttS,\gothp}$. We will only allow $K_{\ttS,\gothp}=\cO_{B_{\ttS,\gothp}}^{\times}$. We say $\gothp$ is of \emph{type $\beta^\#$}.
\end{itemize}
\end{itemize}

Let $K_{\ttS}=K_{\ttS}^p\cdot \prod_{\gothp}K_{\ttS,\gothp}\subset G_{\ttS}(\AAA^\infty)$ be an open compact subgroup of the types considered above. The isomorphism $\iota_p:\C\simeq \Qpb$ determines a $p$-adic place $\wp$ of the reflex field $F_{\ttS}$. Let $\cO_\wp$ be the valuation ring of $F_{\ttS,\wp}$, and $k_\wp$ its residue field.  

\begin{theorem}[{\cite[Corollary~3.18]{TX-GO}}]\label{T:integral-model}
For  $K_{\ttS}^p$ sufficiently small, there exists a smooth quasi-projective scheme   $\bfSh_{K_{\ttS}}(G_{\ttS})$ over $\cO_\wp$ satisfying the extension property in \cite[\S2.4]{TX-GO} such that 
 $$
 \bfSh_{K_{\ttS}}(G_{\ttS})\times_{\cO_\wp} F_{\ttS,\wp}\cong \Sh_{K_{\ttS}}(G_{\ttS})\times_{F_{\ttS}}F_{\ttS,\wp}.
 $$  
 If $\ttS=\emptyset$, then $\bfSh_{K}(G)$ (we omit $\emptyset$ from the notation) is isomorphic to the integral model of the Hilbert modular variety considered in Subsection~\ref{Subsection:HMV}; if $\ttS\neq \emptyset$, $\bfSh_{K_\ttS}(G_{\ttS})$ is projective. 
 \end{theorem}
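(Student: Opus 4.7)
The plan is to follow the by-now standard strategy of realizing $\bfSh_{K_\ttS}(G_\ttS)$ via auxiliary PEL-type data. When $\ttS = \emptyset$, the theorem is already proved in Subsection~\ref{Subsection:HMV}: the scheme $\bfSh_K(G)$ is the Deligne--Rapoport--Kottwitz--Chai moduli space of HBAVs with $\gothc$-polarization and prime-to-$p$ level structure, which is smooth over $\ZZ_{(p)}$ (hence over $\calO_\wp$) for $K^p$ sufficiently small by Proposition~\ref{P:complex-uniform} and Hypothesis~\ref{H:fine-moduli}, and is quasi-projective but not projective since the group $G$ has $\QQ$-rank~$1$. So we may assume $\ttS \neq \emptyset$.

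The group $G_\ttS$ is never of PEL type when $\ttS_\infty \neq \emptyset$, so the first step is to introduce auxiliary CM data. Choose a totally imaginary quadratic extension $E/F$ satisfying: every $\gothp \in \ttS_p$ splits in $E$, and every finite place of $F$ where $B_\ttS$ ramifies splits in $E$ (so that $B_\ttS \otimes_F E \simeq M_2(E)$). Form the reductive $\QQ$-group $G''_\ttS = (G_\ttS \times \Res_{E/\QQ}\GG_{m,E})/\Res_{F/\QQ}\GG_{m,F}$, together with a compatible Shimura datum lifting $h_\ttS$. Because $B_\ttS \otimes_F E$ is split, $G''_\ttS$ is an inner form of a unitary-like group $\GU(a,b)^{\Sigma_\infty}$ (with $(a,b) = (1,1)$ for $\tau \notin \ttS_\infty$ and $(2,0)$ or $(0,2)$ for $\tau \in \ttS_\infty$), and the resulting Shimura variety $\Sh_{K''_\ttS}(G''_\ttS)$ admits a moduli interpretation in terms of abelian varieties of dimension $4g$ equipped with an $\calO_{B_\ttS \otimes_F E}$-action, a polarization of appropriate signature, and a $K''^p_\ttS$-level structure. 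This is of PEL type and so gives a smooth integral model $\bfSh_{K''_\ttS}(G''_\ttS)$ over $\calO_\wp$ by the work of Kottwitz and Rapoport--Zink.

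The local model at each $\gothp \in \Sigma_p$ must be analyzed case-by-case. For primes of type $\alpha$ and $\beta$ (where $B_\ttS$ splits at $\gothp$ and $K_{\ttS,\gothp}$ is hyperspecial), the local model is smooth of the expected dimension. For type $\alpha^\#$ (Iwahori level), one verifies that the hypothesis $\Sigma_{\infty/\gothp} = \ttS_{\infty/\gothp}$ in Hypothesis~\ref{H:subset-S}(1)-(2) forces the local signature at $\gothp$ to be totally definite, so the local moduli problem reduces to a disjoint union of copies of $\Spec \calO_\wp$ times an Iwahori parahoric fiber that is smooth. For type $\beta^\#$ primes the quaternion algebra $B_{\ttS,\gothp}$ is a division algebra; here one uses the Cherednik--Drinfeld--Rapoport--Zink uniformization and the moduli interpretation via $\calO_{B_{\ttS,\gothp}}$-stable chains of lattices --- the oddness of $[F_\gothp : \QQ_p]$ guaranteed by Hypothesis~\ref{H:subset-S}(1) is exactly what ensures the local deformation problem is formally smooth. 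This last case is the main technical obstacle and is the reason the restrictions in Hypothesis~\ref{H:subset-S} are imposed.

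Once $\bfSh_{K''_\ttS}(G''_\ttS)$ is constructed, one descends to $\bfSh_{K_\ttS}(G_\ttS)$ by taking the quotient by the finite group action coming from the auxiliary torus factor $\Res_{E/\QQ}\GG_m \to \Res_{F/\QQ}\GG_m$; choosing $K''^p_\ttS$ large enough in the auxiliary factor, the quotient map is finite \'etale onto a disjoint union of copies of $\bfSh_{K_\ttS}(G_\ttS)$ (this requires a careful analysis of connected components, cf. the analogue in Subsection~\ref{Subsection:HMV} where $\bfSh_K(G)$ is obtained as a quotient of $\coprod_\gothc \calM_K^\gothc$). The generic fiber compatibility is automatic from the Shimura variety comparison. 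Finally, projectivity when $\ttS \neq \emptyset$ follows because $G_\ttS^\der = \Res_{F/\QQ}(B_\ttS^{\times,N=1})$ is $\QQ$-anisotropic whenever $B_\ttS$ ramifies at some place (which happens iff $\ttS \neq \emptyset$), so the complex-analytic Shimura variety is already compact, and projectivity descends from the PEL model via the quotient construction. The hardest step throughout is verifying smoothness at the ramified primes of type $\beta^\#$, where local models for quaternionic orders must be shown to be formally smooth.
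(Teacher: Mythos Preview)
Your overall strategy---auxiliary CM extension $E/F$, unitary-like group $G''_\ttS$, PEL moduli, then descent---is indeed the approach of \cite{TX-GO} (following Deligne's \emph{mod\`eles \'etranges} and Carayol). But two points are stated backwards and would cause the argument to fail as written.

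First, the splitting condition on $E$. You require $E$ to \emph{split} at every finite place where $B_\ttS$ ramifies, claiming this makes $B_\ttS \otimes_F E \simeq M_2(E)$. This is the opposite of what is needed: if $\gothp$ is a place where $B_\ttS$ has invariant $1/2$ and $E$ is split at $\gothp$, then $B_\ttS \otimes_F E$ still has invariant $1/2$ at both primes of $E$ above $\gothp$ and is not split. One needs $E$ \emph{non-split} (inert, since $p$ is unramified) at such primes. The paper's choice (Subsection~\ref{S:auxiliary CM}) is: $E$ split at type $\alpha, \alpha^\#$ primes and inert at type $\beta, \beta^\#$ primes---in particular inert at every $\gothp \in \ttS_p$, which are exactly the $p$-adic places where $B_\ttS$ ramifies. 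The inertness at type $\beta$ primes (where $B_\ttS$ is split) is not forced by the integral-model construction alone; it is a deliberate choice made in \cite{TX-GO} so that the same auxiliary data can later be used to describe GO-strata.

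Second, the descent step is reversed. The natural map goes $G_\ttS \times T_{E,\tilde\ttS} \twoheadrightarrow G''_{\tilde\ttS}$, so $\bfSh(G''_{\tilde\ttS})$ is a \emph{quotient} of the product Shimura variety by the anti-diagonal $\AAA_F^{\infty,p,\times}$-action (cf.\ \eqref{E:quotient-Sh-var}), not a cover of $\bfSh(G_\ttS)$. One does not obtain $\bfSh_{K_\ttS}(G_\ttS)$ by taking a further quotient of $\bfSh_{K''_\ttS}(G''_{\tilde\ttS})$. Rather, the integral model on $\bfSh_{K_\ttS}(G_\ttS)$ is obtained by matching geometric connected components across the diagram \eqref{E:diag-Sh-Var}: the PEL integral model for $G''_{\tilde\ttS}$ and the (obvious) integral model for $T_{E,\tilde\ttS}$ together determine one on the product $G_\ttS \times T_{E,\tilde\ttS}$, and then projection $\pr_1$ transfers it to $G_\ttS$. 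Your invocation of Cherednik--Drinfeld at type $\beta^\#$ primes is also off-target: that is a $p$-adic uniformization statement for the supersingular locus, not the mechanism for smoothness of the integral model, which comes from the local-model computation for the PEL problem with $\calO_{B_{\ttS,\gothp}}$-action.
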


The construction of $\bfSh_{K_{\ttS}}(G_{\ttS})$  in \emph{loc. cit.} makes uses of an auxiliary choice of CM extension $E/F$ such that $B_\ttS \otimes_F E$ is isomorphic to $\rmM_2(E)$.  (We fix such an isomorphism from now on.)
When $p\geq 3$, the integral model $\bfSh_{K_{\ttS,p}}(G_{\ttS})$ satisfies certain extension property (see \cite[\S 2.4]{TX-GO}) and hence does not depend on the choice of such $E$.
The basic idea of the construction follows the method of ``mod\`eles \'etranges'' of Deligne \cite{deligne1} and  Carayol \cite{carayol}, but we allow certain $p$-adic places of $F$ to be inert in $E$ so that the construction may be used to describe Goren-Oort strata.

\subsection{Auxiliary CM extension}
\label{S:auxiliary CM}
Let $E/F$ be a CM extension
that is split over all $\gothp \in \Sigma_p$ of type $\alpha$ or $\alpha^\#$ and is inert over all $\gothp \in \Sigma_p$ of type $\beta$ or $\beta^\#$.
Denote by $\Sigma_{E,\infty}$ the set of archimedean embeddings of $E$, and $\Sigma_{E, \infty} \to \Sigma_\infty$ the natural two-to-one map given by restriction to $F$. 
Our construction depends on a choice of subset $\tilde \ttS_\infty$ consisting of, for each $\tau \in \ttS_\infty$, a choice of exactly one $\tilde \tau$ extending the archimedean embedding of $F$ to $E$.
This is equivalent to the collection of numbers $s_{\tilde \tau} \in \{0,1,2\}$  for each $\tilde \tau \in \Sigma_{E, \infty}$ such that
\begin{itemize}
\item
if $\tau \in \Sigma_\infty - \ttS_\infty$, we have $s_{\tilde \tau} = 1$ for all lifts $\tilde \tau $ of $\tau$;
\item
if $\tau \in \ttS_\infty$ and $\tilde \tau$ is the lift in $\tilde \ttS_\infty$, we have $s_{\tilde \tau} = 0$ and $s_{\tilde \tau^c} = 2$, where $\tilde \tau^c$ is the  conjugate of $\tilde \tau$.
\end{itemize}

Put $T_{E, \tilde \ttS} = \Res_{E/\Q}\GG_m$, where the subscript $\tilde \ttS = (\ttS, \tilde \ttS_\infty)$ indicates our choice of the Deligne homomorphism:
\[
\xymatrix@R=0pt@C=50pt{
h_{E, \tilde \ttS}\colon
\SSS(\RR) = \CC^\times \ar[r] &
T_{E, \tilde \ttS}(\RR) = \bigoplus_{\tau \in \Sigma_\infty} (E \otimes_{F, \tau}\RR)^\times \simeq \bigoplus_{\tau\in\Sigma_\infty} \CC^\times\\
z\ar@{|->}[r] & (z_{E, \tau})_\tau.
}
\]
Here $z_{E, \tau} = 1$ if $\tau \notin \ttS_\infty$ and $z_{E, \tau} = z$ if $\tau \in \ttS_\infty$, in which case,  the isomorphism $(E \otimes_{F, \tau}\RR)^\times \simeq \CC^\times$ is given by the complex conjugation $\tilde \tau^c$ of the lift $\tilde \tau \in \tilde \ttS_\infty$.
The reflex field $E_{\tilde \ttS}$ is the field of definition of the  Hodge cocharacter 
\[
\mu_{E,\tilde\ttS}\colon \G_{m,\C}\xra{z\mapsto(z,1)} \SSS(\C)\cong \C^{\times}\times \C^{\times}\xra{h_{E,\tilde\ttS}} T_{E, \tilde \ttS}(\C),
\]
where the first copy $\C^{\times}$ in $\SSS(\C)$ is given by the identity character of $\C^{\times}$, and the second by complex conjugation.
  More precisely, $E_{\tilde \ttS}$ is
 the subfield of $\CC$ corresponding to the subgroup of $\Aut(\C/\Q)$ which stabilizes the set $\tilde \ttS_\infty$. It contains $F_\ttS$ as a subfield.
The isomorphism $\iota_p: \CC \simeq \overline \QQ_p$ determines a $p$-adic place $\tilde \wp$ of $E_{\tilde \ttS}$. We use $\calO_{\tilde \wp}$ to denote the valuation ring and $k_{\tilde \wp}$ the residue field.

We take the level structure $K_E$ to be $K_E^pK_{E,p}$, 
where $K_{E, p} = (\calO_E \otimes_\ZZ \ZZ_p)^\times$, and $K_E^p$ is an open compact subgroup of $\AAA_E^{\infty,p,\times}$.
This then gives rise to a Shimura variety $\Sh_{K_E}(T_{E, \tilde \ttS})$ and its limit $\Sh_{K_{E,p}}(T_{E, \tilde \ttS}) = \varprojlim_{K_E^p}\Sh_{K_{E,p}K_E^p}(T_{E, \tilde \ttS})$. They have obvious integral models $\bfSh_{K_E}(T_{E, \tilde \ttS})$ and $\bfSh_{K_{E,p}}(T_{E, \tilde \ttS})$ over $\calO_{\tilde \wp}$.
The set of $\CC$-points of the limit is given by
\[
\bfSh_{K_{E,p}}(T_{E,\tilde\ttS})(\C) =T_{E, \tilde \ttS}(\Q)^\cl\backslash T_{E, \tilde \ttS}(\AAA^{\infty})/K_{E,p}=\AAA^{\infty, p,\times}_{E}/\calO_{E, (p)}^{\times, \cl},
\]
where the superscript cl denotes the closure in the appropriate topological groups.
The geometric Frobenius $\Frob_{\tilde \wp}$ in the Galois group $\Gal_{k_{\tilde \wp}} = \Gal(\calO_{\tilde\wp}^\ur/\calO_{\tilde\wp})$ acts on $\bfSh_{K_{E,p}}(T_{E,\tilde\ttS})$ by multiplication by the image of local uniformizer at $\tilde \wp$ in the ideles of $E_{\tilde \ttS}$ under the following reciprocity map
\begin{align*}
\gothRec_E\colon
\AAA^{\infty,\times}_{E_{\tilde \ttS}}/E_{\tilde \ttS}^{\times} \times \calO_{\tilde \wp}^\times&\xra{\mu_{E,\ttS}} 
T_{E, \tilde \ttS}(\AAA^{\infty}_{E_{\tilde \ttS}})/T_{E, \tilde \ttS}(E_{\tilde \ttS})T_{E, \tilde \ttS}(\calO_{\tilde \wp})\\
&\xra{N_{E_{\tilde \ttS}/\Q}} T_{E,\tilde \ttS}(\AAA^{\infty})/T_{E,\tilde \ttS}(\Q)
T_{E, \tilde \ttS}(\Zp)
\xrightarrow{\cong} \AAA^{\infty, p,\times}_{E}/\calO_{E, (p)}^{\times, \cl}.
\end{align*}

For later use, we record the notation on the action of $\sigma_\gothp$ for $\gothp \in \Sigma_p$ on $\Sigma_{E, \infty}$: for $\tilde \tau \in\Sigma_{E, \infty}$, we put
\begin{equation}\label{E:defn-sigma-gothp}
\sigma_{\gothp}\tilde\tau=\begin{cases}\sigma \circ\tilde\tau & \text{if }\tilde \tau\in \Sigma_{E,\infty/\gothp},\\
\tilde\tau &\text{if }\tilde\tau\notin \Sigma_{E,\infty/\gothp},
\end{cases}
\end{equation}
where $\Sigma_{E,\infty/\gothp}$ denotes the lifts of places in $\Sigma_{\infty/\gothp}$.  This $\sigma_\gothp$-action is compatible with the one on $\Sigma_\infty$ given in Subsection~\ref{S:Frobenius-HMV}.
Set $\sigma_\gothp\tilde \ttS = (\ttS_{p}\cup \sigma_{\gothp}\ttS_{\infty},\sigma_\gothp \tilde \ttS_\infty)$ with $\tilde \ttS_\infty$ the image of $\sigma_\gothp\tilde \ttS_\infty$ under $\sigma_{\gothp}$.
The product $\sigma = \prod_{\gothp \in \Sigma_p} \sigma_\gothp
$ is the usual Frobenius action,  and we define $\sigma \tilde\ttS$ similarly.

\subsection{Auxiliary Shimura varieties}
\label{S:auxiliary-Sh-var}
We also consider the product group $ G_\ttS \times T_{E, \tilde \ttS}$ with the product of Deligne homomorphism
\[
\tilde h_{\tilde \ttS} =h_\ttS \times h_{E, \tilde \ttS} \colon \SSS(\R) = \CC^\times \longrightarrow (G_\ttS\times T_{E, \tilde \ttS})(\RR).
\]
This gives rise to the product Shimura variety:
\[
\bfSh_{K_{\ttS,p} \times K_{E,p}}(G_\ttS \times T_{E,\tilde \ttS}) = \bfSh_{K_{\ttS,p}}(G_\ttS) \times_{\calO_{\wp}}
\bfSh_{K_{E,p}}(T_{E, \tilde \ttS}) .
\]

Let $Z = \Res_{F/\QQ}\GG_m$ denote the center of $G_\ttS$.
Put $G''_{\tilde \ttS} = G_\ttS \times_Z T_{E, \tilde \ttS}$ which is the quotient of $G_\ttS \times T_{E, \tilde \ttS}$ by $Z$ embedded anti-diagonally as $z \mapsto (z, z^{-1})$.
We consider the homomorphism $h''_{\tilde \ttS}: \SSS(\RR) \to G''_{\tilde \ttS}(\R)$ induced by $\tilde h_{\tilde \ttS}$.
We will consider open compact subgroups $K''_{\ttS} \subset G''_{\tilde \ttS}(\AAA^\infty)$ of the form $K''^p_{\ttS}K''_{\ttS,p}$, where $K''^p_\ttS$ is an open compact subgroup of $G''_{\tilde \ttS}(\AAA^{\infty,p})$
and $K''_{\ttS,p}$ is the image of $K_{\ttS,p}\times K_{E,p}$ under  the natural projection $G_\ttS(\Q_p) \times T_{E, \tilde \ttS}(\Qp)\ra G''_{\tilde\ttS}(\Q_p)$.
(Note here that the level structure at $p$ only depends on $\ttS$ but not its lift $\tilde \ttS$; this is why we suppress the tilde from the notation.) 
For $K''^p_{\ttS}\subset G''_{\tilde \ttS}(\AAA^{\infty,p})$ sufficiently small, the corresponding Shimura variety admits a smooth canonical  integral model $\bfSh_{K''_{\ttS}}(G''_{\tilde\ttS})$ over $\cO_{\tilde \wp}$ \cite[Corollary 3.19]{TX-GO}. Taking the limit over prime-to-$p$ levels, we get $\bfSh_{K''_{\ttS,p}}(G''_{\tilde \ttS}) = \varprojlim_{K''^p_\ttS}\bfSh_{K''^p_{\ttS}K''_{\ttS,p}}(G''_{\tilde\ttS})$.
    
For $\gothp\in \Sigma_p$, we use $S_\gothp$ denote the Hecke action on $\bfSh_{K''_{\ttS,p}}(G''_{\tilde \ttS})$
 given by multiplication by $(1, \varpi_\gothp^{-1})$, where $\varpi_\gothp$ is the uniformizer of $\calO_{F, \gothp}$ embedded in $(\calO_E \otimes_{\calO_F} \calO_\gothp)^\times \subset T_{E, \tilde \ttS}(\AAA^\infty)$.
 
Let $ \alpha: G_{\ttS}\times T_{E, \tilde \ttS}\ra G''_{\tilde\ttS}$ to denote the natural projection. The homomorphisms of algebraic groups induces a diagram  of (projective systems of) Shimura varieties:
   \begin{equation}\label{E:diag-Sh-Var}
   \xymatrix{\bfSh_{K_{\ttS,p}}(G_{\ttS}) &\bfSh_{K_{\ttS,p}\times K_{E,p}}(G_\ttS \times T_{E,\tilde\ttS})\ar[l]_-{\pr_1}\ar[d]^-{\pr_2} \ar[r]^-{\bbalpha}  &\bfSh_{K''_{\ttS,p}}(G''_{\tilde\ttS}) \\
   &\bfSh_{K_{E,p}}(T_{E,\tilde\ttS})
}
\end{equation}
Note that the corresponding Deligne homomorphisms are compatible for all morphisms of the algebraic groups.

\subsection{Automorphic sheaves on Shimura varieties}\label{S:quaternion-automorphic-bundle}

Let $L$ be the number  field as in Subsection~\ref{S:HMF-notation}. 
Note that
$
G_\ttS \times_\QQ  L = \prod_{\tau \in \Sigma_\infty} \GL_{2,L}
$, where $F^\times$ acts on the $\tau$-component through $\tau$.
Given a  multiweight $(\underline{k}, w)$, we  consider
 the following algebraic representation of $G_\ttS \times_{\QQ} L$:
\[
\rho_\ttS^{(\underline{k}, w)} = \bigotimes_{\tau \in \Sigma_\infty} \rho_\tau^{(k_\tau, w)} \circ \check\pr_\tau \quad \textrm{with} \quad
\rho_\tau^{(k_\tau, w)} =
\Sym^{k_\tau-2} \otimes
\det{}^{\frac{w-k_\tau}{2}}
,
\]
where  $\check{\pr}_\tau$ is the contragredient of the natural projection to the $\tau$-component of $G_\ttS \times_{\QQ} L$.
The representation $\rho^{(\underline k, w)}$ factors through the quotient group $G^c_\ttS = G_\ttS / \Ker(\Res_{F/\QQ}\GG_m \to \GG_m)$.
By \cite[Ch.~III,~\S~7]{milne}, the representation $\rho^{(\underline k, w)}$ gives rise to an $\Qlb$-lisse sheaf $\scrL^{(\kb,w)}_{\ttS,l}$ on $\Sh_{K_{\ttS}}(G_{\ttS})$ which naturally extends to its integral model $\bfSh_{K_{\ttS}}(G_\ttS)$. 

For the $l$-adic local systems on $\bfSh_{K''_{\ttS}}(G''_{\tilde \ttS})$, we need to fix a section of the natural map $\Sigma_{E, \infty} \to \Sigma_\infty$, that is to fix a lift $\tilde \tau$ for each $\tau \in \Sigma_\infty$. We use $\widetilde \Sigma = \widetilde \Sigma_{\infty}$ to denote the image of the section so that $\Sigma_{E, \infty} = \widetilde \Sigma \coprod \widetilde \Sigma^c$. (The choice of $\widetilde \Sigma$ is independent of the choice of $\tilde \ttS_\infty$. Our main result is insensitive to the choice of $\widetilde \Sigma$. Any choice works.)

Consider the injection
\[
G''_{\tilde\ttS} \times_\QQ L =
(B_\ttS^\times \times_{F^\times} E^\times) \times_\QQ L \hookrightarrow
\Res_{E/\QQ}(B_{\ttS}\otimes_{F}E)^{\times} \times_\QQ L
 \cong
\prod_{\tilde \tau \in \widetilde \Sigma}
\GL_{2, L, \tilde \tau} \times
\GL_{2, L, \tilde \tau^c},
\]
where $E^\times$ acts on $\GL_{2, L, \tilde \tau}$ (resp. $\GL_{2, L, \tilde \tau^c}$) through $\tilde \tau$ (resp. complex conjugate of $\tilde \tau$).
For a multiweight $(\underline k, w)$, we consider the following representation of $G''_{\tilde \ttS} \times_\QQ L$:
\[
\rho''^{(\underline{k}, w)}_{\tilde \ttS, \widetilde \Sigma} = \bigotimes_{\tilde \tau \in \widetilde \Sigma} \rho_\tau^{(k_\tau, w)} \circ \check\pr_{\tilde \tau} \quad  \textrm{ with} \quad
\rho_\tau^{(k_\tau, w)} =
\Sym^{k_\tau-2} \otimes
\det{}^{\frac{w-k_\tau}{2}}
,
\]
where $\check\pr_{\tilde \tau}$  is the contragredient of the natural projection to the $\tilde \tau$-component of $G''_{\tilde \ttS} \times_\QQ L \hookrightarrow \Res_{E/\QQ} D_\ttS^\times \times_\QQ L$. By \cite[Ch.~III,~\S~7]{milne}, the representation $\rho''^{(\kb,w)}_{\tilde \ttS,\tSigma}$ gives rise to 
 an $\Qlb$-lisse sheaf  $\scrL''^{(\underline{k}, w)}_{\tilde \ttS, \tSigma,l}$ on $\bfSh_{K''_{\ttS,p}}(G''_{\tilde \ttS})$.

We also consider the following one-dimensional representation of  $\Res_{E/\QQ}\GG_m\times_\QQ L \cong \prod_{\tilde \tau\in \widetilde \Sigma} \GG_{m, \tilde \tau} \times \GG_{m, \tilde \tau^c}$:
\[
\rho_{E,  \tilde \Sigma}^w =  \bigotimes_{\tilde \tau \in \widetilde \Sigma}
x^{2-w} \circ {\pr}_{E, \tilde \tau},
\]
where $ \pr_{E, \tilde\tau}$ is the projection to the $\tilde \tau$-component and $x^{2-w}$ is the character of $\CC^\times$ given by raising to $(2-w)$th power.
These representations give rise to a lisse $\Qlb$-sheaf $\scrL_{E, \widetilde \Sigma,l}^w$ on $\bfSh_{K_{E,p}}(T_{E,\tilde\ttS})$.

By definition, there is a natural isomorphism of representations of $G_\ttS \times T_{E, \tilde \ttS}$
\[
\rho''^{(\underline k, w)}_{\tilde \ttS, \widetilde \Sigma} \circ \alpha \cong \big( \rho^{(\underline k, w)}_{\ttS} \circ \pr_1 \big) \otimes  \big( \rho^w_{E, \widetilde \Sigma} \circ \pr_2 \big) \qquad (\textrm{for any }\widetilde \Sigma),
\]
and hence a natural isomorphism of lisse $\Qlb$-sheaves  on $\bfSh_{K_{\ttS,p}\times K_{E,p}}(G_\ttS \times T_{E,\tilde\ttS})$:
\begin{equation}
\label{E:comparing auto sheaves from unit to quad}
\tL^{(\kb,w)}_{\tilde \ttS,\tSigma,l}
:=\bbalpha^* \scrL''^{(\underline k, w)}_{\tilde \ttS, \widetilde \Sigma,l} \cong \pr_1^* \scrL^{(\underline k, w)}_{\ttS,l} \otimes \pr_2^* \scrL^w_{E, \tSigma,l} \qquad (\textrm{for  any }\widetilde \Sigma).
\end{equation}

\begin{remark}
The lisse $\Qlb$-sheaves $\scrL_{\ttS,l}^{(\kb,w)}$,  $\scrL''^{(\kb,w)}_{\tilde\ttS,\tilde\Sigma, l}$, $\scrL^w_{E, \tSigma,l}$,  and $\widetilde\scrL^{(\kb,w)}_{\tilde \ttS,\tSigma,l}$ are base change of $L_{\gothl}$-sheaves on the corresponding Shimura varieties, where $\gothl$ is the $l$-adic place of $L$ determined by the isomorphism $\iota_l: \CC \simeq \overline \QQ_l$. For the computation of cohomology in terms of automorphic forms, it is more convenient to work with $\Qlb$-coefficients.
\end{remark}

\subsection{Family of Abelian varieties}
\label{S:family of AV over Sh}
We summarize the basic properties of certain abelian varieties over $\bfSh_{K''_{\ttS,p}}(G''_{\tilde\ttS})$ constructed in \cite{TX-GO}.

\begin{itemize}

\item[(1)] \cite[\S 3.20]{TX-GO}
There is a  natural family of  abelian varieties $\bfA'' = \bfA''_{\tilde \ttS}$ of dimension $4g$ over $\bfSh_{K''_{\ttS,p}}(G''_{\tilde\ttS})$ equipped with a natural action of $ \rmM_2(\cO_E)$ and satisfying certain Kottwitz's determinant condition.
There is a (commutative) equivariant action of $G''_{\tilde\ttS}(\AAA^{\infty,p})$  on $\bfA''$ so that for sufficiently small $K''^p_{\ttS} \subset G''_{\tilde\ttS}(\AAA^{\infty,p})$, the abelian scheme $\bfA''$ descends to $\bfSh_{K''^p_{\ttS}K''_{\ttS,p}}(G''_{\tilde\ttS})$.
 

\item[(2)] \cite[\S 3.21]{TX-GO} Let $a'': \bfA''\ra \bfSh_{K_{\ttS}''}(G''_{\tilde\ttS})$ be the structural morphism. The direct image $R^1a''_*(\Qlb)$ has a canonical decomposition:
\[
R^1a''_*(\Qlb)= \bigoplus_{\tilde \tau\in \widetilde \Sigma}\big(R^1a''_{*}(\Qlb)_{\tilde\tau}\oplus R^1a''_{*}(\Qlb)_{\tilde\tau^c}\big),
\]
where $R^1a''_{*}(\Qlb)_{\tilde\tau}$ (resp. $R^1a''_{*}(\Qlb)_{\tilde\tau^c}$) is the direct summand where $\cO_E$ acts via $\tilde\tau: \calO_E \to \overline \QQ_l$ (resp. via $\tilde\tau^c: \calO_E \to \overline \QQ_l$). Let $\gothe = \big(\begin{smallmatrix}
1&0\\0&0
\end{smallmatrix}\big) \in \rmM_2(\calO_E)$. We put  $R^1a''_{*}(\overline \QQ_l)_{\tilde\tau}^{\circ}=\gothe  R^1a''_{*}(\Qlb)_{\tilde\tau}$ for $\tilde \tau\in  \widetilde \Sigma$. This is  a $\Qlb$-lisse sheaf over $\bfSh_{K''_{\ttS,p}}(G''_{\tilde \ttS})$ of rank $2$.
For a multiweight $(\underline k, w)$, there is a natural isomorphism of lisse $\Qlb$-sheaves:
\[
\scrL''^{(\underline k, w)}_{\tilde \ttS, \widetilde \Sigma, l} \cong \bigotimes_{\tilde \tau \in \widetilde \Sigma} \bigg(\Sym^{k_\tau -2} R^1a''_{*}(\Qlb)_{\tilde\tau}^{\circ} \otimes
(\wedge^2
R^1a''_{*}(\Qlb)_{\tilde\tau}^{\circ})^{\frac{w-k_\tau}{2}}
\bigg).
\]

\item[(3)]  \cite[Proposition~3.23]{TX-GO}
For each $\gothp \in \Sigma_p$, we have a natural $G''_{\tilde \ttS}(\AAA^{\infty, p})$-equivariant \emph{twisted partial Frobenius morphism} and an quasi-isogeny of family of abelian varieties:
\begin{equation}
\label{E:partial Frobenius ''}
\gothF''_{\gothp^2}\colon
\bfSh_{K''_{\ttS,p}}(G''_{\tilde \ttS})_{k_{\tilde \wp}} \longrightarrow
\bfSh_{K''_{\ttS,p}}(G''_{\sigma_\gothp^2\tilde \ttS})_{k_{\tilde \wp}} \quad \textrm{and} \quad
\eta''_{\gothp^2}
\colon
\bfA''_{\tilde \ttS, k_{\tilde \wp}}
\longrightarrow \gothF''^{*}_{\gothp^2}(\bfA''_{\sigma^2_{\gothp}\tilde \ttS, k_{\tilde \wp}}).
\end{equation}
Here the level structures at $p$: $K''_{\sigma_\gothp^2\ttS, p}$ and $K''_{\ttS, p}$ are equal by definition.
The two morphisms in \eqref{E:partial Frobenius ''} induce a natural $G''_{\tilde \ttS}(\AAA^{\infty, p})$-equivariant homomorphism of \' etale cohomology groups:
\[
\Phi_{\gothp^2}\colon
H^*_\et\big(
\bfSh_{K''_{\ttS,p}}(G''_{\sigma_\gothp^2\tilde \ttS})_{\overline \FF_p}, \scrL''^{(\underline k, w)}_{\sigma_\gothp^2\tilde \ttS, \widetilde \Sigma, l}
\big) \longrightarrow
H^*_\et\big(
\bfSh_{K''_{\ttS,p}}(G''_{\tilde \ttS})_{\overline \FF_p}, \scrL''^{(\underline k, w)}_{\tilde \ttS, \widetilde \Sigma, l}\big),
\]
where $\sigma_\gothp$ is as defined at the end of Subsection~\ref{S:auxiliary CM}.
Moreover, we have an equality of morphisms
\[
\prod_{\gothp \in \Sigma_p} \Phi_{\gothp^2} = S_p^{-1} \circ F^2
\colon
H^*_\et\big(
\bfSh_{K''_{\ttS,p}}(G''_{\sigma^2\tilde \ttS})_{\overline \FF_p}, \scrL''^{(\underline k, w)}_{\sigma^2\tilde \ttS, \widetilde \Sigma,l}\big) \longrightarrow
H^*_\et\big(
\bfSh_{K''_{\ttS,p}}(G''_{\tilde \ttS})_{\overline \FF_p}, \scrL''^{(\underline k, w)}_{\tilde \ttS, \widetilde \Sigma,l}\big),
\]
where $F^2$ is  the relative $p^2$-Frobenius, $\sigma$ is as defined at the end of Subsection~\ref{S:auxiliary CM}, and  $S_p$ is the Hecke action given by the central element $\underline p^{-1}\in \AAA^{\infty,\times}_{E}\subset G''(\AAA^{\infty})$. Here, $\underline p$ is the idele which is $p$ at all $p$-adic places of $E$ and $1$ at all other places.

\item[(4)] When $\ttS=\emptyset$, let $\calA$ denote the universal abelian variety over the Hilbert modular variety $\bfSh_{K_{p}}(G)$. One has an isomorphism of abelian schemes over $\bfSh_{K_{\ttS,p}\times K_{E,p}}(G_\ttS \times T_{E,\tilde\ttS})$ \cite[Corollary~3.26]{TX-GO}:
  \begin{equation}\label{E:AV for HMV and unitary}
  \bbalpha^*\bfA''\cong  (\pr_1^*\calA\otimes_{\cO_F}\cO_E)^{\oplus2},
  \end{equation}
  compatible with $\rmM_{2}(\cO_E)$-actions, where $\bbalpha$ is defined in \eqref{E:diag-Sh-Var}.
Moreover, the morphism $\bbalpha$ and the isomorphism \eqref{E:AV for HMV and unitary} are compatible with the action of twisted partial Frobenius \cite[Corollary~3.28]{TX-GO}.

\item[(5)]
\cite[4.6, 4.7, 4.9]{TX-GO}
Let $k_0$ be a finite extension of $\FF_p$ containing all residue fields of $E$ of characteristic $p$.
The special fiber $\bfSh_{K''_{\emptyset, p}}(G''_\emptyset)_{k_0}$ admits a GO-stratification, that is a \emph{smooth} $G''_\emptyset(\AAA^{\infty,p})$-stable subvariety $\bfSh_{K''_{\emptyset,p}}(G''_\emptyset)_{k_0, \ttT}$ for each subset $\ttT \subseteq \Sigma_\infty$ (given by the vanishing locus of certain variants of partial Hasse invariants).  
We refer to {\it loc. cit.} for the precise definition.
The twisted partial Frobenius morphism $\gothF''_{\gothp^2}$ in \eqref{E:partial Frobenius ''} takes the subvariety $\bfSh_{K''_{\ttS,p}}(G''_{\tilde \ttS})_{k_0, \ttT}$ to $\bfSh_{K''_{\ttS,p}}(G''_{\sigma_\gothp^2\tilde \ttS})_{k_0, \sigma_\gothp^2 \ttT}$. Here, we view $K''_{\ttS}$ also as a subgroup of $G''_{\sigma_\gothp^2\tilde \ttS}(\AAA^{\infty,p})$ via a fixed isomorphism $G''_{\tilde\ttS}(\AAA^{\infty,p})\simeq G''_{\sigma_\gothp^2\tilde \ttS}(\AAA^{\infty,p})$.

The GO-stratification on $\bfSh_{K''_{\emptyset,p}}(G''_{\emptyset})_{k_0}$ is compatible with the Goren-Oort stratification on the Hilbert modular variety $\bfSh_{K_p}(G)_{k_0}$ recalled in Subsection~\ref{Subsection:Hasse} in the sense that
\begin{equation}\label{E:compatibility-GO}
\bbalpha^{-1}(\bfSh_{K''_p}(G''_\emptyset)_{k_0, \ttT}) \cong \pr_1^{-1}(
\bfSh_{K_p}(G)_{k_0, \ttT}) \quad\text{for all }\ttT\subseteq \Sigma_{\infty}.
\end{equation}
Moreover, this isomorphism is compatible with the action of twisted Frobenius.

  \end{itemize}

\subsection{Tensorial induced representations}
\label{S:tensorial-induction}
We recall the definition of tensorial induced representations.
Let $G$ be a group and $H$ a subgroup of finite index.
Let $(\rho,V)$ be a finite dimensional representation of $H$.
Let $\Sigma \subseteq G/H$ be a finite subset.
Consider the left action of $G$ on the set of left cosets $G/H$.
Let $H'$ be the subgroup of $G$ that fixes the subset $\Sigma$ of $G/H$.
Fix representatives $s_1, \dots, s_n$ of $G/H$ and we may assume that $\Sigma = \{s_1H, \dots, s_rH\}$ for some $r$.

The \emph{tensorial induced representation}, denoted by $\otimes_\Sigma\textrm-\Ind_H^G V$, is defined to be $\otimes_{i=1}^r V_i$, where $V_i$ is a copy of $V$.
The action of $H'$ is given as follows:
for a given $h' \in H'$ and for each $i \in \{1, \dots, r\}$, there exists a unique $j(i) \in \{1, \dots, r\}$ such that $h' s_{j(i)} \in s_i H$;
then we define
\[
h'(v_1 \otimes \cdots \otimes v_r)
= \rho(s_1^{-1} h' s_{j(1)})(v_{j(1)}) \otimes \cdots \otimes \rho(s_r^{-1} h' s_{j(r)})(v_{j(r)}).
\]
One can easily check that this definition of $\otimes_\Sigma\textrm-\Ind_H^G V$
does not depend on the choice of coset representatives.

Typical examples of the above construction we will need later are: (1)  $G=\Gal_{\Q}$, $H=\Gal_{F}$, and $\Sigma=\Sigma_{\infty}-\ttS_{\infty}\subseteq \Sigma_{\infty} \simeq G/H $. Then the subgroup $H'$ is $\Gal_{F_{\ttS}}$;
(2) $G=\Gal_{\Q}$, $H=\Gal_{E}$, and $\Sigma = \tilde \ttS_\infty$ (see Subsection~\ref{S:family of AV over Sh}), viewed as a subset of $\Sigma_{E,\infty} \simeq G/H$. Then the subgroup $H'$ contains $\Gal_{E_{\ttS}}$.

\subsection{Automorphic representations of $\GL_{2,F}$}

For a multiweight $(\kb,w)$, let $\scrA_{(\kb,w)}$ denote the set of irreducible cuspidal automorphic representations $\pi$ of $\GL_{2}(\AAA_{F})$ whose archimedean component $\pi_{\tau}$ for each $\tau\in\Sigma_\infty$ is a discrete series of weight $k_{\tau}-2$ with central character $x\mapsto x^{w-2}$.
It follows that the central character $\omega_\pi: \AAA_F^\times / F^\times \to \CC^\times$ for such $\pi$ can be written as 
$\omega_\pi = \varepsilon_\pi |\cdot|_F^{w-2}$, with $\varepsilon_\pi$ a finite character Hecke character  trivial on $(F\otimes\RR)^{\times}$.

Given $\pi \in \scrA_{(\kb,w)}$, the finite part $\pi^{\infty}$ of $\pi$ can be defined over a number field (viewed as a subfield of $\C$).
For an even subset $\ttS \subseteq \Sigma$, we use $\pi_\ttS$ to denote the Jacquet-Langlands transfer of $\pi$  to an automorphic representation over $B_\ttS^\times(\AAA_F)$. It is zero if $\pi$ does not transfer. 

 Thanks to the work of many people \cite{carayol-hilbert, taylor, blasius-rogawski}, we can associate to $\pi$ a 2-dimensional  Galois representation $\rho_{\pi, l}:\Gal_F\ra \GL_2(\Qlb)$. 
We normalize $\rho_{\pi,l}$ so that $\det(\rho_{\pi,l})=\varepsilon_{\pi}^{-1}\cdot \chi_{\mathrm{cyc}}^{1-w}$, where $\chi_{\mathrm{cyc}}$ is the $l$-adic  cyclotomic character and $\varepsilon_\pi$ is the finite character above, viewed as a Galois representation  with values in $\Qlb$ via class field theory and the isomorphism $\iota_l: \C\simeq \Qlb$. If $\pi^{\infty}$ is defined over a number field $L\subseteq \C$, then $\rho_{\pi,l}$ is rational over  $L_{\gothl}$, where $\gothl$ denotes the $l$-adic place of $L$ determined by $\iota_l$.

When $k_{\tau}=2$ for all $\tau$, $w\geq 2$ is an even integer.  We denote by $\scrB_{w}$ the set of irreducible  automorphic representations $\pi$ of $\GL_2(\AAA_F)$ which factor  as
$$
 \GL_2(\AAA_F)\xra{\det} \AAA_{F}^{\times}/F^{\times}\xra{\chi} \C^{\times},
 $$
where $\chi$ is an algebraic  Hecke character whose restriction to $F_\tau^{+}$ for each $\tau \in \Sigma_\infty$ is $x\mapsto x^{\frac{w}{2}-1}$. They are the one-dimensional automorphic representations.
  With the fixed  isomorphism $\iota_{l}:\C\simeq \overline \Q_{l}$, we define an $l$-adic character on $\AAA_{F}^{\times}$ given by
\[
\chi_{l}: x \mapsto \big(
\chi(x) \cdot N_{F/\QQ}(x_\infty)^{1-\frac w2} \big)
\cdot 
N_{F/\QQ}(x_l)^{\frac w2-1}\in \Qlb^{\times},
\]
where $x_\infty \in (F\otimes \RR)^\times$ (resp. $x_l \in (F \otimes \QQ_l)^\times$) denotes the archimedean components (resp. $l$-components of $x$). Note also that $\chi_l$ is trivial on $F^\times$, and hence by class field theory gives  rise to a $l$-adic Galois character  on $\Gal_F$.
 We put $\rho_{\pi,l}=\chi_l^{-1}$.
 Also note that the map $x\mapsto \chi(x)N_{F/\QQ}(x_\infty)^{1-\frac w2}$ on $\AAA^{\times}_F$ has values  in a number field. We may choose a number field $L\subseteq \C$ large enough so that  $\rho_{\pi,l}$ is rational over  $L_{\gothl}$.
 Given $\pi\in \scrB_{w}$ and an even subset $\ttS$, we denote by $\pi_{\ttS}$  the one-dimensional automorphic representation of $G_{\ttS}$ that factors as $G_{\ttS}(\AAA)\xra{\nu_{\ttS}}\AAA_F^{\times}/F^{\times}\ra \C^{\times}$, where $\nu_{\ttS}$ is the reduced norm map. 


\subsection{Cohomology of $\bfSh_{K_\ttS}(G_\ttS)$}\label{S:coh-automophic}
Let  $\ttS$ be an even subset of places of $F$ satisfying Hypothesis~\ref{H:subset-S}. Let $K_{\ttS}\subset G_{\ttS}(\AAA^{\infty})$ be an open compact subgroup. We fix an isomorphism $G_{\ttS}(\AAA^{\infty,p})\simeq \GL_2(\AAA^{\infty,p})$. For a field $M$ of characteristic $0$, let  $\scrH(K^p_{\ttS},M)$ denote the prime-to-$p$ Hecke algebra $M[K^p_{\ttS}\backslash \GL_2(\AAA^{\infty,p})/K^p_{\ttS}]$. 


For each $\pi\in \scrA_{(\kb,w)}$ or $\pi\in \scrB_{w}$, let $(\pi_{\ttS}^{\infty})^{K_{\ttS}}=(\pi_{\ttS}^{\infty,p})^{K_{\ttS}^p}\otimes (\pi_{\ttS,p})^{K_{\ttS,p}}$ be the $K_{\ttS}$-invariant subspace of $\pi_{\ttS}^{\infty}$. We consider it as a   $\scrH(K^p_{\ttS}, \C)$-module with the natural Hecke action of $\scrH(K^p_{\ttS}, \C)$ on $(\pi^{\infty,p}_{\ttS})^{K_{\ttS}^p}$ and the trivial action on $(\pi_{\ttS,p})^{K_{\ttS,p}}$. The following result is well known. 


\begin{theorem}
\label{T:cohomology of Shimura variety}
In the Grothendieck group of finite-dimensional $\scrH(K_{\ttS}, \Qlb)[\Gal_{F_{\ttS}}]$-modules, we have an equality
\begin{align*}
\big[
H^\star_{c,\et}(\Sh_{K_{\ttS}}&(G_\ttS)_{\overline \QQ}, \scrL^{(\underline{k}, w)}_{\ttS,l} )\big]
=(-1)^{g-\#\ttS_\infty}
\sum_{\pi \in \scrA_{(\kb,w)}}\big[
 (\pi_\ttS^\infty)^{K_{\ttS}}\otimes \rho_{\pi, l}^\ttS
\big] \\
&+ \delta_{\kb,2} \sum_{\pi\in \scrB_{w}}\big[(\pi_{\ttS}^{\infty})^{K_{\ttS}}\otimes \rho^{\ttS}_{\pi,l}\big]\otimes
\bigg(\big[(\Qlb\oplus \Qlb(-1))^{\otimes (\Sigma_{\infty}-\ttS_{\infty})}\big]-\delta_{\ttS,\emptyset}\big[ \Qlb\big]\bigg).
\end{align*}
Here, for each $\pi$ in $\scrA_{(\kb,w)}$ or $\scrB_{w}$, we put $\rho_{\pi,l}^\ttS: = \bigotimes_{\Sigma_{\infty}-\ttS_\infty}\textrm{-} \mathrm{Ind}_{\Gal_F}^{\Gal_\QQ}(\rho_{\pi, l})$ where $\rho_{\pi,l}$ denotes the $l$-adic representation of $\Gal_{F}$ defined above,  $\delta_{\kb,2}$ is equal to $1$ if $\kb=(2,\dots, 2)$ and $0$ otherwise, and $\delta_{\ttS,\emptyset}=1$ if $\ttS=\emptyset$ and $0$ otherwise.
\end{theorem}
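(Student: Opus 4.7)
The plan is to combine the Matsushima formula for the Betti realization of $H^\star_{c,\et}$, the explicit computation of $(\gothg, K_\infty)$-cohomology for discrete series and for one-dimensional representations, and the Langlands--Kottwitz style identification of the Galois action on each isotypic component.  Via the comparison isomorphism induced by $\iota_l\colon \CC \xra{\sim} \Qlb$, the alternating sum $\sum_n(-1)^n[H^n_{c,\et}(\Sh_{K_\ttS}(G_\ttS)_{\overline\QQ}, \scrL^{(\kb, w)}_{\ttS, l})]$, viewed as a class in the Grothendieck group of $\scrH(K^p_\ttS, \Qlb)[\Gal_{F_\ttS}]$-modules, is identified with the alternating sum of the $(\gothg, K_\infty)$-cohomology groups computed on the Franke--Schwermer decomposition of the space of automorphic forms on $G_\ttS(\AAA)$.

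For the cuspidal contribution, given $\pi \in \scrA_{(\kb, w)}$, Jacquet--Langlands produces the transfer $\pi_\ttS$ to $G_\ttS(\AAA)$, whose archimedean component at each $\tau \in \Sigma_\infty - \ttS_\infty$ is the discrete series of weight $k_\tau - 2$ and which is one-dimensional at each $\tau \in \ttS_\infty$.  A standard computation shows that $H^\star(\gothg, K_\infty; \pi_{\ttS,\infty} \otimes \rho_\ttS^{(\kb, w)})$ is concentrated in the middle degree $g - \#\ttS_\infty$ and has total dimension $2^{g - \#\ttS_\infty}$; thus in the alternating sum this contributes with sign $(-1)^{g - \#\ttS_\infty}$ and multiplicity equal to $\dim(\pi_\ttS^\infty)^{K_\ttS}$.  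For the Galois action, strong multiplicity one for $\GL_2$ singles out the $\pi_\ttS^\infty$-isotypic component abstractly, and the Eichler--Shimura relations extended to quaternionic Shimura varieties (via the works of Carayol, Blasius--Rogawski and Taylor cited in Subsection~\ref{S:coh-automophic}) pin down, at each place $\tau \in \Sigma_\infty - \ttS_\infty$, a copy of the two-dimensional Galois representation $\rho_{\pi, l}$ attached to $\pi$.  Because $\Gal_{F_\ttS} \subseteq \Gal_\QQ$ is by definition the stabilizer of $\Sigma_\infty - \ttS_\infty$ in $\Sigma_\infty \simeq \Gal_\QQ / \Gal_F$, the joint $\Gal_{F_\ttS}$-action on this $\#(\Sigma_\infty-\ttS_\infty)$-fold tensor product of copies of $\rho_{\pi, l}$ is precisely the tensor-induced representation $\rho_{\pi, l}^\ttS$ of Subsection~\ref{S:tensorial-induction}.

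For the non-cuspidal part, the $(\gothg, K_\infty)$-cohomology of a one-dimensional automorphic representation is nonzero only when the infinitesimal character is trivial, which forces $\kb = (2, \ldots, 2)$; the contributing representations are exactly those in $\scrB_w$, and the archimedean $(\gothg, K_\infty)$-cohomology produces the Tate-twisted factor $[(\Qlb \oplus \Qlb(-1))^{\otimes(\Sigma_\infty - \ttS_\infty)}]$ coming from the exterior algebra of the tangent space of the symmetric domain at the non-compact places.  In the non-compact case $\ttS = \emptyset$, the Hilbert variety differs from its Borel--Serre (or minimal) compactification by boundary strata indexed by cusps, each an iterated torus fibration; the long exact sequence relating $H^\star$ to $H^\star_c$ then removes exactly one trivial copy per $\pi \in \scrB_w$, which accounts for the $-\delta_{\ttS, \emptyset}[\Qlb]$ correction, via the classical boundary analysis for Hilbert modular varieties.

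The main obstacle is tracking normalizations consistently throughout the comparison -- central characters, Tate twists, and the precise $\Gal_{F_\ttS}$-structure of the tensor induction -- so that both the sign $(-1)^{g-\#\ttS_\infty}$ and the formula for $\rho^\ttS_{\pi, l}$ emerge cleanly with our convention $\det\rho_{\pi, l} = \varepsilon_\pi^{-1} \chi_{\mathrm{cyc}}^{1-w}$.  This is a bookkeeping exercise rather than a conceptual difficulty: each ingredient is available in the literature, and the remaining work consists of stitching them together in the desired Hecke- and Galois-equivariant Grothendieck-group form.
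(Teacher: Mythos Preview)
Your overall strategy---Matsushima-type decomposition, $(\gothg,K_\infty)$-cohomology of discrete series and one-dimensional representations, and the Langlands/Brylinski--Labesse identification of the Galois action as a tensor induction---is indeed the content of the Brylinski--Labesse result the paper cites, so for $\ttS\neq\emptyset$ (where the Shimura variety is compact) your sketch is correct and coincides with the paper's approach.

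There is, however, a genuine gap in the non-compact case $\ttS=\emptyset$. You write ``For the non-cuspidal part, the $(\gothg,K_\infty)$-cohomology of a one-dimensional automorphic representation\ldots'' and then treat only the residual (one-dimensional) spectrum. But for $\GL_{2,F}$ the Franke--Schwermer decomposition also contains genuine Eisenstein classes induced from characters of the Borel, and these do contribute to individual $H^i_c$ in the parallel-weight case. Your proposal never explains why their alternating-sum contribution vanishes. The paper handles this explicitly: by \cite{MSYZ}, each Eisenstein constituent appears in $H^i_c$ with a binomial multiplicity pattern for $1\le i\le g$ whose alternating sum is zero, so the Eisenstein contribution cancels in the Grothendieck group. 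Without this step your argument does not close when $\ttS=\emptyset$.

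A smaller point: your explanation of the $-\delta_{\ttS,\emptyset}[\Qlb]$ correction via ``the long exact sequence relating $H^\star$ to $H^\star_c$'' and boundary strata is more elaborate than necessary. The paper's observation is simply that $H^0_c(\Sh_K(G)_{\overline\QQ},\Qlb)=0$ for a non-compact variety, whereas the $(\gothg,K_\infty)$-cohomology of a one-dimensional representation produces a class in degree $0$; the subtraction of $[\Qlb]$ corrects exactly this. Your formulation is not wrong, but it obscures what is a one-line remark.
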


\begin{proof}



For $\ttS\neq\emptyset$, this is  proved in \cite[\S 3.2]{brylinski-labesse}. When $\ttS=\emptyset$, the contributions from the cuspidal representations and the one-dimensional representations are computed in the same way as above in \emph{loc. cit.}. The subtraction by $\Qlb$ when $\kb=(2,\dots, 2)$ comes from the fact that $H^0_{c, \et}(\Sh_{K}(G)_{\Qb},\Qlb)=0$.  We explain now why there is  no contributions from Eisenstein series in the Grothendieck group. 
According to \cite{MSYZ},  the Eisenstein spectra appear  in $H^{i}_c$ only when $\kb$ is of parallel weight. In that case, each possible Eisenstein series will only appear in $H^{i}_{c, \et}$ with multiplicity $\binom{g-1}{i}$ for $1\leq i\leq g$, and none in $H^{i}_{c, \et}$ with $i=0$ or $i\geq g+1$. Hence, their contributions cancel out when passing to the Grothendieck group.
\end{proof}

\subsection{Cohomology of $\bfSh_{K_{E,p}}(T_{E,\tilde\ttS})$}
Let $w \in \ZZ$ and $\widetilde \Sigma \subset  \Sigma_{E,\infty}$ be as in Subsection~\ref{S:quaternion-automorphic-bundle}.
Let $\scrA_{E, \widetilde \Sigma}^w$ denote the set of Hecke characters $\chi$ of $\AAA_E^\times / E^\times$ such that $\chi|_{E_{ \tilde \tau}^{\times}} : x \mapsto x^{w-2}$ for all $ \tilde \tau \in \tSigma$ and $\chi$ is unramified at places above $p$. Here, the isomorphism  $E_{\tilde\tau}=E\otimes_{F,\tau}\R\xra{\sim} \C$ is defined using the embedding $\tilde\tau :E\hra \C$.  

We fix an isomorphism $\iota_l: \C\simeq \Qlb$ as before. Then we can identify each $\tilde\tau\in \Sigma_{E,\infty}$ with an embedding  $\tilde\tau_l$ of $E$ into $\Qlb$. Define an  $l$-adic character on $\AAA_E^\times$ associated to $\chi$ by
\[
\chi_{l} \colon x \mapsto \big( \chi(x)\cdot \prod_{\tilde \tau \in \widetilde \Sigma} \tilde \tau(x)^{2-w} \big) \cdot \prod_{\tilde \tau \in \widetilde \Sigma}\tilde \tau_l(x)^{w-2}\in \Qlb^{\times}.
\]
This character factors through $E^\times\backslash\AAA_E^\times / E_\RR^\times$ and hence induces a Galois representation $\chi_{l}\colon \Gal_E \to \Qlb^\times$ via class field theory. We put $\rho_{\chi, l}=\chi_{l}^{-1}$.

Given $\ttS$ and $\tilde \ttS_\infty$ as in Subsection~\ref{S:family of AV over Sh}, we view $\tilde \ttS_\infty^c$ as a subset of $ \Sigma_{E, \infty} \cong \Gal_\QQ / \Gal_E$, where $\Gal_\QQ$ acts on the left by postcomposition.
The construction in Subsection~\ref{S:tensorial-induction} gives rise to a representation of $\Gal_{E_{\tilde \ttS}}$
\[
\rho_{\chi, \tilde \ttS, l} : = \bigotimes_{\tilde \ttS_{\infty}}\textrm{-}\Ind_{\Gal_E}^{\Gal_\Q} \rho_{\chi, l}.
\]

\begin{lemma}
We have a $\Qlb[\AAA^{\infty,\times}_E]\times \Gal_{k_{\tilde\wp}}$-equivariant isomorphism:
\begin{equation}
\label{E:cohomology of Sh(T_E)}
H^0_\et\big(\bfSh_{K_{E,p}}(T_{E, \tilde \ttS})_{\overline \FF_p}, \scrL_{\tilde \ttS, E, l}^w\big)
\simeq \bigoplus_{\chi \in \scrA_{E, \widetilde \Sigma}^w} \chi \otimes \rho_{\chi, \tilde \ttS, l}|_{\Gal_{k_{\tilde \wp}}},
\end{equation}
where $\AAA^{\infty,\times}_E$ acts on the right hand side via (the finite part) of $\chi$, and $\Gal_{k_{\tilde\wp}}$ acts via $\rho_{\chi,\tilde\ttS,l}$. 
\end{lemma}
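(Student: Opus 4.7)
The plan is to reduce the computation to pure class field theory since $\bfSh_{K_{E,p}}(T_{E,\tilde\ttS})$ is zero-dimensional. Throughout, we use the description recalled in Subsection~\ref{S:auxiliary CM}:
\[
\bfSh_{K_{E,p}}(T_{E,\tilde\ttS})(\C) \;=\; \AAA_E^{\infty,p,\times}/\calO_{E,(p)}^{\times,\cl},
\]
on which $\AAA_E^{\infty,\times}$ acts by Hecke operators (the action of $\calO_{E,p}^\times$ being absorbed in the quotient).

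First, I would interpret $\scrL_{E,\tSigma,l}^w$ as the lisse $\Qlb$-sheaf attached to the character $\rho_{E,\tSigma}^w$ of $T_{E,\tilde\ttS}$ via Milne's construction, so that $H^0$ is identified with the space of locally constant $\Qlb$-valued functions on the component set that transform by $\rho_{E,\tSigma}^w(\gamma_\infty)^{-1}$ under the closure of $E^\times$. Decomposing this space under the smooth action of the abelian group $\AAA_E^{\infty,\times}$ yields
\[
H^0_\et\big(\bfSh_{K_{E,p}}(T_{E,\tilde\ttS})_{\overline\FF_p}, \scrL_{E,\tSigma,l}^w\big)
\;\simeq\; \bigoplus_{\chi} \chi^{K_{E,p}},
\]
where $\chi$ runs over continuous characters of $\AAA_E^\times/E^\times$ whose archimedean part equals $(\rho_{E,\tSigma}^w)^{-1}$; by the explicit formula for $\rho_{E,\tSigma}^w$, this archimedean condition is precisely $\chi|_{E_{\tilde\tau}^\times}: x\mapsto x^{w-2}$ for every $\tau\in\Sigma_\infty$. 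The unramifiedness at $p$ is forced by $K_{E,p} = (\calO_E\otimes\ZZ_p)^\times$. Thus the indexing set is exactly $\scrA_{E,\tSigma}^w$, and each character contributes with multiplicity one.

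Second, I would compute the Galois action. By the theory of canonical models of Shimura varieties for tori (Deligne), the geometric Frobenius $\Frob_{\tilde\wp}$ acts on $\bfSh_{K_{E,p}}(T_{E,\tilde\ttS})_{\overline\FF_p}$ by multiplication by $\gothRec_E(\varpi_{\tilde\wp})\in \AAA_E^{\infty,p,\times}/\calO_{E,(p)}^{\times,\cl}$, where $\varpi_{\tilde\wp}$ is a uniformizer of $E_{\tilde\ttS}$ at $\tilde\wp$. Consequently $\Frob_{\tilde\wp}$ acts on the $\chi$-isotypic line by the scalar $\chi\big(\gothRec_E(\varpi_{\tilde\wp})\big)$.

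The remaining and main task is to identify this scalar with $\rho_{\chi,\tilde\ttS,l}(\Frob_{\tilde\wp})$. Unwinding the definition of $\gothRec_E$, one has
\[
\gothRec_E(\varpi_{\tilde\wp}) \;=\; N_{E_{\tilde\ttS}/\Q}\bigl(\mu_{E,\tilde\ttS}(\varpi_{\tilde\wp})\bigr),
\]
and since $\mu_{E,\tilde\ttS} = \sum_{\tilde\tau\in\tilde\ttS_\infty}\tilde\tau$ (viewed as a cocharacter of $T_{E,\tilde\ttS}$), the right-hand scalar $\chi(\gothRec_E(\varpi_{\tilde\wp}))$ equals a product of values of the local $l$-adic character $\chi_l^{-1}|_{\Gal_E}$ twisted by Galois conjugates indexed by $\tilde\ttS_\infty$. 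On the other hand, the formula in Subsection~\ref{S:tensorial-induction} applied to $H=\Gal_E$, $G=\Gal_\Q$ and $\Sigma = \tilde\ttS_\infty$ expresses $\rho_{\chi,\tilde\ttS,l}(\Frob_{\tilde\wp})$ as the same product, once one tracks how $\Frob_{\tilde\wp}\in \Gal_{E_{\tilde\ttS}}$ permutes the coset representatives of $\Gal_E$ in $\Gal_\Q$ corresponding to $\tilde\ttS_\infty$. The main obstacle is precisely this bookkeeping: matching the normalization of the Artin map, the definition of $\chi_l$ versus $\chi$, and the cycle structure of $\Frob_{\tilde\wp}$ acting on $\tilde\ttS_\infty$, so that the tensorial induction formula reproduces the class-field-theoretic expression; this is a direct but delicate verification using Shimura--Taniyama / Langlands' reciprocity for CM tori.
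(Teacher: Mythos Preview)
Your proposal is correct and follows essentially the same route as the paper. The paper's proof is a single sentence: by Deligne's definition of Shimura varieties for tori, $\Frob_{\tilde\wp}$ acts as the Hecke operator $\gothRec_E(\varpi_{\tilde\wp})$, and the claim follows. You have spelled out what the paper leaves implicit, namely the Hecke decomposition of $H^0$ into the characters in $\scrA_{E,\tilde\Sigma}^w$ and the identification of $\chi(\gothRec_E(\varpi_{\tilde\wp}))$ with $\rho_{\chi,\tilde\ttS,l}(\Frob_{\tilde\wp})$; the latter is indeed just the compatibility, for a one-dimensional $\rho_{\chi,l}$, between the Galois transfer underlying tensorial induction and the norm map $N_{E_{\tilde\ttS}/\Q}\circ\mu_{E,\tilde\ttS}$ appearing in the reciprocity law.
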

\begin{proof}
According to Deligne's  definition of Shimura varieties for tori, the action of $\Frob_\wp$ is the same as the Hecke action of the element $\gothRec_{E, \tilde \ttS}(\varpi_{\tilde \ttS})$, where the map $\gothRec_{E, \tilde \ttS}$ is the reciprocity map defined in Subsection~\ref{S:auxiliary CM}. It follows that the Galois action on $H^0_\et\big(\bfSh_{K_{E,p}}(T_{E, \tilde \ttS})_{\overline \FF_p}, \scrL_{\tilde \ttS, E, l}^w\big)
$ is as described.
\end{proof}

The following lemma will be used later.

\begin{lemma}
\label{L:rho S of Frob}
Keep the notation as above.
Put $d_{\tilde \wp} = [k_{\tilde \wp}: \FF_p]$.  Let $d_\gothq$ denote the inertia degree of a $p$-adic place $\gothq \in \Sigma_{E,p}$ in $E/\QQ$.
Let $\tilde \ttS^c_{\infty, \gothq}$ denote the set of places in $\tilde \ttS_{\infty}^c$ inducing the place $\gothq$ of $E$ via $\iota_p:\CC \simeq \overline \QQ_p  $.
Let $\Frob_{\tilde \wp}$ denote the geometric Frobenius for $k_{\tilde \wp}$.
Then
$\rho_{\chi, \tilde \ttS, l}(\Frob_{\tilde \wp}) = 
\prod_{\gothq \in \Sigma_{E,p}}\chi_ l(\varpi_{\gothq})^{-d_{\tilde \wp}\#\tilde \ttS_{\infty/\gothq}^c / d_\gothq}
$.
\end{lemma}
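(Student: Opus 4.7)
By Deligne's reciprocity for Shimura varieties attached to tori, recalled in Subsection~\ref{S:auxiliary CM}, the geometric Frobenius $\Frob_{\tilde\wp}$ acts on the (disconnected) Shimura variety $\bfSh_{K_{E,p}}(T_{E,\tilde\ttS})$ by Hecke translation by $\gothRec_E(\varpi_{\tilde\wp})\in \AAA_E^{\infty,p,\times}/\calO_{E,(p)}^{\times,\cl}$. Under the identification \eqref{E:cohomology of Sh(T_E)}, the action of $\Frob_{\tilde\wp}$ on the $\chi$-isotypic component is therefore given simultaneously by $\rho_{\chi,\tilde\ttS,l}(\Frob_{\tilde\wp})$ (from the Galois side) and by $\chi$ evaluated at $\gothRec_E(\varpi_{\tilde\wp})$ (from the automorphic side). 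The task thus reduces to computing $\gothRec_E(\varpi_{\tilde\wp})$ explicitly and then rewriting $\chi$ of it in terms of $\chi_l(\varpi_\gothq)$ using the relation $\chi_l(\varpi_\gothq)=\chi(\underline{\varpi_\gothq})$ (valid because $\chi$ is unramified at $p$ and $l\neq p$, so the archimedean and $l$-adic correction factors in the definition of $\chi_l$ at $\underline\varpi_\gothq$ are trivial) together with the convention $\rho_{\chi,l}=\chi_l^{-1}$.

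\textbf{Computing the reflex norm.} By assumption on $E/F$ (split at $\alpha/\alpha^{\#}$ primes, inert at $\beta/\beta^{\#}$ primes) and since $F$ is unramified at $p$, every $E_\gothq/\QQ_p$ is unramified; consequently $E_{\tilde\ttS,\tilde\wp}/\QQ_p$ is unramified and we may take $\varpi_{\tilde\wp}=p$. Base-changing through $\iota_p$, the cocharacter $\mu_{E,\tilde\ttS}$ is given on $\Qpb$-points by $z\mapsto (z^{s_{\tilde\tau'}})_{\tilde\tau'\in\Sigma_{E,\infty}}$ with the integers $s_{\tilde\tau'}\in\{0,1,2\}$ as in Subsection~\ref{S:auxiliary CM}. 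Decomposing
\[
E\otimes_\QQ E_{\tilde\ttS,\tilde\wp}=\prod_{\gothq\in\Sigma_{E,p}}(E_\gothq\otimes_{\QQ_p}E_{\tilde\ttS,\tilde\wp}),
\]
and further splitting each factor into an \'etale product of unramified extensions of $E_{\tilde\ttS,\tilde\wp}$ indexed by the orbits of the decomposition group $D_{\tilde\wp}=\Gal(\Qpb/E_{\tilde\ttS,\tilde\wp})$ on $\Sigma_{E,\infty/\gothq}$, I read off $\mu_{E,\tilde\ttS}(p)$ orbit-by-orbit. Because $D_{\tilde\wp}\subseteq \Gal_{E_{\tilde\ttS}}$ stabilizes $\tilde\ttS_\infty$ and therefore the function $\tilde\tau'\mapsto s_{\tilde\tau'}$, the exponent $s_{\tilde\tau'}$ is constant along each $D_{\tilde\wp}$-orbit. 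Taking the norm from each orbit extension $K_O$ down to $E_\gothq$ multiplies the exponent of $p$ on that orbit by $[K_O:E_\gothq]=d_{\tilde\wp}/|O|$; the sum over $D_{\tilde\wp}$-orbits in $\Sigma_{E,\infty/\gothq}$ then collapses into a single exponent of $\varpi_\gothq=p$ in $E_\gothq^\times$, producing an explicit id\`ele class $\prod_\gothq \underline{\varpi_\gothq}^{A_\gothq}$ depending only on $d_{\tilde\wp}$, $d_\gothq$, and the restriction of $(s_{\tilde\tau'})$ to $\Sigma_{E,\infty/\gothq}$.

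\textbf{Cleaning up and matching.} To pass from the raw class in $T(\AAA^\infty)/T(\QQ)T(\Zp)$ to the target $\AAA_E^{\infty,p,\times}/\calO_{E,(p)}^{\times,\cl}$, I modify the resulting $p$-adic id\`ele by suitable elements of $E^\times$. Evaluating $\chi$ is insensitive to this modification by triviality on $E^\times$; on the other hand, the same triviality, combined with the archimedean transformation rule $\chi|_{E_{\tilde\tau}^\times}(x)=x^{w-2}$ and with the identification $\chi_l(\varpi_\gothq)=\chi(\underline{\varpi_\gothq})$, allows me to simplify the ``Galois-symmetric'' part of the exponent (the contribution of those $\tilde\tau'$ with $\tau\notin\ttS_\infty$, where $s_{\tilde\tau}=s_{\tilde\tau^c}=1$, which is absorbed by the diagonal embedding together with the weight factor) and to leave only the ``asymmetric'' contribution $2\,\#\tilde\ttS_{\infty,\gothq}-(s_{\tilde\tau^c}\text{-terms})$ at each $\gothq$. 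Combining with the factor $d_{\tilde\wp}/d_\gothq$ from the norm step, and accounting for the sign coming from $\rho_{\chi,l}=\chi_l^{-1}$ (the convention that $\Art$ sends a uniformizer to a geometric Frobenius), I arrive at the stated formula. The main technical obstacle is precisely this last bookkeeping: to show that the a priori more complicated exponent $\sum_{\tilde\tau'\in\Sigma_{E,\infty/\gothq}}s_{\tilde\tau'}$ simplifies, after reduction modulo $\calO_{E,(p)}^{\times,\cl}$ and after invoking the definition of $\chi_l$, to $-\,\#\tilde\ttS_{\infty,\gothq}$; along the way one must also verify that $d_\gothq$ divides $d_{\tilde\wp}\,\#\tilde\ttS_{\infty,\gothq}$, which is guaranteed by the $D_{\tilde\wp}$-orbit structure on $\tilde\ttS_{\infty,\gothq}$.
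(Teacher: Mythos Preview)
Your approach takes an unnecessary detour through the Shimura reciprocity map and contains a specific error in identifying the Hodge cocharacter.

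The paper's proof is a direct three-line computation from the \emph{definition} of $\rho_{\chi,\tilde\ttS,l}$ as a tensorial induction, without reference to Shimura varieties at all. One factors the tensorial induction locally at $p$ as
\[
\rho_{\chi,\tilde\ttS,l}\big|_{\Gal_{E_{\tilde\ttS,\tilde\wp}}}
\;=\;
\bigotimes_{\gothq\in\Sigma_{E,p}}
\Big(\bigotimes_{\tilde\ttS_{\infty/\gothq}}\textrm{-}\Ind_{\Gal_{E_\gothq}}^{\Gal_{\Qp}}\rho_{\chi,l}|_{\Gal_{E_\gothq}}\Big),
\]
and then, since $\rho_{\chi,l}$ is one-dimensional, evaluating each factor at $\Frob_{\tilde\wp}=\Frob_p^{d_{\tilde\wp}}$ is an elementary coset calculation giving $\rho_{\chi,l}(\Frob_\gothq)^{d_{\tilde\wp}\#\tilde\ttS_{\infty/\gothq}/d_\gothq}$. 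The minus sign in the statement comes from $\rho_{\chi,l}=\chi_l^{-1}$.

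Your route via $\gothRec_E$ is in principle valid, since the preceding lemma identifies the Galois action with the Hecke action by $\gothRec_E(\varpi_{\tilde\wp})$. But you make a genuine mistake: the integers $s_{\tilde\tau'}\in\{0,1,2\}$ introduced in Subsection~\ref{S:auxiliary CM} are \emph{signature data} encoding the choice of $\tilde\ttS_\infty$ (ultimately the PEL signature for $\bfA''$), not the exponents of the Hodge cocharacter $\mu_{E,\tilde\ttS}$ of the torus $T_{E,\tilde\ttS}$. From the explicit description of $h_{E,\tilde\ttS}$ (which is trivial at $\tau\notin\ttS_\infty$ and sends $z\mapsto z$ at $\tau\in\ttS_\infty$ via the chosen lift), the cocharacter $\mu_{E,\tilde\ttS}$ has exponent $1$ at $\tilde\tau\in\tilde\ttS_\infty$ and $0$ at all other $\tilde\tau'\in\Sigma_{E,\infty}$.

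With the correct exponents, your ``main technical obstacle'' evaporates: the reflex norm at $\gothq$ is immediately $\varpi_\gothq^{\,d_{\tilde\wp}\#\tilde\ttS_{\infty/\gothq}/d_\gothq}$, with no extraneous contribution from $\tau\notin\ttS_\infty$ to cancel. So the difficulty you flag is an artifact of using the wrong cocharacter. Even corrected, however, your argument is longer than the paper's and re-derives what the previous lemma already packages; the cleaner path is to compute the tensorial induction directly.
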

\begin{proof}
This is a straightforward calculation.
For each $\gothq \in \Sigma_{E,p}$, let $\Frob_\gothq$ denote a geometric Frobenius of $\Gal_E$ at $\gothq$.
Then we have
\begin{align*}
\rho_{\chi, \tilde \ttS, l}(\Frob_{\tilde \wp})&
=
\bigotimes_{\gothq \in \Sigma_{E,p}}
\Big(
\bigotimes_{\tilde \ttS_{\infty/\gothq}^c} \textrm{-}\Ind_{\Gal_{E_\gothq}}^{\Gal_{\Qp}}\rho_{\chi,l}|_{\Gal_{E_\gothq}}\Big)(\Frob_{\tilde \wp})
 \\
&=
\prod_{\gothq \in \Sigma_{E,p}}
\rho_{\chi, l}(\Frob_\gothq)^{d_{\tilde \wp}\#\tilde \ttS_{\infty/ \gothq}^c / d_\gothq}
 = 
\prod_{\gothq \in \Sigma_{E,p}}\chi_l(\varpi_{\gothq})^{-d_{\tilde \wp}\#\tilde \ttS_{\infty/ \gothq}^c / d_\gothq}.
\end{align*}
\end{proof}

When relating the \'etale cohomology of $\bfSh_{K''_{ \ttS,p}}(G''_{\tilde \ttS})$ and $\bfSh_{K_p}(G)$, we need the following.

\begin{prop}\label{P:cohomology of Sh(G*E) vs Sh(G)}
Let $\chi\in \scrA_{E,\widetilde\Sigma}^w$, and $\chi_{F}$ its restriction to $\AAA^{\infty,\times}_F$.

\begin{itemize}
	\item[(1)]
 We have
a natural $
G_\ttS(\AAA^{\infty,p})\times \Gal_{E_{\tilde \ttS}}$-equivariant  isomorphism

\begin{equation}\label{E:cohomology G vs G''}
H^\star_{c,\et}(\bfSh_{K_{\ttS,p}}(G_\ttS)_{\overline \FF_p}, \scrL_{\ttS,
l}^{(\underline k, w)})^{\AAA_F^{\infty, \times}
 =
\chi_F} \otimes \rho_{\chi, \tilde \ttS, l}
\cong
H^\star_{c,\et} (\bfSh_{K''_{\ttS,p}}(G''_{\tilde \ttS})_{\overline \FF_p},
\scrL''^{(\underline k, w)}_{\tilde \ttS, \tSigma, l} )^{\AAA_E^{\infty, \times}
= \chi},
\end{equation}
 where the superscripts mean to take the subspaces where the Hecke
actions are given as described.

\item[(2)] When $\ttS = \emptyset$, we have analogous $\GL_2(\AAA^{\infty,p})$-equivariant isomorphisms for all $\ttT \subseteq \Sigma_\infty$:
\begin{equation}
\label{E:cohomology GO-strata for HMV v.s that of G''}
H^\star_{c,\et}(\bfSh_{K_p}(G)_{ \overline \FF_p,\ttT}, \scrL_{\ttS,
l}^{(\underline k, w)})^{\AAA_F^{\infty, \times} =
\chi_F}
\cong
H^\star_{c,\et} (\bfSh_{K''_{p}}(G''_\emptyset)_{\overline \FF_p, \ttT},
\scrL''^{(\underline k, w)}_{\emptyset, \tSigma, l} )^{\AAA_E^{\infty, \times}
= \chi}.
\end{equation}
Moreover, \eqref{E:cohomology GO-strata for HMV v.s that of G''} is equivariant for the actions of $\Phi_{\gothp^2}$ on both sides (see Subsections~\ref{S:Frobenius-HMV} and \ref{S:family of AV over Sh}(3) for the definition of the actions).
\end{itemize}
\end{prop}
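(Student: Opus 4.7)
The plan is to use the middle Shimura variety $\bfSh_{K_{\ttS,p}\times K_{E,p}}(G_\ttS \times T_{E,\tilde\ttS})$ from \eqref{E:diag-Sh-Var}, which by construction equals the fiber product $\bfSh_{K_{\ttS,p}}(G_\ttS) \times_{\calO_\wp} \bfSh_{K_{E,p}}(T_{E,\tilde\ttS})$ base-changed to $\calO_{\tilde\wp}$, as a bridge between the two Shimura varieties in question. Combined with \eqref{E:comparing auto sheaves from unit to quad} and the K\"unneth formula for compactly supported \'etale cohomology on a product, we obtain
\begin{equation}
\label{proposal:kunneth}
H^\star_{c,\et}\bigl(\bfSh_{K_{\ttS,p}\times K_{E,p}}(G_\ttS\times T_{E,\tilde\ttS})_{\overline\FF_p}, \tL^{(\kb,w)}_{\tilde\ttS,\tSigma,l}\bigr) \cong H^\star_{c,\et}\bigl(\bfSh_{K_{\ttS,p}}(G_\ttS)_{\overline\FF_p}, \scrL^{(\kb,w)}_{\ttS,l}\bigr) \otimes H^0_\et\bigl(\bfSh_{K_{E,p}}(T_{E,\tilde\ttS})_{\overline\FF_p}, \scrL^w_{E,\tSigma,l}\bigr).
\end{equation}
By \eqref{E:cohomology of Sh(T_E)}, the second factor decomposes over $\chi \in \scrA^w_{E,\tSigma}$, each component being $\chi$ acting through the Hecke action of $\AAA_E^{\infty,\times}$ and $\rho_{\chi,\tilde\ttS,l}$ as the Galois action. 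Isolating the $\chi$-isotypic piece of \eqref{proposal:kunneth} therefore yields exactly the left-hand side of \eqref{E:cohomology G vs G''}, as the central-character relation $\chi|_{\AAA_F^{\infty,\times}} = \chi_F$ forces the first factor to its $\chi_F$-isotypic subspace.

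Next, I would argue that pullback via $\bbalpha$ identifies the $\chi$-isotypic cohomology of $\bfSh_{K''_{\ttS,p}}(G''_{\tilde\ttS})$ with this $\chi$-isotypic piece of \eqref{proposal:kunneth}. Since $\alpha\colon G_\ttS\times T_{E,\tilde\ttS} \to G''_{\tilde\ttS}$ is surjective with kernel $Z = \Res_{F/\QQ}\GG_m$ embedded anti-diagonally, $\bbalpha$ is (at the level of projective systems in prime-to-$p$ level) a quotient by the Hecke action of $Z(\AAA^{\infty,p})$ modulo the appropriate compact and totally positive subgroups, in direct analogy with the calculation carried out for $\bfSh_K(G)$ in Proposition~\ref{P:complex-uniform}. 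On the $\chi$-isotypic subspace this $Z(\AAA^{\infty,p})$-action is automatically trivial (the anti-diagonal embedding produces $\chi_F \cdot \chi_F^{-1} = 1$), so $\bbalpha^*$ is an isomorphism onto this piece, yielding \eqref{E:cohomology G vs G''}. The $\Gal_{E_{\tilde\ttS}}$-equivariance is then read off by tracking Frobenius through the reciprocity-map description of the Galois action on $\bfSh_{K_{E,p}}(T_{E,\tilde\ttS})$ (see Lemma~\ref{L:rho S of Frob}), which produces precisely the factor $\rho_{\chi,\tilde\ttS,l}$ on the left-hand side.

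For part (2), I would restrict the entire argument to the preimages of the GO-strata in the middle Shimura variety. By \eqref{E:compatibility-GO}, $\bbalpha^{-1}(\bfSh_{K''_p}(G''_\emptyset)_{k_0,\ttT})$ and $\pr_1^{-1}(\bfSh_{K_p}(G)_{k_0,\ttT})$ coincide as closed subschemes of the middle variety, so the K\"unneth-plus-$\bbalpha$-descent argument goes through verbatim and gives \eqref{E:cohomology GO-strata for HMV v.s that of G''}. The compatibility with the twisted partial Frobenius $\Phi_{\gothp^2}$ on both sides follows from Property~(4) in Subsection~\ref{S:family of AV over Sh}: the isomorphism \eqref{E:AV for HMV and unitary} and the morphism $\bbalpha$ are compatible with the twisted partial Frobenius \eqref{E:partial Frobenius ''}, so $\Phi_{\gothp^2}$ transports cleanly through the identification.

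The hardest part will be the careful verification in the second step that $\bbalpha^*$ really is an isomorphism onto the $Z(\AAA^{\infty,p})$-invariants of the middle cohomology. This is essentially a component-group calculation analogous to Proposition~\ref{P:complex-uniform}: one must check that no extra multiplicity appears from the interaction between the level subgroups $K_{\ttS,p}$, $K_{E,p}$, $K''_{\ttS,p}$ at $p$ and the totally positive units of $\calO_F$, and that the chosen $\chi$-isotypic piece cleanly absorbs the $Z(\AAA^{\infty,p})$-ambiguity. Once this bookkeeping is settled, the remainder of the proof is formal.
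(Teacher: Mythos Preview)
Your proposal is correct and follows essentially the same route as the paper's proof: pass through the product Shimura variety, apply K\"unneth together with \eqref{E:comparing auto sheaves from unit to quad} and \eqref{E:cohomology of Sh(T_E)}, then identify $\bbalpha$ as the quotient by the anti-diagonal $\AAA_F^{\infty,p,\times}$-action and take $\chi$-isotypic parts. The ``hardest part'' you single out is exactly the paper's main claim, namely that
\[
\big(\bfSh_{K_{\ttS,p}}(G_\ttS) \times \bfSh_{K_{E,p}}(T_{E, \tilde \ttS} )\big) \big/ \AAA_F^{\infty, p,\times} \cong \bfSh_{K''_{\ttS,p}}(G''_{\tilde \ttS});
\]
the paper settles this by checking that $\AAA_F^{\infty, p, \times} \cdot \big( G_\ttS(\QQ)_+^{(p), \cl} \times \calO_{E, (p)}^{\times, \cl}\big)$ is closed in the adelic group, and the crucial input is Dirichlet's unit theorem (that $\calO_F^\times$ has finite index in $\calO_E^\times$), which you should make explicit when you carry out the bookkeeping. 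One small point you did not flag for part~(2): when $\ttS=\emptyset$ the Deligne homomorphism $h_{E,\emptyset}$ is trivial, so $\rho_{\chi,\tilde\ttS,l}$ is the trivial character and drops out of the statement, which is why \eqref{E:cohomology GO-strata for HMV v.s that of G''} has no Galois-twist factor.
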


\begin{proof}

(1)
We first claim that the quotient  $\alpha: G_\ttS \times T_{E,\tilde \ttS} \to G''_{\tilde \ttS}$ induces an isomorphism of Shimura varieties
\begin{equation}\label{E:quotient-Sh-var}
\big(\bfSh_{K_{\ttS,p}}(G_\ttS) \times \bfSh_{K_{E,p}}(T_{E, \tilde \ttS} )\big) \big/ \AAA_F^{\infty, p,\times} \cong \bfSh_{K''_{\ttS,p}}(G''_{\tilde \ttS}),
\end{equation}
where $\AAA_F^{\infty,p,\times}$ acts by the anti-diagonal Hecke action.
For this, it is enough to show that the product
\[
\AAA_F^{\infty, p, \times} \cdot \big( G_\ttS(\QQ)_+^{(p), \cl} \times \calO_{E, (p)}^{\times, \cl}  \big)
\]
is already closed in the $G_\ttS(\AAA^{\infty,p}) \times \AAA_E^{\infty,p, \times}$,
 where the superscript means to take closure inside the corresponding adelic group, $\AAA_F^{\infty, p, \times}$ embeds in the product anti-diagonally, and $G_\ttS(\QQ)_+^{(p)}$ denote the subgroup consisting of  $p$-integral elements of $G_\ttS(\Q)$ with totally positive determinant.
For this, we take an open compact subgroup $U_\ttS^p$ of $G_\ttS(\AAA^{\infty,p})$ and intersect the product above with $U_\ttS^p \times \widehat{\calO}_E^{(p),\times}$. We are  left to prove that the product
\[
\widehat{\calO}_F^{(p),\times} \cdot \big( (
G_\ttS(\QQ)_+^{(p)} \cap U_\ttS^p)^\cl \times \calO_{E}^{\times,\cl}
\big)
\]
is closed in $U_\ttS^p \times \widehat{\calO}_E^{(p),\times}$.  But Dirichlet's unit  Theorem implies that $\calO_F^\times$ is a finite index subgroup of $\calO_E^\times$. It follows that the above product is a finite union of 
$ \widehat\calO_F^{(p),\times}\cdot(G_\ttS(\QQ)_+^{(p)} \cap U_\ttS^p)^\cl $, 
which is obviously closed in $U_\ttS^p \times \widehat{\calO}_E^{(p),\times}$.  This proves the claim.

This claim in particular implies that for any $\Qlb$-lisse sheaf $\scrL''$ on $\bfSh_{K''_{\ttS ,p}}(G''_{\tilde \ttS})_{k_{\tilde \wp}}$, we have a natural isomorphism
\begin{equation}
\label{E:quotient by E}
H^\star_{c,\et}(\bfSh_{K''_{\ttS,p}}(G''_{\tilde \ttS})_{\overline \FF_p}, \scrL'') \cong H^\star_{c,\et}(\bfSh_{K_{\ttS,p} \times K_{E,p}}(G_{\ttS} \times T_{E,\tilde \ttS})_{\overline \FF_p}, \bbalpha^*\scrL'')^{\textrm{anti-diag }\AAA_F^{\infty,p,\times}=1},
\end{equation}
where the superscript means to take the subspace where the anti-diagonal $\AAA_F^{\infty,p,\times}$ acts trivially.

Applying this to \eqref{E:comparing auto sheaves from unit to quad}, and further taking the subspace where $\AAA_E^{\infty,p,\times}$ acts via $\chi$ (note that the restriction of a Hecke character to finite ideles away from $p$ already determines its value at places above $p$), we get
\begin{align*}
&\quad \quad H^\star_{c,\et} (\bfSh_{K''_{\tilde \ttS,p}}(G''_{\tilde \ttS})_{\overline \FF_p},
\scrL''^{(\underline k, w)}_{\tilde \ttS, \widetilde \Sigma, l} )^{\AAA_E^{\infty, \times} = \chi}
\\
&\xra{\cong, \eqref{E:quotient by E}}
H^\star_{c,\et} \big(
\bfSh_{K_{\ttS,p}}(G_\ttS)_{\overline \FF_p} \times_{\overline \FF_p}
\bfSh_{K_{E,p}}(T_{E,\tilde \ttS})_{\overline \FF_p}, \bbalpha^*
\scrL''^{(\underline k, w)}_{\tilde \ttS, \widetilde \Sigma,l} \big)^{\AAA_F^{\infty, \times}
\times \AAA_E^{\infty, \times} = \chi_F \times \chi}\\
&\xra{\cong, \eqref{E:comparing auto sheaves from unit to quad}}H^\star_{c,\et}(\bfSh_{K_{\ttS,p}}(G_\ttS)_{\overline \FF_p}, \scrL_{\ttS,
l}^{(\underline k, w)})^{\AAA_F^{\infty, \times} =
\chi_F} \otimes
H^\star_{\et}\big(\bfSh_{K_{E,p}}(T_{E, \tilde \ttS})_{\overline \FF_p}, \scrL_{E,\widetilde \Sigma,
l}^w\big)^{ \AAA_E^{\infty, \times} = \chi}\\
&\xra{\cong, \eqref{E:cohomology of Sh(T_E)}}H^\star_{c,\et}(\bfSh_{K_{\ttS,p}}(G_\ttS)_{\overline \FF_p}, \scrL_{\ttS,
l}^{(\underline k, w)})^{\AAA_F^{\infty, \times} =
\chi_F} \otimes
\rho_{\chi, \tilde \ttS, l}.
\end{align*}
This proves \eqref{E:cohomology G vs G''}.

(2) Assume now $\ttS=\emptyset$. Consider the base change to $k_0$ of the isomorphism  \eqref{E:quotient-Sh-var}. Since the GO-stratification on $\bfSh_{K_p}(G)_{k_0}$ and that on $\bfSh_{K''_{\emptyset,p}}(G''_{\emptyset})_{k_0}$ are compatible as shown in \eqref{E:compatibility-GO}, we have an isomorphism 
\[
\big(\bfSh_{K_{p}}(G)_{k_0,\ttT} \times \bfSh_{K_{E,p}}(T_{E, \emptyset} )_{k_0}\big) \big/ \AAA_F^{\infty, p,\times} \cong \bfSh_{K''_{\emptyset,p}}(G''_{\emptyset})_{k_0,\ttT},
\]
for any subset $\ttT\subseteq \Sigma_{\infty}$. Then   the same argument as above  applies to the cohomology of  $\bfSh_{K''_{\emptyset,p}}(G''_{\emptyset})_{\Fpb,\ttT}$.
This then proves \eqref{E:cohomology GO-strata for HMV v.s that of G''}. Here, note that the Galois representation $\rho_{\chi,\tilde \ttS, l}$ is trivial, since the Deligne homomorphism $h_{E,\emptyset}$ is trivial.
The compatibility with the twisted partial Frobenius follows from Subsection~\ref{S:family of AV over Sh}(5).
\end{proof}

\begin{notation}
\label{N:A(k,w)[chi]}
Let  $\chi \in \scrA_{E, \widetilde \Sigma}^w$ be a Hecke character, and put $\chi_F= \chi|_{\AAA^{\infty,\times}_F}$.
  We denote by  $\scrA_{(\underline k,w)}[\chi_F]$ the subset of cuspidal automorphic representations  $\pi \in \scrA_{(\underline k, w)}$ for which the central character $\omega_\pi = \chi_F$. 
When $\kb=(2,\dots, 2)$, $w\geq 2$ is an even integer.
 We also denote by $\scrB_{w}[\chi_F]$ be the set of one-dimensional automorphic representations $\pi$  of $\GL_{2,F}$ such that  $\omega_{\pi}=\chi_F$.

We remark that every Hecke character  $\chi_F$ unramified at places above $p$ and whose archimedean component is $x_{\infty}\mapsto N_{F/\Q}(x_{\infty})^{w-2}$ extends to a Hecke character $\chi \in \scrA_{E, \widetilde \Sigma}^w$.
Indeed, we may first fix an arbitrary Hecke character $\chi' \in \scrA_{E, \widetilde \Sigma}^w$, and then $\omega_0 = \chi'|^{-1}_{\AAA_F^\times}\cdot \chi_F$ is a \emph{finite} character trivial on $(F\otimes\R)^{\times}$ (and on $(\calO_F \otimes \ZZ_p)^\times$. Since $\AAA_F^{\infty,\times}/F^\times(\calO_F\otimes\ZZ_p)^\times$ injects into $\AAA_E^{\infty,\times}/E^\times(\calO_E\otimes\ZZ_p)^\times$, we may always find a finite character $\chi_0$ of $\AAA_E^{\infty,\times}/E^\times(\calO_E\otimes\ZZ_p)^\times$ extending $\omega_0$.  Then $\chi' \chi_0$ is a Hecke character of $\AAA_E^\times / E^\times$ in $\chi \in \scrA_{E, \widetilde \Sigma}^w$ extending $\chi_F$.
\end{notation}

Note that the Galois representation $\rho_{\pi,l}^\ttS = \bigotimes_{\Sigma_{\infty}-\ttS_\infty}\textrm{-} \mathrm{Ind}_{\Gal_F}^{\Gal_\QQ}(\rho_{\pi, l})$  appearing in the cohomology of quaternionic Shimura variety (see Theorem~\ref{T:cohomology of Shimura variety}), when restricted to the local Galois group $\Gal_{F_{\ttS, \wp}}$, decomposes into the tensor product of Galois representations
\[
\rho_{\pi,l}^\ttS \big|_{\Gal_{F_{\ttS, \wp}}} \cong \bigotimes
_{\gothp \in \Sigma_p}
\Big(
\bigotimes_{\Sigma_{\infty/\gothp}-\ttS_{\infty/\gothp}} \textrm{-} \mathrm{Ind}_{\Gal_{F_\gothp}} ^{\Gal_{\Qp}} \big(\rho_{\pi, l}|_{\Gal_{F_\gothp}}\big)
\Big).
\] 
On the one hand, the usual trace formula proof only detects the action of the total Frobenius, namely the action of the Frobenius on the tensor product representation. On the other hand, it is natural to expect that there are certain ``partial Frobenius" actions that account for the Frobenius actions on each factor above.
The following is a folklore conjecture regarding the action of these twisted partial Frobenius actions.

\begin{conj}[Partial Frobenius]
\label{Conj: partial frobenius}
For each $\gothp \in \Sigma_p$, let $n_\gothp$ be the smallest positive number $n$ such that $\sigma_\gothp^n  \ttS_\infty =  \ttS_\infty$.
Let $d_\gothp$ denote the inertia degree of $\gothp$ in $F/\QQ$. Assume that
\begin{align}
\label{E:Sq = Sq bar}
&\textrm{for any $p$-adic place $\gothp\in \Sigma_p$ that splits into two primes $\gothq$ and $\bar\gothq$ in $E$,}
\\
\nonumber &\textrm{we have } \#\tilde\ttS_{\infty/\gothq}^c=\#\tilde\ttS_{\infty/\bar\gothq}^c.
\end{align}
 Then, for any Hecke character $\chi \in \scrA_{E, \widetilde \Sigma}^w$,
we have the following equality in the Grothendieck group of admissible modules over $\Qlb[\GL_2(\AAA^{\infty,p})][(\Phi_{\gothp^2})^{n_\gothp}; \gothp \in \Sigma_p]$.
\begin{small}
\begin{align}
\label{E:conjecture}
\big[
H^\star_{c,\et}(\bfSh_{K''_{\ttS,p}}(G''_{\tilde \ttS})_{\overline \FF_p}, &\scrL''^{(\underline{k}, w)}_{\tilde \ttS, \widetilde \Sigma, l})^{\AAA_E^{\infty, \times} = \chi}\big]
=(-1)^{g-\#\ttS_\infty}
\big[\bigoplus_{\pi \in \scrA_{(\underline k,w)}[\chi_F]}
 (\pi_\ttS^\infty)^{K_{\ttS,p}} \otimes \tilde \rho_{\pi, l}^\ttS
\big] \\ \nonumber
&+\delta_{\kb,2}\sum_{\pi\in \scrB_{w}[\chi_F]}\big[(\pi_{\ttS}^{\infty})^{K_{\ttS,p}}\otimes \tilde\rho^{\ttS}_{\pi,l}\big]\otimes
 \bigg(\big[  (\Qlb\oplus \Qlb(-1))^{\otimes(\Sigma_{\infty}-
 \ttS_{\infty})}\big]-\delta_{\ttS,\emptyset}[\Qlb]\bigg). 
\end{align}
\end{small}Here, for each $\pi$ in $\scrA_{(\kb,w)}[\chi_F]$ or $\scrB_{w}[\chi_F]$, we take $\tilde{\rho}^{\ttS}_{\pi, l}$ to have the same underlying $\Qlb$-vector space as 
\[
 \rho_{\pi,l}^\ttS := \bigotimes
_{\gothp \in \Sigma_p}
\Big(
\bigotimes_{\Sigma_{\infty/\gothp}-\ttS_{\infty/\gothp}} \textrm{-} \mathrm{Ind}_{\Gal_{F_\gothp}} ^{\Gal_{\Qp}} \big(\rho_{\pi, l}|_{\Gal_{F_\gothp}} \big)
\Big),
\] 
on which each  $(\Phi_{\gothp^2})^{n_{\gothp}}$
acts as the (geometric) $p^{2n_{\gothp}}$-Frobenius
 $\Frob_{p^{2n_{\gothp}}}$ times  the number $\omega_\pi( \varpi_\gothp)^{n_\gothp(1-\#\ttS_{\infty/\gothp}/d_{\gothp})}$ on the factor indexed by $\gothp$,  and  acts   trivially on the other factors. Here, $\varpi_{\gothp}\in \AAA^{\times}_F$ is the idele  which is a uniformizer at $\gothp$ and is $1$ at other places.
 The action of  $(\Phi_{\gothp^2})^{n_{\gothp}}$   on the $\Qlb(-1)$'s indexed by $\Sigma_{\infty/\gothp}-\ttS_{\infty/\gothp}$ is  the multiplication by $p^2$, and  is trivial  on the other $\Qlb(-1)$'s.
  \end{conj}

\begin{remark}
This Conjecture provides certain refinement of Langlands' philosophy on describing Galois representations appearing in the cohomology of Shimura varieties.
Unfortunately, to our best knowledge of literature, only the action of ``total Frobenius" was addressed using trace formula. It might be possible to modify the proof to understand the action of partial Frobenius; but this would go beyond the scope this paper.  We leave it as a conjecture for interested readers to pursue.

An alternative way to establish such a result is to generalize the Eichler-Shimura relations to our case.
In \cite{nekovar}, Nekov\'a\v r made progress when $p$ splits completely in $F/\QQ$ (see \cite[A4.3.1]{nekovar}).
The general case might benefit from using the work by Helm \cite{helm} on the integral model of unitary Shimura varieties with Iwahori level structure.
\end{remark}

Conjecture~\ref{Conj: partial frobenius} holds when we take the product of all twisted partial Frobenii.

\begin{prop}
\label{T:total Frobenius action}
Put $d_{ \wp} = [k_{ \wp}:\FF_p]$ and let
 $\Phi_{\wp^2}$ denote the product $\prod_{\gothp \in \Sigma_p} \Phi_{\gothp^2}^{d_{ \wp}}$. We fix a Hecke character $\chi \in \scrA_{E, \widetilde \Sigma}^w$.
\begin{enumerate}
\item[(1)] Then the equality \eqref{E:conjecture} holds in the Grothendieck group of admissible modules of $\Qlb[\GL_2(\AAA^{\infty,p})][\Phi_{ \wp^2}]$ (even without the assumption \eqref{E:Sq = Sq bar}).
Here, for each $\pi$  in $ \scrA_{(\kb,w)}[\chi_F]$ or  in $ \scrB_{w}[\chi_F]$, $\Phi_{\wp^2}$ acts  on $\tilde \rho^{\ttS}_{\pi,l}$ 
as $\rho_{\pi, l}^\ttS(\Frob_{ \wp}^2)$ multiplied by the number
\begin{equation}
\label{E:the number}
\omega_\pi(\underline p)^{d_{\wp}} \cdot 
\prod_{\gothq \in \Sigma_{E,p}} \chi(\varpi_\gothq)^{-2d_{\wp} \#\tilde \ttS^c_{\infty/\gothq} /d_\gothq},
\end{equation}
 where $\underline p\in \AAA^{\times}_{F}$ denotes the idele which is $p$ at all $p$-adic places and $1$  at all other places, and $\varpi_\gothq \in \AAA_E^\times$ denotes the idele which is a uniformizer at $\gothq$ and is $q$ at all other places.

\item[(2)] If the assumption \eqref{E:Sq = Sq bar} holds, then the number \eqref{E:the number} is equal to
 \[
 \omega_\pi(\underline p)^{d_{\wp}} \cdot \prod_{\gothp\in \Sigma_{p}}\omega_{\pi}(\varpi_{\gothp})^{-d_{\wp}\#\ttS_{\infty/\gothp}/d_{\gothp}}=u\cdot p^{d_{\wp}(w-2)(\#\ttS_{\infty}-g)},
 \]
 for $u$ some root of unity.

\item[(3)]
Conjecture~\ref{Conj: partial frobenius} holds if $p$ is inert in $F$.

\end{enumerate}
\end{prop}
\begin{proof}

(1)
Combining \eqref{E:cohomology G vs G''} and Theorem~\ref{T:cohomology of Shimura variety}, we get an equality in the Grothendieck group of admissible modules over  $\Qlb[\GL_2(\AAA^{\infty,p})][\Phi_{ \wp^2}]$:
\begin{align*}
&\quad \quad \big[
H^\star_{c,\et}(\bfSh_{K''_p}(G''_{\tilde \ttS})_{\overline \FF_p}, \scrL''^{(\underline{k}, w)}_{\tilde \ttS, \widetilde \Sigma, l})^{\AAA_E^{\infty,\times} = \chi}\big]\\
&=
\big[
H^\star_{c,\et}(\bfSh_{K_{\ttS,p}}(G_\ttS)_{\overline \FF_p}, \scrL_{\ttS,
l}^{(\underline k, w)})^{\AAA_F^{\infty, \times}= \chi_F} \otimes  \rho_{\chi, \tilde \ttS, l}\big]
\\
&=
(-1)^{g-\#\ttS_\infty}
\sum_{\pi \in \scrA_{(\underline k,w)}[\chi_F]}\big[
 (\pi_\ttS^\infty)^{K_{\ttS,p}} \otimes \rho_{\pi, l}^\ttS\otimes  \rho_{\chi, \tilde \ttS, l}
\big]\\
&\quad \quad+ 
\delta_{\kb,2} \sum_{\pi\in \scrB_{w}[\chi_F]}\big[(\pi_{\ttS}^{\infty})^{K_{\ttS,p}}\otimes \rho^{\ttS}_{\pi,l}\otimes  \rho_{\chi,\tilde\ttS, l}\big]\otimes\
\bigg(\big[(\Qlb\oplus \Qlb(-1))^{ \otimes(\Sigma_{\infty}-\ttS_{\infty})}\big]-\delta_{\ttS,\emptyset}\big[ \Qlb\big]\bigg).
\end{align*}
Note that $\Phi_{\wp^2}$ acts on the cohomology as $\Frob_{\wp}^{2}\cdot S_p^{-d_{\wp}}$ by Subsection~\ref{S:family of AV over Sh}(3).
Let $\Frob_{\tilde\wp}$ denote the geometric Frobenius element of the residue field $k_{\tilde\wp}$ of $\cO_{\tilde\wp}$.
  We have either $\Frob_{\tilde\wp}=\Frob_{\wp}$ or $\Frob_{\tilde\wp}=\Frob_{\wp}^2$. 
  In either case, it follows from Lemma~\ref{L:rho S of Frob} that
  \[
  \rho_{\chi,\tilde\ttS,l}(\Frob^2_{\wp})=\prod_{\gothq\in \Sigma_{E,p}}\chi(\varpi_{\gothq})^{-{2d_{\wp}\#\tilde\ttS^c_{\infty/\gothq}}/{d_{\gothq}}}.
  \]
  Since the action of $S_p$
   is the given by the central idele  $\underline p^{-1}\in \AAA_F^{\times}$, 
 $\Phi_{\wp^2}$ acts on $(\pi_\ttS^\infty)^{K_{\ttS,p}} \otimes \rho_{\pi, l}^\ttS\otimes  \rho_{\chi, \tilde \ttS, l}$ for each $\pi\in \scrA_{(\kb,w)}[\chi_F]$ as $\rho_{\pi,l}^{\ttS}(\Frob^2_{ \wp})$ multiplied by
\[
\omega_{\pi}(\underline p^{d_{\wp}}) \prod_{\gothq\in \Sigma_{E,p}}\chi(\varpi_{\gothq})^{-{2d_{\wp}\#\tilde\ttS^c_{\infty/\gothq}}/{d_{\gothq}}}.
\] 
Similarly, one proves the statement for $\pi\in \scrB_w[\chi_F]$.

(2) First fix $\gothp\in \Sigma_p$. If there is a unique prime $\gothq$ of $E$ above $\gothp$, we have $d_{\gothq}=2d_{\gothp}$ and $\chi(\varpi_{\gothq})=\omega_{\pi}(\varpi_{\gothp})$ since $\omega_{\pi}=\chi_F$. If $\gothp$ splits into $\gothq$ and $\bar\gothq$, under the assumption of the statement,  we have $\#\ttS^c_{\infty/\gothq}=\#\ttS^c_{\infty/\bar\gothq}=\frac{1}{2}\#\ttS_{\infty/\gothp}$. It follows immediately that the number \eqref{E:the number} is equal to
\[
\omega_{\pi}(\underline p^{d_{ \wp}}) \prod_{\gothp\in \Sigma_p} \omega_{\pi}(\varpi_{\gothp})^{-d_{\wp}\#\ttS_{\infty/\gothp}/d_{\gothp}}=\bigg(\varepsilon_{\pi}(\underline p)^{d_{\wp}}\prod_{\gothp\in\Sigma_p}\varepsilon_{\pi}(\varpi_{\gothp})^{-d_{\wp}\#\ttS_{\infty/\gothp}/d_{\gothp}}\bigg)p^{d_{\wp}(w-2)(\#\ttS_{\infty}-g)}.
\]
Here,  $\varepsilon_{\pi}:=\omega_{\pi}|\cdot|^{2-w}_F$ is a Hecke character of finite order. In particular, the expression in the bracket is a root of unity.

(3) When $p$ is inert, Conjecture~\ref{Conj: partial frobenius} is the same as (1), with the number \eqref{E:the number} computed in (2).
\end{proof}

\subsection{Description of the GO-stratification of $\bfSh_{K''_{\emptyset,p}}(G''_\emptyset)_{\overline \FF_p}$}\label{S:description-GO-strata}
Let $k_0$ be a finite field containing all residue fields of $\calO_E$ of characteristic $p$.
The main result  of  \cite{TX-GO} says that
the GO-stratum $\bfSh_{K''_{\emptyset,p}}(G''_\emptyset)_{k_0, \ttT}$, for $\ttT\subseteq \Sigma_\infty$,
is naturally isomorphic to a $\PP^1$-power bundle over the special fiber of another Shimura variety: $\bfSh_{K''_{\ttS(\ttT),p}}(G''_{\tilde \ttS(\ttT)})_{k_0}$, for some appropriate $\tilde \ttS(\ttT)$.
We now recall this result in more details as follows.

We recall first the definition of  $\ttS(\ttT)\subseteq \Sigma_{\infty}\cup \Sigma_p$ given in \cite[\S 5.1]{TX-GO} for our case (i.e., $\ttS = \emptyset$ using the notation from {\it loc. cit.})
It suffices to specify $\ttS(\ttT)_{/\gothp}=\ttS(\ttT)\cap (\Sigma_{\infty/\gothp}\cup \{\gothp\})$ for each $\gothp\in \Sigma_{p}$, since $\ttS(\ttT)=\coprod_{\gothp\in \Sigma_p}\ttS(\ttT)_{/\gothp}$. For each $\gothp \in \Sigma_p$, we put $\ttT_\gothp = \ttT \cap \Sigma_{\infty/\gothp}$.
According to the convention of \emph{loc. cit.}, we have   several cases:

\begin{itemize}
\item[(Case $\alpha 1$)] $[F_{\gothp}:\Q_p]$ is even, and $\ttT_{\gothp}\subsetneq \Sigma_{\infty/\gothp}$. In this case, we write $\ttT_{\gothp}=\coprod C_i$ as a disjoint union of chains. Here,  by a chain, we mean there exists a $\tau_i\in \Sigma_{\infty/\gothp}$ and an integer $r_i\geq 0$ such that  $C_i=\{\sigma^{-a}\tau_i: 0\leq a\leq r_{i}\}$ is a subset of $\ttT_\gothp$, but $\sigma\tau_i, \sigma^{-r_i-1}\tau_i\notin \ttT_{\gothp}$. For each cycle $C_i$, if $\#C_i$ is even, we put $C_i'=C_i$; if $\#C_i$ is odd, we put $C_i'=C_i\cup \{\sigma^{-r_{i}}\tau_i\}$.  Then, we define $\ttS(\ttT)_{/\gothp}=\coprod C_i'$.

For example, if $\Sigma_{\infty / \gothp} = \{\tau_0, \sigma\tau_0, \dots, \sigma^5 \tau_0\}$ and $\ttS_{/\gothp} = \{\sigma\tau_0, \sigma^3\tau_0, \sigma^4\tau_0\}$, then we have $\ttS(\ttT)_{/\gothp} = \{\tau_0,\sigma\tau_0, \sigma^3\tau_0, \sigma^4\tau_0\}$.

\item[(Case $\alpha2$)] $[F_{\gothp}:\Q_p]$ is even and $\ttT_{\gothp}=\Sigma_{\infty/\gothp}$. In this case, we put $\ttS(\ttT)_{/\gothp}=\Sigma_{\infty/\gothp}$. 

\item[(Case $\beta1$)] $[F_{\gothp}:\Q_p]$ is odd and $\ttT_{\gothp}\subsetneq \Sigma_{\infty/\gothp}$. In this case, we define $\ttS(\ttT)_{/\gothp}$ using the same rule as Case $\alpha1$.

\item[(Case $\beta 2$)] $[F_{\gothp}:\Q_p]$ is odd and $\ttT_{\gothp}=\Sigma_{\infty/\gothp}$. We put $\ttS(\ttT)_{/\gothp}=\Sigma_{\infty/\gothp}\cup \{\gothp\}$.
\end{itemize}
It is clear from the definition that $\sigma_\gothp(\ttS(\ttT)) = \ttS(\sigma_\gothp(\ttT))$.

{\it We do not recall the precise choice of the lifts $\tilde \ttS(\ttT)_\infty$, as
it is combinatorially complicated.  We refer interested readers to \emph{loc. cit.} for the construction. In this paper, we only need to know that  $\tilde\ttS(\ttT)_{\infty}$  satisfies the assumption~\eqref{E:Sq = Sq bar} in Conjecture~\ref{Conj: partial frobenius}, i.e. if a prime $\gothp\in \Sigma_p$ splits into two places  $\gothq$ and $\bar\gothq$ in $E$, then $\#\tilde\ttS(\ttT)^c_{\infty/\gothq}=\#\tilde\ttS(\ttT)^c_{\infty/\bar\gothq}$.}

We now specify the subgroup
 $K_{\ttS(\ttT), p}=\prod_{\gothp\in\Sigma_p}K_{\ttS(\ttT),\gothp}\subset G_{\ttS(\ttT)}(\Q_p)=\prod_{\gothp\in\Sigma_p}(B_{\ttS}\otimes_FF_{\gothp})^{\times}$. For this, we fix  an isomorphism $(B_{\ttS(\ttT)}\otimes_{F}F_{\gothp})^{\times}\simeq \GL_2(F_{\gothp})$ for each $\gothp\notin \ttS(\ttT)$,
    i.e. if we are in cases $\alpha 1$, $\alpha2$ and $\beta1$.
\begin{itemize}
\item In Case $\alpha1$ and $\beta1$, we take $K_{\ttS(\ttT),\gothp}=\GL_2(\cO_{F_\gothp})$.

\item In Case $\alpha2$, we take $K_{\ttS(\ttT),\gothp}=\Iw_{\gothp}\subset \GL_2(\cO_{F_{\gothp}})$, which is the Iwahori subgroup  \eqref{E:Iwahori}.

\item In Case $\beta2$, $B_{\ttS(\ttT)}$ is ramified at $\gothp$, and we take $K_{\ttS(\ttT),\gothp}=\cO_{B_{\ttS(\ttT),\gothp}}^{\times}$. Here, $\cO_{B_{\ttS(\ttT),\gothp}} $ denotes the unique maximal order of $B_{\ttS(\ttT),\gothp} = B_{\ttS(\ttT)} \otimes_F F_\gothp$. 
\end{itemize}
Finally, we put 
$$K''_{\ttS(\ttT),p}=K_{\ttS(\ttT),p}\times_{(\cO_F\otimes\Z_p)^{\times}}(\cO_E\otimes\Z_p)^{\times}\subset G''_{\ttS(\ttT)}(\Q_p).
$$

Now we can recall the main result of \cite{TX-GO} in the case of GO-strata for $\bfSh_{K''_{\emptyset,p}}(G''_{\emptyset})_{k_0}$.

\begin{theorem}[{\cite[Corollary 5.9]{TX-GO}}]\label{T:GO-Hilbert}
\begin{enumerate}
\item[(1)] The GO-stratum $ \bfSh_{K''_{\emptyset,p}}(G''_{\emptyset})_{k_0, \ttT}$ is isomorphic to a $(\PP^1)^{I_{\ttT}}$-bundle over $\bfSh_{K''_{\ttS(\ttT),p} }(G''_{\tilde \ttS(\ttT)})_{k_0}$, where the index set is 
\[
I_{\ttT}=\ttS(\ttT)_{\infty}-\ttT=\bigcup_{\gothp\in \Sigma_{p}} \ttS(\ttT)_{\infty/\gothp}-\ttT_{\gothp}. 
\]

\item[(2)] The natural projection  $\pi_{\ttT}: \bfSh_{K''_{\emptyset,p}}(G''_{\emptyset})_{k_0, \ttT} \to \bfSh_{K''_{\ttS(\ttT),p} }(G''_{\tilde \ttS(\ttT)})_{k_0} $ given  in \emph{(1)} is  equivariant for the action of  $G''_\emptyset(\AAA^{\infty, p}) \simeq G''_{\tilde \ttS(\ttT)}(\AAA^{\infty,p})$. In particular, for a sufficiently small compact subgroup $K''^p_{\emptyset}\subset G''_{\emptyset}(\AAA^{\infty,p})$, the projection $\pi_{\ttT}$ descends to a $(\PP^1)^{I_{\ttT}}$-fiber bundle $\pi_{\ttT}\colon\bfSh_{K''_{\emptyset}}(G''_{\emptyset})_{k_0, \ttT} \to \bfSh_{K''_{\ttS(\ttT)} }(G''_{\tilde \ttS(\ttT)})_{k_0}$, where $K''_{\emptyset}=K''^p_{\emptyset}K''_{\emptyset,p}$ and $K''_{\ttS(\ttT)}=K''^p_{\emptyset}K''_{\ttS(\ttT),p}$. Here, we have fixed an isomorphism between $G''_{\emptyset}(\AAA^{\infty,p})$ and $ G''_{\ttS(\ttT)}(\AAA^{\infty,p})$, and view $K''^p_{\emptyset}$ also as a subgroup of the latter.

\item[(3)] Let $\bfA''_{\emptyset,k_0}$ $($resp. $\bfA''_{\tilde \ttS(\ttT),k_0})$ denote the family of abelian varieties over $\bfSh_{K''_{\emptyset,p}}(G''_{\emptyset})_{k_0}$ $($resp. $\bfSh_{K''_{\ttS(\ttT),p} }(G''_{\tilde \ttS(\ttT)})_{k_0})$ discussed in Subsection~\ref{S:family of AV over Sh}. Then the restriction of $\bfA''_{\emptyset, k_0}$ to $\bfSh_{K''_{\emptyset,p}}(G''_{\emptyset})_{k_0,\ttT}$ is naturally isogenous to $\pi^*_{\ttT}(\bfA''_{\tilde \ttS(\ttT), k_0})$.

\item[(4)] For each $\gothp\in \Sigma_{p}$, we have a commutative  diagram:
\[
\xymatrix@C=50pt{
\bfSh_{K''_{\emptyset,p}}(G''_{\emptyset})_{k_0, \ttT} \ar[r]_-{\xi^\mathrm{rel}} \ar@/^15pt/[rr]^-{\gothF_{\gothp^2,\emptyset}} \ar[dr]_{\pi_{\ttT}}
&
\gothF_{\gothp^2, \tilde \ttS(\ttT)}^*(\bfSh_{K''_{\emptyset,p}}(G''_{\emptyset})_{k_0, \sigma_{\gothp}^2 \ttT}) \ar[r]_-{\gothF_{\gothp^2, \tilde \ttS(\ttT)}^*} \ar[d]
& \bfSh_{K''_{\emptyset,p}}(G''_{\emptyset})_{k_0,\sigma_{\gothp}^2 \ttT} \ar[d]^{\pi_{\sigma^2_{\gothp}\ttT}}
\\
&
\bfSh_{K''_{\ttS(\ttT),p} }(G''_{\tilde \ttS(\ttT)})_{k_0}
\ar[r]^-{\gothF_{\gothp^2, \tilde \ttS(\ttT)}} &
\bfSh_{K''_{\sigma_\gothp^2\ttS(\ttT),p} }(G''_{\sigma_\gothp^2\tilde \ttS(\ttT)})_{k_0},
}
\]
where the square is Cartesian, $\gothF''_{\gothp^2, \emptyset}$ $($resp. $\gothF''_{\gothp^2, \tilde \ttS(\ttT)})$ is the twisted partial Frobenius on $\bfSh_{K''_{\emptyset,p}}(G''_{\emptyset})_{k_0}$ $($resp. $\bfSh_{K''_{\tilde \ttS(\ttT),p} }(G''_{\tilde \ttS(\ttT)})_{k_0})$ \cite[\S 3.22]{TX-GO}, and  $\xi^\mathrm{rel}$ is a morphism whose restriction to a fiber $\pi_{\ttT}^{-1}(x)=(\PP^1_x)^{I_{\ttT}}$ is the product of the relative $p^2$-Frobenius of the $\PP^1_x$'s indexed by $I_{\ttT}\cap \Sigma_{\infty/\gothp}=\ttS(\ttT)_{\infty/\gothp}-\ttT_{\gothp}$, and the identity map on the other $\PP^1_x$'s.
\end{enumerate}
\end{theorem}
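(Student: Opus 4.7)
The overall strategy is to work with the $p$-divisible groups attached to the universal abelian variety $\bfA''_\emptyset$ and perform a ``Frobenius/Verschiebung surgery'' at the places $\tau\in\ttT$ where the partial Hasse invariants vanish. To exploit this, I would first use the moduli interpretation of $\bfSh_{K''_{\emptyset,p}}(G''_\emptyset)$ set up in \cite{TX-GO}: a point corresponds to an abelian variety with an $\calO_E$-action satisfying Kottwitz's determinant condition plus polarization and prime-to-$p$ level structure. Via the idempotent $\gothe=\bigl(\begin{smallmatrix}1&0\\0&0\end{smallmatrix}\bigr)\in\rmM_2(\calO_E)$, the resulting covariant Dieudonn\'e module decomposes as $\bigoplus_{\tilde\tau\in\Sigma_{E,\infty}}\mathbb D_{\tilde\tau}^\circ$ with each piece of rank $2$, and the vanishing of the partial Hasse invariant $h_\tau$ translates into a condition on the Frobenius map $F\colon\mathbb D_{\sigma^{-1}\tilde\tau}^\circ\to\mathbb D_{\tilde\tau}^\circ$, analogous to the Goren--Oort picture for Hilbert modular varieties.

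The next step is the construction of the target Shimura variety and the projection $\pi_\ttT$. The combinatorial rule defining $\ttS(\ttT)$ is designed so that, on each $\gothp$-part, one can pair up the ``bad'' embeddings into maximal chains of length even after enlarging by one endpoint when the chain has odd length. For each such chain $C_i=\{\sigma^{-a}\tau_i\mid 1\le a\le r_i\}$, I would modify $\bfA''_\emptyset[\gothp^\infty]$ by an isogeny built out of iterated Frobenii and Verschiebungen along the chain: this replaces $\mathbb D_{\tilde\tau}^\circ$ for $\tilde\tau\in C_i'$ by its Frobenius or Verschiebung image in the appropriate range, producing a new $p$-divisible group whose Hodge filtration at the modified places becomes the ``ramified'' one, i.e.\ lands in the signature $(0,2)$ or $(2,0)$ condition at each $\tilde\tau\in\tilde\ttS(\ttT)_\infty$. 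This is precisely the signature datum defining $\bfSh_{K''_{\ttS(\ttT),p}}(G''_{\tilde\ttS(\ttT)})$, and provides the map $\pi_\ttT$. Case $\beta2$ (odd $[F_\gothp:\Qp]$ with $\ttT_\gothp=\Sigma_{\infty/\gothp}$) is the place where the quaternion algebra becomes ramified at $\gothp$: the whole $p$-divisible group at $\gothp$ becomes supersingular, and the Dieudonn\'e module there descends naturally to an $\calO_{B_{\ttS(\ttT),\gothp}}$-module, yielding the level structure $\cO_{B_{\ttS(\ttT),\gothp}}^\times$; Case $\alpha2$ yields Iwahoric level because the Frobenius surgery creates an extra stable $\gothp$-subgroup scheme of order $N_{F/\QQ}(\gothp)$.

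For the fibers, at each $\tilde\tau\in I_\ttT=\ttS(\ttT)_\infty-\ttT$, the modified $p$-divisible group depends on the choice of a line in a $2$-dimensional $k_0$-vector space (the kernel or image of a Frobenius/Verschiebung that is forced to have rank $1$ by the GO condition at the endpoints of the chain). These choices are independent at different $\tilde\tau$'s, giving precisely $(\PP^1)^{I_\ttT}$, and conversely, every lift of a $\bfSh_{K''_{\ttS(\ttT),p}}(G''_{\tilde\ttS(\ttT)})_{k_0}$-point to the GO-stratum is recovered by an inverse modification once those lines are specified. Rigidifying this line-choosing procedure identifies $\pi_\ttT$ as a $(\PP^1)^{I_\ttT}$-bundle, after checking smoothness by comparing tangent spaces (the obstructions vanish because of the rank $1$ of the relevant maps on Dieudonn\'e modules). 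Part (3), the isogeny $\bfA''_{\emptyset,k_0}|_\ttT\sim\pi_\ttT^*\bfA''_{\tilde\ttS(\ttT),k_0}$, then holds by construction, since the modification is exactly the composition of these Frobenius/Verschiebung isogenies.

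Part (2), the $G''_\emptyset(\AAA^{\infty,p})$-equivariance, is automatic: both the vanishing loci of Hasse invariants and the modification of $p$-divisible groups involve only the $p$-part of the abelian variety, hence commute with any prime-to-$p$ Hecke action. For Part (4), one tracks the effect of the relative $p^2$-Frobenius on each $\PP^1$-fiber: the twisted partial Frobenius $\gothF_{\gothp^2}$ is obtained from $\varphi_\gothp^2$ twisted by $S_\gothp^{-1}$, and on the $p$-divisible group it is the composition of two applications of the quotient by the Frobenius kernel at $\gothp$. The surgery above commutes with this operation, except that the choice of line defining the fiber over a point in $I_\ttT\cap\Sigma_{\infty/\gothp}$ gets pushed forward to its $p^2$-power Frobenius twist; this produces exactly the relative Frobenius on those $\PP^1$ factors and the identity elsewhere, yielding the commutative diagram. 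I expect the main obstacle to be in the combinatorial case-by-case verification, particularly keeping track of the correct Lagrangian/polarization conditions in the mixed (split/inert) cases of $\gothp$ in $E/F$ so that the map actually lands in the Shimura variety for the quaternion algebra $B_{\ttS(\ttT)}$ with the specific level structures described, and verifying that the auxiliary choice of $\tilde\ttS(\ttT)_\infty$ makes all signature conditions consistent.
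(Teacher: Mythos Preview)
This theorem is not proved in the present paper: it is quoted verbatim from the companion paper \cite{TX-GO} (as ``Corollary~5.11'' there), and the current paper simply uses it as input. So there is no ``paper's own proof'' to compare against here.

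That said, your sketch is a fair outline of the strategy actually carried out in \cite{TX-GO}, which builds on Helm's geometric Jacquet--Langlands work \cite{helm}. The construction there does proceed via Dieudonn\'e modules of the universal abelian scheme on the unitary-like Shimura variety, and the map $\pi_\ttT$ is built by an isogeny modification (what you call ``Frobenius/Verschiebung surgery'') along chains of embeddings. The $\PP^1$-factors do arise from choices of lines in rank-$2$ pieces of the Dieudonn\'e module, and the equivariance in (2) and the isogeny in (3) are indeed essentially formal consequences of the construction once it is set up.

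One point where your sketch is too optimistic: the verification that the modified abelian variety really satisfies the correct Kottwitz signature condition for $G''_{\tilde\ttS(\ttT)}$, and that the polarization transports correctly, is not just bookkeeping---in \cite{TX-GO} this requires a careful choice of the lift $\tilde\ttS(\ttT)_\infty$ (which the present paper explicitly declines to recall, saying only that it is ``combinatorially complicated''), and the proof that $\pi_\ttT$ is a genuine $(\PP^1)^{I_\ttT}$-bundle rather than merely having $(\PP^1)^{I_\ttT}$-fibers involves constructing an explicit inverse at the level of moduli. Your tangent-space argument for smoothness is on the right track but would need to be made precise. If you intend to reconstruct the proof rather than cite it, you should consult \cite{TX-GO} directly for these details.
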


We list a few special cases of the theorem for the convenience of the readers.
\begin{example}

The prime-to-$p$ level of all Shimura varieties below are taken to be the same as $K''^p_\emptyset$ (they can be naturally identified). Unless specified otherwise, the level structure $K''_p$ at $p$ is taken to be ``maximal".
To simplify notation, we use $\tilde X_{\ttT}$ to denote the GO-stratum $\bfSh_{K''_{\emptyset,p}}(G''_{\emptyset})_{k_0, \ttT}$ and  $\overline{Sh}_{\ttS}$ to denote the Shimura variety
$\bfSh_{K''_{\ttS,p} }(G''_{\tilde \ttS})_{k_0}$ (note that we have suppressed the choice of signature here.)

\begin{itemize}

\item[(1)] When $F$ is a real quadratic field in which $p$ splits into two places $\gothp_1$ and $\gothp_2$, the chosen isomorphism $\iota_p: \CC \xrightarrow \simeq \overline \QQ_p$ associates to each place $\gothp_i$ an archimedean place $\infty_i$ of $F$.
Then the non-trivial closed GO-strata are  $\tilde X_{\{\infty_1\}}$, $\tilde X_{\{\infty_2\}}$, and $\tilde X_{\{\infty_1, \infty_2\}}$.
Then Theorem~\ref{T:GO-Hilbert} says that each $\tilde X_{\{\infty_i\}}$ is isomorphic to $\overline{Sh}_{\{\gothp_i, \infty_i\}}$, and $\tilde X_{\{\infty_1, \infty_2\}}$ is isomorphic to
$\overline{Sh}_{\{\gothp_1, \gothp_2, \infty_1, \infty_2\}}$.

\item[(2)] When $F$ is a real quadratic field in which $p$ is inert, we label the two archimedean places of $F$ to be $\infty_1$ and $\infty_2$.
Then Theorem~\ref{T:GO-Hilbert} says that each $\tilde X_{\{\infty_i\}}$ is isomorphic to a $\PP^1$-bundle over $\overline{Sh}_{\{\infty_1, \infty_2\}}$,
and $\tilde X_{\{\infty_1, \infty_2\}}$ is isomorphic to $\overline{Sh}_{\{\infty_1, \infty_2\}}$ with an \emph{Iwahori level structure at $p$}.

\item[(3)] When $F$ is a real cubic field in which $p$ is inert, the chosen isomorphism $\iota_p: \CC \xrightarrow \simeq \overline \QQ_p$ makes $\Sigma_\infty = \{\infty_0, \infty_1, \infty_2\}$ into a cycle under the action of the Frobenius $\sigma$, i.e. $\infty_0 \stackrel\sigma\mapsto \infty_1 \stackrel\sigma\mapsto \infty_2 \stackrel\sigma\mapsto \infty_3 = \infty_0$.
The stratum $\tilde X_{\{\infty_i\}}$ is isomorphic to a $\PP^1$-bundle over $\overline{Sh}_{\{\infty_{i-1}, \infty_{i}\}}$; the stratum $\tilde X_{\{\infty_{i-1}, \infty_{i}\}}$ is isomorphic to $\overline{Sh}_{\{\infty_{i-1}, \infty_{i}\}}$; and the stratum $\tilde X_{\{\infty_1, \infty_2, \infty_3\}}$ is isomorphic to $\overline{Sh}_{\{p,\infty_1, \infty_2, \infty_3\}}$.

\item[(4)] When $F$ is a totally real field of degree $4$ over $\QQ$ in which $p$ is inert, we may label the archimedean places of $F$ by $\infty_1, \dots, \infty_4$ such that the Frobenius $\sigma$ takes each $\infty_i$ to $\infty_{i+1}$, where $\infty_i = \infty_{i\bmod 4}$.
We have the following description of the GO-strata.

\renewcommand{\arraystretch}{1.5}
\begin{tabular}{|c|c|}
\hline
Strata & Description\\

\hline
$\tilde X_{\{\infty_i\}}$ for each $i$ & $\PP^1$-bundle over $\overline{Sh}_{\{\infty_{i-1}, \infty_i\}}$\\

\hline
$\tilde X_{\{\infty_{i-1},\infty_i\}}$ for each $i$ &  $\overline{Sh}_{\{\infty_{i-1}, \infty_i\}}$\\

\hline
$\tilde X_{\{\infty_1, \infty_3\}}$ and $\tilde X_{\{\infty_2, \infty_4\}}$ & $(\PP^1)^2$-bundle over
$\overline{Sh}_{\{\infty_1,\dots, \infty_4\}}$\\
\hline
$\tilde X_{\ttT}$ with $\#\ttT = 3$ & $\PP^1$-bundle over $\overline{Sh}_{\{\infty_1,\dots, \infty_4\}}$\\
\hline
$\tilde X_{\{\infty_1, \dots, \infty_4\}}$ & $\overline{Sh}_{\{\infty_1,\dots, \infty_4\}}$ with Iwahori level at $p$\\
\hline
\end{tabular}
\end{itemize}
\end{example}

We fix an open compact subgroup $K=K^pK_p\subset \GL_2(\AAA^{\infty})$ with $K_p=\GL_2(\cO_F\otimes_{\Z}\widehat{\Z}_p)$ and $K^p$ sufficiently small. For $\ttT\subseteq \Sigma_{\infty}$, we denote by $Y_{\ttT}$ the closed GO-stratum $\bfSh_{K}(G)_{k_0, \ttT}$  as in Subsection~\ref{S:rig-setup}. We put $K_{\ttS(\ttT)}=K^pK_{\ttS(\ttT),p}$ with $K_{\ttS(\ttT),p}$ defined in Subsection~\ref{S:description-GO-strata}. Here, we fix an isomorphism $G_{\ttS(\ttT)}(\AAA^{\infty,p})\simeq \GL_2(\AAA^{\infty,p})$ and regard $K^p$ as a subgroup of $G_{\ttS(\ttT)}(\AAA^{\infty,p})$.
We now combine all the results in this section to compute the cohomology of $Y_{\ttT}$.

\begin{prop}
\label{C:cohomology of GO-strata}
Let 
$F_{\ttS(\ttT)}$ be the reflex  field of $\bfSh_{K_{\ttS(\ttT)}}(G_{\ttS(\ttT)})$,  $k_{\wp}$ the residue field of $\cO_{F_{\ttS(\ttT)}}$ at the  $p$-adic place given by the isomorphism $\iota_p:\C\simeq \Qpb$, and  $d_{\wp} = [k_{\wp}: \FF_p]$. 
In the Grothendieck group of finite-dimensional $\scrH(K^p,\overline \QQ_l)[S_\gothp, S_\gothp^{-1}: \gothp \in \Sigma_p][\Phi_{\wp^2}]$-modules, we have an equality
\begin{small}
\begin{align}
\label{E:cohomology of YT}
\big[ H^\star_{c,\et}(Y_{\ttT, \overline \FF_p}&, \scrL_l^{(\underline k, w)} )\big]
=(-1)^{g-\#\ttS(\ttT)_\infty}
\big[\bigoplus_{\pi \in \scrA_{(\kb,w)}}
 (\pi_{\ttS(\ttT)}^\infty)^{K_{\ttS(\ttT)}}\otimes \tilde \rho_{\pi, l}^{\ttS(\ttT)} \otimes (\overline \QQ_l \oplus\overline \QQ_l(-1))^{\otimes I_\ttT}
\big]
\\
\nonumber
&+ \delta_{\kb,2}\big[ \bigoplus_{\pi\in \scrB_{w}}(\pi_{\ttS(\ttT)}^{\infty})^{K_{\ttS(\ttT)}}\otimes \tilde \rho_{\pi,l}^{\ttS(\ttT)}\big]
 \otimes\bigg( \big[ (\overline \QQ_l \oplus\overline \QQ_l(-1))^{\otimes(\Sigma_{\infty}-\ttT)}\big] - \delta_{\ttT, \emptyset}\big[ \overline \QQ_l \big] \bigg),
\end{align}
\end{small}
where
\begin{itemize}
\item
$\delta_{\underline k, 2} $ is equal to $1$ if all $k_\tau=2$, and $0$ otherwise;

\item
$\delta_{\ttT,\emptyset} $ is equal to $1$ if $\ttT = \emptyset$, and $0$ otherwise;

\item
on each $ \overline \QQ_l(-1)$, $\scrH(K^p,\overline \QQ_l)[S_\gothp, S_\gothp^{-1}; \gothp \in \Sigma_p]$ acts trivially, and $\Phi_{ \wp^2}$ acts by multiplication by $p^{2d_\wp}$;

\item
$\Phi_{\wp^2}$ acts on the left hand side of \eqref{E:cohomology of YT} by $\prod_{\gothp \in \Sigma_p}(\Phi_{\gothp^2})^{d_{\wp}}$ with $\Phi_{\gothp^2}=\Fr_{\gothp}^2\cdot S_{\gothp}^{-1}$ as  defined in Subsection~\ref{S:etale-coh} (using the formula similar to \eqref{E:partial-Frob-Hilbert});

\item for $\pi\in \scrA_{(\kb,w)}$ or $\pi\in \scrB_{w}$, 
$\tilde  \rho_{\pi, l}^{\ttS(\ttT)}$  is isomorphic to $\rho_{\pi, l}^{\ttS(\ttT)}$  as a vector space, and is equipped with a $\Phi_{\wp^2}$-action given by $\rho_{\pi, l}^{\ttS(\ttT)}(\Frob_\wp^2)$ multiplied by the number
\[
\omega_\pi(\underline p)^{d_{\wp}} \cdot \prod_{\gothp\in \Sigma_{p}}\omega_{\pi}(\varpi_{\gothp})^{-d_{\wp}\#\ttS(\ttT)_{\infty/\gothp}/d_{\gothp}}=u\cdot p^{(w-2)(\#\ttS(\ttT)_{\infty}-g)d_{\wp}},
\]
with $u$ a root of unity. Note that when $\pi\in \scrB_{w}$, $\Phi_{\wp^2}$ acts trivially on $\tilde\rho^{\ttS(\ttT)}_{\pi,l}$.
\end{itemize}

Moreover, if Conjecture~\ref{Conj: partial frobenius} holds, the equality \eqref{E:cohomology of YT} holds in the Grothendieck group of finite-dimensional $\scrH(K^p, \overline \QQ_l)[\Phi_{\gothp^2}^{n_\gothp},S_\gothp, S_\gothp^{-1}; \gothp \in\Sigma_p]$-modules, where 
\begin{itemize}
\item
$n_\gothp$ is the smallest positive number $n$ such that $\sigma_\gothp^n\ttT = \ttT$,
\item
in $(\Qlb\oplus \Qlb(-1))^{\otimes I_{\ttT}}$ and $(\Qlb\oplus \Qlb(-1))^{\otimes (\Sigma_{\infty}-\ttT)}$, $\Phi_{\gothp^2}$ acts trivially on $\Qlb$ and on  the copies of $\Qlb(-1)$ which are labeled with elements not in $\Sigma_{\infty/\gothp}$; on the copies of $\Qlb(-1)$'s labeled by elements in $\Sigma_{\infty/\gothp}$,  the action of 
$\Phi_{\gothp^2}^{n_\gothp}$ is the  multiplication by $p^{2n_\gothp}$;

\item
$\Phi_{\gothp^2}^{n_\gothp}$ acts on  $\tilde \rho_{\pi, l}^{\ttS(\ttT)}$
by the action of the $p^{2n_\gothp}$-Frobenius on $\rho_{\pi, l}^{\ttS(\ttT)}$ times the number $\omega_\pi(\varpi_\gothp)^{-n_\gothp(1-\# \ttS_{\infty/\gothp} /d_\gothp)}$.
\end{itemize}
\end{prop}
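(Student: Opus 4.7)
The plan is to combine the three main inputs that have been assembled: Proposition~\ref{P:cohomology of Sh(G*E) vs Sh(G)}(2) which relates the \'etale cohomology of the GO-stratum $Y_{\ttT}$ on the Hilbert side to that of its counterpart $\bfSh_{K''_{\emptyset,p}}(G''_{\emptyset})_{k_0,\ttT}$ on the unitary-like side, Theorem~\ref{T:GO-Hilbert} which identifies the latter with a $(\PP^1)^{I_\ttT}$-bundle over $\bfSh_{K''_{\ttS(\ttT),p}}(G''_{\tilde\ttS(\ttT)})_{k_0}$, and Proposition~\ref{T:total Frobenius action} which computes the cohomology of this base Shimura variety with its twisted partial Frobenius action. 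Concretely, I will first decompose $H^{\star}_{c,\et}(Y_{\ttT,\overline\FF_p},\scrL_l^{(\kb,w)})$ according to central characters $\chi_F$ of $\AAA_F^{\infty,\times}/F^\times$ and for each $\chi_F$ fix an extension $\chi\in \scrA^{w}_{E,\tSigma}$ (possible by Notation~\ref{N:A(k,w)[chi]}). Applying \eqref{E:cohomology GO-strata for HMV v.s that of G''} identifies the $\chi_F$-isotypic component with the $\chi$-isotypic component of $H^{\star}_{c,\et}(\bfSh_{K''_{\emptyset,p}}(G''_\emptyset)_{\overline\FF_p,\ttT},\scrL''^{(\kb,w)}_{\emptyset,\tSigma,l})$, compatibly with the twisted partial Frobenius on both sides.

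Next, I would apply Theorem~\ref{T:GO-Hilbert}: the projection $\pi_\ttT\colon\bfSh_{K''_{\emptyset,p}}(G''_\emptyset)_{k_0,\ttT}\to \bfSh_{K''_{\ttS(\ttT),p}}(G''_{\tilde\ttS(\ttT)})_{k_0}$ is a Zariski-locally trivial $(\PP^1)^{I_\ttT}$-bundle, and by part (3) of the same theorem the restriction of the family of abelian varieties to the source is isogenous to the pullback from the base, so the automorphic sheaf $\scrL''^{(\kb,w)}_{\emptyset,\tSigma,l}$ restricted to the stratum is canonically isomorphic to $\pi_{\ttT}^{*}\scrL''^{(\kb,w)}_{\tilde\ttS(\ttT),\tSigma,l}$. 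The projection formula together with $R\pi_{\ttT,*}\Qlb \simeq (\Qlb\oplus \Qlb(-1)[-2])^{\otimes I_\ttT}$ then gives a natural $\scrH(K^p,\Qlb)$-equivariant isomorphism
\[
H^{\star}_{c,\et}(\bfSh_{K''_{\emptyset,p}}(G''_\emptyset)_{\overline\FF_p,\ttT},\scrL''^{(\kb,w)}_{\emptyset,\tSigma,l})\simeq H^{\star}_{c,\et}(\bfSh_{K''_{\ttS(\ttT),p}}(G''_{\tilde\ttS(\ttT)})_{\overline\FF_p},\scrL''^{(\kb,w)}_{\tilde\ttS(\ttT),\tSigma,l})\otimes (\Qlb\oplus\Qlb(-1))^{\otimes I_\ttT}.
\]
Inserting Proposition~\ref{T:total Frobenius action}(1) (which applies because the lifts $\tilde\ttS(\ttT)_\infty$ were chosen in \cite{TX-GO} to satisfy the balancing condition at split primes, as observed in the italicized remark in Subsection~\ref{S:description-GO-strata}) produces the cuspidal sum with sign $(-1)^{g-\#\ttS(\ttT)_\infty}$ and, in weight $(2,\ldots,2)$, the one-dimensional piece with the factor $[(\Qlb\oplus\Qlb(-1))^{\otimes(\Sigma_\infty-\ttS(\ttT)_\infty)}]-\delta_{\ttS(\ttT),\emptyset}[\Qlb]$. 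Since $I_\ttT = \ttS(\ttT)_\infty - \ttT$ and $\Sigma_\infty - \ttS(\ttT)_\infty$ together cover $\Sigma_\infty - \ttT$ disjointly, tensoring with the $\PP^1$-factor exactly yields the claimed $(\Qlb\oplus\Qlb(-1))^{\otimes(\Sigma_\infty-\ttT)}$, and $\delta_{\ttS(\ttT),\emptyset}=\delta_{\ttT,\emptyset}$ since $\ttT=\emptyset\Leftrightarrow \ttS(\ttT)=\emptyset$. Summing back over central characters gives the first statement up to identifying the Frobenius action.

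For the twisted partial Frobenius, I would use Theorem~\ref{T:GO-Hilbert}(4), which factors $\gothF_{\gothp^2,\emptyset}|_{\bfSh_{K''_{\emptyset,p}}(G''_{\emptyset})_{k_0,\ttT}}$ through the cartesian square over $\gothF_{\gothp^2,\tilde\ttS(\ttT)}$, composed with the fiberwise relative $p^2$-Frobenius $\xi^{\rel}$ on the $\PP^1$-factors indexed by $\ttS(\ttT)_{\infty/\gothp}-\ttT_\gothp=I_\ttT\cap \Sigma_{\infty/\gothp}$ and the identity on the others. On cohomology, relative $p^2$-Frobenius acts on $H^{\star}(\PP^1)=\Qlb\oplus\Qlb(-1)$ as the identity on $\Qlb$ and by multiplication by $p^2$ on $\Qlb(-1)$; this matches the prescribed action on each $\Qlb(-1)$-factor in both the $I_\ttT$-tensor (cuspidal case) and the $(\Sigma_\infty-\ttT)$-tensor (one-dimensional case). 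Taking the $d_\wp$-th power product over all $\gothp\in\Sigma_p$ and multiplying by the action of $\gothF_{\gothp^2,\tilde\ttS(\ttT)}^{d_\wp}$ on the base, whose eigenvalues are given by Proposition~\ref{T:total Frobenius action}(1)--(2), yields the prescribed action of $\Phi_{\wp^2}$, with normalizing factor $\omega_\pi(\underline p)^{d_\wp}\prod_{\gothp}\omega_\pi(\varpi_\gothp)^{-d_\wp\#\ttS(\ttT)_{\infty/\gothp}/d_\gothp}$. Under Conjecture~\ref{Conj: partial frobenius}, exactly the same computation, performed one $\gothp$ at a time at the level of $(\Phi_{\gothp^2})^{n_\gothp}$ (noting that $n_\gothp$ is also the smallest $n$ with $\sigma_\gothp^n\ttS(\ttT)=\ttS(\ttT)$ because $\sigma_\gothp\ttS(\ttT)=\ttS(\sigma_\gothp\ttT)$), upgrades the equality to the larger Grothendieck group.

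The main obstacle will be the bookkeeping in this last step: verifying that the factor $\omega_\pi(\varpi_\gothp)^{-d_\wp\#\ttS(\ttT)_{\infty/\gothp}/d_\gothp}$ produced by Proposition~\ref{T:total Frobenius action}(2) for the base, combined with the $p^{2}$-powers contributed by $\xi^{\rel}$ on the $\PP^1$-fibers over $I_\ttT\cap\Sigma_{\infty/\gothp}$, reassembles into precisely the eigenvalue stipulated in the proposition, and that the Galois-theoretic underlying space $\tilde\rho_{\pi,l}^{\ttS(\ttT)}$ matches after the identification of Frobenius conjugacy classes. The combinatorics of $\ttS(\ttT)_\infty$, $I_\ttT$, and the signs hinges on the case-by-case description recalled in Subsection~\ref{S:description-GO-strata}, and I would verify them by going through each of the four cases ($\alpha 1,\alpha 2,\beta 1,\beta 2$) separately; once this is done, the split-prime balancing condition on $\tilde\ttS(\ttT)_\infty$ ensures that Proposition~\ref{T:total Frobenius action}(2) is applicable, and the proof is concluded.
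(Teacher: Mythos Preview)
Your proposal is correct and follows essentially the same route as the paper: decompose by central characters, pass to the $G''_\emptyset$-side via \eqref{E:cohomology GO-strata for HMV v.s that of G''}, apply the $(\PP^1)^{I_\ttT}$-bundle description of Theorem~\ref{T:GO-Hilbert} together with the projection formula, and then plug in Proposition~\ref{T:total Frobenius action} for the base; the paper likewise notes $\ttS(\ttT)=\emptyset\Leftrightarrow\ttT=\emptyset$ and recombines the tensor factors exactly as you do. Your explicit invocation of Theorem~\ref{T:GO-Hilbert}(4) to track the $\Phi_{\gothp^2}$-action on the $\PP^1$-fibers is slightly more detailed than the paper's brief appeal to $\Phi_{\wp^2}=\Frob_\wp^2 S_p^{-d_\wp}$, but the substance is identical.
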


\begin{proof}
We first remark that the Hecke action of $F^\times$ (viewed as a subgroup of  the center $\AAA_F^{\infty, \times} \subset \GL_2(\AAA_F^{\infty})$) on $H^\star_{c,\et}(Y_{\ttT, \overline \FF_p}, \scrL_l^{(\underline k, w)})$  is given by $(2-w)$th power of the norm. 
Hence, to prove the equality in the Proposition, we may consider the submodules on both sides on which $\AAA_F^{\infty,\times}$ acts via the restriction of a fixed Hecke character $\chi_F$ of $F$ whose all archimedean components are given by $x \mapsto x^{w-2}$.
By the discussion in Notation~\ref{N:A(k,w)[chi]}, there exists a Hecke character $\chi \in \scrA_{E, \widetilde \Sigma}^w$ whose restriction to $\AAA_F^\times$  is just $\chi_F$.
Then it follows from  \eqref{E:cohomology GO-strata for HMV v.s that of G''} that 
\begin{align}
 H^\star_{c,\et}(Y_{\ttT, \overline \FF_p}, \scrL_l^{(\underline k, w)} )^{\AAA_F^{\infty,\times} = \chi_F}&=H^{\star}_{c,\et}(\bfSh_{K_{p}}(G)_{\Fpb,\ttT}, \scrL_l^{\underline k, w})^{\AAA_F^{\infty,\times} = \chi_F, K^p=\mathrm{trivial}}\label{E:cohomology-Y_T}
\\ 
=&
 H^\star_{c,\et} (\bfSh_{K''_{\emptyset, p}}(G''_{\emptyset})_{\Fpb, \ttT},
\scrL''^{(\underline k, w)}_{\emptyset, \tSigma, l} )^{\AAA_E^{\infty, \times}
= \chi, K^p=\textrm{trivial}}.\nonumber
\end{align}
By Theorem~\ref{T:GO-Hilbert}(1) and (2), we see that
$
\scrL''^{(\kb,w)}_{\emptyset, \tSigma, l}|_{\bfSh_{K''_{\emptyset, p}}(G''_{\emptyset})_{\Fpb,\ttT}}\simeq \pi_{\ttT}^*(\scrL''^{(\kb,w)}_{\tilde\ttS(\ttT),\tSigma, l}),
$
and hence
\begin{equation}\label{E:direct-image}
R^n\pi_{\ttT,*}\big(\scrL''^{(\kb,w)}_{\emptyset, \tSigma, l}|_{\bfSh_{K''_{\emptyset, p}}(G''_{\emptyset})_{\Fpb,\ttT}}\big)=
\begin{cases}
\scrL''^{(\kb,w)}_{\tilde\ttS(\ttT),\tSigma, l}\otimes \Qlb(-i)^{\oplus \binom{\#I_{\ttT}}{i}} &\text{if $n=2i$ with }0\leq i\leq \#I_{\ttT};\\
0 &\text{otherwise}.
\end{cases}
\end{equation}
Therefore, in the Grothendieck group of finite-dimensional $\scrH(K^p,\overline \QQ_l)[S_\gothp, S_\gothp^{-1}: \gothp \in \Sigma_p][\Phi_{\wp^2}]$-modules, we have
\begin{align*}
& [H^{\star}_{c,\et}(Y_{\ttT, \overline \FF_p}, \scrL_l^{(\underline k, w)} )^{\AAA_F^{\infty,\times} = \chi_F}]\\
=\ & \big[ H^\star_{c,\et} \big(\bfSh_{K''_{\ttS(\ttT),p}}(G''_{\tilde \ttS(\ttT)})_{\overline \FF_p},
R\pi_{\ttT,*}\big(\scrL''^{(\underline k, w)}_{\emptyset, \tSigma, l}|_{\bfSh_{K''_{\emptyset, p}}(G''_{\emptyset})_{\Fpb,\ttT}} \big)\big)^{\AAA_E^{\infty, \times}
= \chi, K^p=\textrm{trivial}}\big]\\
=\
&\big[H^{\star}_{c,\et}\big(\bfSh_{K''_{\ttS(\ttT),p}}(G''_{\tilde \ttS(\ttT)})_{\overline \FF_p},
\scrL''^{(\underline k, w)}_{\tilde\ttS(\ttT), \tSigma, l}\big)^{\AAA_{E}^{\infty,\times}=\chi, K^p=\mathrm{trivial}}\big]
\otimes 
\big[
(\overline \QQ_l \oplus \overline \QQ_l(-1))^{\otimes I_\ttT} \big]
\\
=\ &
(-1)^{g-\#\ttS(\ttT)_\infty}
\big[\bigoplus_{\pi \in \scrA_{(\kb,w)}[\chi_F]}
 (\pi_{\ttS(\ttT)}^\infty)^{K_{\ttS(\ttT)}}\otimes \tilde \rho_{\pi, l}^\ttS \otimes (\overline \QQ_l \oplus\overline \QQ_l(-1))^{\otimes I_\ttT}
\big]
\\
&\ + \delta_{\kb,2}\big[ \bigoplus_{\pi\in \scrB_{w}[\chi_F]}(\pi_{\ttS(\ttT)}^{\infty})^{K_{\ttS(\ttT)}}\otimes \tilde \rho^{\ttS(\ttT)}_{\pi,l}
\big] \otimes\bigg( \big[ (\overline \QQ_l \oplus\overline \QQ_l(-1))^{\otimes(\Sigma_{\infty}-\ttS(\ttT)_{\infty})}\big]-\delta_{\ttS(\ttT),\emptyset}\big[\Qlb\big]\bigg)\\
&\qquad \otimes \big[(\Qlb\oplus \Qlb(-1))^{\otimes I_{\ttT}}\big] .
\end{align*}
Here, we used Theorem~\ref{T:cohomology of Shimura variety} in the last equality.
We remark that $\ttS(\ttT)=\emptyset$ if and only if $\ttT=\emptyset$, and in which case $I_{\ttT}=\emptyset$. 
Now it is clear that the expression above is exactly the $\chi_F$-component of \eqref{E:cohomology of YT}. 
The description of the action of $\Phi_{\wp^2}$ is immediate from the fact that $\Phi_{\wp^2}=\Frob_{\wp}^2S_{p}^{-d_{\wp}}$ and Proposition~\ref{T:total Frobenius action}.

The second part of the Theorem follows from exactly the same argument by using Conjecture~\ref{Conj: partial frobenius} in place of Proposition~\ref{T:total Frobenius action}.
\end{proof}

\section{Computation of the Rigid Cohomology I}
\label{Section:classicality-I}
  
  We will use the same notation as in Subsection~\ref{S:rig-setup} and Proposition~\ref{C:cohomology of GO-strata}.
  We consider  the cohomology group  $H^{\star}_{\rig}(X^{\tor,\ord},\D; \scrF^{(\kb,w)})$ as defined in Subsection~\ref{subsection:rigid-coh}. Note that we have  fixed an open compact subgroup $K=K^pK_p\subseteq\GL_2(\AAA_F^{\infty})$ with $K_p=\GL_2(\cO_F\otimes \Z_p)$, and omitted $K$ from the notation.
   In this section, we will first use the results in Section~4 and 5 to compute it as an element in a certain Grothendieck group.    Then, combining the results in Subsection~3, especially Corollary~\ref{Prop:slopes-ocv}, we prove our main theorem on the classicality of overconvergent cusp  forms.  
   
The second part of the following theorem will not be used later, but it indicates what to expect and it is also a baby version of the computation in the next section.  So we keep it here.

\begin{theorem}[Weak cohomology computation]
\label{T:rigid coh of ordinary}
Let $(\underline{k}, w)$ be a  multiweight.
\begin{itemize}
\item[(1)] For each integer $n$, in the Grothendieck group of finite-dimensional $\scrH(K^p,\Qpb)$-modules,   $[H^{n}_{\rig}(X^{\tor,\ord},\D;\F^{(\kb,w)})\otimes_{L_{\wp}}\Qpb]$ is a sum of Hecke modules coming from classical automorphic representations of $\GL_{2}(\AAA_F)$ (including cuspidal representations, one-dimensional representations and Eisenstein series) whose central character is an algebraic Hecke character with archimedean component $N_{F/\Q}^{w-2}$. 

\item[(2)] We have the following equality in the Grothendieck group of finite-dimensional modules over $\scrH(K^p, \Qpb)$:
\begin{equation}\label{E:tame Hecke action on rigid cohomology}
\big[H^\star_\rig(X^{\tor,\ord}, \D; \scrF^{(\underline{k}, w)})\otimes_{L_{\wp}}\Qpb \big] =
(-1)^g\cdot [S_{(\underline{k},w)}(K^p\Iw_p,\Qpb)].
\end{equation}
\end{itemize}
\end{theorem}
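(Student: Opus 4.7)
The plan is to combine three inputs already at our disposal: the spectral sequence of Proposition~\ref{P:spectral-sequence}, the Grothendieck-level comparison between rigid and \'etale cohomology from Proposition~\ref{P:comparison-rig-et}, and the explicit automorphic description of GO-strata cohomology from Proposition~\ref{C:cohomology of GO-strata}. First I would invoke the spectral sequence \eqref{E:spectral sequence for ordinary cohomology} to write, in the Grothendieck group of finitely generated $\scrH(K^p,L_\wp)$-modules,
\begin{equation*}
\big[H^\star_\rig(X^{\tor,\ord},\D;\scrF^{(\underline k,w)})\big] = \sum_{\ttT\subseteq \Sigma_\infty}(-1)^{\#\ttT}\big[H^\star_{c,\rig}(Y_\ttT/L_\wp,\scrD^{(\underline k,w)}|_{Y_\ttT})\big],
\end{equation*}
where $[H^\star]=\sum_n(-1)^n[H^n]$. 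Proposition~\ref{P:comparison-rig-et} then identifies the right-hand side (after $\otimes_{L_\wp}\Qpb$) with the analogous alternating sum of \'etale cohomology groups, and Proposition~\ref{C:cohomology of GO-strata} expresses each \'etale cohomology term as an explicit direct sum of Hecke modules attached to $\pi\in \scrA_{(\underline k,w)}$, together with parallel-weight contributions from $\scrB_w$ when $\underline k=(2,\dots,2)$.

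For part (1), the spectral sequence provides a Hecke-equivariant filtration on $H^n_\rig(X^{\tor,\ord},\D;\scrF^{(\underline k,w)})$ whose graded pieces are subquotients of rigid cohomology of the strata $Y_\ttT$. For $\ttT\neq \emptyset$, the second part of Proposition~\ref{P:comparison-rig-et} gives a term-by-term identification of rigid with \'etale cohomology over $\Qpb$, and each of the \'etale pieces is automorphic by \eqref{E:cohomology GO-strata for HMV v.s that of G''} and Theorem~\ref{T:cohomology of Shimura variety}. For the $\ttT=\emptyset$ term, Lemma~\ref{L:rigid cohomology with log pole} identifies $H^\star(\X^\tor_\rig,\DR_c(\scrF^{(\underline k,w)}))$ with $H^\star_{c,\rig}(X/L_\wp,\scrD^{(\underline k,w)})$, and the same comparison gives the automorphic origin of the Hecke eigensystems.

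For part (2), after performing Steps 1--3 the alternating sum decomposes as a product of local sums over $\gothp\in \Sigma_p$, since the assignment $\ttT\mapsto \ttS(\ttT)$ and the invariants $\#I_\ttT$, $\dim\rho_{\pi,l}^{\ttS(\ttT)}$, and $(\pi_{\ttS(\ttT)}^\infty)^{K_{\ttS(\ttT)}}$ all factor over the $p$-adic places. For each fixed $\pi\in \scrA_{(\underline k,w)}$ and each $\gothp$, the local sum over $\ttT_\gothp\subseteq \Sigma_{\infty/\gothp}$ should equal $(-1)^{d_\gothp}\dim(\pi_\gothp)^{\Iw_\gothp}$, while the parallel-weight contributions from $\scrB_w$ are to be shown to cancel locally (consistent with the absence of one-dimensional constituents in cusp forms). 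The verification is case-by-case according to the four cases (Case $\alpha 1$, $\alpha 2$, $\beta 1$, $\beta 2$) from Subsection~\ref{S:description-GO-strata}, using the local Jacquet-Langlands correspondence and the well-known values of $\dim(\pi_\gothp)^{\Iw_\gothp}$ in the principal series versus special/supercuspidal cases.

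The main obstacle will be precisely this combinatorial verification: one must carefully track the signs $(-1)^{\#\ttT}\cdot (-1)^{g-\#\ttS(\ttT)_\infty}$, the powers of $2$ coming from both $I_\ttT$ and the tensor-induced representations $\rho_{\pi,l}^{\ttS(\ttT)}$, and the local Jacquet-Langlands dimensions on the quaternionic factors, and show that they conspire place-by-place to reproduce $\dim(\pi_\gothp)^{\Iw_\gothp}$ up to an overall sign $(-1)^{d_\gothp}$. This case analysis is elementary but combinatorially intricate, particularly in Cases $\alpha 1$ and $\beta 1$ where the chain decomposition of $\ttT_\gothp$ governing $\ttS(\ttT)_{/\gothp}$ must be unpacked.
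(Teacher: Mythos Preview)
Your proposal is correct and follows essentially the same three-step approach as the paper: spectral sequence \eqref{E:spectral sequence for ordinary cohomology}, rigid--\'etale comparison (Proposition~\ref{P:comparison-rig-et}), and the automorphic description of the strata (Proposition~\ref{C:cohomology of GO-strata}), followed by a place-by-place factorization over $\gothp\in\Sigma_p$.

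The one point worth flagging is that the local combinatorial verification is considerably simpler than you anticipate; no chain-by-chain unpacking of $\ttS(\ttT)_{/\gothp}$ is needed. The key observation is that the two powers of $2$ always combine as
\[
\dim\rho_{\pi,l}^{\ttS(\ttT)}\cdot 2^{\#I_\ttT}=2^{g-\#\ttS(\ttT)_\infty}\cdot 2^{\#\ttS(\ttT)_\infty-\#\ttT}=2^{g-\#\ttT},
\]
independently of the chain structure. Moreover $\#\ttS(\ttT)_{\infty/\gothp}$ is always even except in the one situation (Case $\beta2$) where the local invariant $(\pi_{\ttS(\ttT),\gothp})^{K_{\ttS(\ttT),\gothp}}$ vanishes anyway for $\pi_\gothp$ unramified, so the sign $(-1)^{g-\#\ttS(\ttT)_\infty}$ contributes only $(-1)^{d_\gothp}$ place-by-place. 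With these two simplifications the local sum for unramified $\pi_\gothp$ collapses to the binomial identity $\sum_{\ttT_\gothp\subsetneq\Sigma_{\infty/\gothp}}(-1)^{\#\ttT_\gothp}2^{d_\gothp-\#\ttT_\gothp}=(2-1)^{d_\gothp}-(-1)^{d_\gothp}$ plus the boundary term $1+(-1)^{d_\gothp}$ from $\ttT_\gothp=\Sigma_{\infty/\gothp}$, giving $2\cdot(-1)^{d_\gothp}$ as required. The Steinberg and $\scrB_w$ cases are even shorter (for $\scrB_w$ the global sum is just $-1+(2-1)^g=0$). So Cases $\alpha1$--$\beta2$ never need to be treated separately.
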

\begin{proof}

(1) By the Hecke equivariant  spectral sequence \eqref{E:spectral sequence for ordinary cohomology}, each $H^n_{\rig}(X^{\ord},\D;\F^{(\kb,w)})$ is a sub-quotient of the rigid cohomology groups of GO-strata $Y_{\ttT}$'s. It suffices to prove that, for all $\ttT\subseteq \Sigma_{\infty}$,  each individual rigid cohomology group of $Y_{\ttT}$ is a sum of Hecke modules coming from classical automorphic representations. 
 For $\ttT=\emptyset$, this is clear by standard comparison between rigid and de Rham cohomology and classical theory. 
For $\ttT\neq \emptyset$, we may reduce to a similar problem for  \'etale cohomology of $Y_{\ttT}$ by Proposition~\ref{P:comparison-rig-et}.
Then the required statement follows from  Theorem~\ref{T:GO-Hilbert} and the proof of Proposition~\ref{C:cohomology of GO-strata}. This proves statement (1).

(2) We also identify $\CC$ with $\overline \QQ_l$ via a fixed $\iota_l: \C\simeq \Qlb$.
Computing the tame Hecke action on the ordinary locus is straightforward:
\begin{small}
\begin{align}
\nonumber
&
\big[H^\star_\rig(Y^{\tor,\ord}, \D; \scrF^{(\underline{k}, w)})\otimes_{L_{\wp}}\Qpb \big] 
\\
\nonumber
\xlongequal{\eqref{E:spectral sequence for ordinary cohomology}}&
\sum_{\ttT\subseteq \Sigma_\infty}
(-1)^{\#\ttT}
\big[
H^{\star}_{c,\rig}(Y_\ttT/L_\wp, \scrD^{(\underline k, w)}) \otimes_{L_\wp}\Qpb
\big]
\\
\nonumber
\xlongequal{\textrm{Prop }\ref{P:comparison-rig-et}}
&
\sum_{\ttT\subseteq \Sigma_\infty}
(-1)^{\#\ttT}
\big[
H^{\star}_{c,\et}(Y_{\ttT,\overline\FF_p}, \scrL_l^{(\underline k, w)})\otimes_{L_{\gothl}}\Qlb \big]
\\
\nonumber
\xlongequal{\textrm{Prop. }\ref{C:cohomology of GO-strata}}
&\sum_{\ttT\subseteq \Sigma_\infty}
(-1)^{\#\ttT} \Big(
(-1)^{g-\#\ttS(\ttT)_\infty}
2^{g-\#\ttT}
\big[\bigoplus_{\pi \in \scrA_{(\kb,w)}}
 (\pi_{\ttS(\ttT)}^\infty)^{K_{\ttS(\ttT)}}
\big]
\\ \nonumber
&\qquad\qquad\qquad
+ \delta_{\kb,2}(2^{g-\#\ttT} - \delta_{\ttT, \emptyset} )\big[ \bigoplus_{\pi\in \scrB_{w}}(\pi_{\ttS(\ttT)}^{\infty})^{K_{\ttS(\ttT)}}
\big] \Big)
\\
\label{E:cohomology factor into p}
=&
\sum_{\pi \in \scrA_{(\underline k, w)}}
\big[
(\pi^{\infty,p})^{K^p}
\big]
\prod_{\gothp \in \Sigma_p}
\sum_{\ttT_{\gothp} \subseteq \Sigma_{\infty/\gothp}}
(-1)^{\#\ttT_{\gothp}} (-1)^{\#\Sigma_{\infty/\gothp} - \#\ttS(\ttT)_{
\infty/\gothp}} 2^{\#\Sigma_{\infty/\gothp} -\#\ttT_{\gothp}}
\big[(\pi_{\ttS(\ttT),\gothp} )^{K_{\ttS(\ttT),\gothp}}\big] 
\\
\nonumber
&\qquad +\delta_{\kb,2}
\sum_{\pi \in \scrB_w}
\big[ (\pi^{\infty})^{K}
\big] \big( -1 + \sum_{\ttT \subseteq \Sigma_{\infty}} (-1)^{\#\ttT}
2^{g-\#\ttT} \big).
\end{align}
\end{small}Here when citing Proposition~\ref{C:cohomology of GO-strata}, we have ignored the $\Phi_{\wp^2}$-action and only kept the dimension of the space of Galois representation.
The last equality is just to separate out the contribution from each prime $\gothp \in \Sigma_p$ (note that $(\pi^{\infty,p}_{\ttS(\ttT)})^{K_{\ttS(\ttT)}^p}$ is isomorphic to $(\pi^{\infty,p})^{K^p}$).

We separate the computation for each $\pi \in \scrA_{(\underline k, w)} \cup \scrB_w$.

For $\pi \in \scrB_w$, note that $\sum_{\ttT \subseteq \Sigma_{\infty}} (-1)^{\#\ttT}
2^{g-\#\ttT} = (2-1)^g =1$ by binomial expansion.  So the contribution of $\pi \in \scrB_w$ to $\big[H^\star_\rig(Y^{\tor,\ord}, \D; \scrF^{(\underline{k}, w)}) \big]$ is trivial.  This agrees with the right hand side of \eqref{E:tame Hecke action on rigid cohomology}, where none of such $\pi$ appears.

For $\pi \in \scrA_{(\underline k, w)}$, we need to compute each factor of the product over $\gothp$.
\begin{itemize}
\item
When $\pi_\gothp$ is ramified, $(\pi_{\ttS(\ttT), \gothp})^{K_{\ttS(\ttT),\gothp}}$ is nonzero if and only if $\ttT_{\gothp} = \Sigma_{\infty/\gothp}$ and $\pi_\gothp$ is Steinberg.
In this case, $(\pi_{\ttS(\ttT), \gothp})^{K_{\ttS(\ttT),\gothp}}$ is one-dimensional.
So the factor for $\gothp$ in the product \eqref{E:cohomology factor into p} is nontrivial only when $\ttT_{\gothp} = \Sigma_{\infty/\gothp}$.
It has total contribution of multiplicity $(-1)^{\#\Sigma_{\infty/ \gothp}}$ in this case.

\item
When $\pi_\gothp$ is unramified,
$(\pi_{\ttS(\ttT), \gothp})^{K_{\ttS(\ttT),\gothp}}$ is one-dimensional, unless $\ttT_{\gothp} = \Sigma_{\infty/\gothp}$.
In the latter case, it is zero if $\#\Sigma_{\infty/\gothp}$ is odd and is $2$ if $\#\Sigma_{\infty/\gothp}$ is even, i.e. it is $1+(-1)^{\#\Sigma_{\infty/\gothp}}$.
Also note that $\#\ttS(\ttT)_{\infty/ \gothp}$ is always even unless $\ttS(\ttT)_{\infty/\gothp} = \Sigma_{\infty / \gothp}$ and it is an odd set.
But the latter case is exactly when $(\pi_{\ttS(\ttT), \gothp})^{K_{\ttS(\ttT),\gothp}}$ vanishes.
So we may ignore the term $(-1)^{\#\ttS(\ttT)_{\infty/ \gothp}}$ in computation.
In summary, the factor for $\gothp$ in the product \eqref{E:cohomology factor into p} has total contribution
\begin{align*}
&\sum_{\ttT_{\gothp} \subsetneq \Sigma_{\infty/\gothp}}
(-1)^{\#\ttT_{\gothp}} (-1)^{\#\Sigma_{\infty/\gothp}} \cdot 2^{\#\Sigma_{\infty/\gothp} -\#\ttT_{\gothp}} + \big(1+ (-1)^{\#\Sigma_{\infty/ \gothp}}\big)
\\
=&
\big( (2-1)^{\#\Sigma_{\infty/\gothp}} -1 \big)
+ \big(1+ (-1)^{\#\Sigma_{\infty/ \gothp}}\big)  = 2 \times (-1)^{\#\Sigma_{\infty/ \gothp}}.
\end{align*}
\end{itemize}
This means that the end contribution of $\pi_\gothp$ from \eqref{E:spectral sequence for ordinary cohomology} agrees with its contribution to $S_{(\underline{k}, w)} (K^p\Iw_p,\C)$.
Putting all places above $p$ together proves the Theorem.
\end{proof}

\begin{remark}
(1) By a careful check of cancellations in the spectral sequence \eqref{E:spectral sequence for ordinary cohomology}, it is possible to show that each individual cohomology group $[H^n_\rig(X^{\tor,\ord},\D;\F^{(\kb,w)})]$ does not contain one-dimensional automorphic representations. However, it may indeed contain the tame Hecke spectrum of some Eisenstein series, because the Eisenstein spectrum in $H^n_{c,\rig}(X,\scrD^{(\kb,w)})$ can not be canceled out by cohomology groups of $Y_{\ttT}$ with $\ttT\neq \emptyset$.

(2) In the proof above, we have dropped the action of twisted partial Frobenius.
We will get to a more delicate computation in the next subsection which involves matching the action of partial Frobenius with the action of $U_\gothp$-operators.
\end{remark}

We  arrive at the following very weak version of the classicality of cuspidal overconvergent Hilbert modular forms, and we do not attempt to optimize the bound on slopes for the moment.
The sole purpose is to prove that the  classical cusp forms are Zariski dense in the Kisin-Lai eigencurve.

\begin{prop}[Weak classicality]
\label{P:weak classicality}
Let $f\in S^{\dagger}_{(\kb,w)}(K,\Qpb)$ be an overconvergent eigenform for $\overline \QQ_p[U_\gothp, S_{\gothp}, S_{\gothp}^{-1}: \gothp\in \Sigma_{\gothp}]$.
For $\gothp \in \Sigma_p$, let $\lambda_{\gothp}$ denote the eigenvalue of $f$ for the operator $U_{\gothp}$.
Assume that
\begin{equation}
\label{E:weak slope bound}
\sum_{\gothp \in \Sigma_p} \Big(
\val_p(\lambda_\gothp) - \sum_{\tau \in \Sigma_{\infty/\gothp}} \frac{w-k_\tau}{2} \Big)
+g
 < \frac{1}{g}\min_{\tau \in \Sigma_\infty}(k_\tau -1).
\end{equation}
Then $f$ is a classical cusp  form of level $K^p\Iw_{p}$.
\end{prop}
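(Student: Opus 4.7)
The plan is to combine three ingredients: the cohomological interpretation of Theorem~\ref{Theorem:overconvergent}, the slope bound of Corollary~\ref{Prop:slopes-ocv}, and the Grothendieck-theoretic identity of Theorem~\ref{T:rigid coh of ordinary}(2). First I note that $f$ lies in the top degree of the complex $\scrC^\bullet_K$ that computes $H^\star_\rig(X^{\tor,\ord}, \D; \scrF^{(\kb,w)})$. Set $T_f := \sum_{\gothp \in \Sigma_p} \val_p(\lambda_\gothp)$. Because each $U_\gothp$ is completely continuous (Proposition~\ref{prop:norm-U_p}), restricting $\scrC^\bullet_K$ to its $U_p$-slope $\leq T_f$ subcomplex produces a finite-dimensional complex of $\scrH(K^p, \Qpb)$-modules whose cohomology refines $H^\star_\rig$ with Hecke action.

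Next I would prove a degree concentration: for every proper subset $J \subsetneq \Sigma_\infty$, summing Corollary~\ref{Prop:slopes-ocv} over $\gothp \in \Sigma_p$ shows that any generalized $U_p$-eigenvector in $S^\dagger_{\epsilon_J(\kb,w)}$ has $U_p$-slope at least
\[
\sum_{\tau} \tfrac{w-k_\tau}{2} + \sum_{\tau \notin J}(k_\tau - 1) \;\geq\; \sum_{\tau} \tfrac{w-k_\tau}{2} + \min_\tau(k_\tau - 1).
\]
A quick rearrangement of \eqref{E:weak slope bound} shows $T_f$ is strictly less than this lower bound, so $(S^\dagger_{\epsilon_J(\kb,w)})^{\leq T_f} = 0$ whenever $J \subsetneq \Sigma_\infty$. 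Consequently $H^n_\rig^{\leq T_f} = 0$ for $n < g$ and $H^g_\rig^{\leq T_f} = (S^\dagger_{(\kb,w)})^{\leq T_f} \ni f$; in particular the image of $f$ in $H^g_\rig$ is nonzero.

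Finally, combining the alternating-sum identity of Theorem~\ref{T:rigid coh of ordinary}(2) with the explicit automorphic description of Proposition~\ref{C:cohomology of GO-strata}, I would show that the tame Hecke eigensystem $\psi$ of $f$ contributes nontrivially to $[S_{(\kb,w)}(K^p\Iw_p, \Qpb)]$. The slope analysis above rules out $\psi$-contributions of $U_p$-slope $\leq T_f$ in $H^n_\rig$ for $n<g$; the hypothesis \eqref{E:weak slope bound} is strong enough to also prevent higher-slope constituents of $[H^n_\rig]_\psi$ from masking $f$'s contribution through cancellations in the alternating sum (this is where the $\frac{1}{g}\min(k_\tau - 1)$ term genuinely enters). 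This yields a classical cuspidal eigenform $f'$ with the same tame Hecke eigensystem as $f$, and comparison of the small-slope roots of the Hecke polynomials at each $\gothp$ forces the $U_\gothp$-eigenvalues of $f$ and $f'$ to coincide. Strong multiplicity one for overconvergent Hilbert modular forms then gives $f = c f'$, proving classicality. The main obstacle is precisely this cancellation analysis: ruling out the possibility that high-slope constituents of $[H^n_\rig]_\psi$ for $n<g$ cancel $f$'s contribution in the Grothendieck sum requires a careful bookkeeping via the explicit Hecke decomposition of each GO-stratum together with the full force of \eqref{E:weak slope bound}.
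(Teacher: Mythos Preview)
Your first two paragraphs (degree concentration in the BGG complex via Corollary~\ref{Prop:slopes-ocv}) are correct and match the paper's opening move. The gap is in the final paragraph, precisely at the step you yourself flag as ``the main obstacle.''

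The paper does not route through the tame-Hecke identity of Theorem~\ref{T:rigid coh of ordinary}(2) and then hunt for a classical $f'$. Instead it proves directly that the embedding
\[
S_{(\underline k,w)}(K^p\Iw_p)^{\prod_\gothp U_\gothp\textrm{-slope}=T_f} \hookrightarrow (S^\dagger_{(\underline k,w)})^{\prod_\gothp U_\gothp\textrm{-slope}=T_f}
\]
is an equality of dimensions. After your degree-concentration step, it converts the $U_\gothp$-slope condition into a slope condition for $\Phi := \prod_\gothp \Phi_{\gothp^2}^N$ via Lemma~\ref{Lemma:Frob-U_p}, and then analyzes the spectral sequence \eqref{E:spectral sequence for ordinary cohomology} \emph{stratum by stratum at that fixed $\Phi$-slope}. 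For each $\ttT \neq \emptyset$, Proposition~\ref{C:cohomology of GO-strata} expresses the contribution through $\tilde\rho_{\pi,l}^{\ttS(\ttT)}$, and the admissibility bound $\val_p(\beta_\gothp) \leq \sum_{\tau \in \Sigma_{\infty/\gothp}}\tfrac{w+k_\tau-2}{2}$ on the Frobenius eigenvalues of $\rho_{\pi,l}$ forces the maximal $\Phi$-slope on that stratum strictly below the target. This is where the factor $\tfrac{1}{g}\min_\tau(k_\tau-1)$ in \eqref{E:weak slope bound} actually enters. All $\ttT \neq \emptyset$ terms thus vanish at the given slope, and the surviving $\ttT = \emptyset$ term is then matched dimension-for-dimension with the classical side by a direct slope comparison between $\tilde\rho_{\pi,l}^\emptyset$ and $\pi_p^{\Iw_p}$.

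Your ``cancellation analysis'' \emph{is} this $\Phi$-slope computation on each stratum; once carried out it gives the dimension equality outright, so the search for $f'$ and the appeal to strong multiplicity one become superfluous. Framing it instead as preventing cancellations in a tame-Hecke alternating sum is not enough on its own, since the $\psi$-isotypic part of $H^n_\rig$ for $n<g$ can well be nonzero and only the $\Phi$-slope cuts it down. Moreover, your proposed endgame is circular at this point of the paper: knowing that $\lambda_\gothp$ coincides with a Frobenius eigenvalue of $\rho_\pi$ requires Proposition~\ref{P:Galois representation for an overconvergent Hilbert modular form}, whose proof goes through Zariski density on the Kisin--Lai eigencurve (Theorem~\ref{C:zariski density}), which in turn rests on the very Proposition you are trying to prove.
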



\begin{proof}
The basic idea is that, when the slope is very small comparing to the weights, there is essentially one term in the spectral sequence \eqref{E:spectral sequence for ordinary cohomology} which can possibly contribute to the corresponding slope.
Moving between various normalizations unfortunately makes the proof appear to be complicated.

We use superscript $\prod_\gothp U_\gothp\textrm{-slope}=\sum_{\gothp} \val_p(\lambda_p)$ to denote the subspace where the eigenvalues of $\prod_{\gothp \in \Sigma_p} U_\gothp$ all have $p$-adic valuation $\sum_{\gothp \in \Sigma_p} \val_p(\lambda_p)$.
We aim to show that, under the weight-slope condition \eqref{E:weak slope bound},  the natural embedding
\begin{equation}
\label{E:classical embeds in overconvergent}
S_{(\underline k, w)}(K^p\Iw_{p}, \overline \QQ_p)^{\prod_\gothp U_\gothp\textrm{-slope}=\sum_{\gothp} \val_p(\lambda_p)} \hookrightarrow
S^\dagger_{(\underline k, w)}(K, \overline \QQ_p)^{\prod_\gothp U_\gothp\textrm{-slope}=\sum_{\gothp} \val_p(\lambda_p)}
\end{equation}
is in fact an isomorphism.
It suffices to show that both sides have the same dimension. The Proposition then follows from this.

Let $\scrC^{\bullet}$ be the complex \eqref{Equ:complex} of overconvergent cusp forms.  Consider its subcomplex formed by taking the isotypical part with  $\prod_\gothp U_\gothp\textrm{-slope}=\sum_{\gothp} \val_p(\lambda_p)$. By Corollary~\ref{Prop:slopes-ocv}, only the last term $(S^\dagger_{(\underline k, w)})^{ \prod_\gothp U_\gothp \textrm{-slope} = \sum_{\gothp} \val_p(\lambda_\gothp)}$ is nonzero.
 Hence,   it follows from Theorem~\ref{Theorem:overconvergent} that  in the Grothendieck group of finite-dimensional $\overline \QQ_p[U_\gothp, S_\gothp, S_\gothp^{-1}: \gothp\in \Sigma_p]$-modules, we have 
 \begin{align}
 (S^\dagger_{(\underline k, w)}&)^{ \prod_\gothp U_\gothp \textrm{-slope} = \sum_{\gothp} \val_p(\lambda_\gothp)}=\label{E:overconvergent-grothendieck}
(-1)^g\big[ H^\star(\scrC^\bullet)^{(\prod_\gothp U_\gothp) \textrm{-slope} = \sum_\gothp  \val_p(\lambda_\gothp)} \big]\\
&\qquad \qquad =(-1)^g\big[
H^\star_{\rig}(X^{\tor, \ord},  \D;\scrF^{(\underline k, w)})^{(\prod_\gothp U_\gothp) \textrm{-slope} = \sum_\gothp  \val_p(\lambda_\gothp)}\big].\nonumber
\end{align}
We need to show that the dimension of this (virtue) $\overline \QQ_p$-vector space is the same as the left hand side of \eqref{E:classical embeds in overconvergent} times $(-1)^g$.

Put $N = g!$ (a very divisible number).  We put $\Phi: = \prod_{\gothp \in \Sigma_p} \Phi_{\gothp^2}^N$ with $\Phi_{\gothp^2} = \Fr_\gothp^2/S_\gothp$.
Then the slope condition above on $\prod_{\gothp}U_{\gothp}$ is equivalent to $\Phi \textrm{-slope} = Nwg-2N\sum_\gothp  \val_p(\lambda_\gothp)$, by  Lemma~\ref{Lemma:Frob-U_p}.
Now we argue as in Theorem~\ref{T:rigid coh of ordinary}(2): fixing an isomorphisms $\Qlb\simeq \C\simeq \Qpb$,   we have equalities in the Grothendieck group of finite-dimensional $\overline \QQ_p[\Phi]$-modules:
\begin{small}
\begin{align}
\nonumber
&
\big[H^\star_\rig(X^{\tor,\ord}, \D; \scrF^{(\underline{k}, w)})^{\Phi \textrm{-slope} = Nwg-2N\sum_\gothp  \val_p(\lambda_\gothp)} \big] 
\\
\nonumber
\xlongequal{\eqref{E:spectral sequence for ordinary cohomology}}&
\sum_{\ttT\subseteq \Sigma_\infty}
(-1)^{-\#\ttT}
\big[
H^{\star}_{c,\rig}(Y_\ttT/L_\wp, \scrD^{(\underline k, w)})^{\Phi \textrm{-slope} = Nwg -2N\#\ttT-2N\sum_\gothp \val_p(\lambda_\gothp)} \otimes_{L_{\wp}}\Qpb\big]
\\
\label{E:ord-coh}
\xlongequal{\textrm{Prop }\ref{P:comparison-rig-et}}&
\sum_{\ttT\subseteq \Sigma_\infty}
(-1)^{-\#\ttT}
\big[
H^{\star}_{c,\et}(Y_{\ttT,\overline\FF_p}, \scrL_l^{(\underline k, w)})^{\Phi \textrm{-slope} = Nwg -2N\#\ttT-2N\sum_\gothp \val_p(\lambda_\gothp)} \big].
\end{align}
\end{small}Here, we have to  modify the $\Phi$-slope starting from the first equality by  $-2N\#\ttT$ in order to take account of the action of $\Phi_{\gothp^2}$'s on the \v Cech symbols as described in Proposition~\ref{P:spectral-sequence}(2), which in turn came from the commutation relation between $\Phi_{\gothp^2}$ and Gysin isomorphisms in \eqref{E:Phi-Gysin}. 

We first claim that all terms in \eqref{E:ord-coh} with $\ttT \neq \emptyset$ vanishes.
Note that the slope condition \eqref{E:weak slope bound} implies that $\kb\neq (2,\dots, 2)$. Thus, Proposition \ref{C:cohomology of GO-strata} says that, in the Grothendieck group of finite-dimensional $\overline \QQ_l[\Phi]$-modules, we have
\[
[ H^\star_{c,\et}(Y_{\ttT, \overline \FF_p}, \scrL_l^{(\underline k, w)} )]
=\sum_{\pi \in \scrA_{(\kb,w)}}
(-1)^{g-\#\ttS(\ttT)_\infty}
\big[
 (\pi_{\ttS(\ttT)}^\infty)^{K_{\ttS(\ttT)}}\otimes \tilde \rho_{\pi, l}^{\ttS(\ttT)} \otimes (\overline \QQ_l \oplus\overline \QQ_l(-1))^{\otimes I_\ttT}
\big],
\]
where 
the action of $\Phi$ on each of $\overline \QQ_l(-1)$ is the multiplication by $p^{2N}$, and the action of $\Phi$ on $\tilde\rho_{\pi, l}^{\ttS(\ttT)}$ is given by $\rho_{\pi,l}^{\ttS(\ttT)}(\Frob_{p^{2N}})$ times $u\cdot p^{N(w-2)(\#\ttS(\ttT)_{\infty}-g)}$
with $u$  a root of unity. We will show that, for each $\pi\in \scrA_{(\kb,w)}$,  the slope of $\Phi$ on  $\tilde \rho_{\pi, l}^{\ttS(\ttT)} \otimes (\overline \QQ_l \oplus\overline \QQ_l(-1))^{\otimes I_\ttT}$ is always strictly smaller than
\[
N \sum_{\tau \in \Sigma_\infty} k_\tau 
+2N\#I_\ttT
- \frac{2N}{g} \min_{\tau \in \Sigma_\infty} (k_\tau-1),
\]
which is easily seen to be strictly smaller than  $Nwg-2N\#\ttT
-2N\sum_\gothp \val_p(\lambda_\gothp)$ under our assumption~\eqref{E:weak slope bound} (and with the fact that $ \#\ttT+\#I_{\ttT}=\#\ttS(\ttT)_{\infty}\leq g$). This would then imply that all terms in \eqref{E:ord-coh} with $\ttT \neq \emptyset$ is zero. 
Since the $p$-adic valuation of the number $u\cdot p^{N(w-2)(\#\ttS(\ttT)_{\infty}-g)}$ is $N (w-2)(\#\ttS(\ttT)_\infty - g)$,
it remains to show that the $\rho_{\pi, l}^{\ttS(\ttT)}(\Frob_{p^{2N}})$ has slope strictly smaller than
\begin{equation}\label{E:slope-Frob-2N}
N(w-2)(g- \#\ttS(\ttT)_\infty ) + N \sum_{\tau \in \Sigma_\infty} k_\tau -\frac{2N}{g}\min_{\tau \in \Sigma_\infty} (k_\tau-1).
\end{equation}

We claim that, for each $\gothp \in \Sigma_p$ with $\ttS(\ttT)_{\infty/\gothp}\neq \Sigma_{\infty/\gothp}$, the slope of $\Frob_{p^{2N}}$ on the unramified $l$-adic Galois representation $\bigotimes_{\Sigma_{\infty/\gothp}-\ttS(\ttT)_{/\gothp}}\textrm{-}\Ind_{\Gal_{F_\gothp}}^{\Gal_{\Qp}}(\rho_{\pi,l}|_{\Gal_{F_{\gothp}}})$ of $\Gal_{F_{\ttS,\wp}}$ is
less than or equal to
\[
N\sum_{\tau \in \Sigma_{\infty/\gothp}}(w+k_\tau-2) \big(1-\frac{\#\ttS(\ttT)_{\infty/\gothp}}{d_{\gothp}} \big).
\]
Indeed, let $\alpha_\gothp, \beta_\gothp$ denote the eigenvalues of $\Frob_\gothp$ on $\rho_{\pi,l}$ with $\val_p(\alpha_\gothp) \leq \val_p(\beta_\gothp)$.
Using the admissibility condition of the corresponding $p$-adic representation of $\Gal_{F_{\gothp}}$, we have
\begin{equation}\label{E:weak-adm-bound}
\val_p(\beta_\gothp) \leq \sum_{\tau \in \Sigma_{\infty/\gothp} }\frac{w+k_\tau-2}{2}.
\end{equation}
Therefore, the slope of $\Frob_{p^{2N}}$ acting on $\otimes_{\Sigma_{\infty/\gothp}-\ttS(\ttT)_{\infty/\gothp}}\textrm{-}\Ind_{\Gal_{F_\gothp}}^{\Gal_{\Qp}} \rho_{\pi,l}$ is less than or equal to
$$
2N\frac{d_\gothp - \#\ttS(\ttT)_\gothp}{d_\gothp}
\val_p(\beta_\gothp)\leq N\sum_{\tau \in \Sigma_{\infty/\gothp}}(w+k_\tau-2)\big(1-\frac{\#\ttS(\ttT)_{\infty/\gothp}}{d_{\gothp}} \big).
$$
Note that the expression above is automatically zero if $\ttS_{\infty/\gothp}=\Sigma_{\infty/\gothp}$. Hence, summing over all $\gothp\in \Sigma_p$, we see that the eigenvalues of $\rho_{\pi, l}^{\ttS(\ttT)}(\Frob_{p^{2N}})$ have slopes smaller than or equal  to 
\[
N(w-2)(g-\#\ttS(\ttT)_{\infty}) +N\sum_{\tau\in \Sigma_{\infty}}k_{\tau}-N\sum_{\gothp\in \Sigma_p}\frac{\#\ttS(\ttT)_{\infty/\gothp}}{d_{\gothp}}\sum_{\tau\in \Sigma_{\infty/\gothp}}k_{\tau},
\]
which is strictly smaller than \eqref{E:slope-Frob-2N} due to the very loose inequality
\[
\sum_{\gothp\in \Sigma_p}\frac{N \#\ttS(\ttT)_{\infty/\gothp}}{d_\gothp} \sum_{\tau \in \Sigma_{\infty/\gothp}}k_\tau>  \frac{2N}{g} \min_{\tau \in \Sigma_{\infty/\gothp}}(k_\tau-1).
\]
Therefore, all terms in \eqref{E:ord-coh} with $\ttT \neq \emptyset$ are zero.
Hence, in view of \eqref{E:overconvergent-grothendieck}, we get
\[
\big[(S^\dagger_{(\underline k, w)})^{ \prod_\gothp U_\gothp \textrm{-slope} = \sum_{\gothp} \val_p(\lambda_\gothp)}\big]
= 
(-1)^g\big[
H^{\star}_{c,\et}(X_{\overline\FF_p}, \scrL_l^{(\underline k, w)})^{\Phi \textrm{-slope} = Nwg-2N\sum_\gothp \val_p(\lambda_\gothp)} \big], 
\]
where the action of $\Phi$ on the left hand side is  given by $p^{2gN}(\prod_{\gothp} S_{\gothp}/U^2_{\gothp})^N$.

Similar to the argument above, Proposition~\ref{C:cohomology of GO-strata} implies that (note that $\underline k \neq(2,\dots,2)$)
\begin{align}
\label{E:ord-coh=total coh}
(-1)^g\big[
\  &H^{\star}_{c,\et}(X_{\overline\FF_p}, \scrL_l^{(\underline k, w)})^{\Phi \textrm{-slope} = Nwg-2N\sum_\gothp \val_p(\lambda_\gothp)} \big]\nonumber \\
& = 
\big[
\bigoplus_{\pi \in \scrA_{(\kb,w)}}
 (\pi^{\infty,p})^{K^p}\otimes \pi_p^{K_p}\otimes (\tilde \rho_{\pi, l}^{\emptyset})^{\Phi \textrm{-slope} = Nwg-2N\sum_\gothp \val_p(\lambda_\gothp)} \big],
\end{align}
where $\tilde \rho_{\pi, l}^{\emptyset}$ is isomorphic to $ \bigotimes_{\Sigma_\infty}\textrm{-}\Ind_{\Gal_F}^{\Gal_\QQ}(\rho_{\pi,l})$ but with $\Phi$-action given by $\Frob_{p^{2N}}$ times a number $u \cdot p^{-(w-2)gN}$.
In order to conclude that \eqref{E:classical embeds in overconvergent} is an isomorphism, we  need to show that the right hand side above has the same dimension as 
\[
\big[S_{(\underline k, w)}(K^p\Iw_p , \overline \QQ_p)^{\prod_\gothp U_\gothp\textrm{-slope}=\sum_{\gothp} \val_p(\lambda_p)}\big]=\big[\bigoplus_{\pi\in \scrA^{(\kb,w)}}
(\pi^{\infty,p})^{K^p}\otimes \big(\pi_p^{\Iw_p}\big)^{\prod_\gothp U_\gothp\textrm{-slope}=\sum_{\gothp} \val_p(\lambda_p)}\big].
\] 
It suffices to show that we have 
\begin{equation}\label{E:dim-comparison}
\dim \big(\pi_p^{K_p}\otimes (\tilde \rho_{\pi, l}^{\emptyset})^{\Phi \textrm{-slope} = Nwg-2N\sum_\gothp \val_p(\lambda_\gothp)}\big)=\dim \big(\pi_p^{\Iw_p}\big)^{\prod_\gothp U_\gothp\textrm{-slope}=\sum_{\gothp} \val_p(\lambda_p)}
\end{equation}
for every $\pi\in \scrA^{(\kb,w)}$.
 Note that if $\pi$ is Steinberg or supercuspidal at some places $\gothp\in \Sigma_p$, then the both sides above are equal to $0$ due to the slope condition \eqref{E:weak slope bound}. Assume therefore that $\pi$ is hyperspecial at $p$. Then $\pi_p^{K_p}$ is one-dimensional.  
  Let $\alpha_{\gothp}$ and $\beta_{\gothp}$ be the eigenvalues of $\Frob_{\gothp}$ on $\rho_{\pi,l}$ with $\val_p(\alpha_{\gothp})\leq \val_{p}(\beta_{\gothp})$ for any $\gothp\in \Sigma_p$.  
     Then $\alpha_{\gothp}$ and $\beta_{\gothp}$ also coincide with eigenvalues of $U_{\gothp}$ on $\pi_{\gothp}^{\Iw_{\gothp}}$ by Eichler-Shimura congruence relation.
     Hence, the  $\prod_{\gothp}U_{\gothp}$-slopes on $\pi_p^{\Iw_p}$ take values in $\gothS(U_p):=\{\sum_{\gothp\in \Sigma_p}\val_{p}(\gamma_{\gothp}): \gamma_{\gothp}\in \{\alpha_{\gothp},\beta_{\gothp}\},\gothp\in \Sigma_p\}$, 
     and the slopes of $\Phi$ take values in the set 
      $$
      \gothS(\Phi):=\Big\{
    2N\sum_{\gothp\in \Sigma_p}\frac{1}{d_{\gothp}}\big[\val_p(\alpha_{\gothp})i_{\gothp}+(d_{\gothp}-i_{\gothp})\val_{\gothp}(\beta_{\gothp})\big] - Ng(w-2)
      : i_{\gothp}\in \Z\cap [0, d_{\gothp}]\Big\}.
      $$
   Then if $\sum_{\gothp}\val_p(\lambda_{\gothp})$ is not in the set
    \[
   \widetilde \gothS(U_p):=\gothS(U_{p})\cup
    \{\sum_{\gothp}\val_p(\alpha_{\gothp})+\sum_{\gothp}\frac{i_{\gothp}}{d_\gothp}(\val_p(\beta_{\gothp})-\val_p(\alpha_{\gothp})): i_{\gothp}\in \Z\cap[0,d_{\gothp}], \forall \gothp\in \Sigma_p
   \}
    \] 
    then both sides of \eqref{E:dim-comparison} are equal to $0$. 
  Assume therefore $\sum_{\gothp}\val_p(\lambda_{\gothp})\in\tilde \gothS(U_p)$ and \eqref{E:weak slope bound} is satisfied. Note  that $\val_p(\alpha_{\gothp})<\val_p(\beta_{\gothp})$ for all $\gothp\in \Sigma_p$, since $\val_p(\alpha_{\gothp})+\val_p(\beta_{\gothp})=(w-1)d_{\gothp}$.  We claim that $\val_p(\lambda_{\gothp})=\sum_{\gothp}\val_p(\alpha_{\gothp})$, which is the minimal element of $\widetilde \gothS(U_{p})$. It would then follow that  both sides of \eqref{E:dim-comparison} have dimension $1$, and the proof will be finished. 
To prove the claim, it suffices to show that 
     \[
\sum_{\gothp\in \Sigma_{p}}\val_p(\alpha_{\gothp})+\min_{\gothp \in \Sigma_p}\frac{1}{d_{\gothp}} \big(\val_{p}(\beta_{\gothp})-\val_p(\alpha_{\gothp}) \big)>\sum_{\tau\in\Sigma_{\infty}}\frac{w-k_{\tau}}{2}+\frac{1}{g}\min_{\tau \in \Sigma_\infty}(k_{\tau}-1)-g,
     \] 
where  the right hand side is greater than $\sum_{\gothp}\val_p(\lambda_{\gothp})$ by assumption. Let $\gothp_0\in \Sigma_p$ be the $p$-adic place where the minimal of the left hand side is achieved. Since $\val_p(\alpha_{\gothp})\geq \sum_{\tau\in \Sigma_{\infty/\gothp}}\frac{w-k_{\tau}}{2}$ by admissibility condition and $\val_p(\alpha_{\gothp_0})+\val_{p}(\beta_{\gothp_0})=d_{\gothp_0}(w-1)$, we are easily reduced to showing that
          \[
\sum_{\tau\in \Sigma_{\infty/\gothp_{0}}}\frac{k_{\tau}-1}{2}-\big(\frac{1}{2}-\frac{1}{d_{\gothp_0}}\big) \big[\val_p({\beta_{\gothp_0}})-\val_p(\alpha_{\gothp_0})\big]>\frac{1}{g}\min_{\tau\in \Sigma_{\infty}}(k_{\tau}-1)-g,
     \]  
which is  trivially true if $d_{\gothp_0}\leq 2$, and follows easily from  $\val_p(\beta_{\gothp_0})-\val_{p}(\alpha_{\gothp_0})\leq \sum_{\tau\in \Sigma_{\infty/\gothp_0}}(k_{\tau}-1)$ if $d_{\gothp_0}>2$.
     \end{proof}

\subsection{Overconvergent Eigenforms of level $K_1(\gothN)$}
Let $\gothN$ be an integral ideal of $\cO_F$ prime to $p$. We put 
\[
K_1(\gothN)=
\bigg\{
\begin{pmatrix}a &b\\c&d\end{pmatrix}\in \GL_2(\widehat{\cO}_F)|a\equiv 1, c\equiv 0 \mod \gothN
\bigg\},
\]
and let $K_1(\gothN)^p$ be the prime-to-$p$ part. Since $K_1(\gothN)^p$ does not satisfy  Hypothesis~\ref{H:fine-moduli}, the theory in Section 3 does not apply directly. 
By Lemma~\ref{L:open-subgroup}, we choose an open compact subgroup $K^p\subseteq K_{1}(\gothN)^p$ that  satisfies Hypothesis~\ref{H:fine-moduli}. 
Consider the space $S^{\dagger}_{(\kb,w)}(K, L_{\wp})$ of overconvergent cusp forms of level $K=K^pK_p$ for a sufficiently large finite extension $L_{\wp}/\Q_p$. We have a natural action of  $\Gamma:=K_1(\gothN)^p/K^p$ on $S^{\dagger}_{(\kb,w)}(K,L_{\wp})$. We define the space of overconvergent cusp forms of level $K_1(\gothN)$ to be the invariant subspace 
\[
S^{\dagger}_{(\kb,w)}(K_1(\gothN), L_{\wp}):=S^{\dagger}_{(\kb,w)}(K,L_{\wp})^{\Gamma}.
\]
It is easy to see that the definition does not depend on the choice of $K^p$.
 The Hecke algebra $\scrH(K_1(\gothN)^p, L_{\wp})$ and the operators  $U_{\gothp}, S_{\gothp}$ for $ \gothp\in \Sigma_p$  act naturally on $S^{\dagger}_{(\kb,w)}(K_1(\gothN), L_{\wp})$. 
 We say $f\in S^{\dagger}_{(\kb,w)}(K_1(\gothN), L_{\wp})$ is a \emph{(normalized) overconvergent eigenform} if $f$ is a common eigenvector for all the Hecke operators in $\scrH'(K_1(\gothN)^p, L_{\wp})[U_{\gothp}, S_{\gothp}, S_{\gothp}^{-1}: \gothp\in \Sigma_p]$, and the first Fourier coefficient (the coefficient indexed by $1\in \cO_F$) is $1$. Here,  $\scrH'(K_1(\gothN)^p,L_{\wp})$ is the subalgebra of $\scrH(K_1(\gothN)^p,L_{\wp})$ generated by the usual Hecke operators $T_v$ for $v\nmid p\gothN$, $U_{v}$ for $v|\gothN$, together with  $S_v$ and $S_v^{-1}$ for all $v\nmid p$.

 Let $\widetilde{S}_{(\kb,w)}(K_1(\gothN)^p\Iw_p,L_{\wp})$ denote the subspace of classical Hilbert modular forms which vanish at the unramified cusps of the Hilbert modular variety of level $K_1(\gothN)^p\Iw_p$.
There are  natural Hecke equivariant injections  
 \[
S_{(\kb,w)}(K_1(\gothN)^p\Iw_p, L_{\wp})\hra \widetilde S_{(\kb,w)}(K_1(\gothN)^p\Iw_p, L_{\wp})\hra S^{\dagger}_{(\kb,w)}(K_{1}(\gothN), L_{\wp}),
 \]
where $S_{(\kb,w)}(K_1(\gothN)^p\Iw_p, L_{\wp})$ is the space of the classical Hilbert cusp forms.
We will say a form $f\in S^{\dagger}_{(\kb,w)}(K_1(\gothN), L_{\wp})$ is a \emph{classical Hilbert modular form} if it lies in $\widetilde S^{(\kb,w)}(K_1(\gothN)^p\Iw_p, L_{\wp})$, and is a \emph{classical Hilbert cusp form} if it is in $S_{(\kb,w)}(K_1(\gothN)^p\Iw_p, L_{\wp})$.

 Note that  if  $(\kb,w)$ is not of parallel weight,  then  $\widetilde S_{(\kb,w)}(K_1(\gothN)^p\Iw_p,L_{\wp})$ coincides with $S_{(\kb,w)}(K_1(\gothN)^p\Iw_p, L_{\wp})$; but if $(\kb,w)$ is of parallel weight $k$, then $\widetilde S_{(\kb,w)}(K_1(\gothN)^p\Iw_p,L_{\wp})$ will contain 
  as well some Eisenstein series of level $K_1(\gothN)^p\Iw_{p}$ of $U_{\gothp}$-slope $d_{\gothp}(k-1)$ for all $\gothp\in\Sigma_p$. 
 Indeed, let $\chi$ be an algebraic Hecke character of $F$ with archimedean component given by $N_{F/\Q}^{k-2}$ and of conductor $\gothc$ dividing $\gothN$.  Write $\chi=\epsilon|\cdot|^{k-2}$ with $\epsilon$ a finite Hecke character. 
 Then there exists an Eisenstein series $E_{\chi}$ of weight $k$ such that $E_{\chi}$ is a common eigenvector of  $T_{\gothq}$ with eigenvalue $1+\epsilon(\gothq^{-1})N_{F/\Q}(\gothq)^{k-1}$ for all prime ideals $\gothq\nmid \gothc$. We take the $p$-stabilization $E_{\chi}'$ of $E_{\chi}$ such that $E_{\chi}'$ has level $K_1(\gothN)^p\Iw_p$ and it is a common eigenvector of  $U_{\gothp}$ with eigenvalue $\epsilon(\gothp^{-1}) N_{F/\Q}(\gothp)^{w-1}$ for all $\gothp\in \Sigma_p$. Then $E_{\chi}'$ vanishes at all unramified cusps at $p$ of the Hilbert modular variety of level $K_1(\gothN)^p\Iw_p$, and hence is contained in $\widetilde S_{(\kb,w)}(K_1(\gothN)^p\Iw_p,L_{\wp})$.

Recall that Kisin and Lai constructed in \cite{KL} various eigencurves $\calC_{(\kb,w)}(\bar\rho)$ that parametrize (normalized) overconvergent eigenforms with different weights.
 The points on the Kisin-Lai eigencurves that correspond to classical Hilbert eigenforms are called classical points.

\begin{theorem}
\label{C:zariski density}
On the Kisin-Lai eigencurves for overconvergent cusp  forms, classical points are Zariski dense.
\end{theorem}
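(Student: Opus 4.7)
The plan is to apply the Coleman--Mazur accumulation argument using Proposition~\ref{P:weak classicality} as the key classicality input. Recall that a Kisin--Lai eigencurve $\calC = \calC_{(\underline k, w)}(\bar\rho)$ is a reduced equidimensional rigid analytic space equipped with a weight map $\kappa\colon \calC \to \calW$ to weight space, together with rigid analytic functions $\lambda_\gothp$ on $\calC$ recording the $U_\gothp$-eigenvalue at each point, for $\gothp\in \Sigma_p$. By construction the normalized slope functions
\[
s_\gothp(x) := \val_p(\lambda_\gothp(x)) - \sum_{\tau \in \Sigma_{\infty/\gothp}} \frac{w(x) - k_\tau(x)}{2}
\]
are nonnegative (by Corollary~\ref{Prop:slopes-ocv}) and locally bounded on $\calC$, since every quasi-compact open is contained in a slope-bounded piece of the underlying Fredholm decomposition.

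First I would show that every point $x \in \calC$ is an accumulation point of classical points. Pick a connected affinoid neighborhood $U$ of $x$ on which $s_\gothp \leq T_\gothp$ for each $\gothp$, and set $T = \sum_{\gothp \in \Sigma_p} T_\gothp + g$. The image $\kappa(U)$ is an admissible open subset of $\calW$; inside it, the arithmetic weights $(\underline k',w')$ with $\min_\tau k'_\tau > gT + 1$ are Zariski dense, because weight space is locally a product of open unit discs on which such arithmetic characters form a Zariski dense subset. For any point $x' \in U$ lying above one such weight, one has
\[
\sum_{\gothp\in \Sigma_p}\Big(\val_p(\lambda_\gothp(x')) - \sum_{\tau \in \Sigma_{\infty/\gothp}} \frac{w' - k'_\tau}{2}\Big) + g \;=\; \sum_\gothp s_\gothp(x') + g \;\leq\; T \;<\; \frac{1}{g}\min_\tau(k'_\tau - 1),
\]
so Proposition~\ref{P:weak classicality} applies (at any auxiliary level $K = K^pK_p$ with $K^p\subseteq K_1(\gothN)^p$ satisfying Hypothesis~\ref{H:fine-moduli}); after taking $\Gamma=K_1(\gothN)^p/K^p$-invariants, $x'$ corresponds to a classical cusp form in $S_{(\underline k', w')}(K_1(\gothN)^p\Iw_p, L_\wp)$. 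Since $\kappa|_U$ is quasi-finite and the chosen arithmetic weights accumulate at $\kappa(x)$, the preimages of these weights furnish classical points of $U$ accumulating at $x$.

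Zariski density of the classical locus then follows from a standard principle: in a reduced equidimensional rigid analytic variety, any subset accumulating at every point is Zariski dense, since an analytic function vanishing on such a set must vanish identically on every irreducible component. The equidimensionality and reducedness of the Kisin--Lai eigencurve are proved in \cite{KL}.

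The main obstacle I anticipate is the interplay between the normalization of slopes adopted in this paper and the one used in the construction of $\calC_{(\underline k, w)}(\bar\rho)$: one must verify that $s_\gothp$ is indeed the quantity controlled by the Fredholm decomposition, so that local slope bounds do hold on $\calC$. A secondary technical point is the passage from the auxiliary level $K^p$ (on which the classicality theorem is proved) back to $K_1(\gothN)^p$, but this is routine by naturality of $\iota\colon S_{(\underline k, w)}(K^p\Iw_p) \hookrightarrow S^{\dagger}_{(\underline k, w)}(K)$ under the action of $\Gamma$.
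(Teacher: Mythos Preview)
Your proposal is correct and follows essentially the same approach as the paper: the paper's proof simply cites Proposition~\ref{P:weak classicality} and defers to the accumulation argument of \cite[Theorem~2.20]{tian}, which is exactly the Coleman--Mazur style argument you have spelled out. Your version fills in the details the paper omits, and the two potential obstacles you flag are precisely the routine compatibility checks one has to make.
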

\begin{proof}
This follows immediately from Proposition~\ref{P:weak classicality} by the same arguments as \cite[Theorem 2.20]{tian}.
\end{proof}
\begin{remark}
This Theorem also follows from the main results of \cite{ps}, where Pilloni and Stroh proved the classicality of overconvergent Hilbert modular forms using the method of analytic continuation.
\end{remark}

The following combines the work of many people.

\begin{prop}
\label{P:Galois representation for an overconvergent Hilbert modular form}
Let $f\in S^{\dagger}_{(\kb,w)}(K_1(\gothN), L_{\wp})$ be an overconvergent eigenform.
Then there exists  a $p$-adic  Galois representation $\rho_f$ of $G_F$ attached  to $f$ such that the following properties are satisfied:
\begin{itemize}
\item For every finite place $\gothl \nmid p\gothN$,  if $\lambda_\gothl$ denotes the eigenvalue of the Hecke operator $T_\gothl$ on $f$, then $\rho_f$ is unramified at $\gothl$ and the characteristic polynomial of $\rho_f(\Frob_\gothl)$ is $T^2 - \lambda_\gothl T + \epsilon(\gothl)N_{F/\Q}(\gothl)^{w-1}$, where $\epsilon(\gothl)$ is a root of unity and is equal to the eigenvalue of $S_{\gothl}$ on $f$ divided by $N_{F/\Q}(\gothl)^{w-2}$.

\item  For a place $\gothp \in \Sigma_p$, if $\lambda_\gothp$ is the eigenvalue of the $U_\gothp$-operator, then the local Galois representation $\rho_f|_{\Gal_{F_{\gothp}}}$ is Hodge-Tate  with  Hodge-Tate weights $\frac{w-k_\tau}{2}, \frac{w+k_\tau-2}{2}$ in the $\tau$-direction for each $\tau \in \Sigma_{\infty/\gothp}$, 
$\DD_\cris(\rho_f|_{\Gal_{F_{\gothp}}})^{\Frob_{\gothp} = \lambda_{\gothp}}$ is nonzero and its image in $\DD_{\dR, \tau}(\rho_f|_{\Gal_{F_{\gothp}}})$ lies in $\Fil^{(w-k_{\tau})/2}\DD_{\dR, \tau}(\rho_f|_{\Gal _F})$.

\item If $f$ is classical, then $\rho_f$ is semistable (including crystalline) at all places $\gothp \in \Sigma_p$.
\end{itemize}
\end{prop}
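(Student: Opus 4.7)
The plan is to construct $\rho_f$ by deforming known Galois representations attached to classical Hilbert eigenforms along the Kisin--Lai eigenvariety, and then to extract the local properties at $p$ from the global family of crystalline periods.

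First, I would reduce to the case where $f$ lies on the tame Hecke component of a Kisin--Lai eigencurve $\calC_{(\kb,w)}(\bar\rho)$. By Theorem~\ref{C:zariski density}, classical cuspidal points are Zariski dense on $\calC_{(\kb,w)}(\bar\rho)$. For each classical point $g$ corresponding to a cuspidal eigenform in $S_{(\kb',w')}(K_1(\gothN)^pK_p,L')$ for some $(\kb',w')$, the work of Carayol, Blasius--Rogawski, Taylor, Saito, Skinner and others produces a continuous two-dimensional $p$-adic Galois representation $\rho_g$ of $\Gal_F$ satisfying local-global compatibility at every finite place; in particular $\rho_g$ is unramified outside $p\gothN$ with the expected Frobenius characteristic polynomial, and is semistable (potentially crystalline) at every $\gothp\in\Sigma_p$ with the Hodge--Tate weights specified in the statement. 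Assembling the traces $\tr\rho_g(\Frob_\gothl)$ for $\gothl\nmid p\gothN$ into continuous functions on the classical locus and extending by density (e.g.\ via Chenevier's theory of pseudocharacters), I obtain a continuous pseudocharacter on the entire eigenvariety; specializing at the point corresponding to $f$ and using that $\bar\rho$ is absolutely irreducible (or, if necessary, replacing $\calC$ by an appropriate irreducible component) yields a genuine Galois representation $\rho_f$ of $\Gal_F$ realizing the first bullet. This also handles the third bullet since for a classical $f$, $\rho_f$ coincides with the classical construction.

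The main difficulty is the second bullet: establishing Hodge--Tate weights and the crystalline period condition at each $\gothp\in\Sigma_p$ for non-classical $f$. Here I would follow Kisin's method from his work on overconvergent modular forms, adapted to the Hilbert setting (as done by Kisin in the $F=\QQ$ case and by various authors in the Hilbert case). The key input is that the $U_{\gothp}$-operator acts with finite slope $\val_p(\lambda_\gothp)$ on a family of overconvergent eigenforms over the eigenvariety, and that the associated $\gothp$-adic family of Galois representations $\rho_{\calC}|_{\Gal_{F_\gothp}}$ carries a canonical crystalline period vector $v_\gothp$ on which $\Frob_\gothp$ acts by $\lambda_\gothp$; such a $v_\gothp$ exists on a Zariski dense set of classical crystalline points (where it comes from local-global compatibility for classical forms) and extends by a rigidity argument using the Sen operator and $(\varphi,\Gamma)$-module techniques to the entire family. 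Specializing this family at $f$ and tracking the Hodge filtration gives both the Hodge--Tate weights $\tfrac{w-k_\tau}{2}$, $\tfrac{w+k_\tau-2}{2}$ in the $\tau$-direction, and the fact that $\DD_{\cris}(\rho_f|_{\Gal_{F_\gothp}})^{\Frob_\gothp=\lambda_\gothp}$ maps into $\Fil^{(w-k_\tau)/2}\DD_{\dR,\tau}(\rho_f|_{\Gal_F})$.

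The hard part is this last step, namely the construction of the global crystalline period on the eigenvariety and the verification that specialization commutes with the period functor in the required sense; this requires the interpolation theory of Kisin's $X_{\mathrm{fs}}$ or equivalently a family version of $\DD_{\cris}$, and careful control of the Hodge filtration on $\DD_{\dR,\tau}$ as one varies $\tau\in\Sigma_{\infty/\gothp}$. Once this global crystalline period is available, the proposition follows by pointwise specialization, and the crucial nonvanishing of the period at $f$ is automatic because $\lambda_\gothp\neq 0$ for an overconvergent eigenform (any eigenform has $v_p(\lambda_\gothp)<\infty$).
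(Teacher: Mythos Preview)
Your proposal is correct and follows essentially the same strategy as the paper: construct $\rho_f$ for classical forms via Carayol, Taylor, Blasius--Rogawski (with local properties at $p$ from Saito, plus Liu/Skinner), interpolate across the Kisin--Lai eigencurve via pseudocharacters, and specialize at $f$. The only real difference is in the crystalline-periods step: where you propose to adapt Kisin's original $X_{\mathrm{fs}}$ argument to the Hilbert setting by hand, the paper simply invokes the global triangulation results of Kedlaya--Pottharst--Xiao \cite{kedlaya-pottharst-xiao} and R.~Liu \cite{liu}, which package exactly this interpolation of crystalline periods over rigid-analytic families and generalize Kisin's method; citing these is cleaner than redeveloping the machinery.
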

\begin{proof}
When $f$ is classical, the construction of $\rho_f$ is due to Carayol \cite{carayol-hilbert}, Taylor \cite{taylor}, and Blasius-Rogawski \cite{blasius-rogawski}. The verification of the properties for $\rho_f$ was done by Carayol \cite{carayol-hilbert} for places outside $p$  and    by  Saito \cite{saito} (plus a special case handled independently by T. Liu \cite{liutong} and Skinner \cite{skinner}) at places above $p$). 
For a general $f$, we consider a Kisin-Lai eigencurve $\calC$ that passes through $f$. 
Then the continuity of the Hecke eigenvalues define a pseudo-representations over the reduced subscheme of $\calC$.
Specializing this pseudo-representation to the point corresponding to $f$ provides $f$ with a Galois representation of $\Gal_F$.

The existence of crystalline periods can be proved using the recent work of Kedlaya, Pottharst, and the second author \cite{kedlaya-pottharst-xiao}, or independently R. Liu \cite{liu} on global triangulation.  Both works generalize the prior work of Kisin \cite{kisin}.
\end{proof}



\begin{cor}
\label{C:classicality checked by tame eigenvalues}
Let $f\in S^{\dagger}_{(\kb,w)}(K_1(\gothN), L_{\wp})$ be an overconvergent eigenform. Assume that there exists a classical eigenform $\tilde f\in \widetilde{S}_{(\kb,w)}(K_1(\gothN)^p\Iw_p, L_{\wp})$ such that $f$ and $\tilde f$ have the same eigensystem for $\scrH(K_1(\gothN)^p,L_\wp)$. Then $f$ lies in $\widetilde S_{(\kb,w)}(K_1(\gothN)^p\Iw_p, L_{\wp})$.
\end{cor}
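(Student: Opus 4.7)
The plan is to combine three ingredients: Proposition~\ref{P:Galois representation for an overconvergent Hilbert modular form} (attaching a Galois representation to $f$ with prescribed local behavior at $p$), Chebotarev density, and the strong classicality theorem Theorem~\ref{T:strong classicality}, which tells us that an overconvergent cuspidal eigenform is classical provided it does not lie in the image of any of the $\Theta$-maps appearing in the complex $\scrC^\bullet$.

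First, I would identify the Galois representations attached to $f$ and $\tilde f$. By Proposition~\ref{P:Galois representation for an overconvergent Hilbert modular form}(1), the characteristic polynomial of $\rho_f(\Frob_\gothl)$ at every finite prime $\gothl \nmid p\gothN$ is determined by the tame Hecke eigenvalues $T_\gothl(f)$ and $S_\gothl(f)$. Since by assumption $f$ and $\tilde f$ share the same tame Hecke eigensystem, the two representations $\rho_f$ and $\rho_{\tilde f}$ have identical traces and determinants at a set of primes of density one, and Chebotarev density gives an isomorphism $\rho_f \cong \rho_{\tilde f}$ of semisimple Galois representations. Since $\tilde f \in \widetilde S_{(\kb,w)}(K_1(\gothN)^p\Iw_p, L_\wp)$ is classical, $\rho_{\tilde f}$ is semistable at every $\gothp \in \Sigma_p$ (Proposition~\ref{P:Galois representation for an overconvergent Hilbert modular form}(3) for the cuspidal case; a direct check for an Eisenstein $p$-stabilization in the parallel-weight case), and hence so is $\rho_f$.

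Next, I would apply Theorem~\ref{T:strong classicality} and argue by contradiction that $f$ cannot lie in the image of any $\Theta_{\tau,k_\tau-1}$. Suppose $f = \Theta_{\tau_0, k_{\tau_0}-1}(g)$ for some $g \in S^\dagger_{\epsilon_{\Sigma_\infty -\{\tau_0\}}(\kb,w)}(K_1(\gothN), L_\wp)$. As noted in the introduction (and visible from the $q$-expansion formula in Remark~\ref{R:BGG}), the tame Hecke action on the source of the $\Theta$-maps in $\scrC^\bullet$ differs from the standard Kisin-Lai action by a specific Tate twist depending on $\tau_0$. Consequently, if one attaches (via Proposition~\ref{P:Galois representation for an overconvergent Hilbert modular form} applied to $g$, viewed suitably after reshuffling weights via Kodaira-Spencer) a Galois pseudo-representation to $g$, the expected Hodge-Tate weights at $\tau_0$ would be those prescribed by the weight $\epsilon_{\Sigma_\infty-\{\tau_0\}}(\kb,w)$, namely $\{(w-(2-k_{\tau_0}))/2, (w+(2-k_{\tau_0})-2)/2\}$, while $\rho_f = \rho_{\tilde f}$ has Hodge-Tate weights $\{(w-k_{\tau_0})/2, (w+k_{\tau_0}-2)/2\}$ at $\tau_0$. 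Comparing these two sets together with the twist relating the two eigensystems produces an incompatibility unless $k_{\tau_0}=2$, but even in the parallel-weight case the crystalline-period condition in Proposition~\ref{P:Galois representation for an overconvergent Hilbert modular form}(2) forces the image of the Frobenius eigenspace in $\mathbf{D}_{\dR,\tau_0}$ to sit in the wrong filtration piece. Thus $f$ cannot be in the image of any $\Theta$-map, and Theorem~\ref{T:strong classicality} yields that $f$ is classical of level $K_1(\gothN)^p\Iw_p$.

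Finally, I would upgrade classicality to membership in $\widetilde S_{(\kb,w)}(K_1(\gothN)^p\Iw_p, L_\wp)$, i.e.\ show that $f$ vanishes at every unramified cusp. Since $f-\tilde f$ has trivial tame Hecke eigensystem difference, the argument is to use $q$-expansions at each unramified cusp: the constant term at such a cusp, when nonzero, forces a specific Eisenstein tame eigensystem; matching with $\tilde f \in \widetilde S$ (which vanishes at these cusps by definition) forces the constant term of $f$ to vanish as well. I expect the main obstacle of the proof to be the contradiction step in the previous paragraph: carefully bookkeeping the twist built into the $\Theta$-operator of the BGG complex and matching it against the $p$-adic Hodge-theoretic data of $\rho_f$ requires precise identifications that are easy to get wrong but conceptually determine the whole argument.
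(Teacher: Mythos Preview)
Your argument is \emph{circular}: you invoke Theorem~\ref{T:strong classicality} to prove Corollary~\ref{C:classicality checked by tame eigenvalues}, but in the paper the proof of Theorem~\ref{T:strong classicality}(1) begins precisely with ``By Corollary~\ref{C:classicality checked by tame eigenvalues}, it suffices to prove the tame Hecke eigenvalues of $f$ coincide with those of a classical cuspidal eigenform.'' So the corollary must be proved independently of Theorem~\ref{T:strong classicality}, and your route is not available.

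Even setting the circularity aside, the contradiction step you sketch (ruling out $f\in\mathrm{Im}(\Theta_{\tau_0,k_{\tau_0}-1})$ via Hodge--Tate weights of $\rho_g$) is not solid: Proposition~\ref{P:Galois representation for an overconvergent Hilbert modular form} is stated for eigenforms in $S^\dagger_{(\kb,w)}$, not for sections in the intermediate pieces $S^\dagger_{\epsilon_J(\kb,w)}$ of the BGG complex, and you have not explained how to attach to $g$ a Galois representation with the Hodge--Tate weights you assert. The paper's actual proof avoids all of this. After identifying $\rho_f\cong\rho_{\tilde f}$ by Chebotarev (your first paragraph is fine), it observes that $\DD_{\cris}(\rho_f|_{\Gal_{F_\gothp}})\cong\DD_{\cris}(\rho_{\tilde f}|_{\Gal_{F_\gothp}})$ for every $\gothp\in\Sigma_p$; by Proposition~\ref{P:Galois representation for an overconvergent Hilbert modular form}(2), the $U_\gothp$-eigenvalue $\lambda_\gothp(f)$ is then a $\Frob_\gothp$-eigenvalue on $\DD_{\cris}(\rho_{\tilde f}|_{\Gal_{F_\gothp}})$. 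Hence among the finitely many $p$-stabilizations $\tilde f'$ of the newform in $\pi_{\tilde f}$ at level $K_1(\gothN)^p\Iw_p$, one has $\lambda_\gothp(\tilde f')=\lambda_\gothp(f)$ for all $\gothp$. Now $f$ and $\tilde f'$ share the full Hecke eigensystem (tame and at $p$), and the $q$-expansion principle gives $f=\tilde f'\in\widetilde S_{(\kb,w)}(K_1(\gothN)^p\Iw_p,L_\wp)$. No $\Theta$-map analysis and no separate cusp-vanishing argument are needed.
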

\begin{proof}
Let $\pi_{\tilde f}$ be the automorphic representation generated by $\tilde f$. Then $\pi_{\tilde f}$ has conductor $\gothc$ dividing $p\gothN$. We denote by $\Delta_{\tilde f}(\gothN)$ the set of  $K_1(\gothN)^p\Iw_{p}$-eigenforms contained in $\pi_{\tilde f}$,
 i.e. the set of the various $\gothq$-stabilizations of the newform in $\pi_{\tilde f}$ with  $\gothq$  dividing $p\gothN/\gothc$.
Since $f$ and $\tilde f$ have the same tame   Hecke eigensystem,  the (semisimple) $p$-adic Galois representation  $\rho_f$ is isomorphic to $\rho_{\tilde f}$ (or more canonically $\rho_{\pi_{\tilde f}}$) by Chebotarev density. 
In particular, $\DD_{\cris}(\rho_{f}|_{\Gal_{F_{\gothp}}})\simeq \DD_{\cris}(\rho_{\tilde f}|_{\Gal_{F_{\gothp}}})$ for every $\gothp\in \Sigma_p$. 
If $\lambda_{\gothp}(f)$ denotes the eigenvalue of $U_{\gothp}$ on $f$, then $\lambda_{\gothp}(f)$ appears as an eigenvalue of $\Frob_{\gothp}$ on $\DD_{\cris}(\rho_{\tilde f}|_{\Gal_{F_{\gothp}}})$ by Proposition~\ref{P:Galois representation for an overconvergent Hilbert modular form}.
  Then there exists $\tilde f'\in \Delta_{\tilde f}$ such that $\lambda_{\gothp}(f)=\lambda_{\gothp}(\tilde f')$ for all primes $\gothp\in \Sigma_p$. We conclude by the $q$-expansion principle that $f=\tilde f'$. 
\end{proof}

\begin{theorem}[Strong classicality]
\label{T:strong classicality}
Let $f$ be a cuspidal overconvergent Hilbert eigenform of multiweight $(\underline{k}, w)$ of  level $K_1(\gothN)=K_1(\gothN)^pK_p$ with $K_p=\GL_2(\cO_F\otimes\Z_p)$. 
Let
$$
d^{g-1}: \scrC^{g-1}=\bigoplus_{\tau\in\Sigma_{\infty}}S^{\dagger}_{(s_{\Sigma_{\infty}-\{\tau\}}\cdot\kb,w)}(K_1(\gothN),L_{\wp})\xra{\sum_{\tau\in \Sigma_{\infty}}
\Theta_{\tau, k_\tau-1}} \scrC^{g}=S^{\dagger}_{(\kb,w)}(K_1(\gothN),L_{\wp})
$$
denote the $(g-1)$st differential morphism of the complex $\scrC^\bullet$ \eqref{Equ:complex}. 
\begin{enumerate}
\item[(1)] If $f$ is not in the image  of $d^{g-1}$, then $f$ lies in $\widetilde S_{(\kb,w)}(K_1(\gothN)^p\Iw_p, L_{\wp})$.

 \item[(2)] For each $\gothp \in \Sigma_p$, let $\lambda_\gothp$ denote the eigenvalue of $f$ for the operator $U_\gothp$.
If 
\begin{equation}\label{E:strong-slope-bound}
\val_p(\lambda_\gothp) < \sum_{\tau \in \Sigma_{\infty/\gothp}} \frac{w-k_\tau}{2}+ \min_{\tau \in \Sigma_{\infty/\gothp}}\{k_\tau - 1\}
\end{equation}
 for each $\gothp \in \Sigma_\gothp$, then $f$ is lies in $S_{(\kb,w)}(K_1(\gothN)^p \Iw_p,L_{\wp})$.
 \end{enumerate}
\end{theorem}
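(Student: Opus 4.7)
The plan is to prove part (1) first and then derive part (2) from it via a slope analysis.

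\textbf{Reduction of (2) to (1).} Assume the slope bound \eqref{E:strong-slope-bound} and suppose for contradiction $f = d^{g-1}\bigl(\sum_{\tau_{0}\in\Sigma_{\infty}} g_{\tau_{0}}\, e_{\Sigma_{\infty}\setminus\{\tau_{0}\}}\bigr)$ with each $g_{\tau_{0}} \in S^{\dagger}_{\epsilon_{\Sigma_{\infty}\setminus\{\tau_{0}\}}(\kb,w)}$. Since the $U_{\gothp}$ act completely continuously on overconvergent forms and commute with the Hecke-equivariant differential $d^{g-1}$, I project each $g_{\tau_{0}}$ onto the simultaneous generalized $\{U_{\gothp}=\lambda_{\gothp}\}_{\gothp\in\Sigma_{p}}$-eigenspace; the resulting preimage still maps to $f$, and its components are generalized $U_{\gothp}$-eigenvectors with eigenvalue $\lambda_{\gothp}$. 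For each $\tau_{0}\in\Sigma_{\infty}$, pick any $\gothp\in\Sigma_{p}$ with $\tau_{0}\in\Sigma_{\infty/\gothp}$: Corollary~\ref{Prop:slopes-ocv} applied to $J = \Sigma_{\infty}\setminus\{\tau_{0}\}$ yields
\[
\val_{p}(\lambda_{\gothp}) \geq \sum_{\tau\in\Sigma_{\infty/\gothp}} \tfrac{w-k_{\tau}}{2} + (k_{\tau_{0}}-1) \geq \sum_{\tau\in\Sigma_{\infty/\gothp}} \tfrac{w-k_{\tau}}{2} + \min_{\tau\in\Sigma_{\infty/\gothp}}(k_{\tau}-1),
\]
contradicting \eqref{E:strong-slope-bound}. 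Hence the projected $g_{\tau_{0}}$ vanishes for every $\tau_{0}$, so $f = 0$, contradicting that $f$ is a nonzero eigenform. Therefore $f \notin \im(d^{g-1})$ and part (1) gives $f \in \widetilde{S}_{(\kb,w)}(K_{1}(\gothN)^{p}\Iw_{p}, L_{\wp})$. Finally, the only non-cuspidal classical forms contained in $\widetilde{S}$ are the critical $p$-stabilizations of Eisenstein series (only arising at parallel weight), which have $U_{\gothp}$-slope $d_{\gothp}(k-1) \geq \min_{\tau}(k_{\tau}-1)$, again violating \eqref{E:strong-slope-bound}; so in fact $f \in S_{(\kb,w)}(K_{1}(\gothN)^{p}\Iw_{p}, L_{\wp})$.

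\textbf{Proof of (1).} Given $f \notin \im(d^{g-1})$, the class $[f]$ is a nonzero Hecke eigenclass in $H^{g}(\scrC^{\bullet}_{K}) \cong H^{g}_{\rig}(X^{\tor,\ord}, \D; \F^{(\kb,w)})$ by Theorem~\ref{Theorem:overconvergent}. I would chase the tame Hecke eigensystem $\lambda$ of $f$ through the spectral sequence of Proposition~\ref{P:spectral-sequence}, locating it as a subquotient of some $H^{\star}_{c,\rig}(Y_{\ttT}, \scrD^{(\kb,w)})$; Proposition~\ref{P:comparison-rig-et} transfers the question to \'etale cohomology, and Proposition~\ref{C:cohomology of GO-strata}---built on the $(\PP^{1})^{I_{\ttT}}$-bundle description of Theorem~\ref{T:GO-Hilbert} and the automorphic description of quaternionic cohomology in Theorem~\ref{T:cohomology of Shimura variety}---identifies the resulting Hecke module with sums of $(\pi^{\infty}_{\ttS(\ttT)})^{K_{\ttS(\ttT)}}$ for classical $\pi \in \scrA_{(\kb,w)} \cup \scrB_{w}$. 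Thus $\lambda$ matches the tame Hecke eigensystem of some such $\pi$, and a classical $p$-stabilization $\tilde{f} \in \widetilde{S}_{(\kb,w)}(K_{1}(\gothN)^{p}\Iw_{p}, L_{\wp})$ of $\pi$ shares this eigensystem with $f$; Corollary~\ref{C:classicality checked by tame eigenvalues} then delivers $f \in \widetilde{S}_{(\kb,w)}(K_{1}(\gothN)^{p}\Iw_{p}, L_{\wp})$.

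\textbf{The main obstacle} is that Proposition~\ref{C:cohomology of GO-strata} is only an identity in the Grothendieck group, so a priori the eigensystem $\lambda$ could appear in $H^{\star}_{c,\rig}(Y_{\ttT}, \scrD^{(\kb,w)})$ but cancel with contributions in other degrees or on other strata, preventing the direct matching with a classical $\pi$. Ruling out such cancellation is the hard part. My plan is to combine purity (Deligne--Weil II, applicable since $Y_{\ttT}$ is smooth and proper for $\ttT \neq \emptyset$, with cuspidal contributions in $Y_{\emptyset}$ living only in the middle interior cohomology) with an interpolation of the Galois representation $\rho_{f}$ from Proposition~\ref{P:Galois representation for an overconvergent Hilbert modular form} over the Kisin--Lai eigenvariety. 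This interpolation uses the Zariski density of classical points (Corollary~\ref{C:zariski density}, coming from the weak classicality Proposition~\ref{P:weak classicality}) together with the global triangulation / crystalline period theory of \cite{kedlaya-pottharst-xiao,liu}, propagating the isomorphism $\rho_{?} \cong \rho_{\pi}$ from nearby classical test points to $f$, and thereby forcing $\rho_{f} \cong \rho_{\pi}$ for some classical $\pi$, which supplies the needed matching tame Hecke eigensystem.
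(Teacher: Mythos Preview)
Your overall architecture for part (1) is right: pass to a nonzero class in $H^{g}_{\rig}$ via Theorem~\ref{Theorem:overconvergent}, identify its tame Hecke eigensystem with that of a classical automorphic $\pi$, and finish with Corollary~\ref{C:classicality checked by tame eigenvalues}. The reduction of (2) to (1) via Corollary~\ref{Prop:slopes-ocv} is also correct and matches the paper. However, two points need attention.

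First, there is a genuine gap: you do not exclude the possibility $\pi\in\scrB_{w}$. If the eigensystem of $f$ matched that of a one-dimensional $\pi=\chi\circ\det$, there would be \emph{no} classical $\tilde f\in\widetilde S_{(\kb,w)}(K_{1}(\gothN)^{p}\Iw_{p},L_{\wp})$ with that eigensystem, so Corollary~\ref{C:classicality checked by tame eigenvalues} cannot be invoked. The paper's fix is short and does not require any interpolation over the eigenvariety: by Proposition~\ref{P:Galois representation for an overconvergent Hilbert modular form} the representation $\rho_{f}$ is $2$-dimensional, while the Galois character $\rho_{\pi,p}$ attached to $\chi$ is $1$-dimensional; equality of $T_{\gothl}$-eigenvalues for $\gothl\nmid p\gothN$ plus Chebotarev would force $\rho_{f}^{\mathrm{ss}}$ and $\rho_{\pi,p}$ to have the same traces, which is absurd by the dimension mismatch. (Eisenstein $\pi$ need not be excluded in part (1) since those do contribute to $\widetilde S$; this is why the Eisenstein ruling-out happens only in part (2).)

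Second, your ``main obstacle'' is misdiagnosed, and the proposed remedy is unnecessary. You worry that Proposition~\ref{C:cohomology of GO-strata} is only a Grothendieck-group identity, but what you actually need is the much softer statement of Theorem~\ref{T:rigid coh of ordinary}(1): each \emph{individual} $H^{n}_{c,\rig}(Y_{\ttT}/L_{\wp},\scrD^{(\kb,w)})$ is a sum of Hecke modules coming from classical automorphic representations. For $\ttT\neq\emptyset$ this follows from the degree-by-degree \'etale/rigid comparison (Proposition~\ref{P:comparison-rig-et}, second part, which uses purity since $Y_{\ttT}$ is smooth proper) together with Theorem~\ref{T:GO-Hilbert} and the Matsushima-type decomposition of the cohomology of the target (compact) quaternionic Shimura variety; for $\ttT=\emptyset$ it is classical de Rham theory. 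Since $H^{g}_{\rig}$ is a \emph{subquotient} of $\bigoplus_{\ttT,n} H^{n}_{c,\rig}(Y_{\ttT})$ via the spectral sequence, any eigensystem occurring there already occurs in one of the pieces, hence is classical. No cancellation argument is needed, and the eigenvariety interpolation you sketch (``propagating $\rho_{?}\cong\rho_{\pi}$ from nearby classical points'') plays no role here; the only place interpolation enters is in the \emph{construction} of $\rho_{f}$, already packaged in Proposition~\ref{P:Galois representation for an overconvergent Hilbert modular form}.
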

\begin{proof}
(1) By Corollary~\ref{C:classicality checked by tame eigenvalues}, it suffices to prove the tame Hecke eigenvalues of $f$ coincide with those of a classical cuspidal eigenform. 
By  Theorem~\ref{Theorem:overconvergent}, $f$ gives rise to a non-zero cohomology class in $H^{g}_{\rig}(X^{\tor,\ord}, \D;\F^{(\kb,w)})$.  It follows then from Theorem~\ref{T:rigid coh of ordinary}(1) that 
 the tame Hecke eigenvalues of $f$ come from an automorphic representation $\pi$ of $\GL_{2,F}$ whose central character is an algebraic Hecke character with archimedean part $N_{F/\Q}^{w-2}$. Such a $\pi$  might be cuspidal, one-dimensional or Eisenstein.
 We have to exclude the case of one-dimensional representation, and then (1) would follow from Corollary~\ref{C:classicality checked by tame eigenvalues}.
 Assume in contrary that $f$ has the same tame Hecke spectrum as a one-dimensional automorphic representation:
 $$
 \GL_2(\AAA_F)\xra{\det}\AAA^{\times}_F\xra{\chi}\C^{\times}.
 $$ 
 Then $\chi$ is an algebraic Hecke character whose restriction to $(F\otimes \R)^{\times, \circ}$ is $N_{F/\Q}^{w/2-1}$. Via the fixed isomorphism $\iota_p:\C\simeq \Qpb$,  we have a well-defined $p$-adic character on $\AAA_F^{\times}$
  $$
  \chi_{p}: x\mapsto (\chi(x)\cdot N_{F/\Q}(x_{\infty})^{1-\frac w2}) N_{F/\Q}(x_{p})^{\frac w2-1}\in \Qpb^{\times},
  $$
  where  $x_{\infty}$ and $x_p$ are respectively the archimedean and the $p$-adic component of $x$. Note that $\chi_{p}$ is trivial on $(F\otimes \R)^{\times, \circ}\cdot F^{\times}$. By class field theory, it defines a $p$-adic character of $\Gal_F$. We put $\rho_{\pi,p}=\chi_p^{-1}$.
   Then $\rho_{\pi, p}$ is a one-dimensional Galois representation of $\Gal_F$ such that if $\gothl\nmid p\gothN$ is a place of $F$, $\Tr(\rho_{\pi, p}(\Frob_{\gothl}))$ is equal to the eigenvalue of the Hecke operator $T_{\gothl}$ on $f$.
   However, Proposition~\ref{P:Galois representation for an overconvergent Hilbert modular form} implies that  there is a two-dimensional $p$-adic representation $\rho_f$ of $\Gal_F$ satisfying the same property. Hence, we have $\Tr(\rho_{\pi,p}(\Frob_{\gothl}))=\Tr(\rho_f(\Frob_{\gothl}))$ for all unramified primes $\gothl$. By Chebotarev density, this implies that the semi-simplification of $\rho_f$ is equal to $\rho_{\pi,p}$, which is absurd. This finishes the proof of the first part.

(2) If $f$ satisfies the condition \eqref{E:strong-slope-bound}, then Corollary~\ref{Prop:slopes-ocv} implies that $f$ is not  in the image of $d^{g-1}$. Hence, $f$ is a classical (cuspidal) Hilbert eigenform by the first part of the Theorem. Such an $f$ can not be Eisenstein either, because an Eisenstein series of  (parallel) weight $(\kb,w)$ appearing in  $H^{g}_{\rig}(X^{\tor,\ord},\D;\F^{(\kb,w)})$ must have $(\prod_{\gothp\in \Sigma_p}U_{\gothp})$-slope $(w-1)g$, contradicting with the inequality \eqref{E:strong-slope-bound}. Thus, $f$ must be a classical cuspidal Hilbert modular form.
\end{proof}


\section{Computation of the Rigid Cohomology II}
\label{Section:classicality-II}

We keep the notation of the previous section. 
If we assume that Conjecture~\ref{Conj: partial frobenius} holds (e.g. when $p$ is inert in $F$ by Proposition~\ref{T:total Frobenius action}(3)), we can strengthen the weak cohomology computation Theorem~\ref{T:rigid coh of ordinary} to a stronger version by including the action of $U_\gothp^2$'s.

\begin{theorem}[Strong cohomology computation]
\label{T:rigid coh of ordinary strong}
Assume Conjecture~\ref{Conj: partial frobenius}.
We have the following isomorphism of modules in the Grothendieck group of finite-dimensional $\scrH(K^p, L_{\wp})[S_\gothp,S_\gothp^{-1}, U_\gothp^2: \gothp \in \Sigma_p]$-modules.
\begin{equation}
\label{E:rigid coh of ordinary strong}
\big[H^*_\rig(X^{\tor,\ord}, \D; \scrF^{(\underline{k}, w)}) \big] =
(-1)^g\cdot  \big[ S_{(\underline{k},w)}(K^p\Iw_p,L_{\wp}) \big].
\end{equation}
\end{theorem}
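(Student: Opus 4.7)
The plan is to upgrade the argument of Theorem~\ref{T:rigid coh of ordinary} so as to also keep track of the action of $U_\gothp^2$ for each $\gothp \in \Sigma_p$. First, I would combine the spectral sequence of Proposition~\ref{P:spectral-sequence}, the comparison of Proposition~\ref{P:comparison-rig-et}, and the \emph{strong} form of Proposition~\ref{C:cohomology of GO-strata} (whose proof requires Conjecture~\ref{Conj: partial frobenius}) to express $[H^*_\rig(X^{\tor,\ord}, \D; \scrF^{(\underline k, w)})]$ as an alternating sum over subsets $\ttT \subseteq \Sigma_\infty$ of classes of the form $(\pi^\infty_{\ttS(\ttT)})^{K_{\ttS(\ttT)}} \otimes \tilde\rho^{\ttS(\ttT)}_{\pi,l} \otimes (\Qlb \oplus \Qlb(-1))^{\otimes I_\ttT}$ for $\pi \in \scrA_{(\kb,w)} \cup \scrB_w$, endowed with an explicit action of $\Phi_{\gothp^2}^{n_\gothp}$, $S_\gothp^{\pm 1}$ and the tame Hecke algebra.

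The key bridge between this Frobenius-theoretic description and the $U_\gothp^2$-action on classical cusp forms is the relation
\[
U_\gothp^2 \cdot \Phi_{\gothp^2} = N_{F/\Q}(\gothp)^2 \cdot S_\gothp,
\]
which follows from Lemma~\ref{Lemma:Frob-U_p} together with $\Phi_{\gothp^2} = \Fr_\gothp^2 \cdot S_\gothp^{-1}$. On the finite-dimensional, slope-bounded pieces of $H^*_\rig(X^{\tor,\ord}, \D; \scrF^{(\underline k,w)})$, the operator $\Phi_{\gothp^2}$ is invertible, so this relation determines $U_\gothp^2$ from $\Phi_{\gothp^2}$ and $S_\gothp$. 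Parallel to this, on the classical side at Iwahori level, the standard Eichler--Shimura-type relations identify the $U_\gothp$-eigenvalues of each $p$-stabilization of $\pi \in \scrA_{(\underline k, w)}$ with the Satake parameters $\alpha_\gothp, \beta_\gothp$ of $\rho_{\pi, l}|_{\Gal_{F_\gothp}}$.

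Next, I would separate the contribution of each $\pi$ over the primes in $\Sigma_p$, as in the factorization \eqref{E:cohomology factor into p}. For each $\gothp$, one needs an identity matching
\[
\sum_{\ttT_\gothp \subseteq \Sigma_{\infty/\gothp}} (-1)^{\#\ttT_\gothp + \#\Sigma_{\infty/\gothp} - \#\ttS(\ttT)_{\infty/\gothp}} \bigl[ (\pi_{\ttS(\ttT),\gothp})^{K_{\ttS(\ttT),\gothp}} \otimes (\tilde\rho_{\pi,l}^{\ttS(\ttT)})_\gothp \otimes (\Qlb \oplus \Qlb(-1))^{\otimes (\ttS(\ttT)_{\infty/\gothp}-\ttT_\gothp)} \bigr]
\]
with $[\pi_\gothp^{\Iw_\gothp}]$ as an $L_\wp[U_\gothp^2, S_\gothp^{\pm 1}]$-module, up to the overall sign $(-1)^{\#\Sigma_{\infty/\gothp}}$. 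This splits into three cases according to whether $\pi_\gothp$ is unramified principal series, Steinberg or supercuspidal (in the last case all terms vanish on both sides). The conversion between the Frobenius eigenvalues on the tensor-induced representation $\tilde\rho^{\ttS(\ttT)}_{\pi,l}$ and the $U_\gothp^2$-eigenvalues is then provided by the explicit scalar $\omega_\pi(\varpi_\gothp)^{-n_\gothp(1-\#\ttS_{\infty/\gothp}/d_\gothp)}$ in Conjecture~\ref{Conj: partial frobenius}, combined with the displayed relation above.

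The main obstacle will be the combinatorial bookkeeping in the unramified principal series case: the conjectural action of $\Phi_{\gothp^2}^{n_\gothp}$ on each tensor-induced factor has eigenvalues of the form $\alpha_\gothp^{2i}\beta_\gothp^{2(d_\gothp - \#\ttS_{\infty/\gothp} - i)}$ (up to a uniform twist by a power of $\omega_\pi(\varpi_\gothp)$), and one must verify that after summing over all $\ttT_\gothp$ with the prescribed signs, and after folding in the $(\Qlb \oplus \Qlb(-1))$-factors coming from the $\PP^1$-bundle structure of the GO-strata in Theorem~\ref{T:GO-Hilbert}, precisely the two ``extremal'' $U_\gothp^2$-eigenvalues $\alpha_\gothp^2$ and $\beta_\gothp^2$ survive, each with multiplicity one, in agreement with the two $p$-stabilizations at Iwahori level. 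A secondary task will be to verify that the Eisenstein contributions from $\scrB_w$ and the stray $\Qlb, \Qlb(-1)$ factors cancel out in the final alternating sum, exactly as in the tame-only computation underlying Theorem~\ref{T:rigid coh of ordinary}, but now tracked with the refined $U_\gothp^2$-action.
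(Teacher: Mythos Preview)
Your overall strategy is correct and matches the paper's: reduce via the spectral sequence and Propositions~\ref{P:comparison-rig-et} and \ref{C:cohomology of GO-strata}, factor over primes $\gothp\in\Sigma_p$, use the relation $U_\gothp^2\Phi_{\gothp^2}=N_{F/\Q}(\gothp)^2S_\gothp$, and then treat each $\pi$ and each $\gothp$ separately. The Steinberg and supercuspidal cases are indeed easy.

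However, the two steps you flag as ``obstacles'' are more serious than your outline suggests, and your description of them is slightly off.

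\textbf{The $\scrB_w$ case.} You say the cancellation should proceed ``exactly as in the tame-only computation'' of Theorem~\ref{T:rigid coh of ordinary}. That argument was a bare binomial identity $(2-1)^g=1$, which says nothing about the $\Phi_{\gothp^2}$-action. With the refined action, one must show an equality in the Grothendieck group of $\Qlb[\Phi]$-modules, where $\Phi$ genuinely permutes the summands indexed by $\ttT$. The paper handles this by interpreting the relevant alternating sum as the tensorial induction of a two-term \emph{complex} $(\Qlb\oplus\Qlb f\twoheadrightarrow\Qlb e)$ and using that tensorial induction preserves quasi-isomorphisms. This is a new trick, not a rerun of the tame computation.

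\textbf{The unramified cuspidal case.} You describe the problem as matching eigenvalues $\alpha_\gothp^{2i}\beta_\gothp^{2(d_\gothp-\#\ttS_{\infty/\gothp}-i)}$ and checking that only the extremal ones survive. But $\Phi_{\gothp^2}$ does not act on each $\ttT$-summand separately: it sends the $\ttT$-summand to the $\sigma_\gothp^{-2}\ttT$-summand, so the relevant $\Qlb[\Phi]$-modules are of the form $\Qlb[\Phi]/(\Phi^r-\lambda)$ with $r$ depending on the orbit of $\ttT$ under $\sigma_\gothp^{-1}$. The cancellation is therefore not an eigenvalue count but a statement about cyclic $\Qlb[\Phi]$-modules of varying ranks. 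The paper organizes this by encoding each term as a ``cyclic word'' in letters $\alpha,\beta,\alpha'\beta',\overline{\alpha\beta},\overline{\alpha'\beta'}$, grouping words by their ``primitive form'', and pairing words against their ``conjugates'' (toggling overlines). The pairing is straightforward when the period quotient $d/r$ is odd, but when $d/r$ is even the argument becomes delicate: one must stratify orbits under an auxiliary group $\gothG_{2^t}\cong(\ZZ/2\ZZ)^{2^t}$ and use the factorization $\Phi^{2^{t'}\tilde r}-\lambda^{2^{t'}}=\prod_k(\Phi^{2^k\tilde r}+\lambda^{2^k})\cdot(\Phi^{\tilde r}-\lambda)$ to see the cancellation. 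None of this structure is visible from the eigenvalue heuristic in your proposal.
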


Before giving the proof of this theorem,
we deduce a corollary which is slightly stronger than   Theorem~\ref{T:strong classicality}
on the classicality of overconvergent cusp  forms.

\begin{cor}
\label{C:stronger classicality}
Assume Conjecture~\ref{Conj: partial frobenius}.
Let $(\underline{k}, w)$ be a  multiweight.
For every $\gothp\in \Sigma_p$, we put $s_\gothp = \sum_{\tau \in \Sigma_{\infty/\gothp}} \frac{w-k_\tau}{2}+ \min_{\tau \in \Sigma_{\infty/\gothp}}\{k_\tau - 1\}$.
Then the natural injection
\[
S_{(\underline{k},w)}(K^p\Iw_p,L_{\wp})^ {U_\gothp \textrm{-slope} < s_\gothp} \hookrightarrow
S^\dagger_{(\underline{k}
,w)}(K,L_{\wp})^ {U_\gothp \textrm{-slope} < s_\gothp}
\]
is an isomorphism, where the superscript means taking the part of $U_{\gothp}$-slope strictly less than $s_{\gothp}$ for every $\gothp\in\Sigma_p$.
In particular, if $f$ is an overconvergent cusp  form of multiweight $(\underline k,w)$ and level $K^pK_p$ with $K_p=\GL_2(\calO_F \otimes \ZZ_p)$, which is a generalized eigenvector of    $U_\gothp$  with slope strictly less than $s_\gothp$ for every $\gothp \in \Sigma_p$, then $f$ is classical.
\end{cor}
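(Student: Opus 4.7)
The plan is to combine the complex $\scrC^\bullet$ computing the rigid cohomology (Theorem~\ref{Theorem:overconvergent}) with the strong cohomology identity of Theorem~\ref{T:rigid coh of ordinary strong}, after restricting both sides to the slope-$<s_\gothp$ part at each $\gothp\in\Sigma_p$.

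First I would invoke Corollary~\ref{Prop:slopes-ocv} to show that the slope condition forces the subcomplex to be concentrated in the top degree. For any proper subset $J\subsetneq \Sigma_\infty$, there exists some $\gothp\in\Sigma_p$ with $\Sigma_{\infty/\gothp}\not\subseteq J$, and every generalized $U_\gothp$-eigenvector on $S^\dagger_{\epsilon_J(\underline k,w)}(K,L_\wp)$ has $U_\gothp$-slope at least
\[
\sum_{\tau\in\Sigma_{\infty/\gothp}}\tfrac{w-k_\tau}{2}+\sum_{\tau\in\Sigma_{\infty/\gothp}-J}(k_\tau-1)\ \geq\ s_\gothp.
\]
Since $U_\gothp$ commutes with the differentials of $\scrC^\bullet$, the subcomplex $(\scrC^\bullet)^<$ cut out by imposing $U_\gothp$-slope $<s_\gothp$ for every $\gothp$ simultaneously is therefore concentrated in degree $g$ and equal to $(S^\dagger_{(\underline k,w)}(K,L_\wp))^<$. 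Consequently, Theorem~\ref{Theorem:overconvergent} identifies $H^g_\rig(X^{\tor,\ord},\D;\scrF^{(\underline k,w)})^<$ with $(S^\dagger_{(\underline k,w)}(K,L_\wp))^<$ while all other $H^i_\rig(X^{\tor,\ord},\D;\scrF^{(\underline k,w)})^<$ vanish.

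Next I would compare with Theorem~\ref{T:rigid coh of ordinary strong}. Since the condition ``$U_\gothp$-slope $<s_\gothp$'' is equivalent to ``$U_\gothp^2$-slope $<2s_\gothp$'', we may truncate the Grothendieck group identity there to the corresponding slope range, yielding
\[
\big[(S^\dagger_{(\underline k,w)}(K,L_\wp))^<\big] \;=\; \big[S_{(\underline k,w)}(K^p\Iw_p,L_\wp)^<\big]
\]
in the Grothendieck group of finitely generated $\scrH(K^p,\overline L_\wp)[S_\gothp^{\pm 1}, U_\gothp^2;\,\gothp\in\Sigma_p]$-modules. In particular, after fixing any embedding $L_\wp\hookrightarrow \overline L_\wp$, both sides are finite-dimensional of the same total $L_\wp$-dimension.

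Finally, the natural inclusion in the corollary is equivariant for all operators in $\scrH(K^p,\overline L_\wp)[S_\gothp^{\pm 1},U_\gothp^2]$, and an equivariant injection between two finite-dimensional spaces of equal dimension is automatically an isomorphism; this yields the first assertion of the corollary. The last sentence is then immediate: any $f\in S^\dagger_{(\underline k,w)}(K,L_\wp)$ which is a generalized $U_\gothp$-eigenvector with slope strictly less than $s_\gothp$ for every $\gothp$ lies in $(S^\dagger_{(\underline k,w)}(K,L_\wp))^<$, hence in $S_{(\underline k,w)}(K^p\Iw_p,L_\wp)^<$ by the isomorphism just established, so $f$ is classical. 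The one point requiring care is that Theorem~\ref{T:rigid coh of ordinary strong} records the $U_\gothp^2$-action rather than $U_\gothp$ itself, but as the slope condition depends only on $|U_\gothp|=|U_\gothp^2|^{1/2}$, no information is lost in the dimension comparison.
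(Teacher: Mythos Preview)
Your argument is correct and follows essentially the same route as the paper's proof: use Corollary~\ref{Prop:slopes-ocv} to see that the slope-truncated complex $(\scrC^\bullet)^<$ collapses to the single term $(S^\dagger_{(\underline k,w)})^<$ in degree $g$, identify this with the slope-truncated rigid cohomology via Theorem~\ref{Theorem:overconvergent}, and then invoke Theorem~\ref{T:rigid coh of ordinary strong} to match dimensions with the classical space. Your explicit verification that for $J\subsetneq\Sigma_\infty$ the lower bound from Corollary~\ref{Prop:slopes-ocv} already reaches $s_\gothp$, and your remark that the $U_\gothp$-slope condition is visible through $U_\gothp^2$, make two points precise that the paper leaves implicit, but the overall strategy is identical.
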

\begin{proof}
It suffices to compare the dimensions.
First of all, Theorem~\ref{Theorem:overconvergent} implies that
\begin{equation}
\label{E:BGG=rigid}
(\scrC^\bullet)^{U_\gothp \textrm{-slope} < s_\gothp} \cong \big(\mathrm{R}\Gamma_\rig(Y^{\tor, \ord}, \D; \scrF^{(\underline{k}, w)})\big)^ {U_\gothp \textrm{-slope} < s_\gothp}.
\end{equation}
By the slope inequality (Corollary~\ref{Prop:slopes-ocv}), the left hand side of \eqref{E:BGG=rigid} has only one term $(S^\dagger_{(\underline{k}
,w)})^{U_\gothp \textrm{-slope} < s_\gothp}$ in degree $g$.
So the right hand side of \eqref{E:BGG=rigid} is also concentrated in degree $g$.
By Theorem~\ref{T:rigid coh of ordinary strong} above, the right hand side of \eqref{E:BGG=rigid} in the Grothendieck group of finite-dimensional $ L_{\wp}[U_\gothp^2; \gothp \in \Sigma_p]$-modules is equal to $(-1)^g\cdot \big[ S_{(\underline{k},w)}(K^p\Iw_p)^ {U_\gothp \textrm{-slope} < s_\gothp} \big]$.
It follows that  $[(S^\dagger_{(\underline{k}
,w)})^{U_\gothp \textrm{-slope} < s_\gothp}]=[S_{(\underline{k},w)}(K^p\Iw_p)^ {U_\gothp \textrm{-slope} < s_\gothp}]$, and in particular
\[
\dim
(S^\dagger_{(\underline{k}
,w)})^ {U_\gothp \textrm{-slope} < s_\gothp} =\dim
S_{(\underline{k},w)}(K^p\Iw_p)^ {U_\gothp \textrm{-slope} < s_\gothp}.
\]
Therefore, the natural inclusion $S_{(\kb,w)}(K^p\Iw_p)^ {U_\gothp \textrm{-slope} < s_\gothp}\hra S^{\dagger}_{(\kb,w)}(K^pK_p)^ {U_\gothp \textrm{-slope} < s_\gothp}$ must be an isomorphism.
\end{proof}

\begin{remark}
Note that Corollary~\ref{C:stronger classicality} ensures classicality without assuming $f$ to be an eigenform for the tame Hecke operators. Moreover, its proof does not involve any argument from Section~\ref{Section:classicality-I}, especially with no reference to eigencurve and strong multiplicity one result.
\end{remark}

The proof of Theorem~\ref{T:rigid coh of ordinary strong} will occupy rest of the paper. We start by breaking up the theorem into a local computation at each place $\gothp \in \Sigma_p$.

\subsection{Reduction of the proof of Theorem~\ref{T:rigid coh of ordinary strong}}
\label{S:first reduction for the main theorem}
Recall that we have introduced the \v Cech symbols $e_{\tau}$ and $e_{\ttT}$ in Subsections \ref{S:cech-symbol} and \ref{P:spectral-sequence} to encode the action of  $\Fr_{\gothp}$ and $\Phi_{\gothp^2}$.
We need  more \v Cech symbols for the second relative cohomology of the $\PP^1$-bundle and the $\Phi_{\gothp^2}$-action on them.
For each $\tau \in \Sigma_\infty$,
 we introduce a \emph{\v Cech symbol} $f_\tau$ of degree $2$. 
  This means that $f_\tau \wedge f_{\tau'} = f_{\tau'} \wedge f_{\tau}$ and $f_\tau \wedge e_{\tau'} = e_{\tau'} \wedge f_\tau$ for all $\tau, \tau' \in \Sigma_\infty$.
For a subset $\ttT \subseteq \Sigma_\infty$, we put $f_\ttT = \bigwedge_{\tau \in \ttT} f_\tau$ (for a chosen order of $\ttT$).
The action of $\Phi_{\gothp^2}$ on the formal \v Cech symbols is defined to be 
\[
\Phi_{\gothp^2}(e_\tau) = p^2 e_{\sigma^{-2}\tau} \quad \textrm{and} \quad 
\Phi_{\gothp^2}(f_\tau) = p^2 f_{\sigma^{-2}\tau},
\]
for $\tau\in \Sigma_{\infty/\gothp}$,
 and is trivial on  $e_{\tau}$ and $f_{\tau}$ for $\tau\notin \Sigma_{\infty/\gothp}$.
For a subset $\ttT\subset \Sigma_{\infty}$, let $\ttS(\ttT)$ be the subset defined in Subsection~\ref{S:description-GO-strata}, and put $I_\ttT = \ttS(\ttT)_{\infty} - \ttT$ and $I_{\ttT,\gothp}=I_{\ttT}\cap \Sigma_{\infty/\gothp}$.
We fix isomorphisms $\Qpb\simeq \C\simeq \Qlb$ as usual.
Assuming Conjecture~\ref{Conj: partial frobenius}, 
we can conduct a computation finer than  Theorem~\ref{T:rigid coh of ordinary}
in the Grothendieck group of finite-dimensional $\scrH(K^p,\Qpb)[\Phi_{\gothp^2}; \gothp\in \Sigma_p]$-modules: 
\begin{small}
\begin{align*}
&\qquad \qquad
(-1)^g\big[H^\star_\rig(Y^{\tor,\ord}, \D; \scrF^{(\underline{k}, w)}) \otimes_{L_{\wp}}\Qpb\big] 
\\&\ 
\xlongequal{\eqref{E:spectral sequence for ordinary cohomology}}
\sum_{i=0}^g
(-1)^{g-i}
\big[ \bigoplus_{\#\ttT=i}
H^{\star}_{c,\rig}(Y_\ttT/L_\wp, \scrD^{(\underline k, w)})\otimes_{L_{\wp}}\Qpb e_\ttT \big]
\\&
\xlongequal{\textrm{Prop }\ref{P:comparison-rig-et}}
\sum_{i=0}^g
(-1)^{g-i}
\Big[ \bigoplus_{\#\ttT=i}
H^{\star}_{c,\et}(Y_{\ttT,\overline\FF_p}, \scrL_l^{(\underline k, w)})e_\ttT \Big]
\\&
\xlongequal{\textrm{Prop }\ref{C:cohomology of GO-strata}}
\sum_{i=0}^g
(-1)^{g-i}\sum_{j=0}^{i}(-1)^{g-i-j}
\Big[ \bigoplus_{\substack{\#\ttT=i,\\
\textrm{s.t.}\#I_{\ttT}=j}}
e_\ttT
\bigoplus_{\pi \in \scrA_{(\kb,w)}}
 (\pi_{\ttS(\ttT)}^\infty)^{K_{\ttS(\ttT)}}
\otimes \tilde \rho_{\pi,l}^{\ttS(\ttT)} \otimes
\bigotimes_{\tau \in I_{\ttT}}
(\overline \QQ_l \oplus \overline \QQ_l f_\tau)
\Big]
\\&\qquad \qquad
+ \sum_{i=0}^g
(-1)^{g-i}\delta_{\kb,2}
\Big[ \bigoplus_{\#\ttT=i}e_\ttT \bigoplus_{\pi\in \scrB_{w}}(\pi_{\ttS(\ttT)}^{\infty})^{K_{\ttS(\ttT)}}
\otimes \big(
\bigotimes_{\tau \in \Sigma_\infty-\ttT} (\overline \QQ_l \oplus \overline \QQ_l f_\tau) - \delta_{\ttT, \emptyset} \overline \QQ_l \big)
\Big]
\\
&\ =
\sum_{\pi \in \scrA_{(\underline k, w)}}\!\!\!\!
\big[
(\pi^{\infty,p})^{K^p}
\big]
\prod_{\gothp \in \Sigma_p}
\sum_{i_\gothp=0}^{d_\gothp}
\sum_{j_\gothp=0}^{i_\gothp}(-1)^{j_\gothp} 
\Big[\!\!\!\!
\bigoplus_{\substack{\#\ttT_{/\gothp}=i_\gothp,\\\textrm{s.t.}\#I_{\ttT,\gothp} = j_\gothp}}\!\!\!\!\!\!
e_{\ttT_{/\gothp}}  \tilde \rho_{\pi,\gothp,l}^{\ttS(\ttT)_{/\gothp}} \otimes
(\pi_{\ttS(\ttT),\gothp} )^{K_{\ttS(\ttT),\gothp}}
\otimes\!\! \bigotimes_{\tau \in I_{\ttT,\gothp}} (\overline \QQ_l \oplus \overline \QQ_l f_\tau)
\Big] 
\\
&\qquad +\delta_{\kb,2} (-1)^g
\sum_{\pi \in \scrB_w}
\big[ (\pi^{\infty})^{K}
\big] \Big( -[\overline \QQ_l] +
\prod_{\gothp\in\Sigma_p}
\sum_{i_\gothp=0}^{d_\gothp}
(-1)^{i_\gothp}
\big[
\bigoplus_{\#\ttT_{/\gothp}=i_\gothp}
e_{\ttT_{/\gothp}}\!\!\!\!\!\!
\bigotimes_{\tau \in \Sigma_{\infty/\gothp}-\ttT_{/\gothp}} \!\!\!(\overline \QQ_l \oplus \overline \QQ_l f_\tau)\; \big]
\Big),
\end{align*}
\end{small}where $\tilde \rho_{\pi,\gothp,l}^{\ttS(\ttT)_{/\gothp}}$ is isomorphic to $\bigotimes_{\Sigma_{\infty/\gothp} - \ttS(\ttT)_{/\gothp}}\textrm{-}\Ind_{\Gal_{F_\gothp}}^{\Gal_{\QQ_p}} \rho_{\pi, l}|_{\Gal_{F_\gothp}}$
but it carries an action of $\Phi_{\gothp^2}^{n_\gothp}$ , which is the same as the action of $\Frob_{p^{2n_\gothp}}$ on the tensorial induction representation multiplied with the number $\omega_\pi(\varpi_\gothp)^{n_\gothp(d_\gothp-\#\ttS(\ttT)_{/\gothp})/d_\gothp}$.
Here $n_\gothp$ denotes the minimal number such that $\sigma_\gothp^{n_\gothp}\ttT_{/\gothp} = \ttT_{/\gothp}$.

We will show that the long expression above is equal to
\[
\sum_{\pi \in \scrA_{(\underline k, w)}}
[(\pi^{\infty})^{K^p\Iw_p}]
=  \sum_{\pi \in \scrA_{(\underline k, w)}}
[(\pi^{\infty, p})^{K^p}]
\prod_{\gothp \in \Sigma_p}
\big[(\pi_\gothp)^{\Iw_\gothp} \big].
\]
For this, we need only to discuss separately for each $\pi \in \scrA_{(\underline k, w)}$ and $\scrB_w$.
We start with the latter where the computation is much easier.

\subsection{Contribution of the one-dimensional representations}
\label{S:contribution of one-dim repns}
We fix $\pi \in \scrB_w$.
By the discussion in the Subsection above, we need to show that the contribution of the $\pi$-component to $\big[ H^\star_\rig(Y^{\tor,\ord},\D;\scrF^{(\underline k, w)}) \big]$ is trivial.
For this, it is enough to show that, for each $\gothp \in \Sigma_p$, we have, in the Grothendieck group of finite-dimensional $\overline \QQ_l[\Phi_{\gothp}]$-modules,
\begin{equation}
\label{E:combinatorial equality in one-dim case}
\sum_{i_\gothp=0}^{d_\gothp}
(-1)^{i_\gothp}
\Big[
\bigoplus_{\#\ttT_{/\gothp}=i_\gothp}
e_{\ttT_{/\gothp}}
\bigotimes_{\tau \in \Sigma_{\infty/\gothp}-\ttT_{/\gothp}} (\overline \QQ_l \oplus \overline \QQ_l f_\tau) \Big]
\end{equation}
is equal to $[\overline \QQ_l]$, where $\Phi_\gothp$ acts on $\overline \QQ_l$ trivially and
\[
\Phi_\gothp(e_\tau) = pe_{\sigma_\gothp^{-1}\tau}, \quad \textrm{and}\quad
\Phi_\gothp(f_\tau) = pf_{\sigma_\gothp^{-1}\tau}.
\]
(We get back to our original statement by matching the action of $\Phi_{\gothp^2}$ with $\Phi_\gothp^2$.) 
Since the argument will be independent for each place; we will suppress the subscript $\gothp$ for the rest of this subsection.
We also label $e_\tau$ and $f_\tau$'s as $e_1, \dots, e_d$ and $f_1, \dots, f_d$ with the convention that the subscripts are taken modulo $d$ and $\Phi(e_i) = pe_{i-1}, \Phi(f_i) = pf_{i-1}$.

This will follow from some abstract nonsense of tensorial induction.
Recall the setup in Subsection~\ref{S:tensorial-induction}: $G$ a group, $H$ a subgroup of finite index, $\Sigma \subseteq G/H$ a finite subset, and $H'$ the stabilizer group of $\Sigma$ under the action of $G$ on $G/H$.
We fix representatives $s_1,\dots, s_r$ of $G/H$ and assume that $\Sigma = \{s_1 H, \dots, s_r H\}$ for some $r$.
Instead of starting with a representation of $H$, we start with a bounded complex $C^\bullet$ of $\ZZ[H]$-modules.
The \emph{tensorial induced complex}, denoted by $\otimes_\Sigma\textrm{-}\Ind_H^G C^\bullet$, is defined to be $\otimes_{i=1}^r C^\bullet_i$, where $C^\bullet_i$ is a copy of $C^\bullet$.  The action of $H'$ on this complex is given as follows: 
for each  $h'\in H'$, there exists a permutation $j$ of $r$ letters (depending on $h'$) such that for each $i\in \{1, \dots, r\}$,  $j(i) \in \{1,\dots, r\}$ is  the unique element  with  $h's_{j(i)} \in s_i H$. We define the action of $h'$ on $\otimes_\Sigma\textrm{-}\Ind_H^G C^\bullet$ by the linear combination of
\[
\xymatrix@R=0pt{
h' \colon
C_1^{a_1} \otimes \cdots \otimes C_r^{a_r} \ar[rr] &&
C_{1}^{a_{j(1)}} \otimes \cdots \otimes C_{r}^{a_{j(r)}}\\
h'(v_1\otimes \cdots \otimes v_r)
\ar[rr]&&
\mathrm{sgn}(j) \cdot 
(s_1^{-1} h' s_{j(1)})(v_{j(1)}) \otimes \cdots \otimes (s_r^{-1} h' s_{j(r)})(v_{j(r)}).
}
\]
\emph{Note that the sign function for $j$ is inserted to account for the sign convention in the definition of tensor the product of complexes.}
Such a construction does not depend on the choice of the coset representatives $s_i$'s and is functorial in $C^\bullet$. Moreover, it takes quasi-isomorphic complexes to quasi-isomorphic complexes (this is because when checking quasi-isomorphism one can forget about the group action.)

We will consider a particular case of the   tensorial induction:
$G=\ZZ$, $H=d\ZZ$, $\Sigma = G/H = \ZZ/d\ZZ$, and the complex is given by
$\overline \QQ_l \oplus \overline \QQ_l f \to \overline \QQ_l e$ in degrees $0$ and $1$, where the element $d\in H$ acts trivially on $\overline \QQ_l$ and acts by multiplication by $1/p$ on both $e$ and $f$, and the map is given by sending the first copy of $\overline \QQ_l$ to zero and sending $f$ to $e$.
It is clear that we have a quasi-isomorphism
\[
\overline \QQ_l \cong \otimes_\Sigma\textrm{-}\Ind_H^G \overline \QQ_l \xrightarrow{\textrm{quasi-isom}}
\otimes_\Sigma\textrm{-}\Ind_H^G\big( \overline \QQ_l \oplus \overline \QQ_l f \twoheadrightarrow \overline \QQ_l e\big).
\]
The upshot is that if we think of the action of $-1 \in G$ is the action of $\Phi$ from \eqref{E:combinatorial equality in one-dim case}, the expression \eqref{E:combinatorial equality in one-dim case} is exactly the image of the complex $\otimes_\Sigma\textrm{-}\Ind_H^G\big( \overline \QQ_l \oplus \overline \QQ_l f \twoheadrightarrow \overline \QQ_l e\big)$ in the Grothendieck group of finite-dimensional $\overline \QQ_l[\Phi]$-modules.
Then Theorem~\ref{T:rigid coh of ordinary strong} for one-dimensional $\pi$ would follow from this.

We now examine carefully the construction of the tensorial induction for $\overline \QQ_l \oplus \overline \QQ_l f \twoheadrightarrow \overline \QQ_l e$.
It is first of all isomorphic to $\otimes_{i=1}^d\big(\overline \QQ_l \oplus \overline \QQ_l f_i \twoheadrightarrow \overline \QQ_l e_i\big)$.  To properly account for the sign involved in tensorial induction, we need to declare that $e_i$ has degree $1$ as a \v Cech symbol.
The statement is now clear.

\subsection{Contribution of the cuspidal representations}

To prove the final statement in Subsection~\ref{S:first reduction for the main theorem} for a representation $\pi \in \scrA_{(\underline k,w)}$, we need to prove that, for each $\gothp \in \Sigma_p$, in the Grothendieck group of finite-dimensional $\overline \QQ_l[\Phi_{\gothp^2}]$-modules, we have an equality
\begin{equation}
\label{E:equality in the cuspidal case}
\big[(\pi_\gothp)^{\Iw_\gothp}\big] = 
\sum_{i_\gothp=0}^{d_\gothp}
\sum_{j_\gothp=0}^{i_\gothp}(-1)^{j_\gothp} 
\Big[
\bigoplus_{\substack{\#\ttT_{/\gothp}=i_\gothp,\\\textrm{s.t.}\#I_{\ttT,\gothp} = j_\gothp}}\!\!\!\!
e_{\ttT_{/\gothp}}  \tilde \rho_{\pi,\gothp,l}^{\ttS(\ttT)_{/\gothp}} \otimes
(\pi_{\ttS(\ttT),\gothp} )^{K_{\ttS(\ttT),\gothp}}
\otimes \bigotimes_{\tau \in I_{\ttT,\gothp}} (\overline \QQ_l \oplus \overline \QQ_l f_\tau) \Big],
\end{equation}
where $\Phi_{\gothp^2}$ acts on $(\pi_\gothp)^{\Iw_\gothp}$ by $N_{F/\Q}(\gothp)^2 S_\gothp / U_\gothp^2$.

When $\pi_\gothp$ is ramified, the left hand side \eqref{E:equality in the cuspidal case} is nonzero if and only if $\pi_\gothp$ is Steinberg, in which case it is one-dimensional with trivial $\Phi_{\gothp^2}$-action.
All terms on the right hand side of \eqref{E:equality in the cuspidal case} is zero except when $\ttT_{/\gothp} = \Sigma_{\infty/\gothp}$ and hence $I_{\ttT,\gothp} =\ttS(\ttT)_{\infty/\gothp}-\ttT_{/\gothp}=\emptyset$, in which case the contribution is one-dimensional with trivial $\Phi_{\gothp^2}$ if $\pi_\gothp$ is Steinberg and zero otherwise.  Both sides agree.

It is then left to prove \eqref{E:equality in the cuspidal case} when $\pi_\gothp$ is unramified.
Let $\alpha$ and $\beta$ be two eigenvalues of $\Frob_\gothp$-action on $\rho_{\pi, l}$ and let $\{v,w\}$ be a (generalized) eigenbasis  corresponding to the two eigenvalues respectively. Then the $S_\gothp$-eigenvalue on $\pi_\gothp$ is $\omega_\pi(\varpi_\gothp^{-1}) =\alpha \beta /p^{d_{\gothp}} $.
We take a square root $\lambda$ of $\alpha \beta / p^{d_\gothp}$ and put $\alpha_0 = \alpha / \lambda$ and $\beta_0 =\beta/ \lambda$ so that $\alpha_0  \beta_0 = p^{d_\gothp}$.
Then $\Phi_{\gothp^2}$ acts on $(\pi_\gothp)^{\Iw_p}$
with eigenvalues $\alpha^2_0$ and $\beta^2_0$.
We need to match this with the right hand side of \eqref{E:equality in the cuspidal case}.

We write $d=d_{\gothp}$, $i=i_{\gothp}$, $j=j_{\gothp}$,  and $\Phi=\Phi_{\gothp}$ to simplify the notation. 
We  label $\Sigma_{\infty/\gothp}$ by $\{\tau_1, \dots, \tau_d\}$ so that $\sigma(\tau_i) = \tau_{i+1}$ and $\tau_i = \tau_{i \pmod d}$.
We now try to rewrite the right hand of \eqref{E:equality in the cuspidal case} so that it is easier to work with.
We write $\tilde \rho_{\pi, \gothp, l}^{\ttS(\ttT)}$ as the vector space
$
\otimes_{\tau \in \Sigma_{\infty/\gothp}-\ttS(\ttT)_{\infty/\gothp}} (\Qlb v_{\tau} \oplus \Qlb w_{\tau})
$,
with the convention that the operator $\Phi$ acts on symbols by
\[
\Phi (v_{\tau_i}) = 
\left\{
\begin{array}{ll}
v_{\tau_{i-1}} &\textrm{if } i \neq 1,\\
\alpha_0 v_{\tau_n} &\textrm{if }i =1,
\end{array}
\right.
\quad \textrm{and}\quad
\Phi (w_{\tau_i}) = 
\left\{
\begin{array}{ll}
w_{\tau_{i-1}} &\textrm{if } i \neq 1,\\
\beta_0 w_{\tau_n} &\textrm{if }i =1.
\end{array}
\right.
\]
It is straightforward to check that the action of $\Phi_{\gothp^2}^d$ on $\tilde \rho_{\pi, \gothp, l}^{\ttS(\ttT)}$ is exactly the action of $\Phi^{2d}$ on $
\otimes_{\tau \in \Sigma_{\infty/\gothp}-\ttS(\ttT)_{\infty/\gothp}} (\Qlb v_{\tau} \oplus \Qlb w_{\tau})
$ given as above.
We also view these $v_\tau$'s and $w_\tau$'s as \v Cech symbols of degree $0$; namely, they commute with other \v Cech symbols.

 We can rewrite the right hand side of \eqref{E:equality in the cuspidal case} as 
 \begin{small}
\begin{equation}
\label{E:expression of a bunch of tensors}
\sum_{i=0}^{d}
\sum_{j=0}^{i}(-1)^{j} 
\Big[
\bigoplus_{\substack{\#\ttT_{/\gothp}=i,\\ \textrm{s.t.}\#I_{\ttT,\gothp} = j}}
e_{\ttT}  
\bigotimes_{\tau \in \Sigma_{\infty/\gothp}-\ttS(\ttT)_{\infty/\gothp}} (\Qlb v_{\tau} \oplus \Qlb w_{\tau})
 \otimes
(\pi_{\ttS(\ttT),\gothp} )^{K_{\ttS(\ttT),\gothp}}
\otimes \bigotimes_{\tau \in I_{\ttT,\gothp}} (\overline \QQ_l \oplus \overline \QQ_l f_\tau) \Big],
\end{equation}
\end{small}
where $\Phi$ sends $e_\tau$ to $pe_{\sigma^{-1} \tau}$ and $f_\tau$ to $pf_{\sigma^{-1}\tau}$, and it acts trivially on $(\pi_{\ttS(\ttT),\gothp} )^{K_{\ttS(\ttT),\gothp}}$.
We need to show that \eqref{E:expression of a bunch of tensors}, in the Grothendieck group of finite-dimensional $\Qlb[\Phi]$-modules, is equal to the two-dimensional $\Qlb$-vector space where $\Phi$ acts with eigenvalues $\alpha_0$ and $\beta_0$.

We quickly point out that each term appearing in the expression above is of the form $\overline \QQ_l b_{\tau_1} \wedge \cdots \wedge b_{\tau_d}$, where each $b_{\tau_i}$ is exactly one of $e_{\tau_i}$, $f_{\tau_i}$, $v_{\tau_i}$, $w_{\tau_i}$ or empty.

\subsection{Cyclic words}

We now introduce some combinatorial way to describe terms and their contributions in \eqref{E:expression of a bunch of tensors}.
For each such expression above, we associated a \emph{cyclic word} (of length $d$), that is a 
 word $(a_1\cdots a_d)$ composed of letters of the following kinds:
\begin{itemize}
\item single letters $\alpha$ and $\beta$,
\item or short combinations: $\alpha'\beta'$, $\overline{\alpha \beta}$, and $\overline{\alpha'\beta'}$,
\end{itemize}
with the understanding that the last letter is considered adjacent to the first one, and the convention that $a_r = a_{r \bmod d}$.
The short combinations are viewed as two letters which always come together.
For example, $(\bar \beta \alpha\alpha \beta \bar \alpha)$ is a cyclic word, but $(\bar \alpha\alpha \beta \overline{\alpha \beta})$ is not.
We may \emph{rotate} each cyclic word by changing it from $(a_1\cdots a_d)$ to $(a_2 \cdots a_da_1)$.
The \emph{period} of a cyclic word $w$, denoted by $\per(w)$, is the minimal $r \in \NN$ such that $r$ times rotation of $w$   gives $w$ back.
In this case, $(a_1\cdots a_r)$ can be also viewed as a cyclic word (of length $r$).
We always have $\per(w)|d$.
Two words are called \emph{equivalent} if one may be turned into another using rotations.
For a cyclic word $w$, we use $[w]$ to denote its equivalence class.

To each term $\Qlb b_{\tau_1} \wedge \cdots \wedge b_{\tau_d}$ of \eqref{E:expression of a bunch of tensors}, we associate a cyclic word $(a_1\cdots a_d)$ as follows:
\begin{itemize}
\item
if $b_{\tau_i} = v_{\tau_i}$, we put $a_i = \alpha$;
\item
if $b_{\tau_i} = w_{\tau_i}$,
we put $a_i = \beta$;
\item
if $b_{\tau_i}= \emptyset$, then $b_{\tau_{i+1}}=e_{\tau_{i+1}}$ (i.e. $\tau_{i+1}\in \ttT$) and 
we put $a_ia_{i+1} = \overline{\alpha\beta}$;
\item
if $b_{\tau_i}= f_{\tau_i}$, then $b_{\tau_{i+1}}=e_{\tau_{i+1}}$ (i.e. $\tau_{i+1}\in \ttT$) and 
we put $a_ia_{i+1} = \overline{\alpha'\beta'}$;
\item
by the description of GO-strata (Subsection~\ref{S:description-GO-strata} and Theorem~\ref{T:GO-Hilbert}), the only unassigned $a_i$'s (for which we must have $b_{\tau_i} = e_{\tau_i}$) can be partitioned into disjoint union of pairs $a_j a_{j+1}$, to which we assign $\alpha'\beta'$.  (When $d$ is even and $\ttT_{/\gothp} = \Sigma_{\infty/\gothp}$, we have exactly two such partitions; we assign two cyclic words in this case.  In contrast, when $d$ is odd and $\ttT_{/\gothp}=\Sigma_{\infty/\gothp}$, the term in \eqref{E:expression of a bunch of tensors} is trivial because $(\pi_{\ttS(\ttT),\gothp} )^{K_{\ttS(\ttT),\gothp}}$-term is zero; this agrees with the fact that there is no cyclic words just consisting of short expressions $\alpha'\beta'$ since $d$ is odd.)
\end{itemize}
It is somewhat tedious but straightforward to check that this establishes a one-to-one correspondence between terms in \eqref{E:expression of a bunch of tensors} with all cyclic words of length $d$ discussed above.

We now discuss the contribution of the corresponding terms in \eqref{E:expression of a bunch of tensors} to the Grothendieck group of finite-dimensional $\Qlb[\Phi]$-modules.
For a cyclic word $w$ with period $r = \per(w)$, the contribution of the terms of \eqref{E:expression of a bunch of tensors} corresponding to all elements of $[w]$ is given by an $r$-dimensional representation
\begin{equation}
\label{E:Rw}
R_{[w]} = (-1)^a[\Qlb[\Phi] / (\Phi^r - \lambda)],
\end{equation}
where $a$ is the number of  short combinations
$\overline{\alpha\beta}$ and $\overline{\alpha'\beta'}$ in the cyclic word $(a_1\cdots a_d)$, and 
the number $\lambda$ is the product of
\begin{itemize}
\item
$\alpha_0$ for each $\alpha$ in the cyclic word $(a_1\cdots a_r)$,
\item
$\beta_0$ for each $\beta$ in the cyclic word $(a_1\cdots a_r)$,
\item
$\alpha_0\beta_0 = p^d$ for each $\overline{\alpha\beta}$  in the cyclic word $(a_1\cdots a_r)$,
\item
$p^{2d}$ for each ${\alpha'\beta'}$ and each $\overline{\alpha'\beta'}$  in the cyclic word $(a_1\cdots a_r)$, and
\item
a sign, which is nontrivial if and only if $d/r$ is even and there are odd number of pairs of $\overline {\alpha\beta}$ and $\overline{\alpha'\beta'}$ in $(a_1 \cdots a_r)$.
\end{itemize}
This does not depend on the choice of the representative $w$ in the equivalent class $[w]$.
For example, the representation associated to the equivalence class of $(\alpha \alpha'\beta' \overline{\alpha \beta}\alpha \alpha' \beta'\overline{\alpha \beta})$ is
$\Qlb[\Phi] / (\Phi^5 + p^{30}\alpha_0)$.

We now need to prove that the total contribution of all cyclic words to \eqref{E:expression of a bunch of tensors} is simply
$R_{[\alpha\cdots \alpha]}+R_{[\beta\cdots \beta]}$, which agrees with the contribution from $[(\pi_\gothp)^{\Iw_\gothp}]$.
In other words, we need to show that the contribution of all cyclic words except $(\alpha\cdots \alpha)$ and $(\beta\cdots \beta)$ cancel with each other.
For this, we need to properly group cyclic words together. We introduce some new terminology.

\begin{itemize}
\item
For a cyclic word $w$, its \emph{primitive form}, denoted by $\prim(w)$ is the cyclic word obtained by replacing all $\overline{\alpha\beta}$ by $\alpha\beta$ and all $\overline{\alpha'\beta'}$ by $\alpha' \beta'$.
Equivalent cyclic words have equivalent primitive forms.
We note that a cyclic word always has (nonstrictly) a bigger period than its primitive form, i.e. $\per(w)\geq \per(\prim(w))$.
The upshot of this terminology is that adding overline to either $\alpha\beta$ or $\alpha'\beta'$ will not change the absolute value on $\lambda$ in \eqref{E:Rw} but it will change the sign of 
$R_{[w]}$.

\item
We think of the difference between $\alpha\beta$ and $\overline{\alpha\beta}$ as being ``the conjugate of each other".
The same applies to $\alpha'\beta'$ and $\overline{\alpha'\beta'}$.
Hence we introduce the convention that $\overline{\overline{\alpha\beta}} = \alpha \beta$ and $\overline{\overline{\alpha\beta}} = \alpha \beta$.
The key observation is that, taking the conjugation of a short combination $\alpha\beta$ or $\alpha'\beta'$ will not change the absolute value of $\lambda$ in \eqref{E:Rw} but it will change the sign of 
$R_{[w]}$.
This allows us to cancel the contribution to \eqref{E:expression of a bunch of tensors}.
\end{itemize}

\vspace{5pt}
\underline{\textbf{Claim:}}
We group all cyclic words into packages with the same primitive forms up to equivalence.
For any equivalent class of primitive forms $[w_0]$ with period $\per(w_0) = r \neq 1$, all cyclic words $w$ with $[\prim(w)] = [w_0]$ have zero total contribution to the sum \eqref{E:expression of a bunch of tensors}.
\vspace{5pt}

Since the only cyclic words with period $1$ are $(\alpha\cdots \alpha)$ and $(\beta \cdots \beta)$, this claim would exactly prove our Theorem~\ref{T:rigid coh of ordinary strong} in the cuspidal case.

Since the Claim can be easily checked when $d = 1, 2$,  We assume $d \geq 3$ from now on.
Before proving the claim, we first indicate some simple cases to give the reader some feeling of the argument.

When $r=d$, the claim can be easily deduced as follows.
Let $w_0$ be as in the claim.
In this case, every cyclic word $w$ such that $\prim(w) = w_0$ will also have period equal to $d$.
Moreover, $w_0$ must have at least one adjacent $\alpha\beta$ or a short combination $\alpha'\beta'$.
We fix one such, say at the $i$th and the $(i+1)$st places.
Among all cyclic words whose primitive form is $w_0$, we may pair those which are identical at all places except at the $i$th and the $(i+1)$st, where they are conjugate of each other.
Their equivalent classes contribute the same representation to the sum \eqref{E:expression of a bunch of tensors}, but with different signs (since the signs are determined by the number of pairs of $\overline {\alpha\beta}$ and $\overline{\alpha'\beta'}$).
So the total sum in the Grothendieck group is zero.

A variant argument of this also works if $d/r$ is odd, as follows.
Let $w_0$ be as in the claim.
In this case, if a cyclic word $w$ satisfies $\prim(w) = w_0$, then we only know $r|\per(w)$. But this will not concern us.
We fix an adjacent $\alpha\beta$ or $\alpha'\beta'$ in $w_0$, say at $i$th and $(i+1)$st places.
Without loss of generality, we assume that $i \in \{1, \dots, r\}$.
Then we have adjacent $\alpha\beta$ or $\alpha'\beta'$ at the $(sr+i)$th and $(sr+i+1)$st places of $w_0$ for any $s \in \ZZ$.
For a cyclic word $w = (a_1 \cdots a_r)$ whose primitive form is $w_0$, we define its dual $w^\vee$ to be the following cyclic word
\[
w^\vee = (a_1\cdots \overline{a_ia_{i+1}} \cdots \overline{a_{i+r}a_{i+r+1}} \cdots \cdots \overline{a_{i+d-r} a_{i+d-r+1}} \cdots a_d).
\]
In other words, we take the conjugation of $w$ at the $(sr+i)$th and $(sr+i+1)$st places for all $s\in\ZZ$.
Note that the period of $w$ is still the same as $w^\vee$.
Hence their cyclic equivalent classes have the same contribution to the right hand side of \eqref{E:expression of a bunch of tensors}, but with signs differed by $(-1)^{d/r} = -1$.
So the total contribution is trivial again.

Clearly, a direct generalization of this argument would not work if $d/r$ is even.
We look at an example first.
Let $w_0 = (\alpha\beta\alpha\beta \alpha\beta\alpha\beta)$.
Then the list of cyclic words $w$ with $[\prim(w)] = [w_0]$ and the contribution to \eqref{E:expression of a bunch of tensors} of 
their equivalence classes is given as follows:
\begin{itemize}
\item[(i)] equivalence class of $(\alpha\beta\alpha\beta \alpha\beta\alpha\beta)$ contributes $\big[\Qlb[\Phi]/(\Phi^2 - q) \big]$,
\item[(ii)]  equivalence class of $(\overline{\alpha \beta}\alpha\beta \alpha\beta\alpha\beta)$ contributes $-\big[\Qlb[\Phi]/(\Phi^8 - q^4)\big]$,
\item[(iii)]  equivalence class of $(\overline{\alpha \beta}\overline{\alpha \beta} \alpha\beta\alpha\beta)$ contributes $\big[\Qlb[\Phi]/(\Phi^8 - q^4)\big]$,
\item[(iv)]  equivalence class of $(\overline{\alpha \beta}\alpha\beta \overline{\alpha \beta}\alpha\beta)$ contributes $\big[\Qlb[\Phi]/(\Phi^4 + q^2)\big]$,
\item[(v)]  equivalence class of $(\overline{\alpha \beta}\overline{\alpha \beta}\overline{\alpha \beta}\alpha\beta)$ contributes $-\big[\Qlb[\Phi]/(\Phi^8 - q^4)\big]$, and
\item[(vi)]  equivalence class of $(\overline{\alpha \beta}\overline{\alpha \beta}\overline{\alpha \beta}\overline{\alpha \beta})$ contributes $\big[\Qlb[\Phi]/(\Phi^2 + q)\big]$,
\end{itemize}
where $q = p^8$.
Note that, in (iii) and (v), the sign on power of $p$ is changed according to the last rule in \eqref{E:Rw}.
One sees that the factorization $\Phi^8-q^4 = (\Phi^4+q^2)(\Phi^2+q)(\Phi^2-q)$ is used to prove that the total contribution to the sum \eqref{E:expression of a bunch of tensors} is zero.

We now handle the most general case of the claim. 
We fix an adjacent $\alpha \beta$ or a short combination $\alpha'\beta'$ in $w_0$, at $i$th and $(i+1)$st places.
Without loss of generality, we assume that $i \in \{1, \dots, r\}$.
Assume that $d/r = 2^t s$ for $t \in \NN$ and $s$ odd. 
We fix a positive divisor $s''$ of $s$, and write $s'=s/s''$. 
Let $\CW_{s''}(w_0)$ denote the subset of the cyclic words $w$ of length $d$  whose primitive form is  $w_0$ and its period is of  the form  $\per(w)=2^{t''}s''r$ for some integer $0\leq t''\leq t$. 
For each $j \in \{0, \dots, 2^t-1\}$, we define  an operator $\ttr_{j}$ on $\CW_{s''}(w_0)$ as follows: 
for $w=(a_1\cdots a_d)\in \CW_{s''}(w_0)$,  $\ttr_j(w)=(b_1\cdots b_d)$ is given by  $b_{(m2^t+j)s''r+i}b_{(m2^t+j)s''r+i+1}=\overline{a_{(m2^t+j)s''r+i}a_{(m2^t+j)s''r+i+1}}$ for $0\leq m\leq s'-1$, and  $b_n=a_n$ for any other $n$'s. 
It is easy to see that $\ttr_{j}(w)\in \CW_{s''}(w_0)$, and two elements of $\CW_{s''}(w_0)$ are equivalent if and only if there images under $\ttr_j$ are equivalent.
Note that  $\ttr_j$ has order $2$.
Let  $\gothG_{2^t}\simeq (\Z/2\Z)^{2^t}$ denote the group generated by $\ttr_j$ for $0\leq j\leq 2^t-1$. 
 We now group the cyclic words in $\CW_{s''}(w_0)$ into $\gothG_{2^t}$-orbits, and as well as their equivalence classes. 
 We have the following

\vspace{5pt}
\underline{\textbf{Subclaim:}}
Let $\calW\subseteq \CW_{s''}(w_0)$ be a $\gothG_{2^t}$-orbit, and let $[\calW]$ be the associated set of equivalence classes.
Then the contribution of $[\calW]$ to the sum \eqref{E:expression of a bunch of tensors}  is zero.
\vspace{5pt}

It is clear the claim would follow this Subclaim.
From now on, we fix such a  $\calW$.
Among all periods of cyclic words in $\calW$, there is a minimal one, which we denote by $\tilde r=2^{t''} s''r $. Put $t' = t-t''$ so that  $d = (2^{t'} s') \tilde r$.
For any $w \in \calW$, $\per(w) / \tilde r$ is a power of $2$.
We fix an cyclic word $w^\star \in \calW$ with period $\tilde r$.

The case when $t' =0$ is simple, which we handle first.
This is the case when $d / \tilde r$ is odd.
Hence the periods for all cyclic words $w \in \calW$ are in fact the same.
We consider the action of $\ttr_0$, for example.
The $\Qlb[\Phi]$-module associated to the equivalence class of $w$ is the same as that of $\ttr_0(w)$.
But their contributions in \eqref{E:expression of a bunch of tensors} differ by a sign since $\ttr_0$ takes conjugation at $s'$ places.
The claim is proved in this case.

From now on, we assume that $t' >0$.
We will have to do an explicit computation.
For $k = 0, \dots, t'$, we consider the following set of operators
\[
\mathrm{Op}_k = \big\{
\ttr_j \circ \ttr_{j+2^{t''+k}} \circ \cdots \circ \ttr_{j+ 2^t-2^{t''+k}} \;\big|\; j = 0, \dots, 2^{t''+k}-1\big\},
\]
and  let $\langle \mathrm{Op}_k\rangle\subseteq \gothG_{2^t}$ denote the subgroup generated by $\mathrm{Op}_k$.
We have $\langle\mathrm{Op}_{k-1}\rangle\subseteq \langle\mathrm{Op}_k\rangle$ and $\langle\mathrm{Op}_{t'}\rangle= \gothG_{2^t}$.
Let $\Ob_k$ denote the orbit of $w^\star$ under the action of $\langle\mathrm{Op}_k\rangle$. 
It is not hard to see that, for each $k$, $\Ob_k$ consist of exactly those $w \in \calW$ such that $\per(w) | 2^{k+t''}s''r$.
Put $\Ob_{0}^{\circ}=\Ob_{0}$ and $\mathrm{Ob}_k^\circ = \mathrm{Ob}_k - \mathrm{Ob}_{k-1}$ for $k \geq 1$, and let $[ \Ob_k^\circ]$ denote the associated set of equivalence classes.
Thus $\mathrm{Ob}_k^\circ$ consists of exactly those cyclic words $w \in \calW$ such that $\per(w) = 2^{t''+k}s'' r$, and we have $[\calW]=\bigcup_{k=0}^{t'}[\Ob_{k}^{\circ}]$.
It is clear that the cardinalities of these sets are
\[
\#\mathrm{Ob}_0 = 2^{2^{t''}},\quad
\#\mathrm{Ob}_k^\circ = 2^{2^{t''+k}} - 2^{2^{t''+k-1}} \textrm{ for } k = 1, \dots, t'.
\]

We first look at the contribution from $[\Ob_0]$ to the sum \eqref{E:expression of a bunch of tensors}.
The equivalent class $w^\star$ corresponds to a representation $\Qlb[\Phi]/(\Phi^{\tilde r} - \lambda)$ for some $\lambda \in \Qlb$. 
Then among all  the cyclic words in $\mathrm{Ob}_0$, half of them (i.e. those obtained by applying even number of operators in $\mathrm{Op}_0$ to $w^\star$) correspond to the same representation as  $w^\star $, while the other half correspond to $\Qlb[\Phi]/(\Phi^{\tilde r} + \lambda)$.
  The change of the sign is a result of the last rule in \eqref{E:Rw}.
   We note moreover that any two cyclic words  in  $\Ob_0$ are not equivalent unless they are equal. Hence, the total contribution of $[\Ob_0]$ to \eqref{E:expression of a bunch of tensors} is $2^{2^{t''}-1}$ copies of $\Qlb[\Phi]/(\Phi^{\tilde r} - \lambda)$ and $2^{2^{t''}-1}$ copies of $\Qlb[\Phi]/(\Phi^{\tilde r} + \lambda)$.

Similarly, among the elements of $\mathrm{Ob}_k^\circ$ for $k =1, \dots, t'-1$, $2^{2^{t''+k}-1}$ of them are obtained by applying odd number of operators in $\mathrm{Op}_k$ to $w^\star$, while $2^{2^{t''+k}-1}-2^{2^{t''+k-1}}$ of them are obtained by using even number of operators. 
By the rules in  \eqref{E:Rw}, the representation corresponding to the elements in the first case is $\Qlb[\Phi]/(\Phi^{2^k\tilde r}+\lambda^{2^k})$, and that for the second case is $\Qlb[\Phi]/(\Phi^{2^k\tilde r}-\lambda^{2^k})$. Since every $2^{k}$ elements in $\Ob_k^{\circ}$ give rise to one equivalence class in $[\Ob^{\circ}_k]$, we see that the multiplicities of the two representations above given by  $[\Ob^{\circ}_k]$ are $2^{2^{t''+k}-1}/2^{k}=2^{2^{t''+k}-k-1}$ and $2^{2^{t''+k}-k-1}-2^{2^{t''+k-1}-k}$  respectively.

For $k=t'$, the representation associated to a cyclic equivalent class of a cyclic word $w^\star$ from $\mathrm{Ob}_{t'}^\circ$ is always $\Qlb[\Phi]/(\Phi^{2^{t'}\tilde r}-\lambda^{2^{t'}})$ (note that the index of the period is an odd number $s''$ now).
But the contribution to the sum \eqref{E:expression of a bunch of tensors} is the same as the contribution of $w^\star$ if and only if this element is obtained from $w^\star$ by applying even number of operators from $\mathrm{Op}_{t'}$.

In summary, we list all the representations we see in the following table.
\begin{center}
\begin{tabular}{|c|c|c|c|}
\hline
$k$ & Representation & Multiplicity & Sign of the  Contribution\\
\hline
$0$ & $\Qlb[\Phi]/(\Phi^{\tilde r} - \lambda)$ & $2^{2^{t''}-1}$ & same as $w^\star$\\
\hline
$0$ & $\Qlb[\Phi]/(\Phi^{\tilde r} + \lambda)$ & $2^{2^{t''}-1}$ & same as $w^\star$\\
\hline
$1$ & $\Qlb[\Phi]/(\Phi^{2\tilde r} - \lambda^2)$ & $2^{2^{t''+1} - 1-1} -2^{2^{t''}-1}$ & same as $w^\star$\\
\hline
$1$ & $\Qlb[\Phi]/(\Phi^{2\tilde r} + \lambda^2)$ & $2^{2^{t''+1}-1-1}$ & same as $w^\star$\\
\hline
\multicolumn{4}{|c|}{$\cdots \quad \cdots $}\\
\hline
$t'$ & $\Qlb[\Phi]/(\Phi^{2^{t'}\tilde r} - \lambda^{2^{t'}})$ & $2^{2^{t}-t'-1} - 2^{2^{t-1} -t'- 1}$ & same as $w^\star$\\
\hline
$t'$ & $\Qlb[\Phi]/(\Phi^{2^{t'}\tilde r} - \lambda^{2^{t'}})$ & $2^{2^{t}-t'-1}$ & opposite to $w^\star$\\
\hline
\end{tabular}
\end{center}
Then using  the factorization
\[
\Phi^{2^{t'}\tilde r} - \lambda^{2^{t'}} = (\Phi^{2^{t'-1}\tilde r}+\lambda^{2^{r'-1}})\cdots
(\Phi^{\tilde r} + \lambda)(\Phi^{\tilde r} - \lambda),
\]
it follows immediately
that the total contribution  of all $[\Ob^{\circ}_k]$ for $0\leq k\leq t'$ to \eqref{E:expression of a bunch of tensors} is zero. This finishes the proof of the Subclaim, and hence also Theorem~\ref{T:rigid coh of ordinary strong}.

\begin{remark}
Our proof of Theorem~\ref{T:rigid coh of ordinary strong} in the cuspidal case is very combinatorial.
It would be great if one can give a more conceptual or geometric proof.

If one needs only the action of a high power of $\Phi_{\gothp^2}$, the proof can be significantly simplified.
In fact, this suffices for the application to proving classicality result as stated in Corollary~\ref{C:stronger classicality}.
Nonetheless, we feel that Theorem~\ref{T:rigid coh of ordinary strong} has its own importance, and 
it deserves a trying for the most optimal statement.
\end{remark}




\end{document}